\newtheorem{proposition}{Proposition}
\newtheorem{lemma}{Lemma}
\newtheorem{definition}{Definition}
\newtheorem{remark}{Remark}
\newcommand{\norm}[2][\relax]{\ifx#1\relax \ensuremath{\left\Vert#2\right\Vert} \else \ensuremath{\left\Vert#2\right\Vert_{#1}}\fi}
\begin{document}

\title{Natural parameter conditions for singular perturbations of chemical and biochemical reaction networks}

\author{Justin Eilertsen\\
            Mathematical Reviews\\ American Mathematical Society\\
            416 4th Street\\Ann Arbor, MI, 48103\\
            e-mail: {\tt jse@ams.org}\\\\
        Santiago Schnell\\
            Department of Biological Sciences and\\
            Department of Applied and Computational Mathematics and Statistics\\
            University of Notre Dame\\
            Notre Dame, IN 46556\\
            e-mail: {\tt santiago.schnell@nd.edu}\\\\
        Sebastian Walcher\\
            Mathematik A, RWTH Aachen\\
            D-52056 Aachen, Germany\\
            e-mail: {\tt walcher@matha.rwth-aachen.de}}

\date{}

\maketitle
\begin{abstract} We consider reaction networks that admit a singular perturbation reduction in a certain parameter range. 
The focus of this paper is on deriving ``small parameters'' (briefly for small perturbation parameters), to gauge the 
accuracy of the reduction, in a manner that is consistent, amenable to computation and permits an interpretation in 
chemical or biochemical terms. Our work is based on local timescale estimates via ratios of the real parts of eigenvalues 
of the Jacobian near critical manifolds. This approach modifies the one introduced by Segel and Slemrod 
and is familiar from computational singular perturbation theory. While parameters derived by this method cannot provide 
universal quantitative estimates for the accuracy of a reduction, they represent a critical first step 
toward this end. Working directly with eigenvalues is generally unfeasible, and at best cumbersome. Therefore we focus 
on the coefficients of the characteristic polynomial to derive parameters, and relate them to timescales. Thus, we obtain 
distinguished parameters for systems of arbitrary dimension, with particular emphasis on reduction to dimension one. 
As a first application, we discuss the Michaelis--Menten reaction mechanism system in various settings, with new and 
perhaps surprising results. We proceed to investigate more complex enzyme catalyzed reaction mechanisms (uncompetitive, 
competitive inhibition and cooperativity) of dimension three, with reductions to dimension one and two. The distinguished 
parameters we derive for these three-dimensional systems are new. In fact, no rigorous derivation of small parameters 
seems to exist in the literature so far. Numerical simulations are included to illustrate the efficacy of the parameters 
obtained, but also to show that certain limitations must be observed. \\

\noindent
{\bf Keywords}: Reaction network, dimension reduction, perturbation parameter, timescale, eigenvalue, symmetric polynomial, quasi-steady-state approximation, Lyapunov function, singular perturbation\\
{\bf MSC (2020):} 92C45, 34D15, 80A30, 13P10 \\
\end{abstract}

\section{Introduction} 
Reducing the dimension of chemical and biochemical reaction networks or mechanisms is of great 
relevance both for theoretical considerations and for laboratory practice. For instance, the fundamental structure 
of a reaction mechanism is frequently known, or assumed from educated guesswork, but reaction rate constants are 
a priori unknown. Moreover, due to possible wide discrepancies in timescales, as well as limitations on 
experimentally obtainable data, it is important to identify scenarios and parameter regions that guarantee 
accuracy of a suitably chosen reduction. Singular perturbations frequently appear here,\footnote{Other types 
of reduction scenarios do occur, but we will not discuss these in the present work.} and the fundamental 
theorems by Tikhonov~\cite{tikh} and Fenichel~\cite{fenichel} provide a procedure to determine a reduced 
equation, and reliable convergence results. These theorems require an a priori identification of a perturbation 
parameter (also called ``small parameter''). From a qualitative perspective, one actually considers a critical 
manifold together with an associated small parameter, and a corresponding slow invariant manifold. Given a 
well-defined limiting process for the small parameter, theory guarantees convergence of solutions of the full 
system to corresponding solutions of the reduced system. From a practical (``laboratory'') perspective, 
however, convergence theorems are not sufficient, and quantitative results are needed to gauge the accuracy 
of fitting procedures. This implies the need for an appropriate small parameter, which we denote by 
$\varepsilon_S$ for the moment, that also reflects quantitative features. In contrast to the critical manifold, 
from a qualitative perspective the perturbation parameter is far from unique.\footnote{Even for the familiar 
Michaelis--Menten system there are several parameters in use.} 
From a quantitative perspective, ideally $\varepsilon_S$ should provide an upper estimate for the discrepancy 
between the exact and approximate solutions over the whole course of the slow dynamics. From a biochemical 
perspective it should elucidate the influence of reaction parameters. In many application-oriented publications, 
the authors assume (explicitly or implicitly) that certain perturbation parameters provide a quantitative 
estimate for the approximation; see e.g. \cite{hta,Seg,tza,Schnell,JaeKim}\footnote{In several 
instances this assumption seems to be coupled with a too literal interpretation of the 
expression $\varepsilon\ll 1$.}. However, while heuristical arguments may support such assumptions, no 
mathematical proof is given (see the discussion of the Michaelis--Menten system in \cite{Chihuahua}). 
From the applied perspective, in absence of rigorous results on quantitative error estimates for reductions 
of biochemical reaction networks or mechanisms, there is no alternative to employing heuristics. Thus, 
there exists a sizable gap between available theoretical results and applications, and closing this gap 
requires further theoretical results. The present paper is intended as a contribution towards narrowing 
the gap, invoking mathematical theory.

From an overall perspective (based on a derivation of singular perturbation theorems), 
one could say that finding ideal small parameters for a given singular perturbation scenario requires a 
three-step procedure:
\begin{enumerate}
 \item In a first step, estimate the approach of a particular solution to the slow manifold:  A common 
 method employs Lyapunov functions. Thus, one obtains a parameter that measures the discrepancy between 
 the right hand sides of the full system and the reduced equation, following a short initial transient.
 \item In a second step, estimate a suitable critical time at which the slow dynamics sets in, and estimate 
 the solution at this critical time. This is needed to guarantee that the transient phase is indeed short, 
 and to obtain a suitable initial value for the reduced equation.
 \item In a third step, estimate the approximation of the exact solution by the corresponding solution 
 of the reduced equation.\footnote{The proximity of the phase--space trajectory to the slow manifold 
 does not ensure that the time evolutions of the approximate solution and the true solution are close; 
 see e.g. Eilertsen et al.~\cite[{\sc Fig.~4}]{Chihuahua}.}
\end{enumerate}
At first glance, this procedure seems to pose no problems. The feasibility of the steps 
outlined above is guaranteed by standard results about ordinary differential equations. But, the hard part 
lies in their practical implementation for a given parameter-dependent system. Generally, it is not easy 
to obtain meaningful and reasonably sharp estimates. A case-by-case discussion seems unavoidable (see, \cite{rqssa,phase,auxiliary,qssa-rev,Goshia} for examples employing various alternative approaches), for 
each given system.

With the three steps as a background, our goal is to make a significant contribution toward the first 
step, via linear timescale arguments. We will both expand and improve existing results, and moreover 
obtain perturbation parameters for higher-dimensional systems for which no rigorous results have 
previously been reported. In a biochemical context, it seems that timescale arguments 
were first introduced by Segel~\cite{Seg}, and Segel and Slemrod~\cite{SSl}. Conceptually, we build upon 
this approach, but we take a consistent local perspective. Thus, we consider (real parts of) eigenvalue 
ratios, based on the idea that underlies computational singular perturbation theory, going back to 
Lam and Goussis~\cite{LaGou}. Our emphasis is on obtaining parameters that are workable for 
application-oriented readers in mathematical enzymology, and admit an interpretation in biochemical terms.

%%%%%%%%%%%%%%%%%%%%%%%%%%%%%
\subsection{Background}
A solid mathematical foundation for qualitative viability of most reduction procedures in chemistry and 
biochemistry is provided by singular perturbation theory (Tikhonov~\cite{tikh}, Fenichel~\cite{fenichel}). 
This was first clearly stated and utilized in Heineken, Tsuchiya and Aris~\cite{hta}.

For illustrative purposes, and as further motivation, we consider a familiar system from biochemistry, 
viz.\ the (irreversible) Michaelis--Menten reaction mechanism or network \cite{MiMe}, which is modelled 
by the two-dimensional differential equation
\begin{equation}\label{eqmmirrev}
\begin{array}{rclclcl}
\dot s&=& -k_1e_0s&+&(k_1s+k_{-1})c,   \\
\dot c&=& k_1e_0s&-&(k_1s+k_{-1}+k_2)c. \\
\end{array}
\end{equation}
For small initial enzyme concentration with respect to the initial substrate concentration, 
Briggs and Haldane~\cite{BrHa} assumed quasi-steady state (QSS) for complex concentration, thus obtaining the
{QSS manifold} given by
\begin{equation}
    c=\dfrac{k_1e_0s}{k_{-1}+k_2+k_1s};
\end{equation}
and reduction to the Michaelis--Menten equation
\begin{equation}
     \dot s =-\dfrac{k_1k_2e_0s}{k_{-1}+k_2+k_1s}.
\end{equation}
To quantify the notion of smallness for enzyme concentration, they introduced the dimensionless parameter 
\begin{equation}\label{epshtadef}
    \varepsilon_{BH}:=\dfrac{e_0}{s_0}
\end{equation}
(later utilized by Heineken et al.~\cite{hta} in the first application of singular perturbation theory to 
this reaction), and required $\varepsilon_{BH}\ll1$ as a necessary condition for accuracy of the reduction. 
Further parameters to ensure accuracy of approximation by the Michaelis--Menten equation were introduced later 
on. Reich and Selkov~\cite{ReSe} introduced
\begin{equation}\label{epsrsdef}
  \varepsilon_{RS}:=  k_1e_0/(k_{-1}+k_2),
\end{equation}
for which Palsson and Lightfoot~\cite{PaLi} later gave a justification based on linearization at the 
stationary point 0.\footnote{In a recent paper, Patsatzis and Goussis~\cite{PaGo} suggested a parameter 
involving $s$ and $c$ along a trajectory; taking the maximum over $s$ and $c$ yields $\varepsilon_{RS}$.} 
Moreover, Segel and Slemrod~\cite{SSl} derived
\begin{equation}\label{epsssldef}
    \varepsilon_{SSl}:=\dfrac{k_1e_0}{k_{-1}+k_2+k_1s_0}.
\end{equation}
The fundamental approach by Segel and Slemrod~\cite{SSl}, obtaining perturbation parameters by comparing 
suitable timescales has been used widely in the literature ever since.\footnote{The particular argument 
in \cite{SSl} is somewhat problematic since the notion of timescale is ambiguous for nonlinear systems.}

For Michaelis--Menten reaction mechanism, singular perturbation theory shows convergence of solutions 
of \eqref{eqmmirrev} to corresponding solutions of the reduced equation as $e_0\to 0$, in which case 
all of the parameters $\varepsilon_{BH},\,\varepsilon_{RS},\,\varepsilon_{SSl}$ approach zero. But on 
the other hand, it is {\em not} generally true that $\varepsilon_{BH}\to 0$, or $\varepsilon_{RS}\to 0$, 
or $\varepsilon_{SSl}\to 0$, implies convergence to the solution of the reduced system. This, as well 
as related matters, was discussed in detail in Eilertsen et al.~\cite{Chihuahua}, with a presentation 
of counterexamples. We also invite the readers to see other examples in Section~\ref{secMM}.

These facts illustrate that considering a single parameter -- without context and without a clearly 
defined notion of the limiting process  -- will generally not be sufficient to ensure the validity of
some particular reduction. In a singular perturbation setting the critical manifold is the basic object, 
and one generally needs to specify the way in which corresponding small parameters approach zero.

With regard to the procedure outlined in Steps 1 to 3 above,  a wish list for small parameters includes 
the following physically motivated conditions:
\begin{itemize}
    \item $\varepsilon_S$ is dimensionless;
    \item $\varepsilon_S$ is composed of reaction rates and initial values (admitting an interpretation 
    in physical terms);
    \item $\varepsilon_S$ is controllable in experiments.
\end{itemize}
These requirements will be taken into account as well.

Our vantage point is work by Goeke et al.~\cite{gwz,gwz3}, which provides an algorithmic approach to determine 
critical parameter values (Tikhonov-Fenichel parameter values, TFPV), and their critical manifolds: Choosing 
a curve in parameter space (with curve parameter $\varepsilon$) that starts at a TFPV gives rise to a 
singularly perturbed system, based on a clearly defined approach of the small parameter to zero.

Pursuing a less ambitious goal than the one outlined in Steps 1 to 3 above, we will utilize the separation 
of timescales on the slow manifold, adapting work by Lam and Goussis~\cite{LaGou} on computational singular 
perturbation theory. We focus attention on local considerations. Timescales are identified as inverse 
absolute real parts of eigenvalues of the linearization of a vector field, near stationary points. 
Restriction to the vicinity of stationary points is an essential condition here. Given a singular 
perturbation setting, Zagaris et al.~\cite{ZKK} proved that the approach via ``small eigenvalue ratios'' 
is consistent. Unless some eigenvalues of large modulus are purely imaginary, the eigenvalue approach 
provides a small parameter that satisfies the requirement in Step~1 above, up to a multiplicative constant 
that remains to be determined.\footnote{A proof of this fact is sketched in Appendix~\ref{lyapsub}, which 
also indicates that eigenvalue ratios are relevant for Step~3. The multiplicative constant reflects the 
effect of a coordinate transformation.} But dealing directly with eigenvalues (even in the rare case when 
they are explicitly known) is generally too cumbersome to allow productive work and concrete conclusions.

The emphasis of the present paper lies on local (linear) timescale estimates and comparisons, using a mix 
of algebraic and analytic tools. We will obtain parameters that are palatable to application-oriented readers 
and allow for interpretation in a biochemical context. Most of the parameters obtained have not appeared in 
the literature before, and some perhaps are unexpected.

%%%%%%%%%%%%%%%%%%%%%%%%%%%%%%%%%%%%%%%%%%%%%%%%%%%
\subsection{Overview of results}
Given a chemical or biochemical reaction network or mechanism, we will present a method to obtain distinguished
dimensionless parameters. These parameters are directly related to the local fast-slow dynamics of the singularly 
perturbed system. In contrast to many existing timescale estimates in the literature, the one employed here is 
conceptually consistent. Timescale considerations mutate from artwork to a relatively routine procedure, and we 
establish necessary conditions for timescale separation and singular perturbation reductions. 

In the preparatory Section~\ref{secprelim}, we collect some notions and results related to singular perturbation 
theory. In particular, we recall Tikhonov-Fenichel parameter values (TFPV). We also note properties of the 
Jacobian and its characteristic polynomial on the critical manifold. It should be emphasized that our search 
always begins with identifying a TFPV and its associated critical manifold; all our small parameter estimates 
are rooted in this scenario. We establish a repository of dimensionless parameters from coefficients of the 
characteristic polynomial, and we recall the relation between these coefficients and the eigenvalues of the 
Jacobian. Finally, we fix some notation and establish some blanket nondegeneracy conditions that are assumed 
throughout the paper.

Section~\ref{secdimone} is devoted to one--dimensional critical manifolds, which are of considerable relevance 
to experimentalists. Generally, the timecourse of a single product or substrate is measured in an experiment. 
Specific kinetic parameters (such as the Michaelis constant) are estimated via nonlinear regression, in which 
the recorded timecourse data is fitted to a one--dimensional and autonomous QSS model that approximates substrate 
depletion (or product formation) of the reaction on the slow timescale; see, for example, 
Stroberg and Schnell~\cite{StSc} and Choi et al.~\cite{JaeKim}. In the one--dimensional setting, near the 
critical manifold there is one and only one eigenvalue of the Jacobian with small absolute real part. From the
characteristic polynomial, we obtain distinguished small parameters, and we establish their correspondence to 
timescales. The parameters thus obtained admit an interpretation in terms of reaction parameters, so they 
satisfy a crucial practical requirement. They measure the ratio of the slow to the fastest timescale, and 
thus provide a necessary condition for timescale separation. But, in dimension greater than two, this 
condition is not strong enough when there are large discrepancies within the fast timescales. According to 
the Appendix, Section~\ref{lyapsub}, the ratio of the slow to the ``slowest of the fast'' timescales is the 
relevant quantity. To estimate this ratio, we introduce another type of parameter that yields sharp estimates 
whenever all eigenvalues are ``essentially real'' (borrowing terminology of Lam and Goussis~\cite{LaGou}). We 
then specialize our results to systems of dimensions two and three. 

In Section~\ref{secMM}, we apply the results from Section~\ref{secdimone} to the (reversible and irreversible) 
Michaelis--Menten system in various circumstances. We obtain a distinguished parameter for the reversible system 
with small enzyme concentration; this seems to be new. Specializing to the irreversible case, 
we obtain a parameter $\varepsilon_{MM}$ and conclude, via an argument different from 
Palsson and Lightfoot~\cite{PaLi}, that the  Reich-Selkov parameter $\varepsilon_{RS}$ is the most suitable 
among the standard parameters in the irreversible system. Moreover, we obtain a rather surprising distinguished 
parameter for the partial equilibrium approximation with slow product formation. To support the claim that this 
is indeed an appropriate parameter for Step 1, as stated above, we determine relevant Lyapunov estimates, and 
we add some observations with regard to Step 3. To illustrate the necessity of some technical restrictions in 
our results, we close this section by discussing a degenerate scenario with a singular critical variety. 

In Section~\ref{sechigher}, we turn to critical manifolds of dimension greater than one. Imitating the approach 
for one-dimensional critical manifolds and invoking results from local analytic geometry, we obtain distinguished 
parameters that measure the ratio of the fastest timescale to the ``fastest of the slow'' timescales. We provide 
a detailed analysis for three dimensional systems with two dimensional critical manifold.

In Section~\ref{seccsone}, we apply our theory to some familiar three-dimensional systems from biochemistry, 
viz.\ cooperative systems with two complexes, and competitive as well as uncompetitive inhibition, for low 
enzyme concentration. For these systems the only available perturbation parameters in common use seem to be 
$\varepsilon_{BH}=e_0/s_0$, $\varepsilon_{SSl}$ and ad-hoc variants of these. There seems to exist no 
derivation of small parameters via timescale arguments (in the spirit of Segel and Slemrod) in the literature. 
We thus break new ground, and we obtain meaningful and useful distinguished parameters. We illustrate our 
results with several numerical examples, to verify the efficacy of the parameters. But, we also include 
simulations to show their limited applicability in certain regions of parameter space. Such limitations 
were to be expected, since Steps 2 and 3 are needed for a complete analysis. These examples also illustrate 
the necessity of additional hypotheses imposed in the derivation of the distinguished parameters.

In Section~\ref{seven}, we consider some reductions of three dimensional systems obtained via projection onto 
two-dimensional critical manifolds. Specifically, we compute some two-dimensional reductions of the competitive 
and uncompetitive inhibitory reaction mechanisms, and we derive distinguished parameters that are relevant 
for the accuracy of these reductions. Again, we illustrate our results by numerical simulations. To finish, 
we discuss a three timescale scenario that leads to a hierarchical structure in which the two-dimensional 
slow manifold contains an embedded one-dimensional ``very slow'' manifold.

Section~\ref{sec:apppendix}, an Appendix, is a recapitulation of the Lyapunov function method for singularly perturbed systems, also 
outlining the relevance of the eigenvalue ratios for Step 1, and some observations on Steps 2 and 3. Moreover, 
the Appendix contains a summary of some facts from the literature, and proofs for some technical results. Sections~\ref{secprelim}, \ref{secdimone} and \ref{sechigher} as well as the Appendix (Section~\ref{sec:apppendix}) are mostly technical. Readers primarily interested in applications 
may want to skim these only, and focus on the applications in Sections~\ref{secMM}, \ref{seccsone} and \ref{seven}.

%%%%%%%%%%%%%%%%%%%%%%%%%%%%%%%%%%%%%%%%%%%%%%%%%%%%%
\section{Preliminaries}\label{secprelim}
We will discuss parameter-dependent ordinary differential equations
\begin{equation}\label{ode}
\dot x = h(x,\pi),\quad x\in \mathbb R^n,\quad \pi\in\Pi,\quad\Pi\subseteq\mathbb R^m\text{ closed, }
\end{equation}
with the right-hand side a polynomial in $x$ and $\pi$. Our main motivation is the study of chemical mass 
action reaction mechanisms and their singular perturbation reductions. 

%%%%%
\subsection{Tikhonov-Fenichel parameter values (a review)}\label{tfpvsubsec}
We consider singular perturbation reductions that are based on the classical work by Tikhonov~\cite{tikh} and Fenichel~\cite{fenichel}. Frequently the pertinent theorems are stated for systems in slow-fast standard form
\begin{equation}\label{slofa}
\begin{aligned}
\dot u_1&=  \varepsilon \,f_1(u_1,u_2,\varepsilon),\\
\dot u_2&= f_2(u_1,u_2,\varepsilon),\\
\end{aligned}
\end{equation}
with a small parameter $\varepsilon$, subject to certain additional conditions. In slow time, 
$\tau=\varepsilon t$, the reduced system takes the form 
\begin{equation*}
\begin{aligned}
\dfrac{du_1}{d\tau}&=  f_1(u_1,u_2,\varepsilon),\\
0&= f_2(u_1,u_2,\varepsilon),\\
\end{aligned}
\end{equation*}
and the above mentioned conditions ensure that the second equation admits a local resolution for $u_2$ 
as a function of $u_1$ and $\varepsilon$. For general parameter dependent systems \eqref{ode} one first needs 
to identify the parameter values from which such reductions emanate. We recall some notions and results 
(slightly modified from Goeke et al.~\cite{gwz}):
\begin{enumerate}
\item A parameter \(\widehat \pi\in \Pi\) is called a {\em Tikhonov-Fenichel parameter value (TFPV) for dimension \(s\)} ( \(1\leq s\leq n-1\))  of  system \eqref{ode} whenever the following hold:
\begin{enumerate}[(i)]
 \item An irreducible component of the critical variety, i.e., of the zero set $ \mathcal{V}(h(\cdot, \widehat \pi))$ of \(x\mapsto h(x\,,\widehat \pi)\), contains a (Zariski dense) local submanifold \(\widetilde Y\) of dimension \(s\), which is called the critical manifold.
 \item For all \(x\in \widetilde Y\)  one has ${\rm rank}\,D_1h(x,\widehat\pi)=n-s$ and 
\[
 \mathbb R^n = {\rm Ker}\ D_1h(x,\widehat \pi) \oplus {\rm Im}\ D_1h(x,\widehat \pi).
\]
Here $D_1$ denotes the partial derivative with respect to $x$.
\item For all \(x\in \widetilde Y\)  the nonzero eigenvalues of $\ D_1h(x,\widehat \pi) $ have real parts $<0$.
\end{enumerate}
\item Given a TFPV, for any smooth curve $\varepsilon\mapsto \widehat\pi+\varepsilon \rho+\cdots$ in parameter space $\Pi$, the system
\[
\dot x = h(x,\widehat \pi+\varepsilon\rho+\cdots)=h(x,\widehat\pi)+\varepsilon D_2h(x,\widehat\pi)\,\rho+\cdots=: h^{(0)}(x)+\varepsilon h^{(1)}(x)+\cdots,
\]
with  $D_2$ denoting the partial derivative with respect to $\pi$, admits a singular perturbation (Tikhonov-Fenichel) reduction.

A standard method is to fix a parameter direction and a ``ray''  $\varepsilon\mapsto \widehat\pi+\varepsilon \rho$ in parameter space. In a chemical interpretation this may correspond to a gradual increase of some parameters, such as initial concentrations. Our work will always be based on this procedure; by this specification we avoid ambiguities about the range of parameters.
\item The computation of a reduction in the coordinate-free setting is described in Goeke and Walcher~\cite{gw2}: Assuming the TFPV conditions in item 1, there exist rational functions $P$, with values in $\mathbb R^{n\times (n-s)}$, and $\mu$, with values in $\mathbb R^{n-s}$, such that
\[
h^{(0)}(x)=P(x)\mu(x) \text{ on } \widetilde Y,
\]
and $P(x)$ as well as $D\mu(x)$ have full rank on $\widetilde Y$. The reduced equation on $\widetilde Y$ then has the representation
\begin{equation}\label{tfredeq}
\dot x=\varepsilon\left( I-P(x)\left(D\mu(x)P(x)\right)^{-1}D\mu(x)\right)h^{(1)}(x),
\end{equation}
which is correct up to $O(\varepsilon^2)$. By Tikhonov and Fenichel, solutions of \eqref{ode} that start near $\widetilde Y$ will converge to solutions of the reduced system as $\varepsilon\to 0$. But some caveats are in order:
\begin{itemize}
    \item The reduction is guaranteed only locally, for neighborhoods of compact subsets of the critical manifold and for sufficiently small $\varepsilon$. Determining a neighborhood explicitly for which the reduction is valid poses an individual problem for each system.\footnote{A similar problem is familiar from linearly stable stationary points.}
    \item In particular, the distance of the initial value of \eqref{ode} from the slow manifold (not only from the critical manifold) is relevant for the reduction. In general, an approximate initial value for the reduced equation on the slow manifold must be determined.
    \item If the transversality condition in (ii) above breaks down, standard singular perturbation theory is no longer applicable. But, even when it is satisfied, the range of validity for the reduction may be quite small. This reflects the effect of a local transformation to Tikhonov standard form.
    \item Finally, the reduced equation may be trivial, in which case higher order terms in $\varepsilon$ are dominant and no conclusion can be drawn from the first order reduction. By the same token, if the term following $\varepsilon$ in \eqref{tfredeq} is small then the quality of the reduction may be poor.
\end{itemize}
\item  Turning to computational matters, consider the characteristic polynomial
\begin{equation}\label{charpol}
\chi(\tau,x,\pi)=\tau^n+\sigma_{1}(x,\pi)\tau^{n-1}+\cdots+ \sigma_{n-1}(x,\pi)\tau+\sigma_n(x,\pi)
\end{equation}
of the Jacobian $D_1h(x,\pi)$. Then, given $0<s<n$, a parameter value $\widehat\pi$ is a TFPV with locally exponentially attracting critical manifold $\widetilde Y$ of dimension $s$, and $x_0 \in \widetilde Y$, only if the following hold:
\begin{itemize}
\item $h(x_0,\widehat\pi)=0$.
\item The characteristic polynomial $\chi(\tau,x,\pi)$ satisfies
\begin{enumerate}[(i)]
\item $\sigma_n(x_0,\widehat\pi)=\cdots=\sigma_{n-s+1}(x_0,\widehat\pi)=0$;
\item all roots of $\chi(\tau,x_0,\widehat\pi)/\tau^s$ have negative real parts.
\end{enumerate}
\end{itemize}
This characterization shows that $x_0$ satisfies an overdetermined system of equations (more than $n$ equations in $n$ variables), which in turn allows to algorithmically determine conditions on $\widehat\pi$ by way of elimination theory; see Goeke et al.~\cite{gwz}.

Due to the Hurwitz-Routh theorem (see e.g.\ Gantmacher~\cite{Gan}), 
\[
\sigma_k(x_0,\widehat\pi)>0 \text{ for }x_0\in \widetilde Y,\,1\leq k\leq n-s
\]
is a necessary consequence of condition (ii).
Necessary and sufficient conditions for TFPV are stated in \cite{gwz}, but we will not need them here.

\end{enumerate}
%%%%%%%%%%%%%%%%%%%%%%%%%%%%%%%%%%%%%%%%%%%%%%%%%%%
\subsection{Dimensionless parameters}
From Goeke et al.~\cite{gwz}, one finds critical parameter values and corresponding critical manifolds, but there remains to specify the notion of ``small perturbation'', and to relate it to reaction parameters. Singular perturbation theory guarantees convergence in the limit $\varepsilon\to 0$, but for a given system estimates for the rate of convergence are desirable.

To be physically meaningful, relevant small parameters should be dimensionless.
The only dimensions appearing in reaction parameters are time and concentration, thus by dimensional analysis (Buckingham Pi Theorem; see, e.g.\ Wan~\cite{Wan}), there exist $\geq m-2$ independent dimensionless Laurent monomials in the parameters, such that every dimensionless analytic function of the reaction parameters can locally be expressed as a function of these.\footnote{Generically, 
there are exactly $m-2$.} This collection may be quite large; we impose the additional requirement that parameters should correspond to timescales.
In a preliminary step, we therefore list an inventory of rational dimensionless quantities for the network or mechanism.
\begin{lemma}\label{nondimlem} Let \eqref{ode} correspond to a CRN with mass action kinetics, and $\chi$ as in \eqref{charpol}. Then:
\begin{enumerate}[(a)]
\item The coefficient $\sigma_k$ of $\chi$ has dimension $({\rm Time})^{-k}$.
\item Whenever $i_1,\ldots,i_p\geq 1$ and $j_1,\ldots,j_q\geq 1$ are integers such that $i_1+\cdots+i_p=j_1+\cdots+j_q$, the expression
\[
\dfrac{\sigma_{i_1}\cdots\sigma_{i_p}}{\sigma_{j_1}\cdots\sigma_{j_q}}
\]
(when defined) is dimensionless.
\end{enumerate}
\end{lemma}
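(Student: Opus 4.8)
The plan is to prove (a) by a direct dimensional count on the entries of the Jacobian, after which (b) is immediate from the fact that dimensions multiply under products and divide under quotients. First I would record the purely algebraic fact underlying everything: for any $n\times n$ matrix $M=D_1h(x,\pi)$, the coefficients of $\det(\tau I-M)=\tau^n+\sigma_1\tau^{n-1}+\cdots+\sigma_n$ are signed sums of principal minors, namely $\sigma_k=(-1)^kE_k(M)$, where $E_k(M)$ denotes the sum of all $k\times k$ principal minors of $M$. By the Leibniz expansion of a $k\times k$ determinant, each such minor is a polynomial in the entries $M_{ij}$ that is homogeneous of degree exactly $k$: every term is a product of precisely $k$ entries, one from each row and each column of the chosen submatrix. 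Hence $\sigma_k$ is a homogeneous polynomial of degree $k$ in the Jacobian entries.

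The key dimensional input I would use is that every entry of $M$ carries the single dimension $(\mathrm{Time})^{-1}$. Indeed, in a mass-action CRN all state variables $x_j$ are concentrations, so $x_j$ has dimension $\mathrm{Conc}$, while $h_i=\dot x_i$ is a time derivative of a concentration and thus has dimension $\mathrm{Conc}\cdot(\mathrm{Time})^{-1}$; consequently the entry $M_{ij}=\partial h_i/\partial x_j$ has dimension $\mathrm{Conc}\cdot(\mathrm{Time})^{-1}/\mathrm{Conc}=(\mathrm{Time})^{-1}$. Since $\sigma_k$ is homogeneous of degree $k$ in these entries, it is dimensionally homogeneous of dimension $\left((\mathrm{Time})^{-1}\right)^{k}=(\mathrm{Time})^{-k}$, which is exactly (a). One could instead argue via $\sigma_k=(-1)^ke_k(\lambda_1,\dots,\lambda_n)$, the elementary symmetric functions of the eigenvalues, but the principal-minor route is self-contained and avoids having to assign a dimension to individual (possibly complex) eigenvalues.

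For (b) I would simply apply (a) termwise: the numerator $\sigma_{i_1}\cdots\sigma_{i_p}$ has dimension $(\mathrm{Time})^{-(i_1+\cdots+i_p)}$ and the denominator $\sigma_{j_1}\cdots\sigma_{j_q}$ has dimension $(\mathrm{Time})^{-(j_1+\cdots+j_q)}$. Under the hypothesis $i_1+\cdots+i_p=j_1+\cdots+j_q$ these two dimensions coincide, so the quotient is dimensionless wherever the denominator is nonzero, which is all that the qualifier ``when defined'' is meant to exclude.

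The content here is genuinely light, so I do not expect a serious obstacle; the only point requiring care is to make the statement ``$\sigma_k$ has dimension $(\mathrm{Time})^{-k}$'' meaningful, i.e.\ to verify that $\sigma_k$ is \emph{dimensionally homogeneous} rather than a sum of terms with differing dimensions. This is precisely what the homogeneity of the principal minors guarantees, and it in turn rests on all state variables sharing the single dimension $\mathrm{Conc}$ — a feature of mass-action kinetics that is built into the hypothesis of the lemma and would fail for a general system carrying variables of mixed dimensions.
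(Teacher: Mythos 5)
Your proposal is correct and follows essentially the same route as the paper: Jacobian entries have dimension $({\rm Time})^{-1}$ because the right-hand side has dimension ${\rm Concentration}/{\rm Time}$ and differentiation is with respect to a concentration, and $\sigma_k$ is a degree-$k$ polynomial in those entries, so (a) holds and (b) is immediate. Your only addition is to spell out, via principal minors and the Leibniz expansion, that $\sigma_k$ is \emph{homogeneous} of degree $k$ — a point the paper's one-line proof leaves implicit but which, as you note, is exactly what makes the dimension statement meaningful.
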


\begin{proof}
Every monomial on the right-hand side of \eqref{ode} has dimension ${\rm Concentration}/{\rm Time}$, since this holds for the left-hand side. The entries of the Jacobian $D_1h$ are obtained via differentiation with respect to some $x_i$, hence have dimension $({\rm Time})^{-1}$. Since $\sigma_i$ is a polynomial in the matrix enries of degree $i$, part (a) follows. Part (b) is an immediate consequence.
\end{proof}

%%%%%%%%%%%%%%%%%%%%%%%%%%%%%%%%%%%%%%%%%%%%%%%%%%%%
\subsection{Timescales}
There exist various notions of timescale in the literature, and in some cases this ambiguity influences the derivation of small parameters. For a case in point, we invite the reader to see Segel and Slemrod~\cite{SSl}, who use different notions of timescale for the fast and slow dynamics. But, for systems that decay or grow exponentially, and by extension for linear and approximately linear systems, there exists a well-defined notion: 

\begin{definition} Let $A:\,\mathbb R^n\to\mathbb R^n$ be a linear map, and consider the linear differential equation $\dot x= A\,x$.
For $\lambda$ an eigenvalue of $A$, with nonzero real part, we call $|{\rm Re}\,\lambda|^{-1}$ the {\em timescale} corresponding to $\lambda$.

The timescale of an invariant subspace $V\subseteq \mathbb R^n$ (which is a subspace of a sum of generalized eigenspaces) is defined as the slowest timescale of the eigenvalues involved.
\end{definition} 

For a single eigenvalue, the timescale characterizes the speed of growth or decay of solutions along the generalized eigenspace of $\lambda$. For an invariant subspace, it characterizes the speed for generic initial values. 

We will work with this consistent notion of linear timescale, and its extension to linearizations of nonlinear systems near stationary points, throughout the paper. Thus, we adopt the perspective taken in Lam and Goussis~\cite{LaGou}, which is justified by Fenichel's local characterization of the dynamics near the critical manifold $\widetilde Y$ \cite[Section V]{fenichel}, as proven by Zagaris, Kaper and Kaper~\cite{ZKK}. Indeed, the time evolution near $\widetilde Y$ is governed by the linearization $D_1h(x,\widehat\pi+\varepsilon\rho)$, with $\pi=\widehat\pi +\varepsilon\rho$ close to a TFPV $\widehat\pi$, and $x\in \widetilde Y$. For $\pi=\widehat\pi$ the Jacobian has vanishing eigenvalues, hence  for $\pi$ near $\widehat\pi$ one will have eigenvalues of small modulus, while all nonzero eigenvalues of  $D_1h(x,\widehat\pi)$ have negative real parts. 

From a practical perspective, eigenvalues are at best inconvenient to work with.
Moreover, in our context, resorting to numerical approximations is not a viable option. 
To obtain more palatable parameters, we recall the correspondence between the eigenvalues $\lambda_1,\ldots,\lambda_n$ of $D_1h(x,\widehat\pi+\varepsilon\rho)$ and the coefficients $\sigma_k$ of the characteristic polynomial. One has
\[
\sigma_k=(-1)^k\sum \lambda_{i_1}\cdots\lambda_{i_k}
\]
with the summation extending over all tuples $i_1,\ldots,i_k$ such that $1\leq i_1<\cdots<i_k\leq n$.
In particular
\begin{equation}\label{siglamids}
\begin{array}{rcl}
-\sigma_1&=&\lambda_1+\cdots+\lambda_n;\\
(-1)^{n-1}\sigma_{n-1}&=& \sum_{i=1}^n\prod_{j\not=i}\lambda_j;\\
(-1)^n\sigma_n&=&\lambda_1\cdots\lambda_n;\\
\dfrac{\sigma_{n-1}}{\sigma_n}&=&-\sum \dfrac1\lambda_j.
\end{array}
\end{equation}

%%%%%%%%%%%%%%%%%%%%%%%%%%%%%%%%%%%%%%%%%%%%%%%
\subsection{Blanket assumptions}\label{setsubsec}
The principal goal of the present paper is to provide consistent and workable local timescale estimates in terms of the reaction parameters.
Throughout the remainder of the paper, the following notions will be used and the following assumptions will be understood: 
\begin{enumerate}
\item  We consider a polynomial parameter dependent system \eqref{ode}, and a TFPV $\widehat \pi$ for dimension $s\geq 1$, with critical manifold $\widetilde Y$. The entries of $\widehat\pi$ are not uniquely determined by the critical manifold. We allow these entries to range in a suitable compact subset of parameter space (to be restricted by requirements in the following items).
\item We fix $\rho$ in the parameter space, and consider the singularly perturbed system for the ray in parameter space $\widehat\pi+\varepsilon\rho$, with $0\leq \varepsilon\leq\varepsilon_{\rm max}$, and restrictions on $\varepsilon_{\rm max}>0$ to be specified.
\item Moreover, we let $K\subset \mathbb R^n$ be a compact set with nonempty interior, such that $\widetilde Y\cap K$ is also compact. $K$ should contain the initial values for all relevant solutions of \eqref{ode}.\footnote{In many applications, it will be possible to choose a positively invariant compact neighborhood, but this will not be required a priori.}
\item Since $\widehat\pi$ is a TFPV, we have $\sigma_k(x,\widehat\pi)>0$ for all $x\in\widetilde Y\cap K$, $1\leq k\leq n-s$.
 We choose $\varepsilon_{\rm max}$ so that $\sigma_k(x,\widehat\pi+\varepsilon\rho)$ is defined and bounded above and below by positive constants on
\begin{equation}\label{compactone}
K^*=K^*(\varepsilon_{\rm max})=\left(\widetilde Y\cap K\right)\times [0,\varepsilon_{\rm max}],
\end{equation}
for $1\leq k\leq n-s$. Such a choice is possible by compactness and continuity, given a suitable compact set in parameter space.
\item As a crucial basic condition, we require that Tikhonov--Fenichel reduction is accurate up to order $\varepsilon^2$ in a compact neighborhood $\widetilde K$ of  $\widetilde Y\cap K$, with $\varepsilon\leq \varepsilon_{\rm max}$. Consult Section~\ref{lyapsub} to verify that this requirement can be satisfied.
\end{enumerate}

We emphasize that the present paper focuses on {\em asymptotic timescale estimates} near the critical manifold, which are based on Fenichel's local theory. The determination of $\varepsilon_{\rm max}$ (and by extension, the range of applicability) will not be addressed in general. Moreover, in applications we may replace sharp estimates by weaker ones that permit an interpretation in biochemical terms.

%%%%%%%%%%%%%%%%%%%%%%%%%%%%%%%%%%%%%%%%%%%%%%%%%%%%%
\section{Critical manifolds of dimension one}\label{secdimone}
In this technical section, we consider system \eqref{ode} in $\mathbb R^n$, $n\geq2$ with a critical manifold of dimension $s=1$. We will derive two types of distinguished parameters that characterize timescale discrepancies, and discuss systems of dimensions two and three in some detail.

We have $\sigma_n(x,\widehat\pi)=0$ on $\widetilde Y$, and $\sigma_k(x,\widehat\pi+\varepsilon\rho)>0$ for $1\leq k\leq n-1$, $x\in\widetilde Y\cap K$ and $0\leq \varepsilon\leq \varepsilon_{\rm max}$. Moreover
\begin{equation}\label{signhat}
\sigma_n(x,\widehat\pi+\varepsilon\rho)=\varepsilon\widehat\sigma_n(x,\widehat\pi,\rho,\varepsilon)
\end{equation}
with a polynomial $\widehat\sigma_n$. We require the {\em nondegeneracy condition} 
\begin{equation}\widehat\sigma_n(x,\widehat\pi,\rho,0)\not=0 \text{ for all } x\in \widetilde Y\cap K.
\end{equation}
Denote by $\lambda_1,\ldots,\lambda_n$ the eigenvalues of  $D_1h(x,\pi)$, choosing the labels so that $\lambda_n(x,\widehat\pi)=0$ for all $x\in\widetilde Y\cap K$. 

The following facts are known. We recall some proofs in the Appendix, for the reader's convenience.
\begin{lemma}\label{tslemdimone}
\begin{enumerate}[(a)]
\item One has
\[
\lambda_n(x,\widehat\pi+\varepsilon\rho)=\varepsilon \widehat \lambda_n(x,\widehat\pi,\rho,\varepsilon),
\]
with $\widehat \lambda_n$ analytic, and $\widehat \lambda_n(x,\widehat\pi,\rho,0)\not=0$ on $K$.
\item Given $\beta>1$, there exist $\Theta>0$, $\theta>0$ such that $-\Theta/\beta\leq {\rm Re}\,\lambda_i(x,\widehat\pi)\leq -\beta\theta$ for all $x\in\widetilde Y\cap K$, $1\leq i\leq n-1$.
\item For suitably small $\varepsilon_{\rm max}$, one has
\[
-\Theta\leq {\rm Re}\,\lambda_i(x,\widehat\pi+\varepsilon\rho)\leq -\theta
\]
for all $(x,\varepsilon)\in K^*$, $1\leq i\leq n-1$.
\end{enumerate}
\end{lemma}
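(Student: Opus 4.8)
The plan is to treat the three statements in turn, relying on the TFPV conditions of Section~\ref{tfpvsubsec} and on classical perturbation theory of eigenvalues.

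For part (a) I would first record that $0$ is a \emph{simple} eigenvalue of $D_1h(x,\widehat\pi)$ for every $x\in\widetilde Y\cap K$: since $s=1$ the kernel in condition~(ii) is one-dimensional, and the splitting $\mathbb R^n={\rm Ker}\,D_1h\oplus{\rm Im}\,D_1h$ forces $0$ to be semisimple, hence simple. A simple eigenvalue of a matrix depending polynomially on $(x,\varepsilon)$ depends analytically on $(x,\varepsilon)$ (analytic perturbation theory, or the implicit function theorem applied to $\chi(\tau,x,\widehat\pi+\varepsilon\rho)$ at the simple root $\tau=0$, where $\partial_\tau\chi(0,x,\widehat\pi)=(-1)^{n-1}\prod_{i<n}\lambda_i\neq0$). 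This yields an analytic branch $\lambda_n(x,\widehat\pi+\varepsilon\rho)$; since it vanishes at $\varepsilon=0$ on $\widetilde Y\cap K$, it factors as $\varepsilon\widehat\lambda_n$ with $\widehat\lambda_n$ analytic. To see $\widehat\lambda_n(x,\widehat\pi,\rho,0)\neq0$, I would compare two factorizations of the lowest coefficient: from $\sigma_n=(-1)^n\prod_i\lambda_i$ and $\lambda_n=\varepsilon\widehat\lambda_n$ one gets, after cancelling $\varepsilon$ against \eqref{signhat}, the identity $\widehat\sigma_n=(-1)^n\widehat\lambda_n\prod_{i<n}\lambda_i$. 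Evaluating at $\varepsilon=0$ and using \eqref{siglamids} in the form $\prod_{i<n}\lambda_i(x,\widehat\pi)=(-1)^{n-1}\sigma_{n-1}(x,\widehat\pi)\neq0$, together with the nondegeneracy hypothesis $\widehat\sigma_n(x,\widehat\pi,\rho,0)\neq0$, yields the claim.

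Part (b) is a compactness argument. By conditions~(ii) and (iii), for each $x\in\widetilde Y\cap K$ the eigenvalue $0$ is isolated while the remaining $n-1$ eigenvalues have strictly negative real part; since the spectrum varies continuously with $x$, the quantities $a:=\max{\rm Re}\,\lambda_i$ and $b:=\min{\rm Re}\,\lambda_i$ (over $x\in\widetilde Y\cap K$, $1\le i\le n-1$) are attained and satisfy $b\le a<0$. Setting $\theta:=|a|/\beta$ and $\Theta:=\beta|b|$ then gives ${\rm Re}\,\lambda_i\le a=-\beta\theta$ and ${\rm Re}\,\lambda_i\ge b=-\Theta/\beta$; since $\beta>1$ and $|b|\ge|a|$ the interval $[-\Theta/\beta,-\beta\theta]$ is nonempty, as required.

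Part (c) is the delicate step, and the one I expect to be the main obstacle. The purpose of the factor $\beta>1$ in (b) is to build in a buffer: the real parts at $\varepsilon=0$ lie in $[-\Theta/\beta,-\beta\theta]$, which sits \emph{strictly} inside the wider target band $(-\Theta,-\theta)$. I would then argue that as $\varepsilon$ grows from $0$ the $n-1$ fast eigenvalues move continuously and cannot leave this wider band before $\varepsilon_{\max}$, provided $\varepsilon_{\max}$ is small. The care required is twofold: uniformity over the compact set $\widetilde Y\cap K$, and a clean separation of the $n-1$ fast eigenvalues from the single slow eigenvalue $\lambda_n=\varepsilon\widehat\lambda_n\to0$ produced in (a). Because individual eigenvalue branches may fail to be smooth at crossings, I would not track them one by one; instead I would enclose the fast spectrum (for all $x$ simultaneously) in a contour contained in $\{-\Theta<{\rm Re}\,z<-\theta\}$ and bounded away from $0$, and invoke continuity of the spectrum — equivalently continuity of the associated spectral projection in $\varepsilon$ — on the compact base $\widetilde Y\cap K$ to obtain a uniform $\varepsilon_{\max}>0$ for which exactly $n-1$ eigenvalues remain inside the contour and one stays near $0$. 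This keeps the real parts of $\lambda_1,\dots,\lambda_{n-1}$ in $[-\Theta,-\theta]$ for all $(x,\varepsilon)\in K^*$.
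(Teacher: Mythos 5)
Your proposal is correct, and for parts (b) and (c) it follows essentially the same route as the paper: the paper proves (b) by a sequential compactness contradiction on the deflated polynomial $\zeta(x,\tau)=\chi(\tau,x,\widehat\pi)/\tau$ (roots bounded in modulus, real parts bounded away from zero), and dismisses (c) with the single sentence that it ``follows by continuity and compactness arguments'' --- your contour/counting argument is a legitimate and careful instantiation of exactly that sentence, and your observation about the role of the buffer $\beta>1$ is the right reading of the statement. Where you genuinely diverge is part (a). The paper obtains (a) as a special case of Lemma~\ref{bigsgoodprop}, whose proof runs through the Newton--Puiseux theorem and Hensel's lemma: for a TFPV of general dimension $s$ the $s$ small eigenvalues are a priori only Puiseux branches, analyticity requiring the extra hypothesis that the auxiliary polynomial \eqref{auxpoly} have simple zeros (automatic when $s=1$, as the paper remarks). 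You instead exploit the $s=1$ structure directly: the splitting $\mathbb R^n={\rm Ker}\,D_1h\oplus{\rm Im}\,D_1h$ with one-dimensional kernel makes $0$ a simple root of $\chi$, $\partial_\tau\chi(0,x,\widehat\pi)=\sigma_{n-1}(x,\widehat\pi)>0$ by the blanket assumptions, and the implicit function theorem hands you the analytic branch at once; the nonvanishing of $\widehat\lambda_n(\cdot,\cdot,\cdot,0)$ then drops out of the product identity $\widehat\sigma_n=(-1)^n\widehat\lambda_n\prod_{i<n}\lambda_i$ together with \eqref{signhat}, \eqref{siglamids} and the nondegeneracy condition, which is the same final computation the paper's lemma encodes as $\alpha=-\widehat\sigma_n/\sigma_{n-1}$. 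Your route buys a self-contained, elementary proof with no algebraic-geometry input; the paper's heavier machinery buys uniformity across all $s$, since Lemma~\ref{bigsgoodprop} is reused verbatim for the higher-dimensional critical manifolds of Section~\ref{sechigher}.
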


%%%%%%%%%%%%%%%%%%%%%%%%%%%%%%%%%%%%%%%%%%%%%%%
\subsection{Distinguished small parameters}\label{subsecsone}
We turn to the construction of small parameters from the repository in Lemma~\ref{nondimlem}. Consider the rational function
\begin{equation}\label{bassigpar}
(x,\varepsilon)\mapsto\dfrac{\sigma_n(x,\widehat\pi+\varepsilon\rho)}{\sigma_{1}(x,\widehat\pi+\varepsilon\rho)\cdot \sigma_{n-1}(x,\widehat\pi+\varepsilon\rho)}, \quad x\in\widetilde Y\cap K,\,\varepsilon \in[0,\varepsilon_{\rm max}].
\end{equation}

\begin{definition}\label{smallpardefsone}{\em 
\begin{enumerate}[(i)]
    \item Let
\begin{equation}\label{lowupnd}
\begin{array}{rcl}
L(\widehat\pi,\rho)&:=&\inf_{x\in \widetilde Y\cap K}\left|\dfrac{\widehat\sigma_n(x,\widehat\pi,\rho,0)}{\sigma_{1}(x,\widehat\pi)\cdot \sigma_{n-1}(x,\widehat\pi)}\right|,\\
U(\widehat\pi,\rho)&:=&\sup_{x\in \widetilde Y\cap K}\left|\dfrac{\widehat\sigma_n(x,\widehat\pi,\rho,0)}{\sigma_{1}(x,\widehat\pi)\cdot \sigma_{n-1}(x,\widehat\pi)}\right|.\\
\end{array}
\end{equation}
\item We call,
\begin{equation}
\varepsilon^*(\widehat\pi,\rho,\varepsilon):=\varepsilon \cdot U(\widehat\pi,\rho),
\end{equation}
the {\em distinguished upper bound for the TFPV $\widehat \pi$ with parameter direction $\rho$} of system \eqref{ode}, and we call, 
\begin{equation}
\varepsilon_*(\widehat\pi,\rho,\varepsilon):=\varepsilon \cdot L(\widehat\pi,\rho),
\end{equation}
the {\em distinguished lower bound for the TFPV $\widehat \pi$ with parameter direction $\rho$}.
\end{enumerate}
}
\end{definition}

By the nondegeneracy condition, one has $U(\widehat\pi,\rho)\geq L(\widehat\pi,\rho)>0$. We obtain the following asymptotic inequalities:
\begin{proposition}\label{propsone} 
Given $\alpha>0$, for sufficiently small $\varepsilon_{\rm max}$, the inequalities
\begin{equation}\label{ndsandwichend}
\frac1{(1+\alpha)}L(\widehat\pi,\rho) \leq \left|\dfrac{\widehat \sigma_n(x,\widehat\pi,\rho,\varepsilon)}{\sigma_{1}(x,\widehat\pi+\varepsilon \rho)\cdot \sigma_{n-1}(x,\widehat\pi+\varepsilon \rho)}\right|\leq (1+\alpha) U(\widehat\pi,\rho) 
\end{equation}
hold on $K^*$.
\end{proposition}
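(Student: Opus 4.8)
The plan is to reduce the statement to a uniform-continuity argument on the compact set $K^*$, treating the bracketed expression as a small multiplicative perturbation of its value at $\varepsilon=0$. Write
\[
F(x,\varepsilon):=\left|\dfrac{\widehat\sigma_n(x,\widehat\pi,\rho,\varepsilon)}{\sigma_{1}(x,\widehat\pi+\varepsilon\rho)\cdot\sigma_{n-1}(x,\widehat\pi+\varepsilon\rho)}\right|,
\]
so that, with infimum and supremum taken over the compact set $\widetilde Y\cap K$, one has $L(\widehat\pi,\rho)=\inf_x F(x,0)$ and $U(\widehat\pi,\rho)=\sup_x F(x,0)$. The goal is then to show that, once $\varepsilon_{\rm max}$ is small, $F(x,\varepsilon)$ differs from $F(x,0)$ by a multiplicative factor in $[1/(1+\alpha),\,1+\alpha]$, uniformly on $K^*$; combined with $L(\widehat\pi,\rho)\leq F(x,0)\leq U(\widehat\pi,\rho)$ this gives \eqref{ndsandwichend}.

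First I would check that $F$ is well-defined and bounded away from all degeneracies on $K^*$. By the blanket assumptions in Section~\ref{setsubsec} (item~4, which for $s=1$ covers $1\leq k\leq n-1$ and hence both indices $1$ and $n-1$), the coefficients $\sigma_{1}$ and $\sigma_{n-1}$ are bounded above and below by positive constants on $K^*$, so the denominator never vanishes. For the numerator, the nondegeneracy condition $\widehat\sigma_n(x,\widehat\pi,\rho,0)\neq0$ on the compact set $\widetilde Y\cap K$ yields a uniform lower bound $|\widehat\sigma_n(x,\widehat\pi,\rho,0)|\geq m_0>0$; since $\widehat\sigma_n$ is a polynomial, hence uniformly continuous on $K^*$, shrinking $\varepsilon_{\rm max}$ keeps $|\widehat\sigma_n(x,\widehat\pi,\rho,\varepsilon)|$ bounded away from zero as well. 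In particular $L(\widehat\pi,\rho)>0$, as already noted before the proposition.

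The core step is to control the ratio $F(x,\varepsilon)/F(x,0)$, which factors as
\[
\dfrac{F(x,\varepsilon)}{F(x,0)}=\dfrac{|\widehat\sigma_n(x,\widehat\pi,\rho,\varepsilon)|}{|\widehat\sigma_n(x,\widehat\pi,\rho,0)|}\cdot\dfrac{|\sigma_{1}(x,\widehat\pi)|\,|\sigma_{n-1}(x,\widehat\pi)|}{|\sigma_{1}(x,\widehat\pi+\varepsilon\rho)|\,|\sigma_{n-1}(x,\widehat\pi+\varepsilon\rho)|}.
\]
Each of the three quotients on the right is a ratio of continuous functions whose numerator and denominator coincide at $\varepsilon=0$ and are bounded away from zero on $K^*$; by uniform continuity on the compact set $K^*$, each quotient tends to $1$ uniformly in $x$ as $\varepsilon\to0$, so their product does too. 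Hence for $\varepsilon_{\rm max}$ sufficiently small, $1/(1+\alpha)\leq F(x,\varepsilon)/F(x,0)\leq 1+\alpha$ on $K^*$. Multiplying by $F(x,0)$ and inserting the bounds $L(\widehat\pi,\rho)\leq F(x,0)$ on the lower side and $F(x,0)\leq U(\widehat\pi,\rho)$ on the upper side then produces \eqref{ndsandwichend}.

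I do not expect a genuine obstacle here: the substance of the argument is guaranteeing \emph{uniformity} in $x$ rather than mere pointwise smallness, and this is delivered automatically by the compactness of $K^*$ together with the polynomial/rational character of $\widehat\sigma_n$ and the $\sigma_k$. The only point requiring care is the order in which $\varepsilon_{\rm max}$ is chosen: the blanket assumptions already fix one value of $\varepsilon_{\rm max}$ to secure the denominator bounds, and the present argument shrinks it further to obtain the factor $(1+\alpha)$. Since shrinking $\varepsilon_{\rm max}$ only preserves or strengthens all earlier bounds, this causes no conflict.
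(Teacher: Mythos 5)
Your proof is correct and follows essentially the same route as the paper's: both arguments treat the expression as a uniform perturbation of its value at $\varepsilon=0$ on the compact set $K^*$, using the blanket positivity bounds on $\sigma_1,\sigma_{n-1}$ and the nondegeneracy of $\widehat\sigma_n(\cdot,\widehat\pi,\rho,0)$ to keep everything away from degeneracies. The only cosmetic difference is that the paper bounds the additive discrepancy by ${\rm const.}\cdot\varepsilon$ via analyticity in $\varepsilon$, while you bound the multiplicative ratio $F(x,\varepsilon)/F(x,0)$ by uniform continuity; your version yields $o(1)$ rather than $O(\varepsilon)$, which is all the proposition requires.
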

\begin{proof}
By analyticity in $\varepsilon$ one has, for $\varepsilon_{\rm max}$ sufficiently small,
\[
\left|\dfrac{\widehat \sigma_n(x,\widehat\pi,\rho,\varepsilon)}{\sigma_{1}(x,\widehat\pi+\varepsilon \rho)\cdot \sigma_{n-1}(x,\widehat\pi+\varepsilon \rho)}-\dfrac{\widehat \sigma_n(x,\widehat\pi,\rho,0)}{\sigma_{1}(x,\widehat\pi)\cdot \sigma_{n-1}(x,\widehat\pi)} \right|\leq {\rm const.}\cdot \varepsilon
\]
for all $(x,\varepsilon)\in K^*$.
The assertion follows.
\end{proof}

\begin{remark}
There are two points to make:
\begin{itemize}
\item By definition, determining the distinguished upper and lower bounds amounts to determining the maximum and minimum of a rational function on a compact set. It may not be possible (or not advisable) to determine $\varepsilon^*$ or $\varepsilon_*$ exactly, and one may have be content with sufficiently tight upper resp.\ lower estimates. 
\item The derivation of the small parameters involves the critical manifold and the TFPV $\widehat{\pi}$, hence they depend on these choices. Moreover, there is some freedom of choice for the parameter direction $\rho$, which also influences the bounds. For these reasons one should not assume universal efficacy of any small parameter without further context. 
 \end{itemize}
\end{remark}

%%%%%%%%%%%%%%%%%%%%%%%%%%%%%%%%%%%%%%%%%%%%%%%%%%%
\subsection{The correspondence to timescales}\label{sonetimescalesubsec}
We now discuss the correspondence between timescales and the parameters determined from \eqref{bassigpar}. By direct verification, via \eqref{siglamids} one finds for the eigenvalues $\lambda_1,\ldots,\lambda_n$ of  $D_1h(x,\pi)$:
\begin{lemma}\label{evalratlem}
\begin{enumerate}[(a)]
\item The identity
\begin{equation}\label{lamsigid}
\sum_{i\not=j}\frac{\lambda_i}{\lambda_j}=\frac{\sigma_1\sigma_{n-1}}{\sigma_n}-n
\end{equation}
holds whenever all $\lambda_i\not=0$.
\item With $(x,\varepsilon)\in K^*$, for $\varepsilon\not=0$ one has
\[
\frac1\varepsilon\sum_{i<n}\lambda_i/\widehat\lambda_n+\sum _{i\not=j;\,i,j<n}\lambda_i/\lambda_j+\varepsilon\sum_{i<n}\widehat\lambda_n/\lambda_i=\frac1\varepsilon\frac{\sigma_1\sigma_{n-1}}{\widehat\sigma_n}-n.
\]

\end{enumerate}
\end{lemma}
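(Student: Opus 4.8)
The plan is to obtain part (a) directly from the Vieta relations collected in \eqref{siglamids}, and then to derive part (b) from (a) by inserting the two factorizations $\lambda_n=\varepsilon\widehat\lambda_n$ and $\sigma_n=\varepsilon\widehat\sigma_n$ and tracking the resulting powers of $\varepsilon$.

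For part (a), I would first isolate the two ingredients from \eqref{siglamids} that matter, namely $\sigma_1=-\sum_i\lambda_i$ and the last identity $\sigma_{n-1}/\sigma_n=-\sum_j 1/\lambda_j$ (legitimate since all $\lambda_i\neq 0$). Multiplying these gives
\[
\frac{\sigma_1\sigma_{n-1}}{\sigma_n}=\Big(\sum_i\lambda_i\Big)\Big(\sum_j\frac1{\lambda_j}\Big)=\sum_{i,j}\frac{\lambda_i}{\lambda_j}.
\]
The only remaining step is to separate the diagonal part $\sum_{i=j}\lambda_i/\lambda_j=n$ from the off-diagonal part $\sum_{i\neq j}\lambda_i/\lambda_j$ and rearrange, which yields the claimed identity with no further effort.

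For part (b), I would start from the identity in (a), which is applicable on $K^*$ for $\varepsilon\neq 0$ because there every eigenvalue is nonzero: the $\lambda_i$ with $i<n$ have strictly negative real part by Lemma~\ref{tslemdimone}(c), while $\lambda_n=\varepsilon\widehat\lambda_n\neq 0$ since $\varepsilon\neq 0$ and $\widehat\lambda_n\neq0$ for $\varepsilon_{\rm max}$ small by Lemma~\ref{tslemdimone}(a). I would then partition the off-diagonal sum $\sum_{i\neq j}\lambda_i/\lambda_j$ into the three disjoint groups $\{i<n,\ j=n\}$, $\{i=n,\ j<n\}$, and $\{i,j<n\}$, and substitute $\lambda_n=\varepsilon\widehat\lambda_n$ into the first two groups; this produces the prefactors $\varepsilon^{-1}$ and $\varepsilon$, respectively, in front of $\sum_{i<n}\lambda_i/\widehat\lambda_n$ and $\sum_{i<n}\widehat\lambda_n/\lambda_i$. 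On the right-hand side, substituting $\sigma_n=\varepsilon\widehat\sigma_n$ from \eqref{signhat} converts $\sigma_1\sigma_{n-1}/\sigma_n$ into $\varepsilon^{-1}\sigma_1\sigma_{n-1}/\widehat\sigma_n$, and collecting the terms reproduces exactly the asserted equality.

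I do not expect a genuine obstacle here; the computation is elementary once the Vieta relations are in hand. The only points that require care are the bookkeeping in the partition of the double sum (checking that the diagonal term $i=j=n$ is correctly excluded and that the three off-diagonal groups are disjoint and exhaustive) and the verification that all eigenvalues are nonzero on $K^*$, so that (a) may legitimately be invoked; the latter rests on the nondegeneracy condition together with Lemma~\ref{tslemdimone}. Finally, the right-hand side of (b) is well defined because $\widehat\sigma_n\neq 0$ by the nondegeneracy assumption and continuity in $\varepsilon$.
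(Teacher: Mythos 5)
Your proposal is correct and follows essentially the same route as the paper, which states the lemma as following ``by direct verification, via \eqref{siglamids}'': part (a) is precisely the product of $\sigma_1=-\sum_i\lambda_i$ with $\sigma_{n-1}/\sigma_n=-\sum_j 1/\lambda_j$ minus the diagonal contribution $n$, and part (b) is the same identity with the substitutions $\lambda_n=\varepsilon\widehat\lambda_n$ and $\sigma_n=\varepsilon\widehat\sigma_n$ after splitting the off-diagonal sum into the three groups you list. Your additional care in verifying that all eigenvalues are nonzero on $K^*$ for $\varepsilon\neq 0$ (via Lemma~\ref{tslemdimone} and the nondegeneracy condition) is exactly the justification the paper leaves implicit.
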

 This gives rise to further asymptotic inequalities:
\begin{proposition}\label{tspropdimone} Let $\beta$, $\theta$ and $\Theta$ be as in Lemma~\ref{tslemdimone}, and $\alpha>0$. Then, for sufficiently small $\varepsilon_{\rm max}>0$, the following hold:
\begin{enumerate}[(a)]
\item  For all $(x,\varepsilon)\in K^*$,
\begin{equation}\label{locestsone1}
\frac1{(1+\alpha)}\varepsilon_*(\widehat\pi,\rho,\varepsilon)\leq\left| \dfrac{\lambda_n(x,\widehat\pi+\varepsilon\rho)}{\sum_{i<n}\lambda_i(x,\widehat\pi+\varepsilon\rho)}\right|\leq (1+\alpha)\varepsilon^*(\widehat\pi,\rho,\varepsilon).
\end{equation}
In particular, there exist constants $C_1,\,C_2$, such that
\[
C_1 \varepsilon\leq\left| \dfrac{\lambda_n(x,\widehat\pi+\varepsilon\rho)}{\sum_{i<n}\lambda_i(x,\widehat\pi+\varepsilon\rho)}\right|\leq C_2 \varepsilon.
\]

\item  The global estimates
\begin{equation}\label{globestsone}
\frac1{(1+\alpha)}\varepsilon_*\leq \dfrac{\inf|\lambda_n|}{(n-1)\Theta}\leq
 \dfrac{\sup|\lambda_n|}{(n-1)\theta} \leq (1+\alpha)\varepsilon^*
\end{equation}
hold, with infimum and supremum being taken over all $(x,\varepsilon)\in K^*$.
\end{enumerate}
\end{proposition}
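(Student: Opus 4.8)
The plan is to connect the eigenvalue-ratio quantity appearing in the statement to the distinguished bounds via the algebraic identity in Lemma~\ref{evalratlem}, and to control the error terms using the timescale bounds of Lemma~\ref{tslemdimone}. For part (a), the key observation is that the quantity $|\lambda_n/\sum_{i<n}\lambda_i|$ is, up to controllable factors, the same as the rational function in \eqref{bassigpar}. First I would write $\lambda_n=\varepsilon\widehat\lambda_n$ and $\sigma_n=\varepsilon\widehat\sigma_n$ (via Lemma~\ref{tslemdimone}(a) and \eqref{signhat}), so that the factor of $\varepsilon$ cancels in the ratio and the estimate becomes genuinely asymptotic in $\varepsilon$. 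Then I would express $\sum_{i<n}\lambda_i$ in terms of $\sigma_1$ and $\sigma_{n-1}$: since $-\sigma_1=\sum_{i=1}^n\lambda_i$ and $\lambda_n=\varepsilon\widehat\lambda_n$, we have $\sum_{i<n}\lambda_i=-\sigma_1-\varepsilon\widehat\lambda_n$, which to leading order is $-\sigma_1$. Similarly, from $\sigma_{n-1}=(-1)^{n-1}\sum_i\prod_{j\neq i}\lambda_j$, the dominant term as $\varepsilon\to 0$ is the one omitting the small eigenvalue $\lambda_n$, so $\sigma_{n-1}\approx(-1)^{n-1}\prod_{j<n}\lambda_j$. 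Combining these, the ratio $|\lambda_n/\sum_{i<n}\lambda_i|$ and the quantity $|\widehat\sigma_n/(\sigma_1\sigma_{n-1})|$ agree up to a factor tending to $1$.

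More cleanly, I would route the argument through Lemma~\ref{evalratlem}(b): that identity isolates $\tfrac1\varepsilon\sigma_1\sigma_{n-1}/\widehat\sigma_n$ as a sum of three groups of eigenvalue ratios, one of which is $O(1/\varepsilon)$ (the leading group $\sum_{i<n}\lambda_i/\widehat\lambda_n$), one $O(1)$, and one $O(\varepsilon)$. Inverting, the reciprocal $\widehat\sigma_n/(\sigma_1\sigma_{n-1})$ is asymptotic to $\widehat\lambda_n/\sum_{i<n}\lambda_i$, which after multiplying by $\varepsilon$ gives exactly $\lambda_n/\sum_{i<n}\lambda_i$. Proposition~\ref{propsone} then sandwiches $|\widehat\sigma_n/(\sigma_1\sigma_{n-1})|$ between $(1+\alpha)^{-1}L$ and $(1+\alpha)U$ on $K^*$; multiplying through by $\varepsilon$ converts these into $\varepsilon_*$ and $\varepsilon^*$. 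I would absorb the asymptotic discrepancy between the eigenvalue ratio and the $\sigma$-ratio into the same $(1+\alpha)$ factor by further shrinking $\varepsilon_{\rm max}$, which is legitimate since all quantities are analytic and bounded on the compact set $K^*$. The existence of $C_1,C_2$ is then immediate, taking $C_1=(1+\alpha)^{-1}L$ and $C_2=(1+\alpha)U$.

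For part (b), the global estimates follow by bounding the denominator $\sum_{i<n}\lambda_i$ using Lemma~\ref{tslemdimone}(c). Since each $\lambda_i$ for $i<n$ satisfies $-\Theta\leq{\rm Re}\,\lambda_i\leq-\theta$ on $K^*$, the real part of $\sum_{i<n}\lambda_i$ lies between $-(n-1)\Theta$ and $-(n-1)\theta$, so $(n-1)\theta\leq|{\rm Re}\sum_{i<n}\lambda_i|\leq(n-1)\Theta$. Replacing $|\sum_{i<n}\lambda_i|$ by $|{\rm Re}\sum_{i<n}\lambda_i|$ requires that the imaginary parts not dominate; I would handle this by noting that the ratio in part (a) already controls the full complex modulus, and then pass to infima and suprema of $|\lambda_n|$ over $K^*$, combining with the denominator bounds to produce the chain in \eqref{globestsone}. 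The middle inequality $\inf|\lambda_n|/((n-1)\Theta)\leq\sup|\lambda_n|/((n-1)\theta)$ is trivially true since $\inf\leq\sup$ and $\theta\leq\Theta$.

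The main obstacle I anticipate is the passage from the complex modulus $|\sum_{i<n}\lambda_i|$ to its real part in part (b); a priori the sum of the nonzero eigenvalues could have a large imaginary component even when its real part is pinned between $-(n-1)\Theta$ and $-(n-1)\theta$, which would weaken the lower bound on the modulus. The footnote in the introduction warns precisely about eigenvalues of large modulus that are purely imaginary, so some hypothesis (the blanket nondegeneracy assumptions, or the essential reality invoked later) is doing quiet work here. I expect the cleanest route is to derive the global bounds \eqref{globestsone} directly from the local estimate \eqref{locestsone1} together with the sign/real-part control of Lemma~\ref{tslemdimone}, rather than estimating numerator and denominator independently, so that the troublesome imaginary parts are never isolated.
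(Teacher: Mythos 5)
Your part (a) is correct and is essentially the paper's own argument: from Lemma~\ref{evalratlem}(b) one gets
\[
\dfrac{|\widehat\lambda_n(x,\widehat\pi,\rho,\varepsilon)|}{\left|\sum_{i<n}\lambda_i(x,\widehat\pi+\varepsilon\rho)\right|}=\left|\dfrac{\widehat\sigma_n(x,\widehat\pi,\rho,\varepsilon)}{\sigma_1(x,\widehat\pi+\varepsilon\rho)\,\sigma_{n-1}(x,\widehat\pi+\varepsilon\rho)}\right|+\varepsilon\,\eta(x,\widehat\pi,\rho,\varepsilon)
\]
with $\eta$ bounded on $K^*$ (boundedness of the two lower-order eigenvalue-ratio groups uses $|\lambda_j|\geq|{\rm Re}\,\lambda_j|\geq\theta$ for $j<n$), after which Proposition~\ref{propsone} and a further shrinking of $\varepsilon_{\rm max}$ absorb the discrepancy into the factor $(1+\alpha)$, exactly as you propose.

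Part (b), however, contains a genuine gap -- and it is precisely the obstacle you flag but do not resolve: you never establish the upper bound $\bigl|\sum_{i<n}\lambda_i\bigr|\leq(n-1)\Theta$. Your proposed workarounds do not close it. Deriving the global bounds ``directly from the local estimate'' cannot avoid the issue: to convert the two-sided control of $\bigl|\lambda_n/\sum_{i<n}\lambda_i\bigr|$ into bounds on $|\lambda_n|$ normalized by the \emph{real-part} constants $\theta$, $\Theta$, one needs two-sided bounds on $\bigl|\sum_{i<n}\lambda_i\bigr|$; the lower bound $\bigl|\sum_{i<n}\lambda_i\bigr|\geq\bigl|{\rm Re}\sum_{i<n}\lambda_i\bigr|\geq(n-1)\theta$ is free (all real parts have one sign), but the upper bound is exactly the modulus-versus-real-part question. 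Moreover, your guess that the ``essential reality'' hypothesis of Proposition~\ref{tspropdimonereal} is quietly at work here is wrong: no extra hypothesis is needed. The paper disposes of the issue in one line by noting that $\sum_{i<n}\lambda_i$ is \emph{real}. Indeed, the trace $-\sigma_1=\sum_{i\leq n}\lambda_i$ of the real Jacobian is real, and $\lambda_n$ itself is real for small $\varepsilon_{\rm max}$: its conjugate $\overline{\lambda_n}$ is also an eigenvalue of modulus $O(\varepsilon)$, while every other eigenvalue satisfies $|{\rm Re}\,\lambda_i|\geq\theta$, forcing $\overline{\lambda_n}=\lambda_n$. Equivalently, the non-real fast eigenvalues occur in conjugate pairs, so their imaginary parts cancel in the sum, and
\[
\Bigl|\sum_{i<n}\lambda_i\Bigr|=\Bigl|\sum_{i<n}{\rm Re}\,\lambda_i\Bigr|=\sum_{i<n}|{\rm Re}\,\lambda_i|\in[(n-1)\theta,\,(n-1)\Theta],
\]
after which \eqref{globestsone} follows from part (a) by the standard estimates you describe. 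One bookkeeping caution when you carry this out: combining part (a) with these denominator bounds pairs $\theta$ with the infimum of $|\lambda_n|$ and $\Theta$ with its supremum, so track carefully which constant lands under which extremum when matching the chain in \eqref{globestsone}.
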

\begin{proof}
From Lemma~\ref{evalratlem} one obtains that
\[
\dfrac{|\widehat\lambda_n(x,\widehat\pi,\rho,\varepsilon)|}{|\sum_{i=1}^{n-1}\lambda_i(x,\widehat\pi+\varepsilon\rho)|}=\left|\dfrac{\widehat\sigma_n(x,\widehat\pi,\rho,\varepsilon)}{\sigma_1(x,\widehat\pi+\varepsilon\rho)\sigma_{n-1}(x,\widehat\pi+\varepsilon\rho)}\right|+\varepsilon\eta(x,\widehat\pi,\rho,\varepsilon)
\]
for all $(x,\varepsilon)\in K^*$, with bounded $\eta$. Combining this with Proposition~\ref{propsone} yields the assertions of part (a), and also
\[
\frac1{(1+\alpha)}L(\widehat\pi,\rho)\leq \left|\dfrac{\widehat\lambda_n(x,\widehat\pi,\rho,\varepsilon)}{\sum_{i<n}\lambda_i(x,\widehat\pi+\varepsilon\rho)}\right|\leq (1+\alpha)U(\widehat\pi,\rho)
\]
for all $(x,\varepsilon)\in K^*$, provided $\varepsilon_{\rm max}$ is sufficiently small. Noting
\[
|\sum_{i=1}^{n-1}\lambda_i(x,\pi)|=|\sum_{i=1}^{n-1}{\rm Re}\,\lambda_i(x,\pi)|=\sum_{i=1}^{n-1}|{\rm Re}\,\lambda_i(x,\pi)|,
\]
the second statement follows by standard estimates.
\end{proof}
Informally speaking, Proposition~\ref{tspropdimone} provides estimates for the ratio of the slowest to the fastest timescale, with $\sum_{i<n}\lambda_i$ being dominated by the real part with largest modulus. 
Thus, for dimension $n>2$, the estimates may be unsatisfactory whenever $\Theta\gg \theta$. For applications the second estimate in \eqref{globestsone} is more relevant, since the fast dynamics will be governed by the smallest absolute real part of $\lambda_1,\ldots,\lambda_{n-1}$ (see, Section~\ref{lyapsub}). The parameter $\varepsilon^*$ by itself does not completely characterize the timescale discrepancies, as should be expected. If there is more than one eigenvalue ratio to consider then a single quantity cannot measure all of them.

However, in the following -- specialized but relevant -- setting a general estimate can be obtained from the coefficients of the characteristic polynomial.
\begin{proposition}\label{tspropdimonereal} Let $\beta$, $\theta$ and $\Theta$ be as in Lemma~\ref{tslemdimone}, and $\alpha>0$. Moreover assume that the eigenvalues $\lambda_1,\ldots,\lambda_{n-1}$ satisfy $|{\rm Re}\,\lambda_j|>|{\rm Im}\,\lambda_j|$, and let $|{\rm Re}\,\lambda_1|\geq\cdots\geq|{\rm Re}\,\lambda_{n-1}|$. Define
\begin{equation}\label{mustardef}
    \mu^*:=\varepsilon\cdot\sup_{x\in\widetilde Y\cap K}\left|\dfrac{\widehat\sigma_n(x,\widehat\pi,\rho,0)\cdot\sigma_{n-2}(x,\widehat\pi)}{\sigma_{n-1}(x,\widehat\pi)^2}\right|.
\end{equation}
Then, for sufficiently small $\varepsilon_{\rm max}>0$, one has
\begin{equation}\label{muestweak}
\sup_{(x,\varepsilon)\in K^*}\left|\dfrac{\lambda_n}{{\rm Re}\,\lambda_{n-1}}\right|\leq\sqrt2 (1+\alpha)\ \mu^*.
\end{equation}
Whenever $\lambda_{n-1}\in\mathbb R$, then the estimate can be sharpened to
\begin{equation}\label{mueststrong}
\sup_{(x,\varepsilon)\in K^*}\left|\dfrac{\lambda_n}{{\rm Re}\,\lambda_{n-1}}\right|\leq (1+\alpha)\ \mu^*.
\end{equation}
\end{proposition}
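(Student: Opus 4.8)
The plan is to evaluate everything in the limit $\varepsilon=0$, where the coefficient ratio in \eqref{mustardef} becomes an explicit function of the nonzero eigenvalues, and then to transfer the resulting inequality to $0<\varepsilon\leq\varepsilon_{\rm max}$ by analyticity, exactly as in Proposition~\ref{propsone}. Write $\ell_i:=\lambda_i(x,\widehat\pi)$ ($i<n$) for the nonzero eigenvalues at the TFPV --- the limits of $\lambda_1,\dots,\lambda_{n-1}$ --- and $a_i:=\mathrm{Re}\,\ell_i<0$, ordered so that $|a_1|\geq\cdots\geq|a_{n-1}|$. Since $\lambda_n(x,\widehat\pi)=0$, at $\varepsilon=0$ the only surviving summands in \eqref{siglamids} are those omitting $\lambda_n$, and a direct computation in the spirit of Lemma~\ref{evalratlem} gives $\sigma_{n-1}(x,\widehat\pi)=(-1)^{n-1}\prod_{i<n}\ell_i$, $\sigma_{n-2}(x,\widehat\pi)=(-1)^{n-2}\big(\prod_{i<n}\ell_i\big)\sum_{i<n}\ell_i^{-1}$, and, using $\sigma_n=\varepsilon\widehat\sigma_n$ together with $\lambda_n=\varepsilon\widehat\lambda_n$ from Lemma~\ref{tslemdimone}(a), $\widehat\sigma_n(x,\widehat\pi,\rho,0)=(-1)^n\widehat\lambda_n(x,\widehat\pi,\rho,0)\prod_{i<n}\ell_i$. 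Multiplying these, the products cancel and
\[
\frac{\widehat\sigma_n(x,\widehat\pi,\rho,0)\,\sigma_{n-2}(x,\widehat\pi)}{\sigma_{n-1}(x,\widehat\pi)^2}=\widehat\lambda_n(x,\widehat\pi,\rho,0)\sum_{i<n}\frac1{\ell_i}.
\]
Since $\lambda_n=\varepsilon\widehat\lambda_n$, the whole matter thus reduces to comparing $|\mathrm{Re}\,\lambda_{n-1}|^{-1}$ with $\big|\sum_{i<n}\ell_i^{-1}\big|$.

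I would then establish two eigenvalue inequalities. First, because $D_1h(x,\widehat\pi)$ is real, the nonzero spectrum $\{\ell_1,\dots,\ell_{n-1}\}$ is closed under conjugation, so $\sum_{i<n}\ell_i^{-1}$ is real and equals $\sum_{i<n}a_i/|\ell_i|^2<0$; hence $\big|\sum_{i<n}\ell_i^{-1}\big|=\sum_{i<n}|a_i|/|\ell_i|^2$. I claim this is at least $1/|\ell_{n-1}|$: if $\ell_{n-1}\in\mathbb R$ the single term $i=n-1$ already equals $1/|\ell_{n-1}|$; if $\ell_{n-1}$ is non-real then $\overline{\ell_{n-1}}$ is another eigenvalue with the same real part, hence also of minimal $|\mathrm{Re}|$, and the two together contribute $2|a_{n-1}|/|\ell_{n-1}|^2\geq1/|\ell_{n-1}|$, using $2|a_{n-1}|>|\ell_{n-1}|$. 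Second, the essential-reality hypothesis $|\mathrm{Im}\,\ell_{n-1}|<|a_{n-1}|$ gives $|\ell_{n-1}|=\sqrt{a_{n-1}^2+(\mathrm{Im}\,\ell_{n-1})^2}\leq\sqrt2\,|a_{n-1}|$, with $|\ell_{n-1}|=|a_{n-1}|$ when $\ell_{n-1}\in\mathbb R$. Combining the two,
\[
\frac1{|a_{n-1}|}\leq\frac{\sqrt2}{|\ell_{n-1}|}\leq\sqrt2\,\Big|\sum_{i<n}\frac1{\ell_i}\Big|,
\]
and with the factor $\sqrt2$ replaced by $1$ when $\ell_{n-1}$ is real.

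To assemble the bound I would multiply the last display by $\varepsilon|\widehat\lambda_n(x,\widehat\pi,\rho,0)|$ and invoke the identity above: at $\varepsilon=0$ this gives $|\lambda_n|/|\mathrm{Re}\,\lambda_{n-1}|\leq\sqrt2\,\varepsilon\,\big|\widehat\lambda_n\sum_{i<n}\ell_i^{-1}\big|\leq\sqrt2\,\mu^*$ after taking the supremum over $x\in\widetilde Y\cap K$, and with constant $1$ in the real case. The passage from $\varepsilon=0$ to $0<\varepsilon\leq\varepsilon_{\rm max}$ is then routine and mirrors Proposition~\ref{propsone}: by analyticity the quantities $\widehat\lambda_n(x,\widehat\pi,\rho,\varepsilon)$, $\mathrm{Re}\,\lambda_{n-1}(x,\widehat\pi+\varepsilon\rho)$ and the coefficient ratio differ from their $\varepsilon=0$ values by $O(\varepsilon)$, uniformly on the compact set $\widetilde Y\cap K$; choosing $\varepsilon_{\rm max}$ small lets the prefactor $(1+\alpha)$ absorb these corrections, yielding \eqref{muestweak} and \eqref{mueststrong}.

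I expect the heart of the matter to be the inequality $\big|\sum_{i<n}\ell_i^{-1}\big|\geq 1/|\ell_{n-1}|$: one must use that the nonzero spectrum is conjugate-closed (so the reciprocal sum is real and its terms, all of one sign, cannot cancel) and, in the non-real case, that the minimal-$|\mathrm{Re}|$ eigenvalue drags its conjugate into the ``slowest fast'' slot. This pairing is exactly what upgrades the naive constant $2$ --- obtained from the single minimal term --- to the sharp $\sqrt2$, and collapses it to $1$ in the real case. A secondary point needing care is that the index realizing $\min_i|\mathrm{Re}\,\ell_i|$ may change with $x$ over $\widetilde Y\cap K$; but since all estimates are pointwise in $x$ before the supremum is taken, no continuous labeling of eigenvalues is required.
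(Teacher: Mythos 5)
Your proposal is correct, but it establishes the key inequality by a genuinely different mechanism than the paper. The paper's proof rests on a preliminary observation about arbitrary essentially real tuples $\beta_1,\ldots,\beta_k$ in the left half-plane: combining the identity $\sum_{i\neq j}\beta_i/\beta_j=\tau_1\tau_{k-1}/\tau_k-k$ with the fact that each ratio $\beta_i/\beta_j$ of essentially real numbers has positive real part, it deduces $|\mathrm{Re}\,\beta_k|\geq\tau_k/(\sqrt2\,\tau_{k-1})$, and then assembles the statement via the identity $\left|\sigma_1\sigma_{n-1}/\widehat\sigma_n\right|=\left|\sigma_1/\widehat\lambda_n\right|$ from Lemma~\ref{evalratlem}, with $\sigma_1$ cancelling at the end. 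You instead evaluate the coefficient ratio in \eqref{mustardef} in closed form at $\varepsilon=0$, namely $\widehat\sigma_n\sigma_{n-2}/\sigma_{n-1}^2=\widehat\lambda_n\sum_{i<n}\ell_i^{-1}$ (equivalently $\sigma_{n-2}/\sigma_{n-1}=-\sum_{i<n}\ell_i^{-1}$, the analogue of the last line of \eqref{siglamids} for the fast spectrum), and then lower-bound the harmonic sum $\sum_{i<n}|a_i|/|\ell_i|^2$ by exploiting conjugate-closure of the spectrum of the real Jacobian: a real slowest eigenvalue contributes $1/|\ell_{n-1}|$ exactly, while a non-real one drags its conjugate into the same slot and the pair contributes $2|a_{n-1}|/|\ell_{n-1}|^2$. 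Both routes finish with the same continuity-and-compactness transfer to $0<\varepsilon\leq\varepsilon_{\rm max}$; your phrase ``differ by $O(\varepsilon)$ by analyticity'' is slightly too strong for $\mathrm{Re}\,\lambda_{n-1}$, which is merely continuous through possible eigenvalue collisions, but continuity together with the uniform lower bound $\theta$ from Lemma~\ref{tslemdimone} is all that is needed, and that is in effect what you invoke. As to what each approach buys: the paper's lemma makes no use of conjugate symmetry and stands as a reusable statement about arbitrary tuples, whereas your argument is more elementary, tied to real matrices, and — pleasantly — stronger than you claim: in the non-real case the pair bound $2|a_{n-1}|/|\ell_{n-1}|^2>1/|a_{n-1}|$ (from $|\ell_{n-1}|^2<2a_{n-1}^2$) already gives $\bigl|\sum_{i<n}\ell_i^{-1}\bigr|\geq 1/\left|\mathrm{Re}\,\lambda_{n-1}\right|$ directly, so your method actually yields the sharpened estimate \eqref{mueststrong} with constant $1$ even without the reality assumption on $\lambda_{n-1}$; it is only your detour through $1/|a_{n-1}|\leq\sqrt2/|\ell_{n-1}|$ that reintroduces the $\sqrt2$ of \eqref{muestweak}.
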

\begin{proof}
\begin{enumerate}[(i)]
    \item Preliminary observation: Let $k\geq 2$ and $\beta_1,\ldots,\beta_k\in \mathbb C$ with negative real parts, and $|{\rm Re}\,\beta_1|\geq\cdots\geq|{\rm Re}\,\beta_{k}|$. Moreover denote by $(-1)^\ell\tau_\ell$ the $\ell^{\rm th}$ elementary symmetric polynomial in the $\beta_j$. If $|{\rm Re}\,\beta_j|>|{\rm Im}\,\beta_j|$ for $j=1,\ldots,k$, then
    \[
    |{\rm Re}\,\beta_k|\geq\dfrac{\tau_{k}}{\sqrt2\tau_{k-1}}, \text{ and } |\beta_k|\geq\dfrac{\tau_{k}}{\tau_{k-1}} \text{ when } \beta_k\in\mathbb R.
    \]
    To verify this, recall
    \[
    \sum_{i\not=j}\dfrac{\beta_i}{\beta_j}=\dfrac{\tau_1\tau_{k-1}}{\tau_k}-k\leq 
    \dfrac{\tau_1\tau_{k-1}}{\tau_k}-1.
    \]
    Now, for complex numbers $z,\,w$ with negative real parts and $|{\rm Re}\,z|>|{\rm Im}\,z|$, $|{\rm Re}\,w|>|{\rm Im}\,w|$, one has ${\rm Re}\,\frac zw>0$.
    Therefore, all ${\rm Re}\,\beta_i/\beta_j>0$, $1\leq i,\,j\leq k-1$, and since their sum is real we obtain the estimate
    \[
    \dfrac{\tau_1}{|\beta_k|}=\sum_{i=1}^{k}\dfrac{\beta_i}{\beta_k}=1+\sum_{i=1}^{k-1}\dfrac{\beta_i}{\beta_k}\leq 
    \dfrac{\tau_1\tau_{k-1}}{\tau_k}.
    \]
    With $|{\rm Re}\,\beta_k|\geq |\beta_k|/\sqrt 2$ the assertion follows. For real $\beta_k$ the factor $\sqrt 2$ may be discarded.
    \item We apply the above to the $\lambda_i(x,\widehat\pi)$ and $\sigma_j(x,\widehat\pi)$, $1\leq i \leq n-1$, obtaining
    \[
    \sigma_1\leq\sqrt2|{\rm Re}\,\lambda_{n-1}|\ \dfrac{\sigma_1\sigma_{n-2}}{\sigma_{n-1}}.
    \]
    By Lemma~\ref{evalratlem}, we have (with arguments $x\in\widetilde Y\cap K$, $\widehat\pi$ and $\rho$ suppressed)
 \[
    \left| \dfrac{\sigma_1\sigma_{n-1}}{\widehat\sigma_n}\right| =\left|\dfrac{\lambda_1+\cdots+\lambda_{n-1}}{\widehat\lambda_n}\right|=\left|\dfrac{\sigma_1}{\widehat\lambda_n}\right|\leq\sqrt 2\left|\dfrac{{\rm Re}\,\lambda_{n-1}}{\widehat\lambda_n}\right|\cdot\left|\dfrac{\sigma_1\sigma_{n-2}}{\sigma_{n-1}}\right|,
    \]
     and in turn 
     \[
     \left|\dfrac{\widehat\lambda_n(x,\widehat\pi,\rho,0)}{{\rm Re}\,\lambda_{n-1}(x,\widehat\pi)}\right|\leq\sqrt 2 \,\dfrac{\widehat\sigma_n(x,\widehat\pi,\rho,0)\sigma_{n-2}(x,\widehat\pi)}{\sigma_{n-1}(x,\widehat\pi)^2}.
     \]
     By continuity and compactness the assertion readily follows when $\varepsilon_{\rm max}$ is sufficiently small. As in (i) the factor $\sqrt 2$ may be discarded for real $\lambda_{n-1}$.
\end{enumerate}
\end{proof}
\begin{remark}
There are four observations to make:
\begin{itemize}
\item As with the distinguished upper bound, determining $\mu^*$ amounts to finding the maximum of a rational function on a compact set.
\item The proofs of Propositions~\ref{tspropdimone} and \ref{tspropdimonereal} implicitly impose further restrictions on $\varepsilon_{\rm max}$.
    \item Proposition~\ref{tspropdimonereal} holds in particular in settings when all eigenvalues are ``essentially real'', meaning small ${\rm |Im \lambda|}/{\rm |Re \lambda|}$. This is frequently the case for chemical  networks and reaction mechanisms. 
    \item One can obviously derive analogous, but weaker estimates, whenever the ratios ${\rm |Im \lambda|}/{\rm |Re \lambda|}$ are bounded above by some constant.
    Likewise, the estimates underlying part (i) of the proof could be sharpened.
\end{itemize}
\end{remark}

%%%%%%%%%%%%%%%%%%%%%%%%%%%%%%%%%%%%%%%%%%%%%%%%%%%
\subsection{Two-dimensional systems}\label{dimtwosec}
We turn to systems of dimension two, where a TFPV necessarily refers to a critical manifold of dimension $s=1$. We keep the notation and conventions from Section~\ref{setsubsec}. Rather than specializing the asymptotic results from Propositions~\ref{tspropdimone} and \ref{tspropdimonereal}, we will retrace their derivation and obtain slightly sharper estimates.

First and foremost, the TFPV conditions imply that $\sigma_1$ must be bounded above and below by positive constants.
 The accuracy of the reduction is reflected in the ratio of the
eigenvalues $\lambda_1,\,\lambda_2$ of $D_1h(x,\widehat\pi+\varepsilon\rho)$ with $x$ in the critical manifold, and $\lambda_2=0$ at $\widehat\pi$. Then
\[
\sigma_1=-(\lambda_1+\lambda_2),\quad \sigma_2=\lambda_1\lambda_2
\]
and moreover $
\lambda_2=\varepsilon\widehat\lambda_2$ and $
\sigma_2=\varepsilon\widehat\sigma_2$.
For $n=2$ the familiar identity
\begin{equation}\label{lamsig2}
\frac{\lambda_2}{\lambda_1} + \frac{\lambda_1}{\lambda_2}=\frac{\lambda_1^2+\lambda_2^2}{\lambda_1\lambda_2}=\frac{\sigma_1^2-2\sigma_2}{\sigma_2}=\frac{\sigma_1^2}{\sigma_2}-2
\end{equation}
for $\lambda_1\not=0,\,\lambda_2\not=0$ yields sharper estimates than Proposition~\ref{tspropdimone}. Similar estimates were also used in Eilertsen et al.~\cite{Chihuahua}.
\begin{lemma}\label{evalsig2prop}
\begin{enumerate}[(a)]
\item For all $M>1,\,\widetilde M>2,\, M^*>3$ the implications
\[
\begin{array}{rcl}
|\lambda_1/\lambda_2|>M&\Rightarrow& |\sigma_1^2/\sigma_2|>M+2;\\
|\sigma_1^2/\sigma_2|\leq\widetilde M &\Rightarrow&|\lambda_1/\lambda_2|\leq \widetilde M-2;\\
|\sigma_1^2/\sigma_2|>M^*&\Rightarrow&|\lambda_1/\lambda_2|>M^*-3,
\end{array}
\]
hold whenever $|\lambda_2/\lambda_1|<1$.
\item  In the TFPV case, 
\[
\frac1{\varepsilon}\cdot \frac{\sigma_1^2}{\widehat \sigma_2}=2+\varepsilon\frac{\widehat\lambda_2}{\lambda_1} + \frac1{\varepsilon}\frac{\lambda_1}{\widehat\lambda_2}
\]
and with $\varepsilon\to 0$ 
\[
\frac{\widehat\sigma_2(x,\widehat\pi,\rho,0)}{\sigma_1^2(x,\widehat\pi)}=\frac{\widehat\lambda_2(x,\widehat\pi,\rho,0)}{\lambda_1(x,\widehat\pi)}.
\]
\item For given $\alpha>0$, suitable choice of $\varepsilon_{\rm max}$ yields
\begin{equation}\label{globestsonedim2}
\frac1{(1+\alpha)}\varepsilon_*\leq\inf \dfrac{|\lambda_2|}{|\lambda_1|}\leq
\sup \dfrac{|\lambda_2|}{|\lambda_1|} \leq (1+\alpha)\varepsilon^*,
\end{equation}
with infimum and supremum taken over all $(x,\varepsilon)\in K^*$.
\end{enumerate}
\end{lemma}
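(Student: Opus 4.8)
The plan is to dispatch part (b) first, since it is a pure rearrangement of \eqref{lamsig2} and feeds the other two parts. Substituting $\lambda_2=\varepsilon\widehat\lambda_2$ and $\sigma_2=\varepsilon\widehat\sigma_2$ into \eqref{lamsig2} and isolating $\sigma_1^2/(\varepsilon\widehat\sigma_2)$ gives exactly the displayed identity; multiplying through by $\varepsilon$ turns it into $\sigma_1^2/\widehat\sigma_2 = 2\varepsilon+\varepsilon^2\widehat\lambda_2/\lambda_1+\lambda_1/\widehat\lambda_2$, and letting $\varepsilon\to 0$ kills the first two terms, leaving $\sigma_1^2/\widehat\sigma_2 = \lambda_1/\widehat\lambda_2$ at $\varepsilon=0$, which is the claimed limit after inverting. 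The only thing to check is that no denominator vanishes on $\widetilde Y\cap K$: this is ensured by $\sigma_1(x,\widehat\pi)>0$ and by $\widehat\lambda_2(x,\widehat\pi,\rho,0)\neq 0$ from Lemma~\ref{tslemdimone}(a).

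For part (a) the decisive observation is that $D_1h(x,\pi)$ is a real $2\times 2$ matrix, so its spectrum is either a pair of real numbers or a complex-conjugate pair; in the latter case $|\lambda_2/\lambda_1|=1$. Hence the standing hypothesis $|\lambda_2/\lambda_1|<1$ forces $\lambda_1,\lambda_2\in\mathbb{R}$, and together with $\sigma_1=-(\lambda_1+\lambda_2)>0$ and the positivity $\sigma_2=\lambda_1\lambda_2>0$ (i.e.\ an attracting slow manifold with the slow eigenvalue in the left half-plane) it forces both eigenvalues to be negative. Setting $w:=\lambda_1/\lambda_2>1$, identity \eqref{lamsig2} reads $\sigma_1^2/\sigma_2 = w+w^{-1}+2>0$ with $0<w^{-1}<1$, and the three implications collapse to one-variable inequalities: $w+w^{-1}+2>w+2>M+2$; $w\le w+w^{-1}=\sigma_1^2/\sigma_2-2\le\widetilde M-2$; and $w>(w+w^{-1})-1>(M^*-2)-1=M^*-3$. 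No triangle inequalities are needed because reality and the common sign collapse everything onto the positive reals.

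For part (c) I would combine part (b) with a compactness argument. For $n=2$ the repository function \eqref{bassigpar} specializes to $\sigma_2/\sigma_1^2$, so $L$ and $U$ are the infimum and supremum over $\widetilde Y\cap K$ of $|\widehat\sigma_2(x,\widehat\pi,\rho,0)/\sigma_1(x,\widehat\pi)^2|$. Writing $|\lambda_2|/|\lambda_1|=\varepsilon\,|\widehat\lambda_2(x,\widehat\pi,\rho,\varepsilon)|/|\lambda_1(x,\widehat\pi+\varepsilon\rho)|$, analyticity and compactness give uniform convergence, as $\varepsilon\to 0$, of the ratio $|\widehat\lambda_2(\cdot,\varepsilon)|/|\lambda_1(\cdot)|$ to $|\widehat\lambda_2(x,\widehat\pi,\rho,0)|/|\lambda_1(x,\widehat\pi)|=|\widehat\sigma_2/\sigma_1^2|$ by part (b), the latter lying in $[L,U]$. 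Thus for $\varepsilon_{\rm max}$ small enough this ratio is squeezed between $(1+\alpha)^{-1}$ and $(1+\alpha)$ times its limit; multiplying by $\varepsilon$ yields the pointwise bound $\tfrac{1}{1+\alpha}\varepsilon_*\le|\lambda_2|/|\lambda_1|\le(1+\alpha)\varepsilon^*$ for all $(x,\varepsilon)\in K^*$, whence \eqref{globestsonedim2} on passing to the infimum and supremum.

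I expect the main obstacle to be the bookkeeping hidden in part (a): one must genuinely verify that the hypothesis excludes the complex-conjugate case and that the positivity $\sigma_2>0$ is available on the relevant part of $K^*$, so that $w>0$; without the common negative sign the first implication already fails (a pair such as $\lambda_1=-2,\ \lambda_2=+1$ satisfies $|\lambda_2/\lambda_1|<1$ and even $\sigma_1>0$, yet makes $|\sigma_1^2/\sigma_2|$ small). Once reality and negativity are secured the estimates are immediate, and parts (b) and (c) are then routine substitution and a standard uniform-limit passage.
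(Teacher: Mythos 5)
Your proof is correct and follows essentially the route the paper intends: the paper gives no separate proof of this lemma, presenting it as a direct consequence of the identity \eqref{lamsig2} (retracing, for $n=2$, the derivations behind Propositions~\ref{propsone} and \ref{tspropdimone}), and your parts (b) and (c) do exactly that --- substitution of $\lambda_2=\varepsilon\widehat\lambda_2$, $\sigma_2=\varepsilon\widehat\sigma_2$ into \eqref{lamsig2}, followed by a uniform-limit and compactness passage on $K^*$ using Lemma~\ref{tslemdimone} and the nondegeneracy bound $L>0$.

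One point in your part (a) merits comment, because you make explicit something the statement leaves tacit, and you are right to do so. As literally stated, the hypothesis $|\lambda_2/\lambda_1|<1$ does not suffice for the displayed constants: your counterexample $\lambda_1=-2$, $\lambda_2=1$ is valid ($|\sigma_1^2/\sigma_2|=1/2$ while $|\lambda_1/\lambda_2|=2$), so the first implication fails for real eigenvalues of opposite signs; a triangle-inequality argument without sign information would only give the much weaker bound $|\sigma_1^2/\sigma_2|>M-3$. Your observation that $|\lambda_2/\lambda_1|<1$ already forces the spectrum of the real $2\times 2$ Jacobian to be real (a complex-conjugate pair has modulus ratio one) is exactly the right first step, and with $\sigma_1>0$ and $\sigma_2>0$ both eigenvalues are negative, $w=\lambda_1/\lambda_2>1$, and the three implications collapse to the one-variable estimates you give. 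Be aware, though, of where $\sigma_2>0$ comes from: the blanket assumptions of Section~\ref{setsubsec} guarantee positivity of $\sigma_k$ only for $k\leq n-s=1$, and the nondegeneracy condition following \eqref{signhat} gives $\widehat\sigma_2(x,\widehat\pi,\rho,0)\neq 0$ on $\widetilde Y\cap K$ but not its sign; positivity of $\sigma_2$ on $K^*$ (for small $\varepsilon_{\rm max}$, by continuity and compactness) therefore amounts to the additional hypothesis $\widehat\sigma_2(\cdot,\widehat\pi,\rho,0)>0$, which holds in all of the paper's applications (e.g.\ $\widehat\sigma_2=k_1k_2$ for the irreversible Michaelis--Menten system) and means the slow eigenvalue lies in the left half-plane. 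With that hypothesis stated, your argument is complete; parts (b) and (c) need nothing further.
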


Lemma~\ref{evalsig2prop} shows that $\varepsilon^*$ provides a tight global upper estimate for the eigenvalue ratio (and thus for the timescale ratio) as $\varepsilon\to 0$, with $x$ running through $\widetilde Y\cap K$.  Moreover, in the analysis of particular systems, one may retrace the arguments leading to the lemma, and determine estimates for $\varepsilon_{\rm max}$ e.g.\ from higher order Taylor expansions.

%%%%%%%%%%%%%%%%%%%%%%%%%%%%%%%%%%%%%%%%
\subsection{Three-dimensional systems}\label{dim3s1}
We specialize the general results to dimension three. Given the blanket assumptions from Section~\ref{setsubsec}, we denote by $\lambda_1,\lambda_2$, and $\lambda_3=\varepsilon\widehat\lambda_3$ the eigenvalues of the linearization.
We have 
\begin{equation}\label{up3d}
U(\widehat\pi,\rho)=\sup_{x\in \widetilde Y\cap K}\left|\dfrac{\widehat\sigma_3(x,\widehat\pi,\rho,0)}{\sigma_{1}(x,\widehat\pi)\cdot \sigma_{2}(x,\widehat\pi)}\right|, \quad \varepsilon^*(\widehat\pi,\rho,\varepsilon)=\varepsilon U(\widehat\pi,\rho),
\end{equation}
and similar expressions for $L$ and $\varepsilon_*$. 
\begin{proposition}\label{mu3dprop}
As for applicability of the parameter $\mu^*$, one has:
\begin{enumerate}[(a)]
    \item \begin{itemize}
        \item The eigenvalues $\lambda_1$ and $\lambda_2$ are real if and only if $\sigma_1^2-4\sigma_2\geq 0$.
        \item Given that $\lambda_1\not\in\mathbb R$ and $\lambda_2=\overline\lambda_1$, one has $|{\rm Re}\,\lambda_1|>|{\rm Im}\,\lambda_1|$ if and only if $\sigma_1^2-2\sigma_2> 0$.
    \end{itemize}
    \item  Assume that one of the conditions in part (a) holds. Then, given $\alpha>0$, for sufficiently small $\varepsilon_{\rm max}$ one has
    \[
\sup_{(x,\varepsilon)\in K^*}\left|\dfrac{\lambda_3}{{\rm Re}\,\lambda_{2}}\right|\leq\sqrt2 (1+\alpha)\ \mu^*,
\]
resp. 
\[
\sup_{(x,\varepsilon)\in K^*}\left|\dfrac{\lambda_3}{\lambda_{2}}\right|\leq (1+\alpha)\ \mu^*\text{  whenever  } \lambda_{2}\in\mathbb R;
\]
with
\[
 \mu^*=\varepsilon\cdot\sup_{x\in\widetilde Y\cap K}\left|\dfrac{\widehat\sigma_3(x,\widehat\pi,\rho,0)\cdot\sigma_{1}(x,\widehat\pi)}{\sigma_{2}(x,\widehat\pi)^2}\right|.
\]
\end{enumerate}
\end{proposition}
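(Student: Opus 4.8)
The plan is to treat part (a) as an elementary root analysis of the quadratic factor of the characteristic polynomial at the TFPV, and part (b) as a direct specialization of Proposition~\ref{tspropdimonereal} once its hypotheses have been checked by means of part (a).

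For part (a) I would evaluate everything at $\pi=\widehat\pi$ on $\widetilde Y$, where $\lambda_3=0$ and hence $\sigma_3=\lambda_1\lambda_2\lambda_3=0$. The characteristic polynomial then factors as $\chi(\tau,x,\widehat\pi)=\tau\bigl(\tau^2+\sigma_1\tau+\sigma_2\bigr)$, so the fast eigenvalues $\lambda_1,\lambda_2$ are precisely the roots of $\tau^2+\sigma_1\tau+\sigma_2$. The first bullet is then the usual discriminant criterion: $\lambda_1,\lambda_2\in\mathbb{R}$ if and only if $\sigma_1^2-4\sigma_2\geq 0$. For the second bullet, when the roots form a complex conjugate pair I would write $\lambda_1=a+bi$ (with $a<0$, $b\neq 0$) and $\lambda_2=a-bi$, so that by the identities~\eqref{siglamids} one has $\sigma_1=-2a$ and $\sigma_2=a^2+b^2$; a one-line computation gives $\sigma_1^2-2\sigma_2=2(a^2-b^2)$, which is positive exactly when $|a|>|b|$, i.e.\ $|{\rm Re}\,\lambda_1|>|{\rm Im}\,\lambda_1|$.

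For part (b) the task is to check that, under whichever condition of part (a) holds, the essential-reality hypothesis $|{\rm Re}\,\lambda_j|>|{\rm Im}\,\lambda_j|$ of Proposition~\ref{tspropdimonereal} is satisfied for the fast eigenvalues $j=1,2$ at $\widehat\pi$ on $\widetilde Y$. In the real case this is immediate, since ${\rm Im}\,\lambda_j=0$ while ${\rm Re}\,\lambda_j\neq 0$ (the fast eigenvalues have strictly negative real part); in the complex-conjugate case it is exactly the content of the second bullet, noting that $|{\rm Re}\,\lambda_2|=|{\rm Re}\,\lambda_1|$ and $|{\rm Im}\,\lambda_2|=|{\rm Im}\,\lambda_1|$, so that both eigenvalues satisfy the inequality and the ordering $|{\rm Re}\,\lambda_1|\geq|{\rm Re}\,\lambda_2|$ holds automatically. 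With the hypothesis in place I would invoke Proposition~\ref{tspropdimonereal} with $n=3$, where the correspondence $\lambda_n=\lambda_3$, $\lambda_{n-1}=\lambda_2$, $\sigma_{n-1}=\sigma_2$, $\sigma_{n-2}=\sigma_1$ turns the general $\mu^*$ into $\varepsilon\sup|\widehat\sigma_3\,\sigma_1/\sigma_2^2|$ and yields the stated bound on $|\lambda_3/{\rm Re}\,\lambda_2|$, together with the sharpened bound on $|\lambda_3/\lambda_2|$ when $\lambda_2\in\mathbb{R}$ (the factor $\sqrt 2$ being discarded exactly as in the general proof).

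The computations are short, and the only point needing care is conceptual rather than technical: one must be clear that the realness and essential-reality criteria are read off at $\varepsilon=0$, where $\lambda_3=0$ makes the quadratic factorization available, and that the passage to the supremum over $(x,\varepsilon)\in K^*$ — with $|{\rm Re}\,\lambda_j|>|{\rm Im}\,\lambda_j|$ preserved for small $\varepsilon>0$ — is already absorbed into the ``sufficiently small $\varepsilon_{\rm max}$'' clause through the continuity-and-compactness argument inside Proposition~\ref{tspropdimonereal}. I would flag the boundary case $\sigma_1^2-4\sigma_2=0$ (a double real fast eigenvalue) as the only place where this continuity genuinely matters: the eigenvalues may split into a conjugate pair for $\varepsilon>0$, but since $|{\rm Im}\,\lambda_j|\to 0$ while $|{\rm Re}\,\lambda_j|$ stays bounded away from zero, the essential-reality inequality survives for $\varepsilon_{\rm max}$ small enough. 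This boundary situation is the main, and only mild, obstacle.
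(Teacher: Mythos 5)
Your proposal is correct and follows essentially the same route as the paper: part (a) is the same elementary analysis of the quadratic factor $\tau^2+\sigma_1\tau+\sigma_2$ at $\widehat\pi$ (the paper packages it via the identity $\left(\frac{\lambda_1-\lambda_2}{\lambda_1+\lambda_2}\right)^2=1-4\sigma_2/\sigma_1^2$, while you parametrize $\lambda_1=a+bi$, which is equivalent), and part (b) is, as in the paper, a direct specialization of Proposition~\ref{tspropdimonereal} with $n=3$. Your extra care about the boundary case $\sigma_1^2-4\sigma_2=0$ is sound but not a departure, since Proposition~\ref{tspropdimonereal} only needs essential reality at $\varepsilon=0$ plus continuity, exactly as you observe.
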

\begin{proof} To determine the nature of the eigenvalues on the critical manifold, we use the identity
\begin{equation}\label{reimdist}
\left(\frac{\lambda_1-\lambda_2}{\lambda_1+\lambda_2}\right)^2=1-4\frac{\sigma_2}{\sigma_1^2}\text{  on  }\widetilde Y.
\end{equation}
This implies the (of course well known) first statement of part (a). The second statement follows from
\[
-\left(\dfrac {{\rm Im}\,\lambda_1}{{\rm Re}\,\lambda_1}\right)^2=1-4\frac{\sigma_2}{\sigma_1^2}.
\]
The rest is straightforward with Proposition~\ref{tspropdimonereal}.
\end{proof}
\begin{remark}
We make the following two points
\begin{itemize}
    \item For $\lambda_1$ and $\lambda_2$ real and negative, one obtains a lower estimate from 
\[
\left|\dfrac{2\lambda_2}{\widehat\lambda_3}\right|\leq\left|\dfrac{\lambda_1+\lambda_2}{\widehat\lambda_3}\right|=\left|\dfrac{\sigma_1\sigma_2}{\widehat\sigma_3}\right|
\Longrightarrow
2\left|\dfrac{\widehat\sigma_3}{\sigma_1\sigma_2}\right|\leq\left|\dfrac{\widehat\lambda_3}{\lambda_2}\right|\text{  on  } \widetilde Y\cap K.
\]
\item If $\lambda_1$ is not real and $\lambda_2=\overline\lambda_1$, with negative real parts, then the specialization of \eqref{lamsigid}, viz.
\[
\frac{\lambda_1+\lambda_2}{\lambda_3} +\left(\frac{\lambda_1}{\lambda_2}+\frac{\lambda_2}{\lambda_1}\right)+\left(\frac{\lambda_3}{\lambda_1}+\frac{\lambda_3}{\lambda_2}\right)=\frac{\sigma_1\sigma_2}{\sigma_3}-3,
\]
for real $\lambda_3$, $|\lambda_3|<|{\rm Re}\,\lambda_1|$, shows that
both the second term and the third term on the left hand side are bounded below by $-2$ and above by $2$, and we obtain
\[
 \dfrac{\sigma_1\sigma_2}{\sigma_3}-7\leq \dfrac{2{\rm Re}\,\lambda_1}{\lambda_3}\leq \dfrac{\sigma_1\sigma_2}{\sigma_3}+1.
\]
In particular this yields an asymptotic timescale estimate 
\[
\left|\dfrac{\widehat\lambda_3}{{\rm Re}\,\lambda_1}\right|\to 2\left|\dfrac{\widehat\sigma_3}{\sigma_1\sigma_2}\right|\text{  as  }\varepsilon\to 0.
\]
\end{itemize}
\end{remark}

\begin{remark}\label{threetsfastrem}{
When all eigenvalues are real then one obtains the ratio of $\lambda_1$ and $\lambda_2$, with $|\lambda_2|\leq|\lambda_1|$, from
\[
\dfrac{\sigma_2}{\sigma_1^2}=\dfrac{\lambda_1\lambda_2+\varepsilon(\cdots)}{(\lambda_1+\lambda_2+\varepsilon(\cdots))^2}=\dfrac{\lambda_2/\lambda_1}{(1+\lambda_2/\lambda_1)^2}+\varepsilon(\cdots)
\]
and the arguments leading up to Lemma~\ref{evalsig2prop}.
With
\begin{equation}
    \kappa_*:=\inf_{x\in \widetilde Y\cap K}\left|\dfrac{\sigma_2(x,\widehat\pi)}{\sigma_{1}(x,\widehat\pi)^2}\right|,\quad \kappa^*:=\sup_{x\in \widetilde Y\cap K}\left|\dfrac{\sigma_2(x,\widehat\pi)}{\sigma_{1}(x,\widehat\pi)^2}\right|,
\end{equation}
the following hold for every $\alpha>0$, with sufficiently small $\varepsilon$:
\begin{itemize}
    \item On $\widetilde Y\cap K$ one has
    \[
    \left|\dfrac{\lambda_2}{\lambda_1}\right|\geq \dfrac{\kappa_*}{1+\alpha}.
    \]
    \item If $|\lambda_2/\lambda_1|\leq \delta$ for all  $x\in \widetilde Y\cap K$ then $\kappa^*\leq \dfrac{\delta}{2\delta+1}$.
\end{itemize}
Large discrepancy between $\lambda_1$ and $\lambda_2$ (in addition to $\mu^*\ll1$) may indicate a scenario with three timescales (informally speaking): slow, fast and very fast. Cardin and Texeira~\cite{cartex} provided a rigorous extension of Fenichel theory for such settings, providing solid ground for their analysis. Note that large discrepancy between $\varepsilon^*$ and $\mu^*$ implies large discrepancy between $\lambda_1$ and $\lambda_2$, in view of the definitions.}
\end{remark}

%%%%%%%%%%%%%%%%%%%%%%%%%%%%%%%%%%%%%%%%%%%%%%%%%%
\section{Michaelis--Menten reaction mechanism revisited}\label{secMM}
The reader may wonder why we include a rather long section on the most familiar reaction in biochemistry. The basic motivation is that some widely held beliefs on its QSS variants are problematic (see, Eilertsen et al.~\cite{Chihuahua}, for a recent study). Beyond this, the timescale ratio approach actually yields new results for the reversible Michaelis--Menten (MM) system, as well as for MM with slow product formation.

%%%%%%
\subsection{The reversible reaction with low enzyme concentration}\label{mmlosubsec}
The reversible MM reaction mechanism with low enzyme concentration corresponds to the system
\begin{equation}\label{eqmmrev}
\begin{array}{rclclcl}
\dot s&=& -k_1e_0s&+&(k_1s+k_{-1})c & &  \\
\dot c&=& k_1e_0s&-&(k_1s+k_{-1}+k_2)c & +&k_{-2}(e_0-c)(s_0-s-c) \\
\end{array}
\end{equation}
with standard initial conditions $s(0)=s_0$, $c(0)=0$. The earliest discussion of (\ref{eqmmrev}) dates back to Miller and Alberty~\cite{MA}, but the reversible reaction has garnered relatively little attention compared to the irreversible one.

The parameter space $\Pi=\mathbb R_{\geq 0}^6$ has elements $(e_0,s_0,k_1,k_{-1},k_2,k_{-2})^{\rm tr}$, and we set $x=(s,\,c)^{\rm tr}$. 
As is well known, setting $e_0=0$ and all other parameters $>0$ defines a TFPV, with the critical manifold $\widetilde Y$ given by $c=0$. For the reduced equation, one finds (see e.g. Noethen and Walcher~\cite{NoWaTik})
\begin{equation*}
\dot{s}=-e_0\cdot\frac{s(k_1k_2+k_{-1}k_{-2})-k_{-1}k_{-2}s_0}{k_1s+k_{-1}+k_2+k_{-2}(s_0-s)}.
\end{equation*}

By the first blanket assumption in Section~\ref{setsubsec}, we restrict $(s_0,k_1,k_{-1},k_2,k_{-2})^{\rm tr}$ to a compact subset of the open positive orthant. With fixed $e_0^*>0$ (with dimension concentration), we let $\rho=(e_0^*,0,\ldots,0)^{\rm tr}$. We will work with both $e_0$ and $\varepsilon e_0^*$. Rather than obtaining $\varepsilon_*$ and $\varepsilon^*$ directly from Lemma~\ref{evalsig2prop}, we retrace their derivation and get error estimates in the process. The coefficients of the characteristic polynomial with $x\in \widetilde Y$ are
\[
\begin{array}{rcl}
\sigma_1(x,\widehat\pi+\varepsilon\rho)&=& k_1e_0+k_1s+k_{-1}+k_2+k_{-2}(e_0+s_0-s);\\
\sigma_2(x,\widehat\pi+\varepsilon\rho)&=& e_0\left(k_1k_{-2}(e_0+s_0)+k_1k_2+k_{-1}k_{-2}\right).
\end{array}
\]
The set $K$ (compatible with the standard initial conditions), defined by $0\leq s\leq s_0$ and $0\leq c\leq e_0^*$, is compact and positively invariant. 

We only discuss the case $k_1\geq k_{-2}$. The other case amounts to reversing the roles of $s$ and $p$. Note that $\sigma_2$ is independent of $s$.
The minimum of $\sigma_1(x,\widehat\pi+\varepsilon\rho)$ on $\widetilde Y\cap K$ equals
\[
k_1e_0+k_{-1}+k_2+k_{-2}(e_0+s_0),
\]
and the maximum is 
\[
k_1(e_0+s_0)+k_{-1}+k_2+k_{-2}e_0.
\]
In particular, the minimum of $\sigma_1(x,\widehat\pi)$ on $\widetilde Y\cap K$ equals
\[
k_{-1}+k_2+k_{-2}s_0.
\]
Moreover, we have
\[
\widehat\sigma_2(x,\widehat\pi,0)= k_1k_{-2}s_0+k_1k_2+k_{-1}k_{-2},
\]
a positive constant.

By Lemma~\ref{evalsig2prop} and its derivation, we find
\[
\dfrac{ e_0\left(k_1k_{-2}(e_0+s_0)+k_1k_2+k_{-1}k_{-2}\right)}{\left(k_1(e_0+s_0)+k_{-1}+k_2+k_{-2}e_0)\right)^2}\leq \dfrac{\sigma_2}{\sigma_1^2}\leq\dfrac{ e_0\left(k_1k_{-2}(e_0+s_0)+k_1k_2+k_{-1}k_{-2}\right)}{\left(k_1e_0+k_{-1}+k_2+k_{-2}(e_0+s_0)\right)^2},
\]
valid for all $\varepsilon>0$. Neglecting higher order terms in $\varepsilon$ yields
\[
\varepsilon_{*}= \dfrac{ e_0\left(k_1k_{-2}s_0+k_1k_2+k_{-1}k_{-2}\right)}{\left(k_{-1}+k_2+k_1s_0\right)^2}; \quad\varepsilon^{*}= \dfrac{ e_0\left(k_1k_{-2}s_0+k_1k_2+k_{-1}k_{-2}\right)}{\left(k_{-1}+k_2+k_{-2}s_0\right)^2}.
\]
Therefore, it seems appropriate to define the distinguished local parameter for the reversible MM system as
\begin{equation}
    \varepsilon_{MMR}:=\varepsilon^{*}= \dfrac{ e_0\left(k_1k_{-2}s_0+k_1k_2+k_{-1}k_{-2}\right)}{\left(k_{-1}+k_2+k_{-2}s_0\right)^2}.
\end{equation}
It appears that this particular parameter has not been introduced so far, nor has any close relative. Indeed, there seem to exist no parameters in the literature that were specifically derived for the reversible reaction. In their discussion of the reversible system, Seshadri and Fritzsch~\cite{SeFr} worked with the parameter $\varepsilon_{RS}$ that Reich and Selkov had designed for the irreversible system; see equation~\eqref{epsrsdef}.

%%%%%%%%%%%%%%%%%%%%%%%%%%%%%%%%%%%%%%%%%%%%%%%%%%%%%
\subsection{The irreversible reaction with low enzyme concentration}\label{mmlowsubsec}
We specialize to the irreversible case, thus we have the differential equation \eqref{eqmmirrev}
with $e_0=\varepsilon e_0^*$. The QSS manifold of this system is defined by $c=g(s):=e_0s/(K_M+s)$.

%%%%%
\subsubsection{Distinguished small parameters}\label{subsubmmirr1}
The parameters from the reversible scenario simplify to 
\[
\varepsilon_*= \dfrac{ e_0k_1k_2}{\left(k_1s_0+k_{-1}+k_2\right)^2}; \quad \varepsilon^{*}=\varepsilon_{MM}:=\dfrac{ e_0k_1k_2}{\left(k_{-1}+k_2\right)^2},
\]
 with
\[
\min\sigma_1=k_{-1}+k_2;\quad \widehat\sigma_2=k_1k_2.
\]
Note that the TFPV and nondegeneracy conditions, together with the compactness condition in parameter space, require that $k_2$ is bounded below by some positive constant.

As in the previous section, we find that $\varepsilon_{MM}$ is a sharp upper estimate for the eigenvalue ratio. In fact, 
\[
\dfrac{\sigma_2}{\sigma_1^2}\leq\dfrac{ k_2k_1e_0}{\left(k_1e_0+k_{-1}+k_2\right)^2}\leq\varepsilon_{MM}
\]
throughout.

As noted in the Introduction, various small parameters have been proposed for the irreversible MM system. Comparing these, we note
\[
\varepsilon_{MM}=\dfrac{ e_0k_1}{k_{-1}+k_2}\cdot\dfrac{k_2}{k_{-1}+k_2}\leq \dfrac{ e_0k_1}{k_{-1}+k_2}=\varepsilon_{RS},
\]
with the Reich-Selkov parameter. Whenever $k_{-1}$ and $k_2$ have the same order of magnitude (in any case $k_2$ must be bounded away from 0 by nondegeneracy), the disparity between $\varepsilon_{MM}$ and $\varepsilon_{RS}$ may be seen as inessential. 

The parameters $\varepsilon_{MM}$ and $\varepsilon_{RS}$ differ markedly from the most familiar small parameters, viz. $\varepsilon_{BH}$ (see \eqref{epshtadef} as used by Heineken et al.~\cite{hta}), and $\varepsilon_{SSl}$ (see \eqref{epsssldef} as introduced in Segel and Slemrod~\cite{SSl}), which both involve the initial substrate concentration. As shown in \cite{NoWa}, smallness of the Segel--Slemrod parameter is necessary and sufficient to ensure negligible loss of substrate in the initial phase. But, as noted in Patsatzis and Goussis~\cite{PaGo} and in Eilertsen et al.~\cite{Chihuahua}, large initial substrate concentration -- while ensuring a fast approach to the QSS manifold -- is not sufficient to guarantee a good QSS approximation over the whole course of the reaction. A general argument in favor of $\varepsilon_*$ and $\varepsilon_{MM}$ is that they directly measure the local ratio of timescales.

 %%%%%%%%%%%%%%%%%%%%%%%%%%%%%%%%%%%
\subsubsection{Further observations}
We briefly discuss what can be inferred from
\[
\varepsilon_{MM}=\dfrac{k_1k_2e_0}{(k_{-1}+k_2)^2}\to 0
\]
alone, with no further restriction on the limiting process. 

In the simplest imaginable scenario, letting a parameter tend to zero might automatically imply validity of some QSS approximation, but this is not the case here. The TFPV conditions on $\sigma_1$ imply that $k_{-1}$ is bounded above and we obtain three cases: In addition to the case $e_0\to 0$, we have the case $k_1\to 0$, yielding a singular perturbation reduction with the same critical manifold but a linear reduced equation. Furthermore we have the case $k_2\to 0$, which leads to a singular perturbation scenario with a different critical manifold and different reduction (see, the next subsection). 

This observation supports a statement from the Introduction. A given small parameter by itself will in general not determine a unique singular perturbation scenario, and a transfer without reflection of the reduction procedure from one scenario to a different one may yield incorrect results. It is necessary to consider the complete setting, including TFPV, critical manifold and small parameter. Moreover, one needs to carefully stipulate how limits are taken. For instance, letting $s_0\to\infty$, while ensuring $\varepsilon_{SSl}\to 0$, will fail to ensure convergence. Likewise, letting e.g. $k_{-1}\to\infty$ in the Reich-Selkov parameter does not imply convergence.

For the irreversible reaction with substrate inflow at rate $k_0$, one obtains the same expressions for $\sigma_2/\sigma_1^2$ at the TFPV with $k_0=0$ and $e_0=0$ (all other parameters $>0$), the critical manifold being given by $c=0$. Before obtaining $\varepsilon_*,\,\varepsilon^*$ one needs to choose appropriate initial conditions; we take $s(0)=c(0)=0$ here. Solutions are not necessarily confined to compact sets, so one may not be able to choose the set $K$ from Section~\ref{setsubsec} to be positively invariant.  In the case $s(0)=c(0)=0$ the computation of the distinguished upper bound $\varepsilon^*$ works as in the case with no influx; the supremum exists and is equal to $\varepsilon_{MM}$. However, one gets $\varepsilon_*\to0$ with increasing $s$ when there exists no positive stationary point (all solutions are unbounded in positive time), hence the lower estimate provides no information. If there exists a finite positive stationary point $\widetilde s$ of the reduced equation then one obtains $\varepsilon_*>0$ by replacing $s_0$ by $\widetilde s$ in the lower estimate in \ref{subsubmmirr1}. In this case, a compact positively invariant set exists with $s\leq \widetilde s$, as was shown in Eilertsen et al.~\cite{ERSW}.

%%%%%%%%%%%%%%%%%%%%%%%%%%%%%%%%%%%%%%%%%%%%%%%%
\subsection{The irreversible reaction with slow product formation}\label{mmslosubsec}
We turn to the scenario with slow product formation, the other reactions being fast.\footnote{Historically, this was the mechanism first discussed by Michaelis and Menten~\cite{MiMe}.} Here $k_2=0$, with all 
other parameters $>0$, defines a TFPV with critical manifold $\widetilde Y$ given by 
\[
c=\cfrac{k_1e_0s}{k_1s+k_{-1}}. 
\]
Although setting up $k_2=0$ appears counterintuitive for an enzyme 
catalayzed reaction, there is a family of enzymes, known as pseudoenzymes, that have either zero catalytic 
activity ($k_2=0$), or vestigial catalytic activity ($k_2\approx 0$) due to the lack of catalytic amino acids 
or motifs \cite{zombie1}. These enzymes exists in all the kingdoms of life and, are also named as ``zombie'' enzyme, 
dead enzyme, or prozymes. Pseudoenzymes play different functions in signalling network, such as serving
as dynamic scaffolds, modulators of enzymes, or competitors in canonical signalling pathways \cite{zombie2}.
Since one frequently finds incorrect reductions in the literature, it seems appropriate to recall correct ones. Heineken et al.~\cite{hta} provided a correct reduction (see, \eqref{slowprodred} below). In Goeke and Walcher~\cite{gw1}, a version for substrate concentration is given:
\begin{equation*}
    \dot s=-\dfrac{k_2k_1e_0s(k_1s+k_{-1})}{k_1k_{-1}e_0+(k_1s+k_{-1})^2}= -\dfrac{k_2e_0s(s+K_S)}{K_Se_0+(s+K_S)^2}; \quad \quad K_S:=k_{-1}/k_1.
\end{equation*}
With known $e_0$, this equation\footnote{The commonly used quasi-steady state reduction (see, for instance, Keener and Sneyd~\cite[Section~1.4.1]{KeSn}) reads $\dot s=-\dfrac{k_2e_0s}{s+K_S}$ and thus neglects the term involving $e_0$ in the denominator, although $e_0$ is not negligible here.} in principle allows to identify the limiting rate $k_2e_0$ and the equilibrium constant $K_S$. It should be noted that one also needs an appropriate initial time and initial value for the reduction. Since one cannot assume negligible substrate loss in the transient phase, an appropriate fitting would require completion of Step 2 of the program outlined in the Introduction.

%%%%%%%%%%%
\subsubsection{Distinguished small parameters}
Intersecting $\widetilde Y$ with the positively invariant compact set $K$ defined by $0\leq s\leq s_0$ and $0\leq c\leq e_0$, amounts to restricting $0\leq s\leq s_0$. The elements of the parameter space $\Pi=\mathbb R_{\geq 0}^5$ have the form $(e_0,s_0,k_1,k_{-1},k_2)^{\rm tr}$, and a natural choice of ray direction is $\rho=(0,0,0,0,k_2^*)^{\rm tr}$, with $k_2=\varepsilon k_2^*$. 

The coefficients of the characteristic polynomial on $\widetilde Y$ are
\[
\begin{array}{rcl}
\sigma_1&=& \dfrac{k_{-1}k_1e_0}{k_1s+k_{-1}}+k_1s+k_{-1}+k_2,\\
\sigma_2&=& e_0k_1k_2\cdot\dfrac{k_{-1}}{k_1s+k_{-1}}.
\end{array}
\]
To distinguish small parameters, we need to consider the following steps:
\begin{itemize}
    \item We first evaluate the nondegeneracy conditions for the coefficients of the characteristic polynomial, from TFPV requirements and compactness. The minimum of $\sigma_1(x,\widehat\pi)$ on $\widetilde Y\cap K$ is equal to $k_{-1}+k_1e_0$ when $k_1e_0\leq k_{-1}$, and equal to $2\sqrt{k_{-1}k_1e_0}$ otherwise. This minimum must be bounded below by some positive constant. Combining this observation with the boundedness of the maximum of 
    \[\widehat\sigma_2=\dfrac{k_2^*k_{-1}k_1e_0}{k_1s+k_{-1}} \quad \text{on} \quad [0,\,s_0],\] which is equal to $k_2^*k_1e_0$, one sees that $k_1e_0$ and $k_{-1}$ must be bounded above and below by positive constants.
\item Turning to small parameters,
in the asymptotic limit one obtains
\[
\varepsilon^*=k_2\sup \dfrac{av}{(a+v^2)^2} \quad \text{with} \quad a=k_{-1}k_1e_0,\,v=k_1s+k_{-1},
\]
where the supremum is taken over $k_{-1}\leq v\leq k_{-1}+k_1s_0$.
By elementary calculus one finds the global maximum of this function on the unbounded interval $v\geq 0$, thus for sufficiently large $s_0$ we obtain the maximum at $v=\sqrt{k_{-1}k_1e_0/3}$, and find the estimate

\[
\varepsilon^{*}\leq\dfrac{3\sqrt3}{16}\dfrac{k_2}{\sqrt{k_{-1}\cdot k_1e_0}}=:\dfrac{3\sqrt3}{8}\cdot \varepsilon_{PE},\quad \text{with }\varepsilon_{PE}:=\dfrac{2k_2}{\sqrt{k_{-1}\cdot k_1e_0}}.
\]

Note that $\varepsilon_{PE}$ always yields an upper estimate for the eigenvalue ratio near the critical manifold. One could thus discard the factor $1+\alpha$ in Lemma~\ref{evalsig2prop}. 
\item Depending on the given parameters, in some cases one may obtain sharper estimates for $\varepsilon^*$ from the endpoints of the interval $[0,s_0]$. In any case, to determine $\varepsilon_*$ one needs to consider the boundary points of this interval.
\item The expression for $\varepsilon_{PE}$ may look strange, but $\sqrt{k_2/k_{-1}}\cdot\sqrt{k_2/(k_1e_0)}$ is the geometric mean of two reaction rate ratios, thus admits a biochemical interpretation. 
There is little work in the literature on small parameters for the case of slow product formation. Heineken et al.~\cite{hta} suggested $ {k_2}/{(k_1s_0)}$, while Patsatzis and Goussis introduced a parameter depending on $s$ and $c$ along a trajectory, taking the maximum over all $s,\,c$ yields ${k_2}/{k_{-1}}$. The latter represents a commonly accepted ``small parameter'' for this scenario; see, Keener and Sneyd~\cite[Section~1.4.1]{KeSn}.
In the limiting case $k_2\to 0$, one also has $\varepsilon_{MM}\to 0$, but one should not conclude that the standard QSS approximation is valid here. Recall that, in the low enzyme setting, $k_2$ needs to be bounded away from zero due to nondegeneracy requirements.
\end{itemize}

%%%%%
\subsubsection{Approach to the slow manifold}
For MM reaction mechanism with slow product formation, we specialize the arguments in the Appendix~\ref{lyapestsubsec} to determine $\varepsilon_L$, and show that $\varepsilon_{PE}$ appears naturally in this estimate.\footnote{Step 1 for the case of low enzyme concentration is more involved. A complete discussion of Steps 1--3 will be given in a forthcoming paper.} We use the results (and refer to the notation) of Section~\ref{lyapsub}. 

We rewrite the system in Tikhonov standard form. Since $\frac{d}{dt}(s+c)=-k_2c$, $s+c$ is a first integral of the fast system in the limit $k_2=0$, with $x=s+c$, $y=s$ (so $c=x-y$, $x\geq y\geq 0$), and $k_2=\varepsilon k_2^*$ we obtain
\begin{equation}\label{mmslotsf}
\begin{array}{rcl}
  \dot x   & =&-k_2(x-y) \\
  \dot y   & =& -k_1e_0y+(k_1{y}+k_{-1})(x-y)\\
            &=& -k_1(y-h_-(x))\cdot(y-h_+(x))
\end{array}
\end{equation}
with
\[
h_\pm(x):=\frac12\left(-(K_S+e_0-x)\pm q(x)\right);\quad q(x):=\sqrt{(K_S+e_0-x)^2+4K_Sx}.
\]
We focus on the particular initial conditions with zero complex, thus
\[
x(0)=y(0)=s_0.
\]
The QSS variety $\widetilde Y$ is defined by $y=h_+(x)$, and the reduced equation reads
\begin{equation}\label{slowprodred}
\dot x=-\dfrac{k_2}{2}\left((K_S+e_0+x)-\sqrt{(K_S+e_0-x)^2+4K_Sx}\right).
\end{equation}

We use the notation and apply the general procedure from the Section~\ref{lyapsub}, with
\[
A=-k_1(y-h_-(x))=-k_1q(x) \quad\text{on}\quad\widetilde Y,
\]
and $g(x)=h_+(x)$.
We will use some properties of $q$ in the following. The calculation of $q'(x)$ leads to
\[
q'(x)=\dfrac{K_S+x-e_0}{\sqrt{(K_S+e_0-x)^2+4K_Sx}},
\]
hence
 $|q'(x)|\leq 1$ for all $x\geq 0$. Moreover, the sign of $q'$ changes from $-$ to $+$ at $x=e_0-K_S$ when $e_0-K_S\geq 0$, and is otherwise positive for all $x\geq 0$. Thus, the minimum of $q$ is attained at $0$, with value $K_S+e_0$, when $e_0<K_S$, and is attained at $e_0-K_S$, with value $2\sqrt{K_Se_0}$, when $e_0\geq K_S$. By the arithmetic-geometric mean inequality, we thus have
\[
q(x)\geq 2\sqrt{K_Se_0}\quad\text{ for all } x\geq 0.
\]
This shows
\[
A\leq -2k_1\sqrt{K_Se_0}=-2\sqrt{k_1e_0k_{-1}},
\]
and we arrive at
\[
\gamma=\sqrt{k_1e_0k_{-1}}.
\]
According to Section~\ref{lyapsub}, $\gamma^{-1}$ is an appropriate timescale for the approach to the slow manifold.

To determine $\kappa$, we have $g(x)=h_+(x)=\frac12(x-K_S-e_0+q(x))$, thus $|g'(x)|\leq 1$, and 
\[
|f_1(x,y)|=k_2(x-y)\leq k_2e_0,\quad \text{since }x-y=c\leq e_0,
\]
hence we may set $\kappa=k_2e_0$.

Altogether, we obtain from the Lyapunov function the (dimensional) parameter
\begin{equation}
    \varepsilon_L=\dfrac{2\kappa}{\gamma}=e_0\cdot\varepsilon_{PE}.
\end{equation}
To obtain a non-dimensional small parameter,
normalization by $e_0$ seems to be the natural choice here, which yields
\begin{equation}
    \widehat\varepsilon_L=\varepsilon_{PE}.
\end{equation}
In this particular setting, the local timescale parameter completely characterizes the approach of the solution to the slow manifold.

%%%%%%%%%%
\subsubsection{Estimates for long times}
 We will not attempt to estimate a critical time for the onset of the slow dynamics, and without this we cannot determine approximation errors for solutions of the reduced equation (as outlined in Section~\ref{missingsubsec}). In this respect, the discussion of the MM reaction mechanism with slow product formation remains incomplete. But the following observation provides a relevant condition for the long-term behavior. Since $|y-g(x)|\to e_0\cdot \varepsilon_{PE}$, the solution will enter the domain with $|y-g(x)|\leq 2 e_0\cdot \varepsilon_{PE}$ after some short transitory phase.\footnote{The factor 2 could be replaced by any constant $>1$.} In this domain, we obtain the reduced equation with error term:
 \begin{equation}\label{mmsloredest}
 \begin{array}{rcl}
     \dot x&=&-k_2(x-g(x))+k_2(y-g(x))\\
     &\leq &  -\dfrac{k_2}{2}\left((K_S+e_0+x)-\sqrt{(K_S+e_0-x)^2+4K_Sx}\right)  +k_2\cdot\dfrac{2k_2e_0}{\sqrt{k_1e_0\cdot k_{-1}}}     =:U(x).         \\
     \end{array}
 \end{equation}
 By a differential inequality argument, the solution of $\dot x=U(x)$, with positive initial value, is an upper bound for the first entry of the solution of \eqref{mmslotsf}, given appropriate initial values near the QSS variety. Moreover the solution of the reduced equation \eqref{slowprodred} with the same initial value remains positive. For $t\to \infty$, the absolute value of the difference of these solutions converges to the stationary point of $\dot x=U(x)$, which therefore indicates the discrepancy. We determine the stationary point, neglecting terms of order $>1$ in $k_2$
 \begin{equation*}
     \begin{array}{rcl}
    \left((K_S+e_0+x)-4   \dfrac{k_2e_0}{\sqrt{k_1e_0\cdot k_{-1}}}\right)^2   & =&(K_S+e_0-x)^2+4K_Sx \\
    \Rightarrow e_0 x      & =&\dfrac{2k_2e_0}{\sqrt{k_1e_0\cdot k_{-1}}}\cdot(K_S+e_0+x)+\cdots\\
     \Rightarrow \dfrac{x}{e_0}      & =&2\dfrac{k_2}{\sqrt{k_1e_0\cdot k_{-1}}}\cdot\dfrac{k_1e_0+k_{-1}}{k_1e_0}+\cdots.\\
     \end{array}
 \end{equation*}
 Thus, we obtain the parameter
 \begin{equation}
     \varepsilon_{\infty}=\dfrac{k_1e_0+k_{-1}}{k_1e_0}\cdot \dfrac{2k_2}{\sqrt{k_1e_0\cdot k_{-1}}}=\dfrac{k_1e_0+k_{-1}}{k_1e_0}\cdot \varepsilon_{PE},
 \end{equation}
which provides an upper bound for the long-term discrepancy of the true solution and its approximation.

%%%%%%%%%%%%%%%%%%%%%%%%%%%%%%%%%%%%%%%%%%%%
\subsection{ A degenerate scenario}\label{mmdegsubsec}
To illustrate the limitations of the approach via Proposition~\ref{tspropdimone}, consider the irreversible system with TFPV $k_{-1}=k_2=0$, the other parameters positive, and $\rho=(0,0,0,k_{-1}^*,k_2^*)^{\rm tr}$. Here the critical variety is reducible, being the union of the lines $Y_1$, $Y_2$ defined by $e_0-c=0$ resp.\ $s=0$, and the TFPV conditions fail at their intersection. We consider the case $e_0<s_0$, and define $\widetilde Y_1$ by $c=e_0,\,s>0$. The fast system admits the first integral $s+c$, so the initial value of the slow system on $\widetilde Y_1$ is close to $(s_0-e_0,e_0)^{\rm tr}$.
Proceeding, one may choose
\[
K=\left\{(s,c)^{\rm tr};\, s+c\leq s_0, \,s\geq\widetilde s\right\}, \quad 0<\widetilde s<s_0-e_0.
\]
Then, $\widetilde Y_1\cap K$ is compact, but not positively invariant, and on this set one has
\[
\sigma_1= k_1s,\quad \sigma_2=0,\text{  and  } \widehat\sigma_2=0.
\]
Here, the nondegeneracy condition in \eqref{signhat} fails, and we obtain no timescale ratio by way of Lemma~\ref{evalsig2prop}. A direct computation in a neigborhood of $\widetilde Y_1$ yields
\[
\lambda_2/\lambda_1=\varepsilon k_1k_2^*(e_0-c),
\]
but this obscures the fact that both eigenvalues approach zero as $s\to 0$.
Standard singular perturbation methods are not sufficient to analyze the dynamics of this system for small $\varepsilon$.

%%%%%%%%%%%%%%%%%%%%%%%%%%%%%%%%%%%%%%%%%%%%%%%%%%%%
\section{TFPV for higher dimensions}\label{sechigher}
We keep the notation and conventions from Sections~\ref{tfpvsubsec} and \ref{setsubsec}, but now we will focus on a TFPV $\widehat\pi$ for dimension $s>1$. The goal of this technical section is to identify distinguished parameters and discuss their relation to timescales. There is a rather obvious direct extension of results from the $s=1$ case, but the timescale correspondence will be not as pronounced. Moreover, we will need to impose a stronger nondegeneracy condition. We abbreviate
\begin{equation}\label{sigtildef}
\widetilde\sigma_i(x,\varepsilon):=\sigma_i(x,\widehat\pi+\varepsilon\rho),\quad 1\leq i\leq n,
\end{equation}
keeping in mind that $\widetilde\sigma_i(x,0)>0$ for all $x\in \widetilde Y\cap K$ and $1\leq i\leq n-s$, due to $\widehat\pi$ being a TFPV. Additionally, we set $\widetilde\sigma_0:=1$. 

%%%%%%%%%%%%%%%%%%%%%%%%%%%%%%%%%%%%%%%%%%%%%%
\subsection{Distinguished small parameters}\label{modsonesubsec}
Some notions and results from Section~\ref{secdimone} can easily be modified for the case $s>1$. For suitable $\varepsilon_{\rm max}>0$, we have
 \[
\sigma_{i}(x,\widehat\pi+\varepsilon\rho)>0 \text{ for all }(x,\varepsilon)\in K^*,\quad1\leq i\leq n-s,
\]
and due to $\sigma_{n-s+1}(x,\widehat\pi)=0$ for $x\in\widetilde Y\cap K$, we obtain
\[
\sigma_{n-s+1}(x,\widehat\pi+\varepsilon\rho)=\varepsilon\widehat\sigma_{n-s+1}(x,\widehat\pi,\rho,\varepsilon)
\]
with a polynomial $\widehat\sigma_{n-s+1}$, for all $(x,\varepsilon)\in K^*$. 
\begin{definition}\label{smallpardefsmore}
Let
    \begin{equation}\label{lowupndmore}
\begin{array}{rcl}
L(\widehat\pi,\rho)&:=&\inf_{x\in \widetilde Y\cap K}\;\left|\dfrac{\widehat\sigma_{n-s+1}(x,\widehat\pi,\rho,0)}{\sigma_{1}(x,\widehat\pi)\cdot \sigma_{n-s}(x,\widehat\pi)}\right|,\\
\\
U(\widehat\pi,\rho)&:=&\sup_{x \in \widetilde Y\cap K}\left|\dfrac{\widehat\sigma_{n-s+1}(x,\widehat\pi,\rho,0)}{\sigma_{1}(x,\widehat\pi)\cdot \sigma_{n-s}(x,\widehat\pi)}\right|,\\
\end{array}
\end{equation}

Now, we define
\begin{equation}
\varepsilon^*(\widehat\pi,\rho,\varepsilon):=\varepsilon  U(\widehat\pi,\rho)
\end{equation}
the {\em distinguished upper bound for the TFPV $\widehat \pi$ for dimension $s$, with parameter direction $\rho$}, of system \eqref{ode}. 
 Moreover we call 
\begin{equation}
\varepsilon_*(\widehat\pi,\rho,\varepsilon):=\varepsilon  L(\widehat\pi,\rho)
\end{equation}
the {\em distinguished lower bound for the TFPV $\widehat \pi$ for dimension $s$ with parameter direction $\rho$}.
\end{definition}

As in the case of reduction to dimension one, determining the distinguished parameters amounts to determining the extrema of a rational function on a compact set, or (when this is not possible, or not sensible) determining reasonably sharp estimates for these extrema. We note the following straightforward variant of Proposition~\ref{propsone}.
\begin{proposition}\label{propsmore} 
 Given $\alpha>0$, for sufficiently small $\varepsilon_{\rm max}$, the estimates
\begin{equation}\label{ndsandwichendmore}
\frac{\varepsilon }{(1+\alpha)}L(\widehat\pi,\rho) \leq\left| \dfrac{\sigma_{n-s+1}(x,\widehat\pi+\varepsilon\rho)}{\sigma_{1}(x,\widehat\pi+\varepsilon \rho)\cdot \sigma_{n-s}(x,\widehat\pi+\varepsilon \rho)}\right|\leq \varepsilon(1+\alpha) U(\widehat\pi,\rho) 
\end{equation}
hold on $K^*$.
\end{proposition}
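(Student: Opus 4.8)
The plan is to prove Proposition~\ref{propsmore} by essentially the same argument as Proposition~\ref{propsone}, namely by reducing the claim to a statement about how close the $\varepsilon$-dependent rational function is to its value at $\varepsilon=0$, and then invoking continuity and compactness. First I would recall the factorization $\sigma_{n-s+1}(x,\widehat\pi+\varepsilon\rho)=\varepsilon\,\widehat\sigma_{n-s+1}(x,\widehat\pi,\rho,\varepsilon)$ established just above the definition. Dividing through by $\sigma_1(x,\widehat\pi+\varepsilon\rho)\cdot\sigma_{n-s}(x,\widehat\pi+\varepsilon\rho)$, the quantity to be estimated becomes
\[
\left|\dfrac{\sigma_{n-s+1}(x,\widehat\pi+\varepsilon\rho)}{\sigma_{1}(x,\widehat\pi+\varepsilon\rho)\cdot\sigma_{n-s}(x,\widehat\pi+\varepsilon\rho)}\right|
=\varepsilon\left|\dfrac{\widehat\sigma_{n-s+1}(x,\widehat\pi,\rho,\varepsilon)}{\sigma_{1}(x,\widehat\pi+\varepsilon\rho)\cdot\sigma_{n-s}(x,\widehat\pi+\varepsilon\rho)}\right|,
\]
so it suffices to sandwich the $\varepsilon$-free factor on the right between $L/(1+\alpha)$ and $(1+\alpha)U$.

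The key step is then a first-order Taylor estimate in $\varepsilon$, exactly as in the proof of Proposition~\ref{propsone}. The denominator $\sigma_1\cdot\sigma_{n-s}$ is, by the blanket assumptions in Section~\ref{setsubsec}, bounded below by a positive constant on $K^*$ for $\varepsilon_{\mathrm{max}}$ small, and together with analyticity of $\widehat\sigma_{n-s+1}$ and of the $\sigma_i$ in $\varepsilon$ this yields a bound of the form
\[
\left|\dfrac{\widehat\sigma_{n-s+1}(x,\widehat\pi,\rho,\varepsilon)}{\sigma_{1}(x,\widehat\pi+\varepsilon\rho)\cdot\sigma_{n-s}(x,\widehat\pi+\varepsilon\rho)}-\dfrac{\widehat\sigma_{n-s+1}(x,\widehat\pi,\rho,0)}{\sigma_{1}(x,\widehat\pi)\cdot\sigma_{n-s}(x,\widehat\pi)}\right|\leq\mathrm{const.}\cdot\varepsilon
\]
uniformly for $(x,\varepsilon)\in K^*$. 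Since by definition the $\varepsilon=0$ term has absolute value lying in $[L(\widehat\pi,\rho),U(\widehat\pi,\rho)]$, choosing $\varepsilon_{\mathrm{max}}$ small enough that $\mathrm{const.}\cdot\varepsilon_{\mathrm{max}}$ is dominated by the appropriate $\alpha$-fraction of $L$ and $U$ gives the two-sided bound, and multiplying back by $\varepsilon$ produces \eqref{ndsandwichendmore}.

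The only point that is genuinely different from the $s=1$ case, and hence the main thing to verify carefully, is that the denominator is controlled: here $\sigma_{n-s}$ plays the role that $\sigma_{n-1}$ played before, and I must ensure both $\sigma_1$ and $\sigma_{n-s}$ are bounded away from zero on $K^*$. This is precisely guaranteed by the blanket assumption that $\sigma_k(x,\widehat\pi+\varepsilon\rho)$ is bounded above and below by positive constants for $1\leq k\leq n-s$, which covers the indices $1$ and $n-s$. Consequently there is no real obstacle beyond bookkeeping; the compactness of $K^*$ and the uniform positivity of the denominator make the continuity argument go through verbatim, and the proof is a routine adaptation of Proposition~\ref{propsone}.
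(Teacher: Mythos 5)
Your proposal is correct and follows exactly the route the paper intends: the paper states Proposition~\ref{propsmore} without a separate proof, calling it a ``straightforward variant of Proposition~\ref{propsone}'', and your argument is precisely that adaptation --- factor out $\varepsilon$ via $\sigma_{n-s+1}=\varepsilon\,\widehat\sigma_{n-s+1}$, compare to the $\varepsilon=0$ value with a uniform $\mathrm{const.}\cdot\varepsilon$ bound from analyticity and compactness on $K^*$, and use the blanket assumption that $\sigma_1$ and $\sigma_{n-s}$ are bounded away from zero. You also correctly identified the one point needing verification (control of the denominator via the indices $1$ and $n-s$), so nothing is missing.
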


%%%%%%%%%%%%%%%%%%%%%%%%%%%%%%%%%%%%%%
\subsection{The correspondence to timescales}
Proofs of the following statements are given in the Appendix (Lemma~\ref{mucheps} and Lemma~\ref{bigsgoodprop}).

Let $\widehat\pi$ be a TFPV for dimension $s$, with critical manifold $\widetilde Y$. Then for all $x\in\widetilde Y\cap K$ one has
\begin{equation}\label{bigsgoodcond}
\widetilde\sigma_i(x,\varepsilon)=\varepsilon^{i-n+s}\widehat\sigma_i(x,\varepsilon)\text{ for all }x\in\widetilde Y\cap K, \quad n-s\leq i\leq n,
\end{equation}
with polynomials $\widehat\sigma_i$.

Assume that \eqref{bigsgoodcond} is given, and furthermore assume the nondegeneracy condition
\begin{equation}\label{sgtonenondeg}
\widehat\sigma_{n-s}(x,0)\not=0 \text{ and }\widehat\sigma_{n}(x,0)\not=0 \text{ on }\widetilde Y\cap K.
\end{equation}

Then the zeros $\lambda_i(x,\widehat \pi+\varepsilon\rho)$ of the characteristic polynomial can be labeled such that 
\[
\lambda_1(x,\widehat\pi)\not=0,\ldots, \lambda_{n-s}(x,\widehat\pi)\not=0\quad\text{ on } \widetilde Y\cap K,
\]
and
\[
\lambda_i(x,\widehat\pi+\varepsilon\rho)=\varepsilon \widehat \lambda_i(x,\widehat\pi,\rho,\varepsilon),\quad n-s+1\leq i\leq n
\]
with continuous functions in $\varepsilon$.

%%%%%%%%%%%%%%%%%%%%%%%%%%%%%%%%%%%%%%%%

Given the nondegeneracy assumptions, we turn to discussing the correspondence of $ \varepsilon_{*}$ and $ \varepsilon^*$ to timescales.
By \eqref{siglamids}, and by the definition of  $\widetilde\sigma_i$ in \eqref{sigtildef}, one has
\[
\begin{array}{rcl}
-\widetilde\sigma_1&=&\lambda_1+\cdots+\lambda_{n-s}+\varepsilon\,(\cdots);\\
(-1)^{n-s} \widetilde\sigma_{n-s}&=& \sum \lambda_{j_1}\cdots \lambda_{j_{n-s}}=\lambda_{1}\cdots \lambda_{{n-s}}+\varepsilon\,(\cdots);\\
(-1)^{n-s+1} \widetilde\sigma_{n-s+1}&=& \sum \lambda_{i_1}\cdots \lambda_{i_{n-s+1}}\\
                        &=&\lambda_{1}\cdots\lambda_{n-s}\left( \lambda_{{n-s+1}}+\cdots+\lambda_n\right)+\varepsilon^2\,(\cdots).\\
\end{array}
\]
This directly provides a result on separation of timescales.
\begin{proposition}\label{timescalebigs} 
Assume that the nondegeneracy condition \eqref{sgtonenondeg} holds.
\begin{enumerate}[(a)]
    \item 
The identity 
\[
\dfrac{\widetilde\sigma_{n-s+1}}{\widetilde\sigma_1\widetilde\sigma_{n-s}}=\dfrac{ \lambda_{{n-s+1}}+\cdots+\lambda_n}{\lambda_1+\cdots+\lambda_{n-s}}+\varepsilon^2\,(\cdots)=\varepsilon\dfrac{ \widehat\lambda_{{n-s+1}}+\cdots+\widehat\lambda_n}{\lambda_1+\cdots+\lambda_{n-s}}+\varepsilon^2\,(\cdots)
\]
holds on $K^*$, with $(\cdots)$ representing a continuous function.
\item Given $\alpha>0$, and $\varepsilon_{\rm max}$ sufficiently small, the estimates
\begin{equation}\label{locestsone}
\frac1{(1+\alpha)}\varepsilon_*(\widehat\pi,\rho,\varepsilon)\leq\left| \dfrac{\sum_{i\leq n-s}\lambda_i(x,\widehat\pi+\varepsilon\rho)}{\sum_{j>n-s}\lambda_j(x,\widehat\pi+\varepsilon\rho)}\right|\leq (1+\alpha)\varepsilon^*(\widehat\pi,\rho,\varepsilon)
\end{equation}
hold for all $(x,\varepsilon)\in K^*$.
In particular, there exist constants $C_1,\,C_2$ such that
\[
\;\;C_1 \varepsilon\leq\left| \dfrac{\sum_{i\leq n-s}\lambda_i(x,\widehat\pi+\varepsilon\rho)}{\sum_{j>n-s}\lambda_j(x,\widehat\pi+\varepsilon\rho)}\right|\leq C_2 \varepsilon.
\]
\end{enumerate}
\end{proposition}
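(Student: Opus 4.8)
The plan is to derive part~(a) from the three leading-order expansions of $\widetilde\sigma_1$, $\widetilde\sigma_{n-s}$ and $\widetilde\sigma_{n-s+1}$ recorded immediately before the statement, and then to obtain part~(b) by combining part~(a) with Proposition~\ref{propsmore}. Throughout, for $(x,\varepsilon)\in K^*$ I abbreviate the fast eigenvalue sum, the fast eigenvalue product, and the slow eigenvalue sum of $D_1h(x,\widehat\pi+\varepsilon\rho)$ by
\[
F:=\lambda_1+\cdots+\lambda_{n-s},\qquad G:=\lambda_1\cdots\lambda_{n-s},\qquad H:=\lambda_{n-s+1}+\cdots+\lambda_n .
\]
By the labeling fixed above one has $H=\varepsilon(\widehat\lambda_{n-s+1}+\cdots+\widehat\lambda_n)=O(\varepsilon)$, whereas $F$ and $G$ are governed by the fast eigenvalues.

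First I would record that $F$ and $G$ stay bounded away from $0$ uniformly on $K^*$ once $\varepsilon_{\rm max}$ is small. This is immediate from the blanket assumptions: since $1\le n-s$, both $\widetilde\sigma_1$ and $\widetilde\sigma_{n-s}$ are bounded above and below by positive constants on $K^*$, and the first two displayed identities give $F=-\widetilde\sigma_1+O(\varepsilon)$ and $G=(-1)^{n-s}\widetilde\sigma_{n-s}+O(\varepsilon)$ (the nondegeneracy of $\widehat\sigma_{n-s}(\cdot,0)$ in~\eqref{sgtonenondeg} being automatic here). Dividing the third identity by the product of the first two, the sign factors $(-1)^{n-s+1}$ cancel and I obtain
\[
\frac{\widetilde\sigma_{n-s+1}}{\widetilde\sigma_1\,\widetilde\sigma_{n-s}}=\frac{GH+\varepsilon^2(\cdots)}{\bigl(F+\varepsilon(\cdots)\bigr)\bigl(G+\varepsilon(\cdots)\bigr)} .
\]
Since $F$ and $G$ are bounded away from $0$, the denominator equals $FG$ up to an $O(\varepsilon)$ perturbation with $FG$ bounded away from $0$, so the quotient equals $\tfrac{GH}{FG}=\tfrac{H}{F}$ times a factor $1+O(\varepsilon)$. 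As $H/F=O(\varepsilon)$, the $O(\varepsilon)$ relative error contributes only $O(\varepsilon^2)$, which yields the first equality of part~(a); substituting $H=\varepsilon(\widehat\lambda_{n-s+1}+\cdots+\widehat\lambda_n)$ gives the second.

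For part~(b) I pass to moduli in part~(a): $\bigl|\widetilde\sigma_{n-s+1}/(\widetilde\sigma_1\widetilde\sigma_{n-s})\bigr|$ agrees with the modulus of the slow-to-fast eigenvalue-sum ratio in~\eqref{locestsone} up to an additive term of size $O(\varepsilon^2)$. Applying Proposition~\ref{propsmore} with an auxiliary $\alpha'>0$ sandwiches $\bigl|\widetilde\sigma_{n-s+1}/(\widetilde\sigma_1\widetilde\sigma_{n-s})\bigr|$ between $\tfrac{1}{1+\alpha'}\varepsilon_*$ and $(1+\alpha')\varepsilon^*$. Because $\varepsilon_*=\varepsilon L$ and $\varepsilon^*=\varepsilon U$ are of order $\varepsilon$, the additive $O(\varepsilon^2)$ correction is dominated by any prescribed fraction of these bounds once $\varepsilon_{\rm max}$ is small; absorbing it allows $\alpha'$ to be replaced by the given $\alpha$ and produces~\eqref{locestsone}. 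The concluding two-sided estimate then holds with $C_1:=L/(1+\alpha)$ and $C_2:=(1+\alpha)U$, where $C_1>0$ relies on $L>0$, i.e.\ on the nondegeneracy of $\widehat\sigma_{n-s+1}(\cdot,0)$ on $\widetilde Y\cap K$.

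The main obstacle is the order bookkeeping that makes the remainder genuinely $O(\varepsilon^2)$ rather than $O(\varepsilon)$. One must check that the leading denominator $FG$ (equivalently the product $\widetilde\sigma_1\widetilde\sigma_{n-s}$ at $\varepsilon=0$) is bounded away from $0$ uniformly over the compact set, so that division does not amplify the errors, and that the surviving $O(\varepsilon)$ relative error multiplies the already small main term $H/F=O(\varepsilon)$. Both points rest on the nondegeneracy hypothesis~\eqref{sgtonenondeg} together with the spectral gap keeping the fast eigenvalues away from $0$; without a uniform lower bound on $|FG|$ the clean cancellation to an $O(\varepsilon^2)$ remainder would fail, and the passage from the $\sigma$-ratio to the eigenvalue-sum ratio would no longer be legitimate.
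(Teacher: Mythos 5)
Your proof is correct and follows essentially the same route as the paper: the paper obtains the proposition directly from the same three eigenvalue expansions of $\widetilde\sigma_1$, $\widetilde\sigma_{n-s}$ and $\widetilde\sigma_{n-s+1}$ displayed before the statement, dividing them and combining with Proposition~\ref{propsmore}, and you merely make explicit the $O(\varepsilon^2)$ bookkeeping (uniform lower bounds on the fast sum and fast product, cancellation of sign factors, absorption of the quadratic remainder into the factor $1+\alpha$) that the paper leaves implicit. One remark: in \eqref{locestsone} as printed the eigenvalue ratio appears inverted (fast sum over slow sum) relative to the $s=1$ analogue \eqref{locestsone1}; you correctly interpreted it as the slow-to-fast ratio, which is what both your argument and the paper's actually establish.
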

Thus, for higher dimensions of the critical manifold the coefficients of the characteristic polynomial still provide -- albeit weaker -- estimates for timescale ratios. Informally speaking, ${\widetilde\sigma_{n-s+1}}/({\widetilde\sigma_1\widetilde\sigma_{n-s}})$ measures the ratio of the ``fastest slow timescale'' and the ``fastest fast timescale''. Similar to the situation for $s=1$, a more relevant ratio is the one of the ``fastest slow timescale'' and the ``slowest fast timescale''. We invite readers to compare Section~\ref{lyapsub} in the Appendix. We remark that for real or ``essentially real'' $\lambda_1,\ldots,\lambda_{n-s}$ one may obtain results similar to Proposition~\ref{tspropdimonereal}, but we will not pursue this further.

%%%%%%%%%%%%%%%%%%%%%%%%%%%%%%%%%%%%%%%%%%%
\subsection{Further dimensionless parameters}
Given the setting of \eqref{bigsgoodcond}, it is natural to 
ask about different types of dimensionless small parameters, in addition to the distinguished ones obtained from Proposition~\ref{propsmore}. We consider terms of the form
\[
\dfrac{\widetilde\sigma_{n-s+k}}{\widetilde \sigma_{j_1}\cdots\widetilde\sigma_{j_\ell}\cdot\widetilde\sigma_{n-s+v_1}\cdots\widetilde\sigma_{n-s+v_m}}
\]
with $k\geq 1$, $\ell\geq 0$, $m>0$, and the indices $1\leq j_1\leq\cdots \leq j_\ell$, $1\leq v_1\leq\cdots\leq v_m$ subject to the following conditions:
\begin{enumerate}[(1)]
\item ``Dimensionless'': This mean by Lemma~\ref{nondimlem}
\[
j_1+\cdots+j_\ell+(n-s)+v_1+\cdots+(n-s)+v_m=(n-s)+k.
\]
\item ``Order one in $\varepsilon$'':
\[
v_1+\cdots+v_m=k-1.
\]
\end{enumerate}
\begin{proposition}\label{extrasmore}
The only classes of dimensionless small parameters that satisfy (1) and (2) are the following:
\begin{enumerate}[(a)]
\item  $m=1$ with $\ell=1$ and $j_1=1$, with parameters 
\begin{equation}
\dfrac{\widetilde\sigma_{n-s+k}}{\widetilde\sigma_1\,\widetilde\sigma_{n-s+k-1}}, \quad 2\leq k\leq s.
\end{equation}
\item  $m=2$, $n\geq 4$, $s=n-1$ and $\ell=0$, with parameters 
\begin{equation}
\dfrac{\widetilde\sigma_{2+v_1+v_2}}{\widetilde\sigma_{1+v_1}\,\widetilde\sigma_{1+v_2}}, \quad 1\leq v_1\leq v_2,\quad v_1+v_2\leq n-2.
\end{equation}
\end{enumerate}
\end{proposition}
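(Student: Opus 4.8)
The plan is to collapse the two defining counting conditions into a single Diophantine equation in the multiplicities $\ell,m$ and the index $n-s$, and then to read off the admissible patterns by an elementary case split. First I would record condition~(1) in the aggregated form
\[
(j_1+\cdots+j_\ell)+m(n-s)+(v_1+\cdots+v_m)=(n-s)+k,
\]
which is precisely Lemma~\ref{nondimlem}(b) for the quotient: the total index of the denominator (the $\ell$ fast factors plus the $m$ slow factors $\widetilde\sigma_{n-s+v_i}$, each of index $n-s+v_i$) must match the numerator index $n-s+k$. Condition~(2), namely $v_1+\cdots+v_m=k-1$, encodes that the quotient is of order $\varepsilon^{1}$: by \eqref{bigsgoodcond} the factor $\widetilde\sigma_{n-s+v_i}$ carries $\varepsilon^{v_i}$, the numerator carries $\varepsilon^{k}$, and the fast coefficients $\widetilde\sigma_{j_a}$ are $\varepsilon^{0}$, so the net power is $k-\sum_i v_i=1$.

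Next I would substitute condition~(2) into condition~(1) to eliminate both $k$ and $\sum_i v_i$, arriving at the key identity
\[
(j_1+\cdots+j_\ell)+(m-1)(n-s)=1.
\]
The decisive observation is that both summands on the left are \emph{nonnegative integers}: the first because each $j_a\geq 1$, the second because $m\geq 1$ and $n-s\geq 1$ (as $s\leq n-1$). Hence exactly one of them equals $1$ and the other equals $0$, and this dichotomy is exactly what produces the two classes (a) and (b).

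Then I would dispatch the two cases. If $(m-1)(n-s)=0$, then $n-s\geq 1$ forces $m=1$, while $j_1+\cdots+j_\ell=1$ with each $j_a\geq 1$ forces $\ell=1$ and $j_1=1$; feeding $m=1$ back into condition~(2) gives $v_1=k-1$, so the parameter is $\widetilde\sigma_{n-s+k}/(\widetilde\sigma_1\,\widetilde\sigma_{n-s+k-1})$, and the constraints $v_1\geq 1$ and $n-s+k\leq n$ translate into $2\leq k\leq s$, which is exactly (a). If instead $j_1+\cdots+j_\ell=0$, then $\ell=0$, and $(m-1)(n-s)=1$ forces $m=2$ and $n-s=1$, i.e.\ $s=n-1$; now condition~(2) reads $v_1+v_2=k-1$, whence $n-s+k=2+v_1+v_2$ and the parameter is $\widetilde\sigma_{2+v_1+v_2}/(\widetilde\sigma_{1+v_1}\,\widetilde\sigma_{1+v_2})$, with $v_1,v_2\geq 1$ and $2+v_1+v_2\leq n$, i.e.\ $v_1+v_2\leq n-2$; since $v_1+v_2\geq 2$ this also forces $n\geq 4$, which is exactly (b).

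I do not expect a genuine obstacle here: once the two counting conditions are in hand the argument is elementary number theory. The points that require care are purely bookkeeping — correctly invoking $n-s\geq 1$ to conclude $m=1$ in the first case, and faithfully converting the inequalities $v_i\geq 1$ and ``numerator index $\leq n$'' into the stated ranges $2\leq k\leq s$ and $n\geq 4,\ v_1+v_2\leq n-2$. The one subtlety I would flag explicitly is \emph{completeness}: one must argue that (a) and (b) exhaust all solutions rather than merely verifying that the listed patterns are valid, and this is immediate from the ``exactly one summand equals $1$'' observation applied to the key identity.
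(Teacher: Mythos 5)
Your proof is correct and takes essentially the same route as the paper: both substitute condition (2) into condition (1) to obtain the key identity $j_1+\cdots+j_\ell+(m-1)(n-s)=1$ and then split into the cases $m=1$ (forcing $\ell=1$, $j_1=1$) and $m=2$ (forcing $\ell=0$, $s=n-1$). The only difference is that you additionally spell out the bookkeeping that converts $v_i\geq 1$ and the bound on the numerator index into the stated ranges $2\leq k\leq s$ and $n\geq 4$, $v_1+v_2\leq n-2$, which the paper's proof leaves implicit.
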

\begin{proof}
Combining (1) and (2) one finds
\[
j_1+\cdots+j_\ell+(m-1)\,(n-s)=1,
\]
thus, necessarily $m\leq 2$ due to $s<n$. In case $m=1$, one has $\ell=1$ and $j_1=1$. In case $m=2$, one necessarily has $s=n-1$ and $\ell=0$.
\end{proof}

To obtain explicit parameter bounds in the first case, use
\[
\sigma_{n-s+j}(x,\widehat\pi+\varepsilon\rho)=\varepsilon^j\widehat\sigma_{n-s+j}(x,\widehat\pi,\rho,\varepsilon),\quad j\geq 1
\]
to determine
\[
\begin{array}{rcl}
\widetilde L_j(\widehat\pi,\rho)&:=& \inf_{x\in \widetilde Y\cap K}\;\left|\dfrac{\widehat\sigma_{n-s+j}(x,\widehat\pi,\rho,0)}{\widehat\sigma_{n-s+j-1}(x,\widehat\pi,\rho,0)\,\sigma_1(x,\widehat\pi)}\right|,\\
\\
\widetilde U_j(\widehat\pi,\rho)&:=&\sup_{x\in \widetilde Y\cap K}\left|\dfrac{\widehat\sigma_{n-s+j}(x,\widehat\pi,\rho,0)}{\widehat\sigma_{n-s+j-1}(x,\widehat\pi,\rho,0)\,\sigma_1(x,\widehat\pi)}\right|,\\
\end{array}
\]
and small parameters
\[
\delta_{j*}:=\varepsilon\cdot \widetilde L_j(\widehat\pi,\rho,0),\quad \delta_j^*:=\varepsilon\cdot \widetilde U_j(\widehat\pi,\rho,0), \quad j\geq 2.
\]
\begin{remark}{\em 
In the first case, there is a notable correspondence to eigenvalues (thus to timescales). A variant of the argument in Proposition~\ref{timescalebigs} shows that 
\[
\dfrac{\widetilde\sigma_{n-s+k}}{\widetilde\sigma_1\,\widetilde\sigma_{n-s+k-1}}=\varepsilon\dfrac{\tau_k(\widehat\lambda_{n-s+1},\ldots,\widehat\lambda_n)}{(\lambda_1+\cdots+\lambda_{n-s})\cdot\tau_{k-1}(\widehat\lambda_{n-s+1},\ldots,\widehat\lambda_n)}+\varepsilon^2(\cdots),
\]
where $\tau_\ell$ denotes the $\ell^{\rm th}$ elementary symmetric polynomial in $s$ variables. 
}
\end{remark}

%%%%%%%%%%%%%%%%%%%%%%%%%%%%%%%%%%%
\subsection{Dimension three}\label{dim3s2}
We specialize the results to dimension three and $s=2$, assuming nondegeneracy. By \eqref{bigsgoodcond},
$\widetilde\sigma_2$ is of order $\varepsilon$, and $\widetilde \sigma_3$ is of order $\varepsilon^2$. 

In view of Propositions~\ref{propsmore} and \ref{timescalebigs}, we consider
\[
\dfrac{\widetilde\sigma_2}{\widetilde\sigma_1^2}=\varepsilon\dfrac{\widehat\lambda_2+\widehat\lambda_3}{\lambda_1}+\varepsilon^2\cdots.
\]
Informally speaking, this expression governs the ratio of the fastest slow timescale to the fast timescale, which is the pertinent ratio according to Section~\ref{lyapesteval}.
We obtain 
\[
U=\sup_{x \in \widetilde Y\cap K}\left|\dfrac{\widehat\sigma_{2}(x,\widehat\pi,\rho,0)}{\sigma_{1}(x,\widehat\pi)^2}\right|,\quad \varepsilon^*=\varepsilon\cdot U
\]
as well as
\[
L=\;\inf_{x \in \widetilde Y\cap K}\left|\dfrac{\widehat\sigma_{2}(x,\widehat\pi,\rho,0)}{\sigma_{1}(x,\widehat\pi)^2}\right|,\quad \varepsilon_*=\varepsilon\cdot L.
\]

Similar to the observations in Remark~\ref{threetsfastrem}, disparate slow eigenvalues may indicate 
a scenario with three timescales (informally speaking, fast, slow and very slow). To measure the disparity, 
we use Proposition~\ref{extrasmore} and consider
\[
\dfrac{\widetilde\sigma_3}{\widetilde\sigma_1 \widetilde\sigma_2}=\varepsilon\dfrac{\lambda_1\widehat\lambda_2\widehat\lambda_3}{(\lambda_1+\varepsilon\cdots)(\lambda_1(\widehat\lambda_2+\widehat\lambda_3)+\varepsilon\cdots)}=\varepsilon\dfrac{\widehat\lambda_2\widehat\lambda_3}{\lambda_1(\widehat\lambda_2+\widehat\lambda_3)}+\varepsilon^2\cdots.
\]
Combining parameters shows
\[
\dfrac{\widetilde\sigma_1\widetilde\sigma_3}{\widetilde\sigma_2^2}=\dfrac{\widehat\lambda_2\widehat\lambda_3}{(\widehat\lambda_2+\widehat\lambda_3)^2}+\varepsilon\cdots=\dfrac{\widehat\lambda_3/\widehat\lambda_2}{(1+\widehat\lambda_3/\widehat\lambda_2)^2}+\varepsilon\cdots.
\]
Thus, the constants
\[
\kappa^*:=\sup_{x\in \widetilde Y\cap K}\dfrac{\sigma_1(x,\widehat\pi) \widehat\sigma_3(x,\widehat\pi,\rho,0)}{\widehat\sigma_2(x,\widehat\pi,\rho,0)^2}\text{ and }\kappa_*:=\inf_{x\in \widetilde Y\cap K}\dfrac{\sigma_1(x,\widehat\pi) \widehat\sigma_3(x,\widehat\pi,\rho,0)}{\widehat\sigma_2(x,\widehat\pi,\rho,0)^2}
\]
measure the disparity of $\widehat\lambda_2$ and $\widehat\lambda_3$. In particular, given that $|\lambda_3|\leq |\lambda_2|$ one has
\[
\left|\dfrac{\lambda_3}{\lambda_2}\right|\geq \kappa_*\text{ throughout } \widetilde Y\cap K.
\]

%%%%%%%%%%%%%%%%%%%%%%%%%%%%%%%%%%%%%%%%%%%%%%%%%%%%
\section{Case studies: Reduction from dimension three to one}\label{seccsone}
In this section, we discuss two biochemically relevant modifications of the MM reaction mechanism and a non-Michaelis--Menten reaction mechanism, with low enzyme concentration, and their familiar (quasi-steady state) reductions to dimension one.
This seems to be the first instance that small parameters in the spirit of Segel and Slemrod -- although consistently based on linear timescales -- are derived for these reaction mechanisms in a systematic manner. Note that, in the application-oriented literature, the perturbation parameter of choice mostly seems to be $\varepsilon_{BH}=e_0/s_0$, on loan from the MM reaction mechanism.  

We will directly consider the asymptotic small parameters $\varepsilon^*, \,\varepsilon_*,\,\mu^*$ by application of the results in Section~\ref{secdimone}, and obtain rather satisfactory estimates for these. Considering the steps outlined in the Introduction, we thus complete a substantial part of Step 1. Proceeding beyond this, along the lines of Section~\ref{lyapsub}, would involve considerable and lengthy work for each system, so we will not go further. However, to test and illustrate the efficacy of the parameters, we include extensive numerical simulations. We also include examples that demonstrate the limitations of the local timescale approach, and in particular show that the nondegeneracy conditions imposed on the ``non-small'' parameters are necessary.

%%%%%%%%%%%%%%%%%%%%%%%%%%%%%%%%%%%%%%%%%%%%%%%%%%
\subsection{Cooperativity reaction mechanism}\label{Co}
The (irreversible) cooperative reaction mechanism
\begin{equation}\label{coop}
\begin{array}{rcccl}
E+S&\overset{k_1}{\underset{k_{-1}}{\rightleftharpoons}}&
C_1&\overset{k_2}{\rightharpoonup}&E+P,\\
S+C_1&\overset{k_{3}}{\underset{k_{-3}}{\rightleftharpoons}}&
C_2& \overset{k_4}{\rightharpoonup} & C_1+P
\end{array}
\end{equation}
is a non-Michaelis--Menten reaction mechanism of enzyme action. It is modelled by the mass action equations
\begin{equation}\label{MACO}
\begin{array}{rclclclclcl}
\dot s=&-& k_1(e_0-c_1-c_2)s&+&k_{-1}c_1 &   &-& k_3sc_1 &+& k_{-3}c_2,  \\
\dot c_1=&& k_1(e_0-c_1-c_2)s&-&(k_{-1}+k_2)c_1& &-&k_3sc_1 &+& (k_4+k_{-3})c_2,    \\
\dot c_2=& & k_3sc_1  &&&& & &- &(k_4+k_{-3}) c_2,
\end{array}
\end{equation}
via stoichiometric conservation laws.
Typical initial conditions are $s(0)=s_0,\,e(0)=e_0,\,$ and $c_1(0)=c_2(0)=p(0)=0$. The conservation laws yield the compact positively invariant set
\begin{equation}
    K:=\{(s,c_1,c_2)\in \mathbb{R}_{\geq 0}^3: 0 \leq s\leq s_0, 0 \leq c_1+c_2 \leq e_0^*\},
\end{equation}
with some reference value $e_0^*>0$.
The parameter space $\Pi=\mathbb R_{\geq 0}^8$ has elements $
(e_0,s_0,k_1,k_{-1},k_2,k_3,k_{-3},k_4)^{\rm tr},$
and setting $e_0=0$ defines a TFPV,
\[
\widehat \pi:=(0,s_0,k_1,k_{-1},k_2,k_3,k_{-3},k_4)^{\rm tr}
\]
for dimension one, subject to certain nondegeneracy conditions on the $k_i$. The associated critical manifold is
\begin{equation}
    \widetilde Y:=\{(s,c_1,c_2)\in \mathbb{R}^3_{\geq 0}:c_1=c_2=0\}.
\end{equation}
We now set $\rho=(e_0^*,0,\ldots,0)^{\rm tr}$, and consider the perturbed system with parameter $\pi=\widehat{\pi}+\varepsilon\rho$.
The singular perturbation reduction (according to formula~\eqref{tfredeq} in Section~\ref{tfpvsubsec}) was carried out in Noethen and Walcher~\cite[Section~4]{NoWa} and Goeke and Walcher~\cite[Examples 8.2 and 8.7]{gw1}. This reduction 
agrees with the well known classical quasi-steady state reduction for complexes of the cooperativity reaction mechanism (see Keener and Sneyd~\cite[Section 1.4.4]{KeSn}). We have
\begin{equation}\label{Cred}
    \dot{s} = - \cfrac{k_1e_0s\left(k_3k_4s+k_2(k_{-3}+k_4)\right)}{(k_1s+k_{-1}+k_2)(k_{-3}+k_4) + k_1k_3s^2},\quad s(0)=s_0.
\end{equation}
The quasi-steady state variety (see, Keener and Sneyd~\cite{KeSn}) is given parametrically by
\[
\begin{pmatrix}
c_1\\ c_2
\end{pmatrix}= \dfrac{k_1e_0s}{(k_{-1}+k_2)(k_{-3}+k_4)+k_1(k_{-3}+k_4)s+k_1k_3s^2}\cdot \begin{pmatrix}
k_{-3}+k_4 \\ k_3s
\end{pmatrix},\quad 0\leq s\leq s_0,
\]
and agrees with the first order approximation of the slow manifold.
Fenichel theory guarantees that (\ref{Cred}) holds for sufficiently small $e_0=\varepsilon e_0^*$, up to errors of order $\varepsilon^2$. The initial value for the reduced equation is generally chosen as $s_0$, and we adopt this choice here (refraining from a closer analysis of the approximation error).

%%%%%%%%%%%%%%%%%%%%%%%%%%%%%%%%%%%%%%%%%%%%%%%%%%%
\subsubsection{Asymptotic small parameters}
According to the first blanket assumption in Section~\ref{setsubsec}, we will assume that $(s_0,k_1,k_{-1},k_2,k_3,k_{-3},k_4)^{\rm tr}$ is contained in a compact subset of $\mathbb R_{\geq 0}^7$. In particular $s_0$ and all the $k_i$ are bounded above by some positive constants. We now further specify this compact parameter set. On $\widetilde Y\cap K$ with $\pi=\widehat\pi$, we have
\[
\begin{array}{rcl}
\sigma_1&=& (k_1+k_3)s+k_{-1}+k_2+k_{-3}+k_4;\\
\sigma_2&=& k_1k_3s^2+k_1(k_{-3}+k_4)s+(k_{-1}+k_2)(k_{-3}+k_4);\\
\widehat\sigma_3&=&k_1e_0^*\cdot\left(k_3k_4s+k_2(k_{-3}+k_4)\right).
\end{array}
\]
Due to the TFPV requirement, $\sigma_1$ and $\sigma_2$ must be bounded below  on $K\cap \widetilde Y$ by positive constants,
\begin{subequations}
\begin{align*}
k_{-1}+k_2 + k_{-3}+k_4 &=\min \sigma_1 >0,\\
(k_{-1}+k_2)(k_{-3}+k_4) &=\min \sigma_2>0,
\end{align*}
\end{subequations}
and from this one sees that the TFPV conditions hold if and only if both $k_{-1}+k_2$ and $k_{-3}+k_4$ are bounded below by positive constants. Nontriviality of the reduced equation \eqref{Cred} also imposes conditions on $k_1$, $k_2$, $k_3$, and $k_4$. Moreover, for instance, in the limit $k_3\to 0$, with $k_4$ bounded below by a positive constant, the reduced equation is nontrivial but approaches the Michaelis--Menten equation. We will take a closer look at this situation below.

Generally, the TFPV and nondegeneracy conditions will certainly hold whenever $(s_0,k_1,k_{-1},k_2,k_3,k_{-3},k_4)^{\rm tr}$ is contained in a compact subset of the open positive orthant. Our aim is now to determine a suitable dimensionless parameter that corresponds to the legitimacy of (\ref{Cred}). The typical requirement in the literature, that $e_0/s_0\ll1$, yields a sufficient asymptotic condition for bounded $s_0$, since singular perturbation theory guarantees convergence as $e_0\to 0$, but no quantitative information can be inferred. In contrast, we use the results of Section~\ref{secdimone} to provide a correspondence to linear timescales.

The explicit calculation of $\varepsilon^*$ according to Proposition~\ref{tspropdimone}, i.e. determining the maximum of 
\begin{equation}
s\mapsto r(s):=\dfrac{\widehat\sigma_3}{\sigma_1\sigma_2},\quad 0\leq s\leq s_0
\end{equation}
involves the computation of the roots of the numerator of the derivative, thus of a parameter dependent cubic polynomial $q$ in $s$. The signs of all the coefficients\footnote{It is unproblematic to determine these explicitly, but the expressions are unwieldy.} are negative, except possibly the constant coefficient. By the Descartes rule of signs, the polynomial $q$ has at most one positive zero. If there exists no positive zero, then $r$ is strictly decreasing for $0\leq s< \infty$ and attains its maximum at $s=0$,\footnote{Straightforward computation yields a condition on $k_3$ that ensures the maximum of $r$ being attained at $s=0$.} and in any case one has
\[
\varepsilon^*\geq r(0)= \varepsilon_{MM}\cdot\cfrac{k_{-1}+k_2}{k_{-1}+k_2+k_{-3}+k_4} .
\]
If a positive zero $s^*$ exists\footnote{This case does occur.} then the maximum of $r$ will be attained there. 
An exact calculation via Cardano does not provide any palatable information, but an upper bound for $\varepsilon^*$ is obtained rather easily from the monotonicity of the $\sigma_j$:
\begin{equation}\label{varcoop}
\begin{array}{rcl}
    \varepsilon ^*  &\leq& \varepsilon\cfrac{\sup_{\widetilde Y \cap K} \widehat\sigma_3}{\inf_{\widetilde Y \cap K} \sigma_1 \inf_{\widetilde Y \cap K} \sigma_2}\\
    &=&{ \cfrac{k_1e_0}{k_{-1}+k_2}\cdot\bigg(\cfrac{k_3k_4s_0+k_2(k_{-3}+k_4)}{(k_{-1}+k_2+k_{-3}+k_4)(k_{-3}+k_4)} \bigg)}\\
    &=& \varepsilon_{MM}\cdot\bigg(\cfrac{k_3k_4s_0(k_{-1}+k_2)}{k_2(k_{-1}+k_2+k_{-3}+k_4)(k_{-3}+k_4)}+ \cfrac{k_{-1}+k_2}{k_{-1}+k_2+k_{-3}+k_4}\bigg)=:\varepsilon_C.
    \end{array}
\end{equation}
Comparing this to the lower estimate $r(0)$, one finds that the upper estimate by $\varepsilon_C$ is acceptable as long as $s_0$ is not too large, but weakens with increasing $s_0$.
As noted in Section~\ref{secdimone}, $\varepsilon^*$ -- and by extension $\varepsilon_C$ -- provides an estimate for the ratio of slowest to fastest timescale. Thus, smallness of $\varepsilon^*$ is a necessary condition, but it may not be sufficient when the fast timescales are far apart.

We therefore consider an estimate for the ratio of the slow timescale to the slower of the fast ones via $\mu^*$. It is straightforward to verify that $\sigma_1^2-4\sigma_2 \geq 0$, thus all eigenvalues are real, and Proposition~\ref{mu3dprop}(b) is applicable.
The explicit calculation of $\mu^*$ again involves a cubic polynomial in $s$, for $0\leq s\leq s_0$. In this case, the Descartes sign rule allows for two or no positive zeros, and there exist at most two local maxima for $0\leq s<\infty$. One of these is located at $s=0$, yielding in any case the lower estimate
\begin{equation}\label{mucooplow}
    \mu^*\geq \varepsilon_{MM}\cdot\cfrac{k_{-1}+k_2+k_{-3}+k_4}{k_{-3}+k_4},
\end{equation}
but an explicit computation of the maximum
provides little information. Instead, we again resort to an upper bound
\begin{equation}\label{mucoop}
\begin{array}{rcl}
   \mu^* &\leq& \varepsilon\cfrac{\sup_{\widetilde Y \cap K} \widehat\sigma_3\sup_{\widetilde Y \cap K} \sigma_1}{\inf_{\widetilde Y \cap K} \sigma_2^2}\\
   &=& \varepsilon_{MM}\cdot\bigg(\cfrac{k_3k_4s_0+k_2(k_{-3}+k_4)}{k_2(k_{-3}+k_4)}\cdot \cfrac{(k_1+k_3)s_0+k_{-1}+k_2+k_{-3}+k_4}{k_{-3}+k_4}\bigg) =:\mu_C.
\end{array}
\end{equation}

Comparison with \eqref{mucooplow} shows that the estimate by $\mu_C$ is satisfactory as long as $s_0$ is not too large, but it will become rather weak with increasing $s_0$.

All estimates involve the distinguished Michaelis--Menten parameter $\varepsilon_{MM}$, multiplied by some positive factor. For both estimates in \eqref{mucooplow}, \eqref{mucoop} this factor is $>1$.

%%%%%%%%%%%%%%%%%%%%%%%%%%%%%%%%%%%%%%%%%%%%%%%%%%%%%

%%%%%%%%%%%%%%%%%%%%%%%%%%%%%%%%%%%%%%%%%%%%%%%%%%%%%%%
\subsubsection{Numerical simulations}
While we have obtained asymptotic timescale estimates for given reaction parameters, these estimates are unsatisfactory for large substrate concentrations. Moreover, by its nature
our approach alone does not provide an upper estimate for the distance of the solution to the slow manifold. So, to obtain a priori gauge of the efficacy of (\ref{Cred}), it is natural to resort to numerical simulations. These simulations serve two purposes: a positive and a negative. On the positive side, they illustrate that the small parameters $\varepsilon^*$ and $\mu^*$ are good indicators for viability of the QSS reduction, in a wide parameter range. On the negative side, numerical examples highlight parameter combinations where consideration of $\varepsilon^*$ and $\mu^*$ is misleading. Such cases can be traced back to problems with the blanket assumptions from Section~\ref{setsubsec}, or with assumptions implicit in the proofs of Propositions~\ref{tspropdimone} and \ref{tspropdimonereal}.

We will consider some specific examples, and instead of relying on $\varepsilon_C$ and $\mu_C$ we will compute both $\varepsilon^*$ and $\mu^*$ numerically in the simulations that follow. This is still far less computationally involved than working with eigenvalues of linearizations on $\widetilde Y\cap K$. In the figures illustrating all the simulations, to show the behavior of trajectories over the interval $0\leq t<\infty$, time is mapped to
\[
\tau = t/T, \;\tau \in [0,1],
\]
 where the numerical solution has been computed on the interval $[0, T]$, and $T$ is chosen large enough to ensure the numerical simulations capture the long-time dynamics of the reaction. We start with some examples that document the efficacy of the parameters in ``normal'' parameter domains:
\begin{enumerate}
    \item 
In a first numerical study, we compare the numerical solution to the mass action equations~(\ref{MACO}) with the numerical solution to~(\ref{Cred}) in the scenario when all parameters except $e_0$ are of the same order of magnitude. In the simulations, all parameter values except $e_0$ are set equal to $1$, and $e_0$ is varied from $10^0$--$10^{-3}$. The simulation results are reported 
in {{\sc Figure}}~\ref{FIG1}, which reinforces the assertion that $\varepsilon^* \ll 1$ and $\mu^* \ll 1$ support the validity of (\ref{Cred}).  Moreover, we see that smallness of $\mu^*$ is the more relevant condition.
\begin{figure}[htb!]
  \centering
    \includegraphics[width=8.0cm]{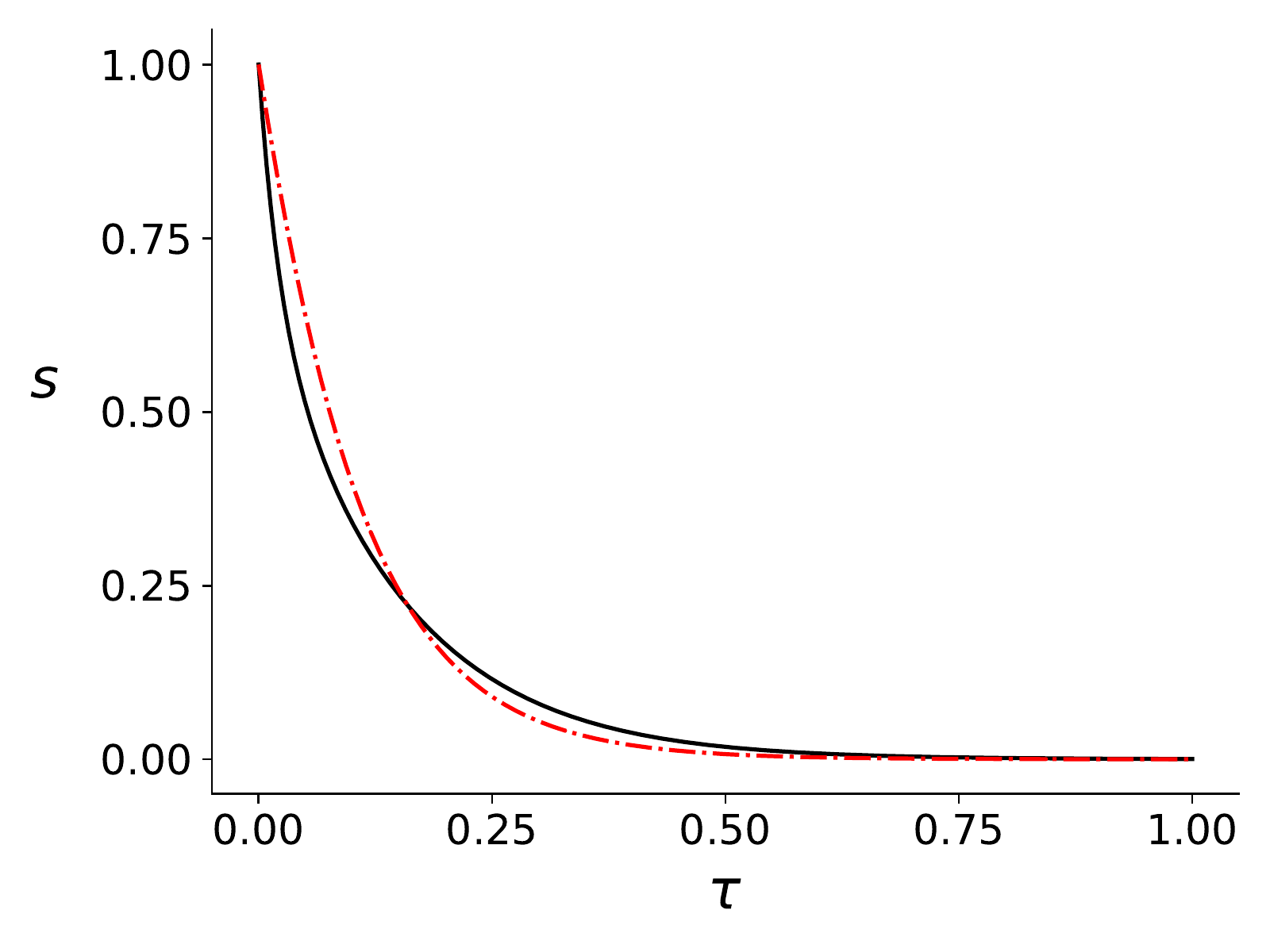}
    \includegraphics[width=8.0cm]{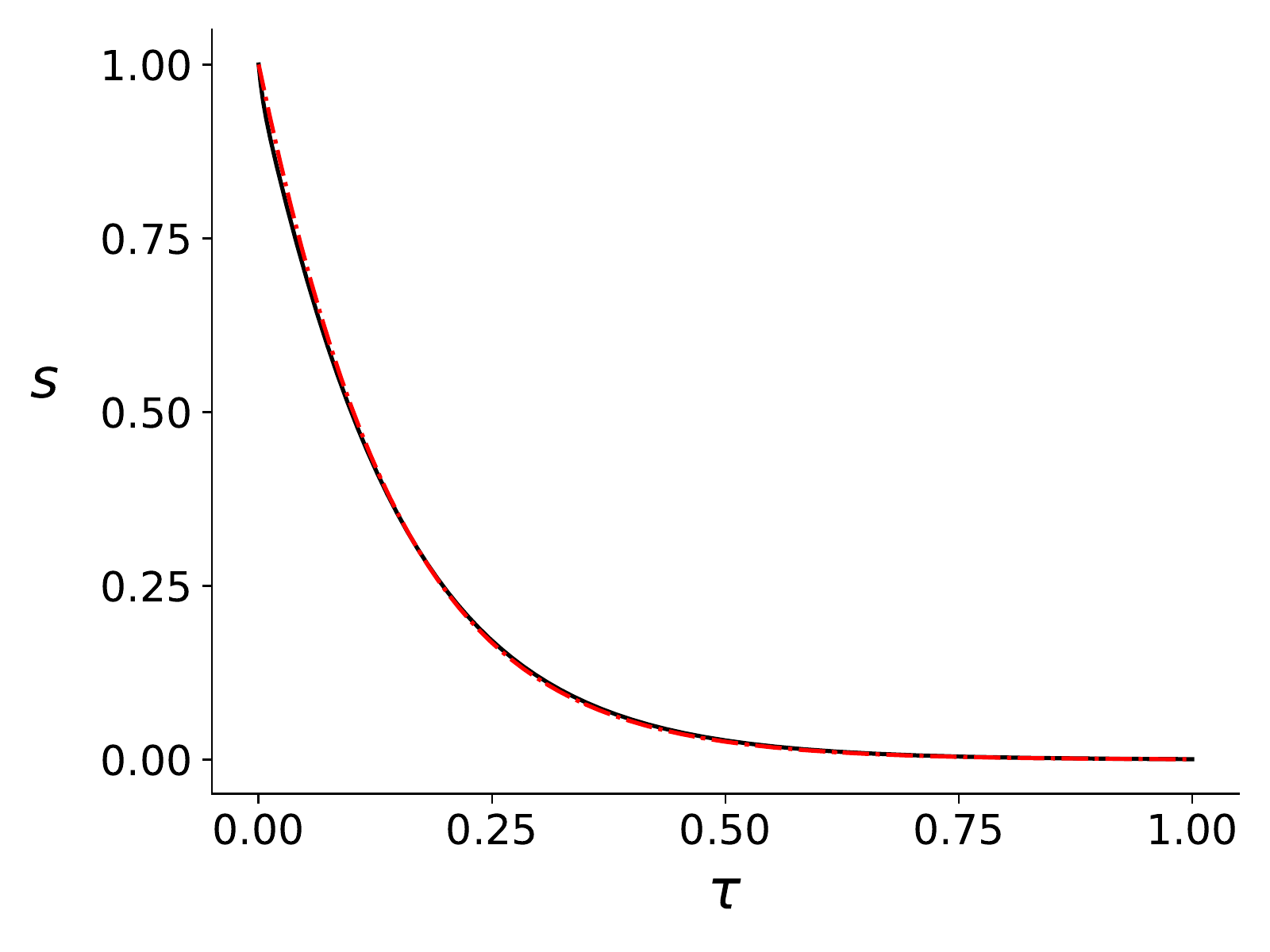}\\
    \includegraphics[width=8.0cm]{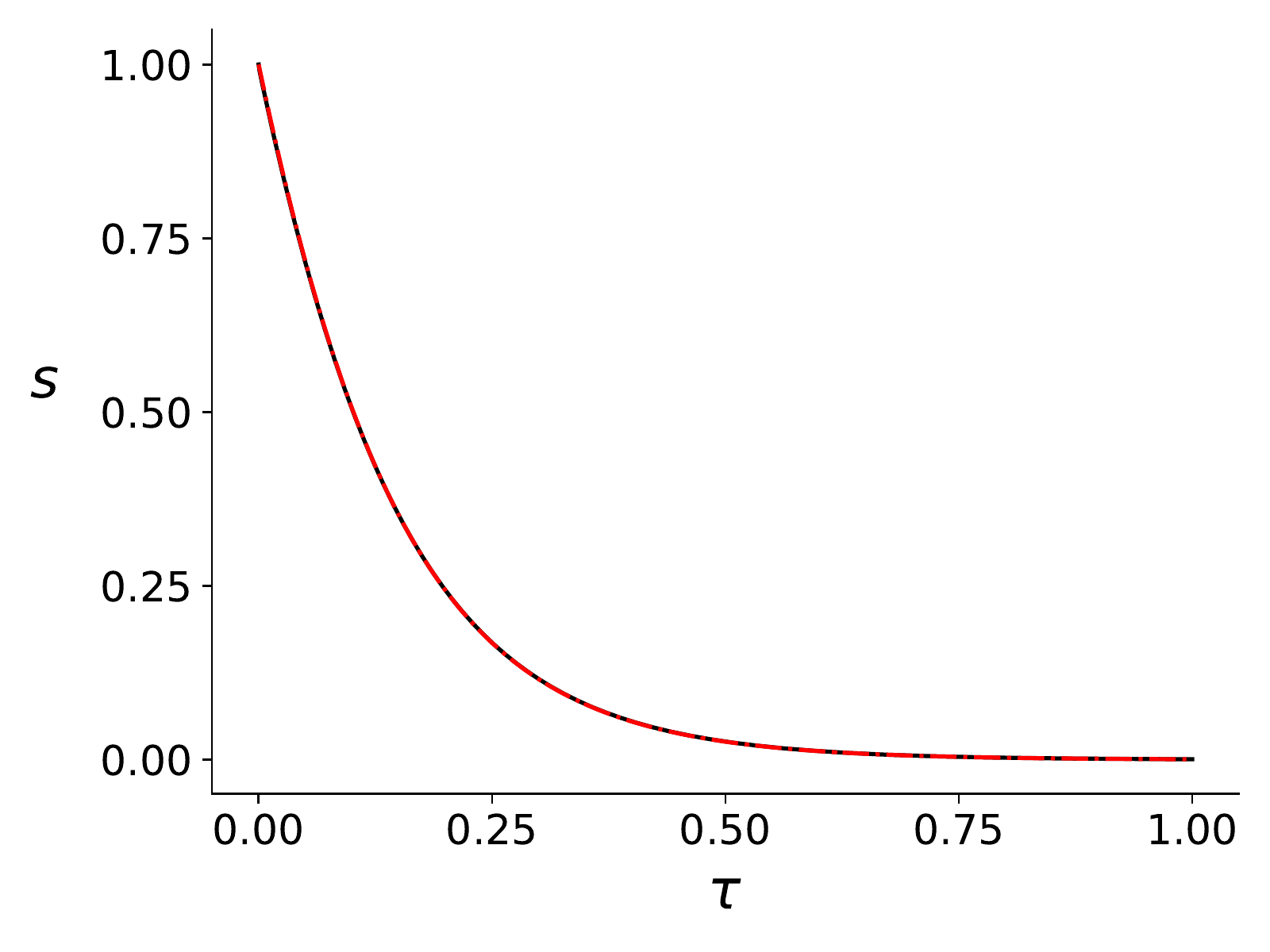}
    \includegraphics[width=8.0cm]{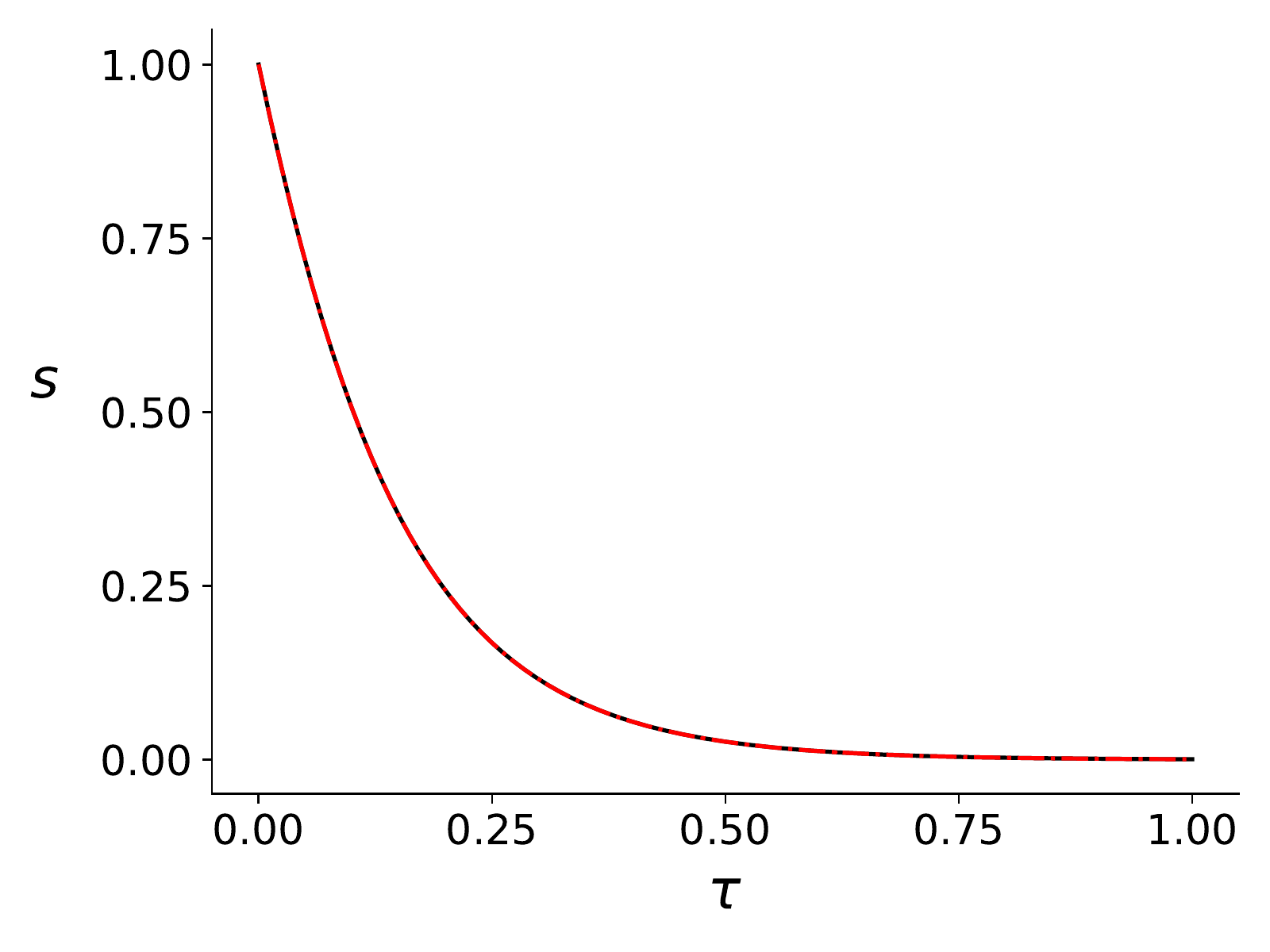}
\caption{\textbf{Cooperativity  reaction mechanism: Numerical simulations indicate that the accuracy of (\ref{Cred}) improves along the parameter ray direction as both $\varepsilon^* \to 0$ and $\mu^*\to 0$.} In both panels, the parameters (in arbitrary units) are: $s_0=1.0$, $k_1=1.0$, $k_2=1.0$, $k_{-1}=1.0$, $k_3=1.0$, $k_{-3}=1.0$ and $k_4=1.0$. Time has been mapped to the $\tau$ scale: 
$\tau = t/T$, \;$\tau \in [0,1]$. The solid black curve is the numerical solution to the mass action system~(\ref{MACO}). The broken red curve is the numerical solution to~(\ref{Cred}). {\sc{Top left panel}}: Simulation performed with $e_0=1.0$. The numerically-computed dimensionless parameters are: $\varepsilon^*=1.25 \times 10^{-1}, \mu^*=5 \times 10^{-1}$, and there is visible error. {\sc{Top Right panel}}: Simulation performed with $e_0=10^{-1}$. The numerically-computed dimensionless parameters are: $\varepsilon^*=1.25 \times 10^{-2}, \mu^*=5 \times 10^{-2}$. There is visible error, but the approximation (\ref{Cred}) appears to improve. {\sc{Bottom Left panel}}: Simulation performed with $e_0=10^{-2}$. The numerically-computed dimensionless parameters are: $\varepsilon^*=1.25 \times 10^{-3}, \mu^*=5 \times 10^{-3}$. The QSS reduction (\ref{Cred}) is virtually indistinguishable from (\ref{MACO}). {\sc{Bottom Right panel}}: Simulation performed with $e_0=10^{-3}$. The numerically-computed dimensionless parameters are: $\varepsilon^*=1.25 \times 10^{-4}, \mu^*=5 \times 10^{-4}$. The QSS reduction~(\ref{Cred}) is again virtually indistinguishable from (\ref{MACO}).
 } \label{FIG1}
\end{figure}

\item In a second numerical study, we examine a case with varied parameter values, but all (except $e_0$) within the same order of magnitude. The results are reported in {{\sc Figure}}~\ref{FIG2}, and once again support the claim that the accuracy of~(\ref{Cred}) improves as $\varepsilon^*\to 0$ and $\mu^* \to 0$, with higher relevance for $\mu^*$.
\begin{figure}[htb!]
  \centering
    \includegraphics[width=8.0cm]{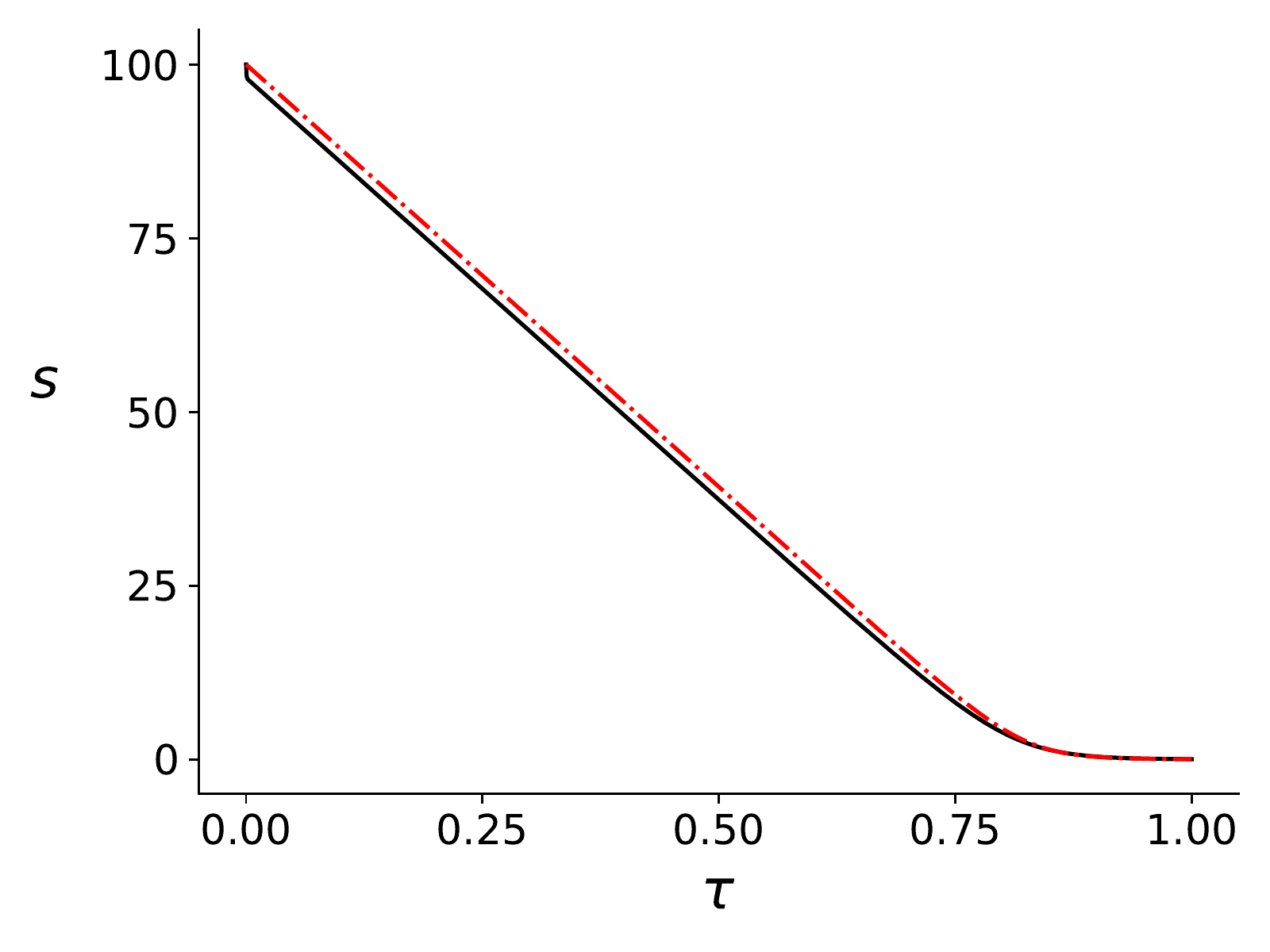}
    \includegraphics[width=8.0cm]{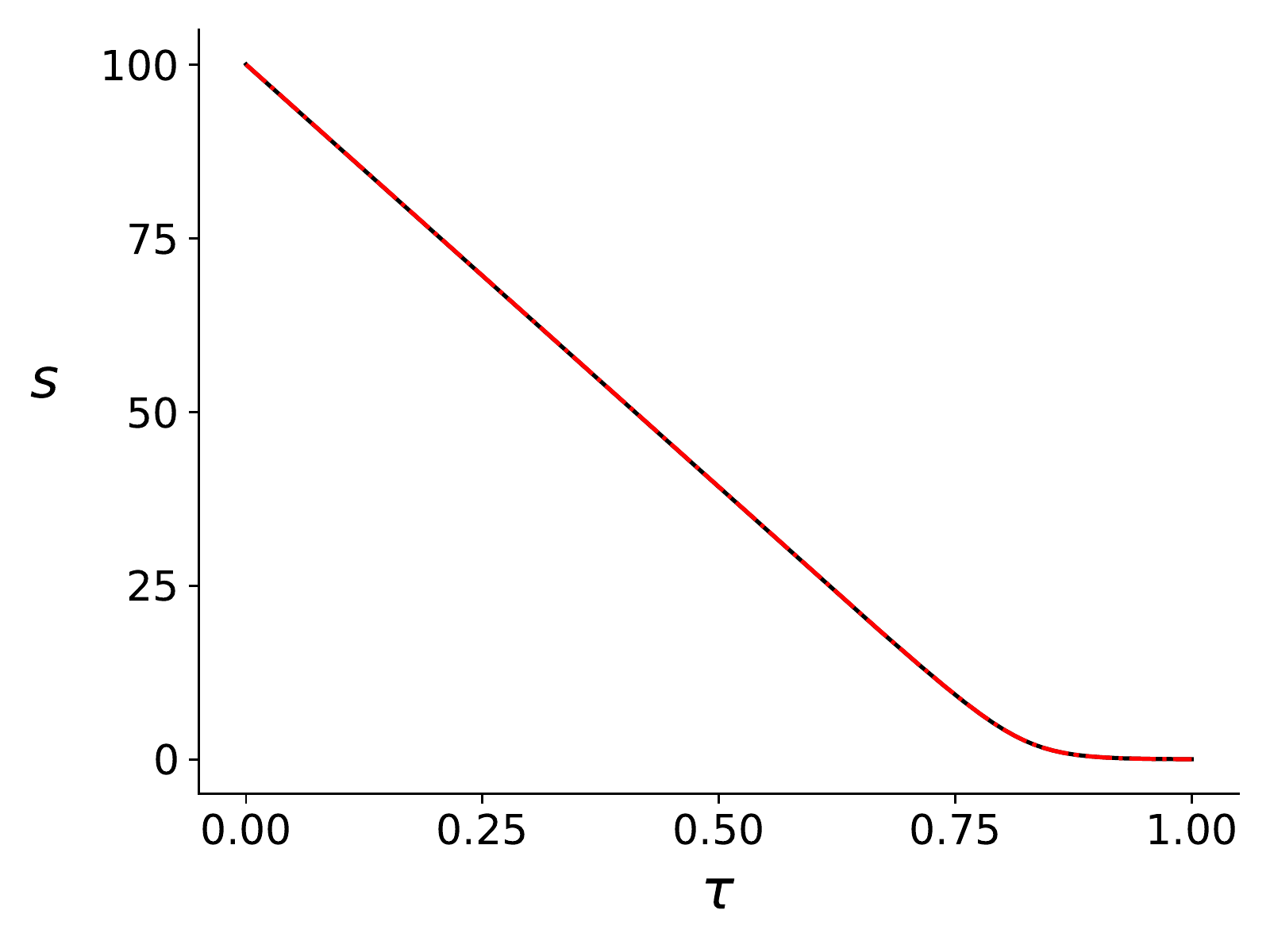}
\caption{\textbf{Cooperativity reaction mechanism: Numerically-computed $\bf{\mu^*}$ and $\bf{\varepsilon^*}$ give an a priori indication of the accuracy of~(\ref{Cred}).} In both panels, the parameters (in arbitrary units) are: $s_0=10^2$, $k_1=20$, $k_2=50$, $k_{-1}=50$, $k_3=10$, $k_{-3}=20$ and $k_4=40$.  The solid black curve is the numerical solution to the mass action system~(\ref{MACO}). The broken red curve is the numerical solution to~(\ref{Cred}). Time has been mapped to the $\tau$ scale: 
$\tau = t/T$, \;$\tau \in [0,1]$. {\sc{Left panel}}: $e_0=1.0$ and $\varepsilon^* = 6.25 \times 10^{-2}$ but $\mu^*$ is roughly $2.67 \times 10^{-1}$ and the QSS approximation~(\ref{Cred}) is inaccurate. {\sc{Right panel}}: $e_0=10^{-2}$, $\varepsilon^*$ is numerically estimated to be $6.25 \times 10^{-4}$ and $\mu^*$ is numerically-estimated to be roughly $2.67\times 10^{-3}$. In this simulation the validity of~(\ref{Cred}) clearly improves along the parameter ray $\rho=(e_0^*,0,\ldots,0)^{\rm tr}$ as $\mu^* \to 0.$ Thus, $e_0$ must be small enough so that $0<\mu^*\ll 1$ (recall that $\mu^* \ll 1$ implies $\varepsilon^* \ll 1).$ We see that $\varepsilon^* \ll 1$ provides a too optimistic prediction, and that $\mu^* \ll 1$ is a better indicator for the accuracy of the reduction~(\ref{MACO}).
 } \label{FIG2}
\end{figure}

\item As a third numerical example, we consider a combination of parameter values that are somewhat disparate in terms of the magnitudes. Nevertheless, we once again confirm that that the accuracy of~(\ref{Cred}) improves as $\varepsilon^*\to 0$ and $\mu^* \to 0$, again with higher relevance for $\mu^*$ (see, {{\sc Figure}}~\ref{FIG3}). This simulation also debunks the commonly accepted notion that $e_0/s_0 \ll 1$ is sufficient for the accuracy of~(\ref{Cred}).
\begin{figure}[htb!]
  \centering
    \includegraphics[width=8.0cm]{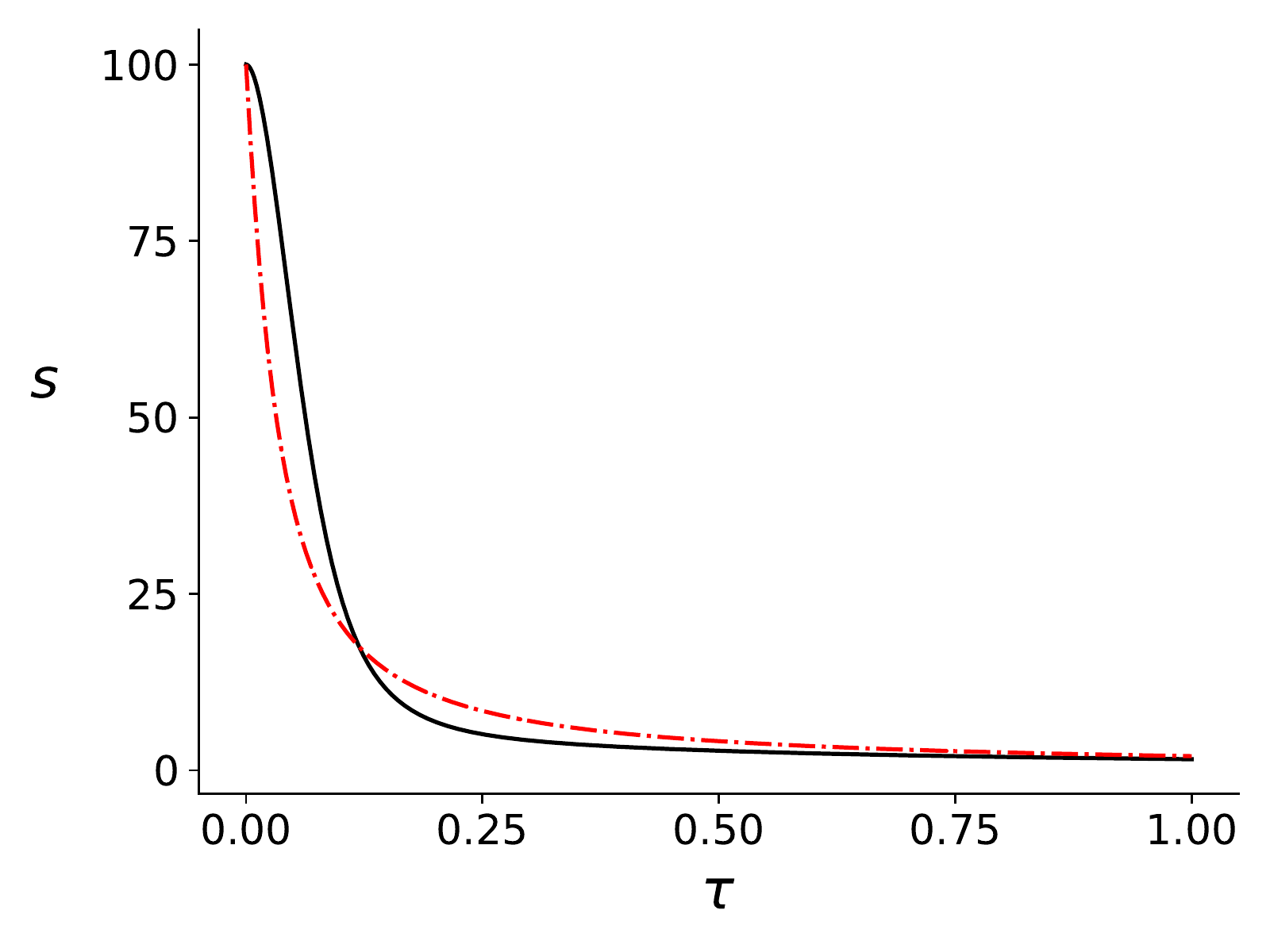}
    \includegraphics[width=8.0cm]{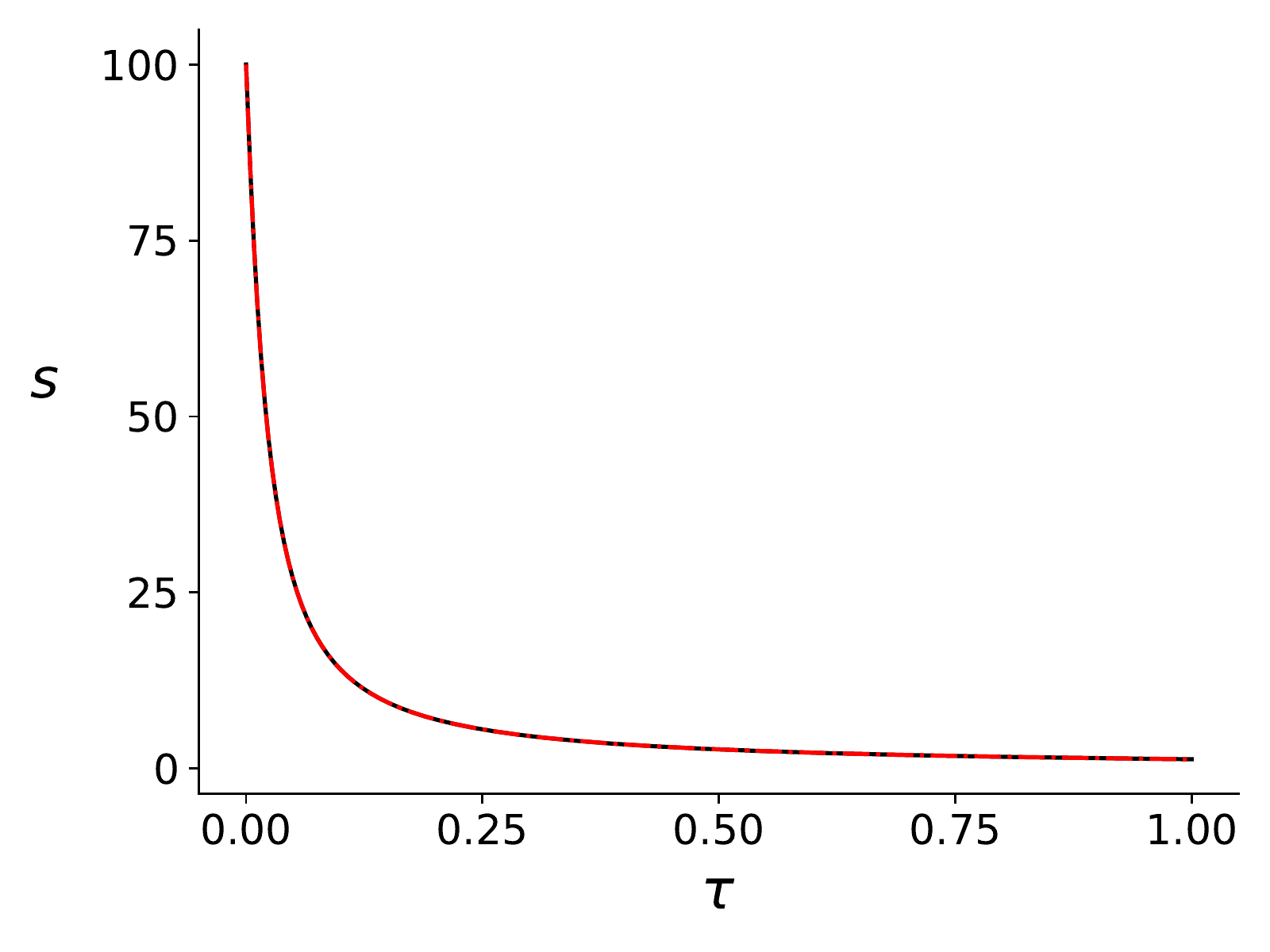}
\caption{\textbf{Cooperativity  reaction mechanism: Numerically-computed $\bf{\mu^*}$ and $\bf{\varepsilon^*}$ give an a priori indication of the long-time accuracy of (\ref{Cred}).} In both panels, the parameters (in arbitrary units) are: $s_0=100$, $k_1=1.0$, $k_2=k_{-1}=10^{2}$, $k_3=2\times 10^{3}$, $k_{-3}=k_4=10^5$. The solid black curve is the numerical solution to the mass action system~(\ref{MACO}). The broken red curve is the numerical solution to~(\ref{Cred}). Time has been mapped to the $\tau$ scale: 
$\tau = t/T$, \;$\tau \in [0,1]$. {\sc{Left panel}}: $e_0=1.0$ and $\varepsilon^* \approx 6.5\cdot 10^{-4}$ but $\mu^*$ is roughly $1.0$. {\sc{Right panel}}: $e_0=10^{-3}$, $\varepsilon^* \approx 6.5\times 10^{-7}$ and $\mu^* \approx 10^{-3}$; the reduction~(\ref{Cred}) is an excellent approximation to~(\ref{MACO}). Note that although $e_0/s_0 \ll 1$ the reduction (\ref{Cred}) is inaccurate: The failure in the left panel is immediate (and severe), despite the fact that $e_0/s_0=10^{-2}.$
 } \label{FIG3}
\end{figure}
\end{enumerate}
Throughout these simulations we observe that the magnitude of $\mu^*$ is more relevant for the quality of the QSS approximation than the magnitude of $\varepsilon^*$. This is in accordance with the results of Section~\ref{sonetimescalesubsec}.

%%%%%%%%%%
\subsubsection{Exceptional cases: Near-degeneracy and near-invariance}\label{CoSpecial}
Here, we briefly discuss two special scenarios with $\mu^*\gg 1$, but precede this by a word of caution. Obviously, whenever $\mu^*>1$, then the implicit assumptions in the proofs of Propositions~\ref{tspropdimonereal} and \ref{mu3dprop} are violated for the given values of $e_0$, and the propositions are not applicable in this range. To enable applicability, $\varepsilon_{\rm max}$ would have to be adjusted to a smaller value. However, the consideration of such extreme cases provides insight into the significance of various parameters.

The first case involves a near-degeneracy scenario. The critical variety contains a degenerate point and $1 \ll \mu^*$, while at first sight the QSS reduction~(\ref{Cred}) \textit{appears} to be highly accurate. In the second case, a two-dimensional nearly-invariant subspace emerges within phase space.  Here we present a description of the cases:
\begin{enumerate}
\item Consider the parameter point
\begin{equation*}
\pi^{\ddag} = (s_0,0,k_1,0,0,k_3,k_{-3},k_4)^{\text{tr}}, 
\end{equation*}
thus in addition to $e_0=0$ one has $k_{-1}=k_2=0$ (which is problematic in view of nondegeneracy conditions).
The associated critical variety, $Y$, consists of two intersecting lines of equilibria (and is therefore not a manifold)\footnote{Recall a similar scenario for Michaelis--Menten in Section~\ref{mmdegsubsec}.}
\begin{equation*}
    Y := \{(s,c_1,c_2)\in \mathbb{R}^3_{\geq 0}: c_1=c_2=0\} \cup \{(s,c_1,c_2)\in \mathbb{R}^3_{\geq 0}: s=c_2=0\}.
\end{equation*}
 The perturbation form of the mass action equations with $e_0 = \varepsilon e_0^*, k_2 = \varepsilon k_2^*$ and $k_{-1} =\varepsilon k_{-1}^*$ is
\begin{equation}
    \begin{pmatrix}\dot{s}\\\dot{c}_1\\\dot{c}_2\end{pmatrix} = \begin{pmatrix}\;\;(k_1-k_3) & k_1s+k_{-3}\\ -(k_1+k_3)& -k_1s+k_4+k_{-3}\\ k_3 & -(k_{-3}+k_4)\end{pmatrix}\begin{pmatrix}sc_1\\c_2\end{pmatrix} + \varepsilon \begin{pmatrix}k_1e_0^*s +k_{-1}^*c_1\\ k_1e_0^*s-(k_{-1}^*+k_2^*)c_1\\0\end{pmatrix}.
\end{equation}
In this case, the rank of the Jacobian is not constant 
\begin{subequations}
\begin{align*}
{\rm rank}\;\; D_1h(s,c_1,c_2,\pi^{\ddagger}) &=1,\;\;\text{if}\;\;(s_1,c_1,c_2)=(0,0,0);\\
{\rm rank}\;\; D_1h(s,c_1,c_2,\pi^{\ddagger})&= 2, \;\;\text{otherwise}.
\end{align*}
\end{subequations}
While the rank condition from Section~\ref{tfpvsubsec} fails\footnote{A dynamic transcritical bifurcation occurs at the point where the rank of $D_1h(x,\pi^{\ddagger})$ is $1$. See Krupa and Szmolyan~\cite{KrSz} for a general discussion of such scenarios. } on $Y$, it is straightforward to verify that the compact submanifolds defined by
\begin{subequations}
\begin{align*}
   \widetilde Y_{1} &:= \{(s,c_1,c_2)\in K: c_1=c_2=0 \;\;\text{and} \;\;s \geq \theta_1\}, \quad 0< \theta_1<s_0,\\
    \widetilde  Y_{2} &:= \{(s,c_1,c_2)\in K: s=c_2=0 \;\;\text{and} \;\;c_1 \geq \theta_2\}, \quad 0<\theta_2<e_0^*,
\end{align*}
\end{subequations}
are normally hyperbolic and attracting. Thus, for $\pi$ sufficiently close to $\pi^{\ddag}$, and for $s_0 >0$, trajectories will rapidly approach the attracting branch $ \widetilde Y_{1}$. Projection of the perturbation onto the tangent space of $ \widetilde Y_{1}$, according to \eqref{tfredeq}, yields
\begin{equation*}
   \begin{pmatrix} \dot{s} \\ \dot{c}_1\\\dot{c}_2 \end{pmatrix}= \varepsilon \begin{pmatrix} 1 & \cfrac{k_1(k_3s+k_4+k_{-3})-k_3k_4}{k_1(k_3s+k_4+k_{-3})} & \cfrac{(2k_3s+k_4+2k_{-3})k_1-k_3k_4}{k_1(k_3s+k_4+k_{-3})}\\ 0 & 0 &0 \\ 0& 0 &0\end{pmatrix}\begin{pmatrix}-k_1e_0^*s\\ \;\;k_1e_0^*s\\0\end{pmatrix} 
\end{equation*}
and the corresponding reduction on $ \widetilde Y_{1}$ is
\begin{equation}\label{MM2}
\dot{s} = -\cfrac{e_0k_4k_3s}{k_{-3}+k_4+k_3s}.
\end{equation}
Remarkably, one can recover (\ref{MM2}) by setting $k_{-1}=k_2=0$ in (\ref{Cred}). Thus, equation~(\ref{MM2}) can be viewed as a special case of (\ref{Cred}) in the limit of small $k_2$ and $k_{-1}$. Moreover, numerical simulations seem to indicate that the reduction~(\ref{Cred}) is valid over the full time course, even when $\mu^*$ is quite large (see, {{\sc Figure}}~\ref{FIGCfail}, {\sc Left panel}. But, this is illusory. Both (\ref{Cred}) and (\ref{MM2}) fail to approximate the depletion of $s$ near the origin, as the {\sc Right panel} shows. Thus, near-degeneracy scenarios can generate conditions in which (\ref{Cred}) may appear to yield an excellent approximation. But recall that small $\varepsilon^*$ combined with large $\mu^*$ indicates that two eigenvalues are small, and this necessarily prohibits the reduction from being valid over the complete time course. 
\begin{figure}[htb!]
  \centering
    \includegraphics[width=8.0cm]{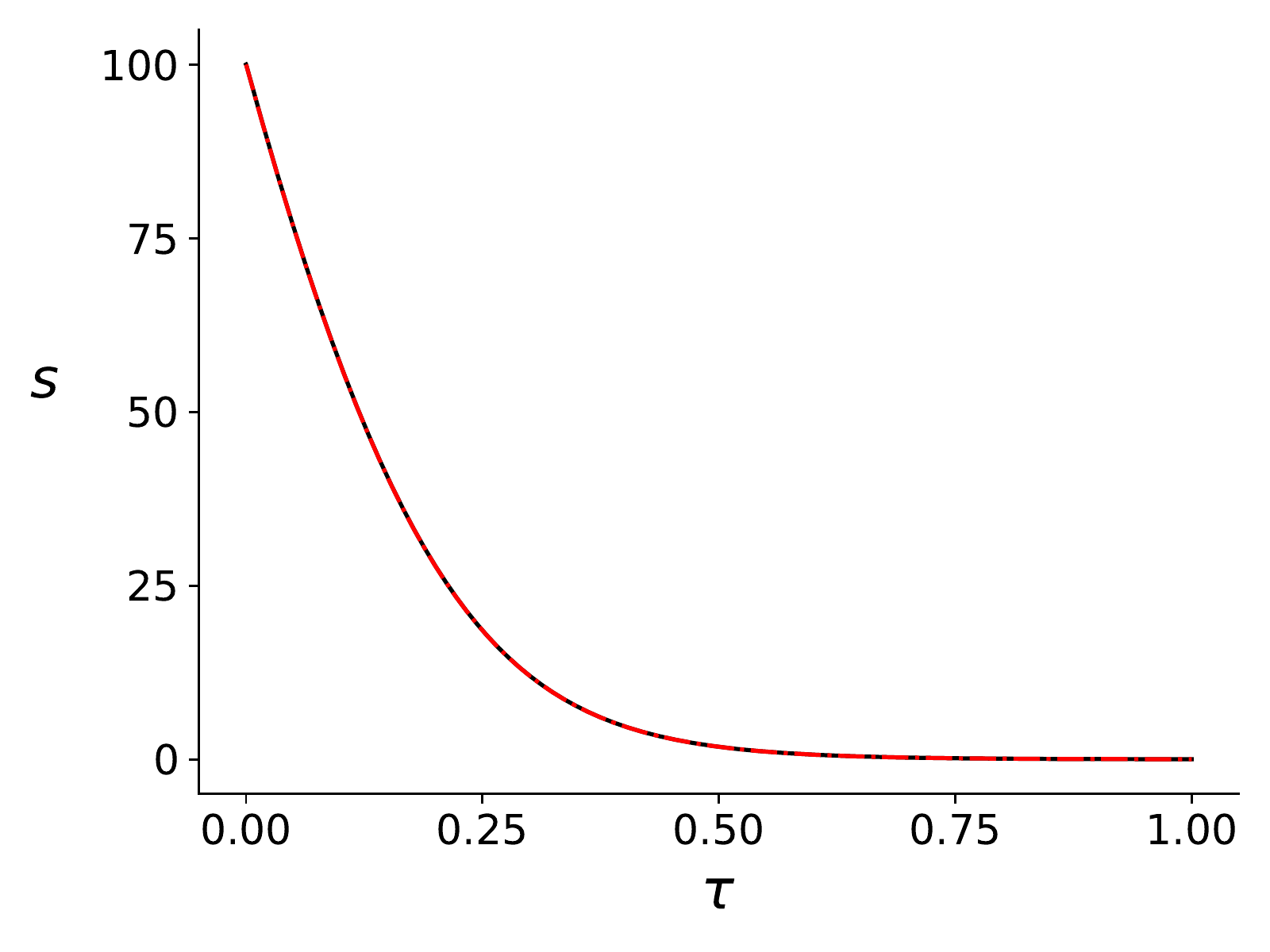}
    \includegraphics[width=8.0cm]{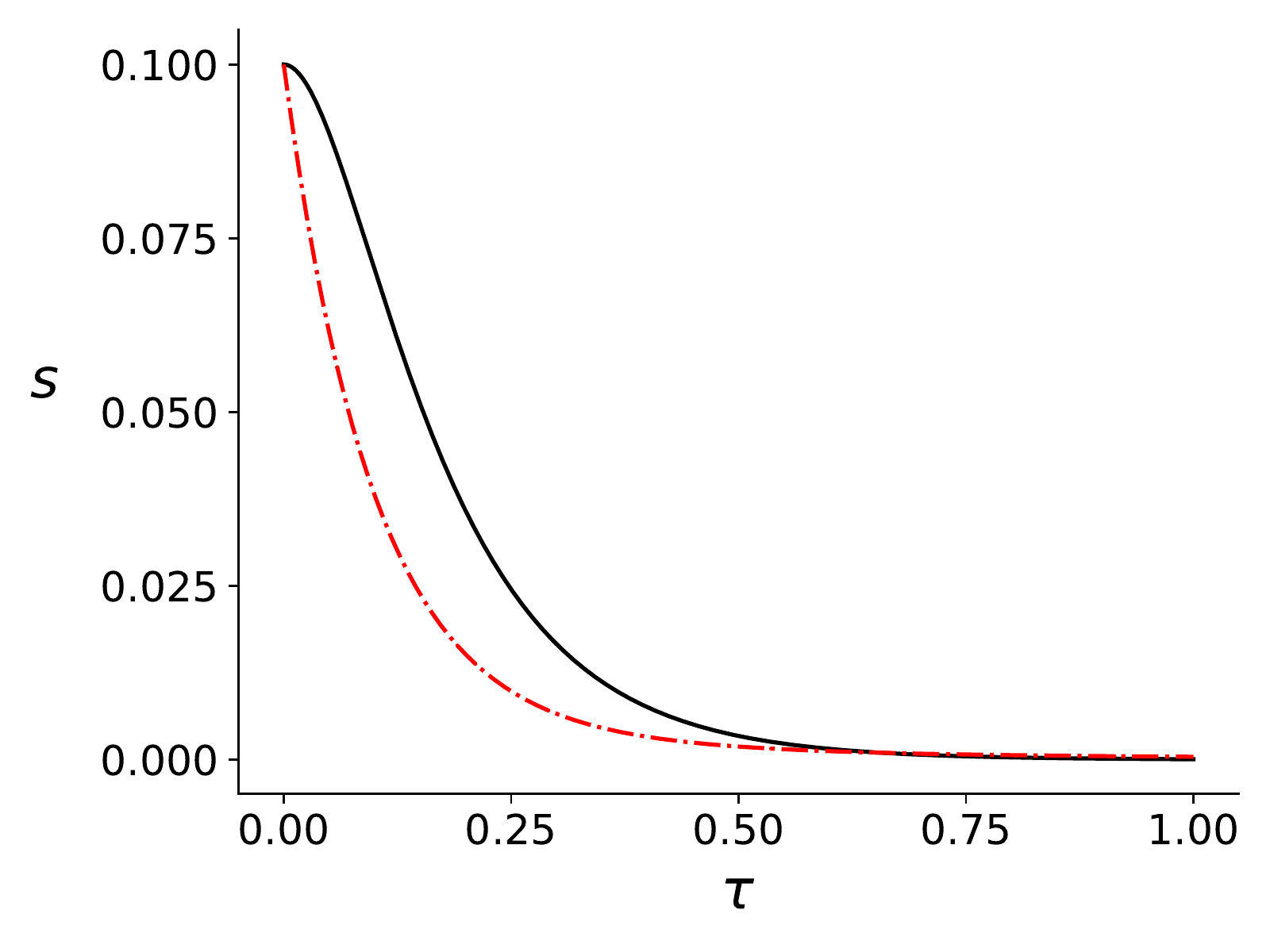}
\caption{\textbf{Cooperativity reaction mechanism: Numerically-computed $\bf{\mu^*}$ and $\bf{\varepsilon^*}$ give an a priori indication of the long-time accuracy of (\ref{Cred}).} In both panels, the parameters (in arbitrary units) are: $e_0=10^{-5}$, $k_1=1.0$, $k_2=k_{-1}=10^{-3}$, $k_3=10^{4}$, $k_{-3}=10^2$ and $k_4=10^6$. The solid black curve is the numerical solution to the mass action system~(\ref{MACO}). The broken red curve is the numerical solution to (\ref{Cred}). Time has been mapped to the $\tau$ scale: 
$\tau = t/T$, \;$\tau \in [0,1]$. {\sc{Left panel}}: $s_0=100$ and $\varepsilon^* \approx 10^{-8}$ but numerically-computed $\mu^*$ is roughly $10^1$. Nevertheless, the QSS reduction~(\ref{Cred}) appears to be very good. However, the reduction fails near the origin, which is not captured in the {\sc{Left panel}} due to limited resolution. {\sc{Right panel}}: $s_0=10^{-1}$, and the reduction~(\ref{Cred}) clearly fails to approximate the timecourse of $s$. This example illustrates that $\mu^*\ll 1$ is necessary for the long-time validity of (\ref{Cred}).
 } \label{FIGCfail}
\end{figure}

There are other degenerate scenarios for this reaction (for instance, $k_{-3}=k_4=0$ or $k_{-3}=k_4=k_2=k_{-1}=0$ with all other parameters bounded below by a positive constant). We will not further investigate these.

\item In the final numerical example of this case study, we exhibit a scenario for which (\ref{Cred}) provides a valid approximation even even though $1 \ll \mu^*$. This can happen, for instance, in the limit of small $e_0$ and small $k_3$. For $k_3 = 0$, the two-dimensional subspace $V := \{(s,c_1,c_2) \in \mathbb{R}^3 : c_2 = 0\}$ is invariant. One approach to such a scenario is to consider a singular perturbation reduction with both $e_0= \varepsilon e_0^{\star}, k_3 = \varepsilon k_3^{\star}$ of order $\varepsilon$. The perturbation form of the mass action system is
\begin{equation}
    \begin{pmatrix} \dot{s}\\\dot{c}_1\\\dot{c}_2\end{pmatrix}=\begin{pmatrix} k_1s + k_{-1} & k_1s+k_{-3} \\-k_1s-(k_{-1}+k_2) & \;\;k_1s + (k_{-3}+k_4) \\ 0 & -(k_{-3}+k_4)\end{pmatrix}\begin{pmatrix} c_1\\ c_2\end{pmatrix} + \varepsilon\begin{pmatrix}-k_1e_0^{*}s-k_3^{*}c_1s\\\;\;k_1e_0^{*}s-k_3^{*}c_1s\\k_3^{*}c_1s\end{pmatrix},
\end{equation}
with the critical manifold given by $c_1=c_2=0$.
Projection onto the critical manifold according to \eqref{tfredeq} yields
\begin{equation*}
\begin{pmatrix} \dot{s}\\\dot{c}_1\\\dot{c}_2\end{pmatrix} =\varepsilon \begin{pmatrix} 1 & \cfrac{k_1s+k_{-1}}{k_1s+k_{-1}+k_2} & \cfrac{k_1s(k_2+k_4)+k_4k_{-1}-k_2k_{-3}}{(k_1s+k_{-1}+k_2)(k_{-3}+k_4)}\\ 0 & 0 &0 \\ 0 & 0 &0\end{pmatrix}\begin{pmatrix}-k_1e_0^{*}s\\\;\;k_1e_0^{*}s\\0\end{pmatrix},
\end{equation*}
and thus the QSS reduction
\begin{equation*}
    \dot{s} = -\cfrac{k_1k_2e_0s}{k_1s+k_{-1}+k_2},
\end{equation*}
which corresponds to the sQSSA of the MM reaction mechanism.

One may regard this also from a different perspective: For fixed $k_3$, one obtains the reduction~(\ref{Cred}). Then, letting $k_3\to 0$ yields the Michaelis--Menten equation. Notably, the lower estimate \eqref{mucooplow} for $\mu^*$ is independent of $k_3$, and thus large $\mu^*$ will remain large as $k_3\to 0$. On the other hand, the upper estimate for $\varepsilon^*$ decreases as $k_3\to 0$. We recover the Michaelis--Menten equation, because a slight perturbation to $k_3=0$ results in $V$ being \textit{nearly} invariant (see, e.g.\ Goeke et al.~\cite{gwz3} for the notion). Biochemically, near invariance of $V$ is equivalent to gradually “turning off” the cooperative mechanism, since the secondary complex $C_2$ is being produced at a very 
small rate. Mathematically, the near invariance of $V$ implies for the given initial values, thus $c_2(0)=0$, that the relevant dynamics are essentially two-dimensional even prior to reduction, and further reduction to a one-dimensional manifold depends only on a single eigenvalue ratio. In the simulation example, the fast eigenvalue with smaller absolute value -- which generally is responsible for the slow-fast separation -- has negligible influence, since the dynamics evolves on an invariant manifold very near $c_2 = 0$. Consequently $\varepsilon^*$ (or indeed $\varepsilon_{MM}$) is the relevant quantity rather than $\mu^*$; see {{\sc Figure}}~\ref{FIG2D}.
\end{enumerate}
\begin{figure}[htb!]
  \centering
    \includegraphics[width=8.0cm]{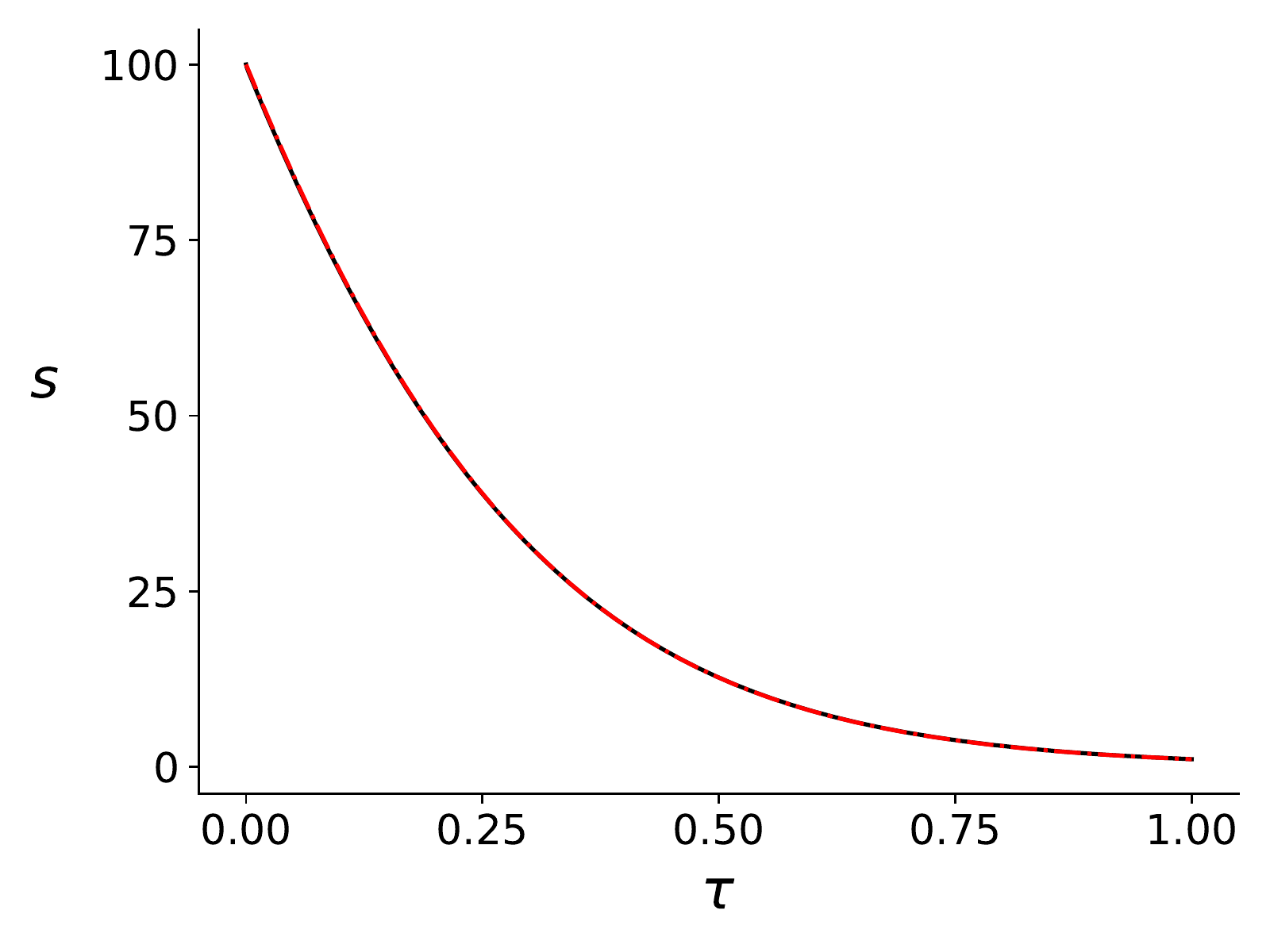}
    \includegraphics[width=8.0cm]{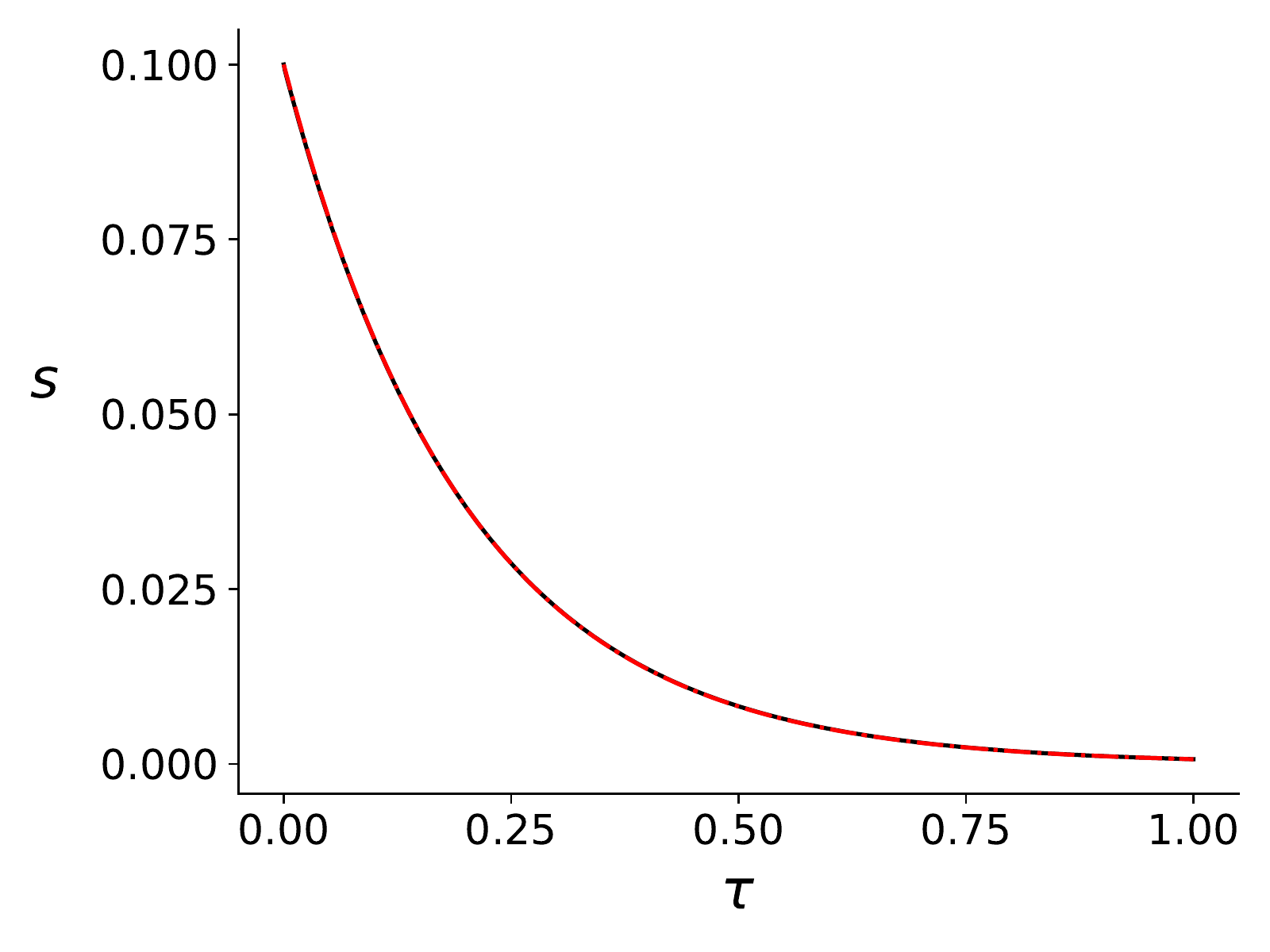}
\caption{\textbf{Cooperative reaction mechanism with nearly invariant subspace: When the three-dimensional dynamics is nearly two-dimensional, $\bf{\varepsilon^*}$ provides a good a priori measure for the accuracy of (\ref{Cred})}. In both panels the solid black curve is the numerical solution to the mass action equations~(\ref{MACO}). The broken red curve is the numerical solution to the QSS reduction~(\ref{Cred}). Time has been mapped to the $\tau$ scale: $\tau = t/T$, \;$\tau \in [0,1]$, and the parameters (in arbitrary units) are: $e_0=1.0$, $k_1=1.0$, $k_2=k_{-1}=10^{2}$, $k_3=10^{-5}$, $k_{-3}=10^{-1}$ and $k_4=10^{-1}$.   {\sc{Left panel}}: $s_0=100$ and $\varepsilon^* \approx 2.5 \times 10^{-3}$ but the numerically-computed $\mu^*$ is roughly $2.5$. Nevertheless, the QSS reduction~(\ref{Cred}) is very accurate. By near-invariance of $V$, (\ref{Cred}) effectively reduces to the Michaelis--Menten equation, and the pertinent dynamics unfold in the two-dimensional subspace $V$. {\sc{Right panel}}: $s_0=10^{-1}$, and we have confirmation that the long-time accuracy of the reduction~(\ref{Cred}) holds, even though $\mu^*\approx2.5$ remains of order unity.}
  \label{FIG2D}
\end{figure}

%%%%%%%%%%%%%%%%%%%%%%%%%%%%%%%%%%%%%%%%%%%%%%%%%%%%%%%%%%%%%%%%%%%%%%%%%%%%
\subsection{Uncompetitive inhibition reaction mechanism}\label{UN}
The irreversible MM reaction mechanism in the presence of an uncompetitive inhibitor
\begin{equation}\label{uncompet}
\begin{array}{rcccl}
E+S&\overset{k_1}{\underset{k_{-1}}{\rightleftharpoons}}&
C_1&\overset{k_2}{\rightharpoonup}&E+P,\\
C_1+I&\overset{k_{3}}{\underset{k_{-3}}{\rightleftharpoons}}&
C_2,& & 
\end{array}
\end{equation}
is modelled deterministically by the system
\begin{equation}\label{MAUn}
\begin{array}{rclclclclcl}
\dot s=&-& k_1(e_0-c_1-c_2)s&+&k_{-1}c_1, &     \\
\dot c_1=&& k_1(e_0-c_1-c_2)s&-&(k_{-1}+k_2)c_1&    -& k_3(i_0-c_2)c_1& + & k_{-3}c_2,\\
\dot c_2=& & k_3 (i_0-c_2)c_1    &- &k_{-3} c_2,
\end{array}
\end{equation}
via stoichiometric conservation laws. The standard initial conditions are $(s,c_1,c_2)(0)=(s_0,0,0)$. We fix a reference value $e_0^*$ and obtain from the conservation laws the compact positively invariant set
\begin{equation*}
    K:=\{(s,c_1,c_2)\in \mathbb{R}^3_{\geq 0}: 0 \leq s\leq s_0,\;\; c_1+c_2 \leq e_0^*, \;\;c_2 \leq \min\{e_0^*,i_0\}\}.
\end{equation*}

The parameter space $\Pi=\mathbb R_{\geq 0}^8$ has elements $\pi=(e_0,s_0,k_1,k_{-1},k_2,k_3,k_{-3},i_0)^{\rm tr}$. Given suitable nondegeneracy conditions on the parameters (to be specified below), setting $e_0=0$ defines a TFPV for dimension one:
\[
\widehat \pi:=(0,s_0,k_1,k_{-1},k_2,k_3,k_{-3},i_0)^{\rm tr},
\]
with associated critical manifold 
\begin{equation*}
    \widetilde Y:=\{(s,c_1,c_2)\in \mathbb{R}^3_{\geq 0}: c_1=c_2=0\}.
\end{equation*}
We set $\rho=(e_0^*,0,\ldots,0)^{\rm tr}$ and consider the ray $\varepsilon\mapsto \widehat\pi+\varepsilon\rho$ in parameter space. Then, the 
perturbed system has the form
\begin{equation}\label{upert}
    \begin{pmatrix}\dot{s}\\\dot{c}_1\\\dot{c}_2\end{pmatrix}=\begin{pmatrix}k_1s+k_{-1}&k_1s\\-k_1s-(k_{-1}+k_2)-k_3(i_0-c_2) & -k_1s+k_{-3}\\k_3(i_0-c_2)&-k_{-3}\end{pmatrix}\begin{pmatrix}c_1\\c_2\end{pmatrix} + \varepsilon\begin{pmatrix}-k_1e_0^{*}s\\\;\;k_1e_0^{*}s\\0\end{pmatrix}.
\end{equation}
According to \eqref{tfredeq}, the singular perturbation reduction of (\ref{upert}) is given by
\begin{equation}
  \begin{pmatrix}\dot{s}\\\dot{c}_1\\\dot{c}_2\end{pmatrix}=\varepsilon \begin{pmatrix}1 & \cfrac{(k_1s+k_{-1})k_{-3}+i_0k_1k_3s}{(k_1s+k_2+k_{-1})k_{-3}+i_0k_1k_3s} & \cfrac{(k_1s+k_{-1})k_{-3}+(i_0k_3+k_2)k_1s}{(k_1s+k_2+k_{-1})k_{-3}+i_0k_1k_3s}\\ 0& 0&0\\ 0&0&0\end{pmatrix} \begin{pmatrix}-k_1e_0^{*}s\\\;\;k_1e_0^{*}s\\0\end{pmatrix},
\end{equation}
thus $\dot{c}_1=\dot{c}_2=0$ and
\begin{equation}\label{UQSS}
    \dot{s}=-\cfrac{k_1e_0k_2k_{-3}\,s}{(k_1s+k_2+k_{-1})k_{-3}+i_0k_1k_3s},\quad s(0)=s_0,
\end{equation}
in the limiting case of small $e_0=\varepsilon e_0^*$, up to errors of order $\varepsilon^2$.

The reduced equation~(\ref{UQSS}) has been previously reported in the literature (see, e.g. Schnell and Mendoza~\cite{inhibition}. It is different from the classical QSS reduction, which is obtained by substituting exact equations for the $c_1$-- and $c_2$--nullclines into (\ref{MAUn}). But, in accordance with Goeke et al.~\cite[Proposition 5]{gwz3}, the difference between the classical reduction and (\ref{UQSS}) will be of order $\varepsilon^2$. Typically, in numerical simulations there will only be noticeable differences between the classical reduction and the Fenichel reduction at very large substrate concentrations.

%%%%%%%%%%%%%%%%%%%%%%%%%%%%%%%%%%%%%%%%%%%%%%%%%
\subsubsection{Asymptotic small parameters}
On $\widetilde Y\cap K$, we have at $\pi=\widehat\pi$
\[
\begin{array}{rcl}
\sigma_1&=&k_1s+k_{-1}+k_2+k_3i_0+k_{-3},\\
\sigma_2&=& k_1s(k_3i_0+k_{-3})+(k_{-1}+k_2)k_{-3},\\
\widehat\sigma_3&=& k_2k_1e_0^*k_{-3}.
\end{array}
\]
As always, we assume that all the parameters are contained in a suitable compact subset of parameter space, in particular they are bounded above by positive constants. The TFPV property requires, in addition, that $\sigma_1$ and $\sigma_2$ are bounded below on $K\cap\widetilde Y$ by positive constants, thus 
\[
\begin{array}{rcl}
    k_3i_0+k_{-3}+k_{-1}+k_2&=&\min\sigma_1>0,\\
    (k_{-1}+k_2)k_{-3}&=&\min\sigma_2>0,
\end{array}
\]
and therefore the TFPV conditions hold if and only if both  $k_{-1}+k_2$ and $k_{-3}$ are bounded below by positive constants. Moreover, the reduction \eqref{UQSS} should be significantly different from a trivial equation, hence one also requires $k_2$ to be bounded below by some positive constant. No lower bound for $k_3i_0$ is imposed by the TFPV conditions, but note that \eqref{UQSS} approaches the  Michaelis--Menten equation as $i_0\to 0$ or $k_3\to 0$. We will discuss this scenario below.

As before, we will obtain usable estimates for the timescale ratio from Propositions~\ref{tspropdimone}, \ref{tspropdimonereal} and \ref{mu3dprop}. For uncompetitive inhibition, the maxima can be determined explicitly.

The distinguished small parameter $\varepsilon^*$, with $\sigma_1,\sigma_2$ and $\widehat\sigma_3$ evaluated at $\pi=\widehat \pi$, may be determined from 
\begin{equation}\label{Ueps}
\begin{array}{rcl}
 \varepsilon ^* &=&\varepsilon\max_{0\leq s\leq s_0} \cfrac{\widehat{\sigma}_3(s,\widehat\pi,\rho,0)}{\sigma_1(s,\widehat\pi)\sigma_2(s,\widehat\pi)}\\
     &=&\varepsilon \cfrac{\widehat{\sigma}_3(0,\widehat\pi,\rho,0)}{\sigma_1(0,\widehat\pi)\sigma_2(0,\widehat\pi)}\\
      &=&\cfrac{k_2k_1e_0}{(k_{-1}+k_2)^2}\cdot  \cfrac{k_{-1}+k_2}{k_3i_0+k_{-3}+k_{-1}+k_2}\\
      &=&\varepsilon_{MM}\cdot  \cfrac{k_{-1}+k_2}{k_3i_0+k_{-3}+k_{-1}+k_2} =:\varepsilon_{U},
    \end{array}
\end{equation}
with the distinguished parameter $\varepsilon_{MM}$ from the MM reaction mechanism. Note that to see why the first equality sign in \eqref{Ueps} holds, you can determine the derivative and verify that it is negative for $s\geq 0$.

It is straightforward to verify that all eigenvalues are real, since $\sigma_1^2-4\sigma_2 \geq 0$. Thus, from $\sigma_1,\sigma_2$ and $\widehat\sigma_3$ evaluated at $\widehat\pi$, the parameter $\mu^*$ is obtained from 
\begin{equation}\label{Umu}
\begin{array}{rcl}
    \mu^* &=& \varepsilon\max_{0\leq s\leq s_0} \cfrac{\widehat\sigma_3(s,\widehat\pi,\rho,0)\sigma_1(s,\widehat\pi)}{\sigma_2(s,\widehat\pi)^2}\\  &=&\cfrac{k_2k_1e_0}{(k_{-1}+k_2)^2}\cdot\bigg(\cfrac{k_3i_0+k_2+k_{-1}+k_{-3}}{k_{-3}}\bigg)\\
    &=& \varepsilon_{MM}\cdot\bigg(\cfrac{k_3i_0+k_2+k_{-1}+k_{-3}}{k_{-3}}\bigg) =:\mu_U.
    \end{array}
\end{equation}
Note that the first equality holds, because the derivative of 
\[s\mapsto \displaystyle {\frac{\widehat\sigma_3(s,\widehat\pi,\rho,0)\sigma_1(s,\widehat\pi)}{\sigma_2(s,\widehat\pi)^2}}
\] 
is negative for all $s\geq 0$.

%%%%%%%%%%%%%%%%%%%%%%%%%%%%%%%%%%%%%%%%%%%%%%%
\subsubsection{Numerical simulations}
We now turn to numerical simulations, with the same dual motivation as in Section~\ref{Co}.
Parallel to our analysis of (\ref{varcoop}) and (\ref{mucoop}), we discuss the reliability of the qualifiers $\varepsilon_{U}\ll 1$ and $\mu_U \ll 1$ in gauging the validity of (\ref{UQSS}):
\begin{enumerate}
    \item We begin with the special case  $\pi=(e_0,1,1,1,1,1,1,1)$, representing a scenario where all parameters except $e_0$ are of the same order $1$, and vary $e_0$ from $1$ to $10^{-3}$. The results are reported in {{\sc Figure}}~\ref{FIG6}, and collectively support the statement that (\ref{UQSS}) holds when $\mu_U$ is sufficiently less than $1$. With all ``non-small'' parameters having the same order, one also sees that sufficiently small $\varepsilon_{U}$ suffices.
\begin{figure}[!htb]
  \centering
    \includegraphics[width=8.0cm]{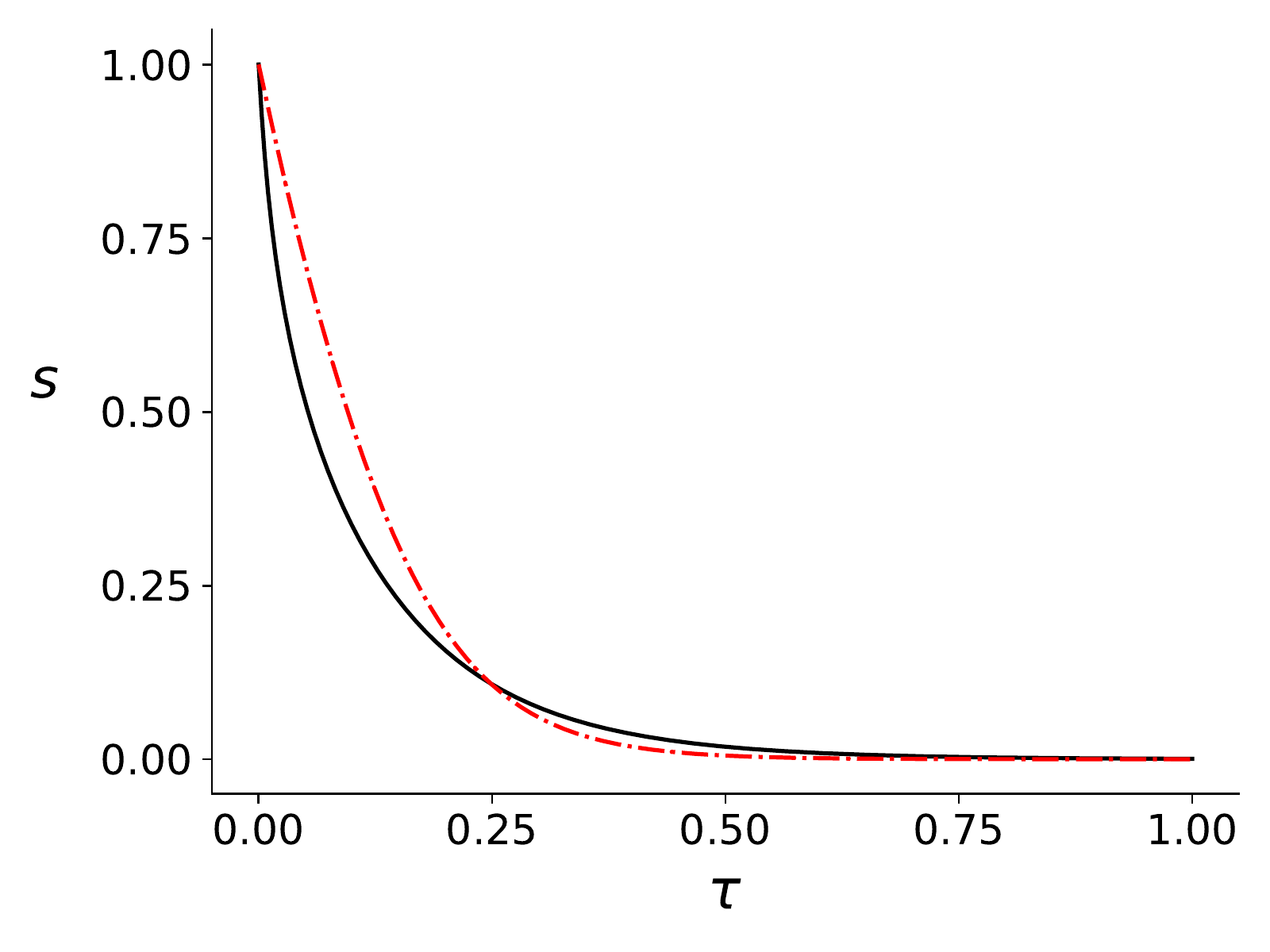}
    \includegraphics[width=8.0cm]{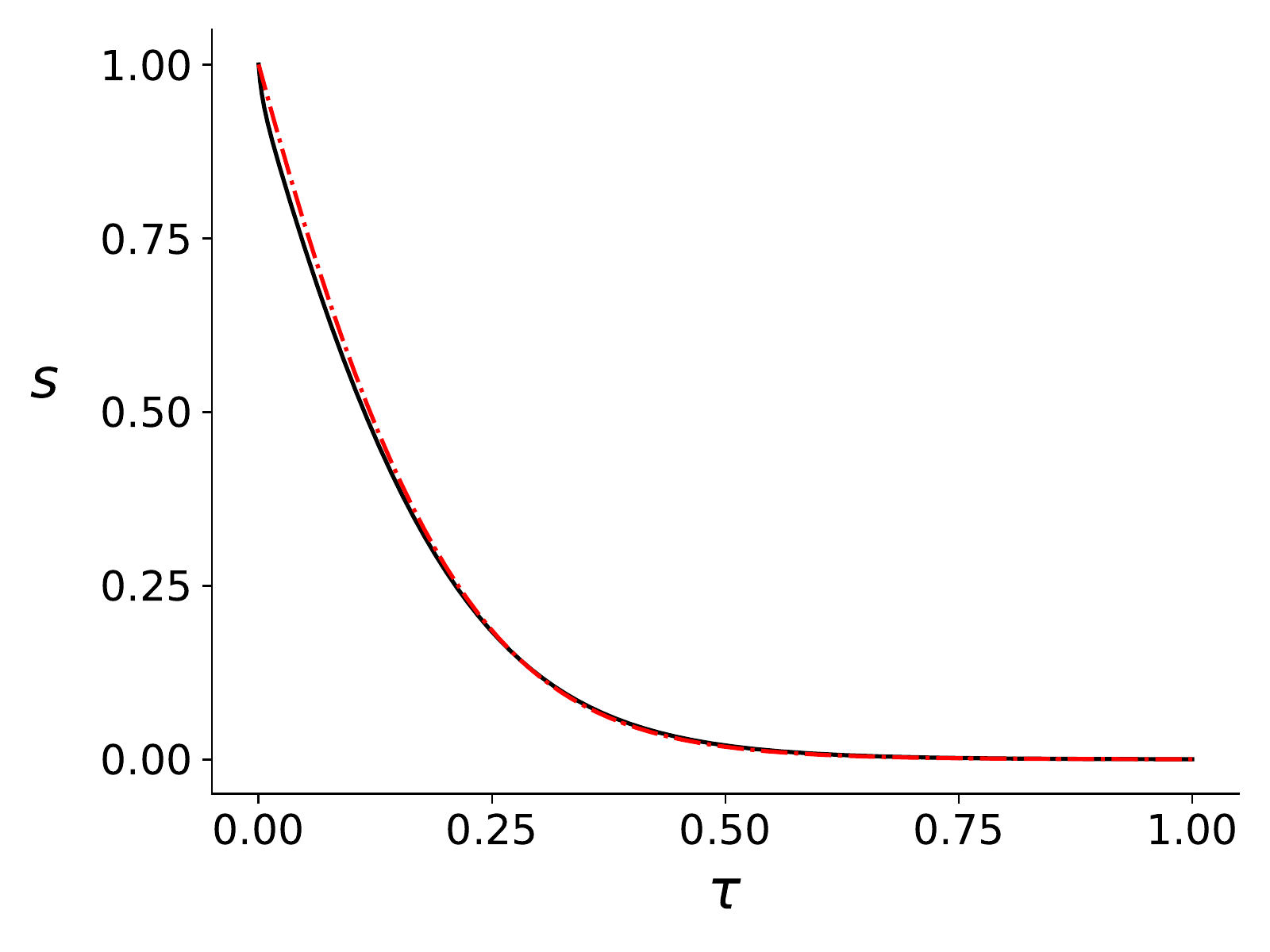}\\
    \includegraphics[width=8.0cm]{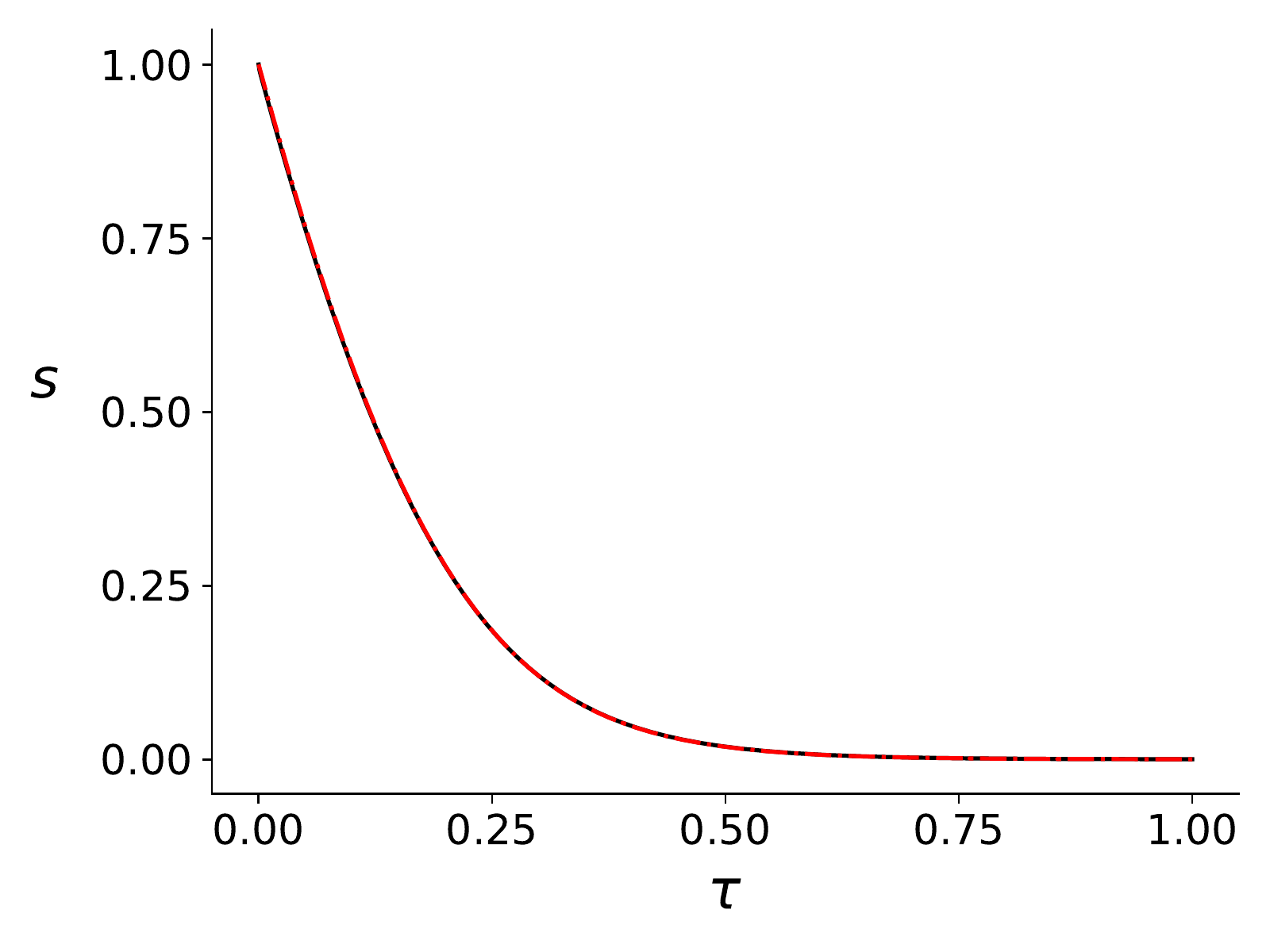}
    \includegraphics[width=8.0cm]{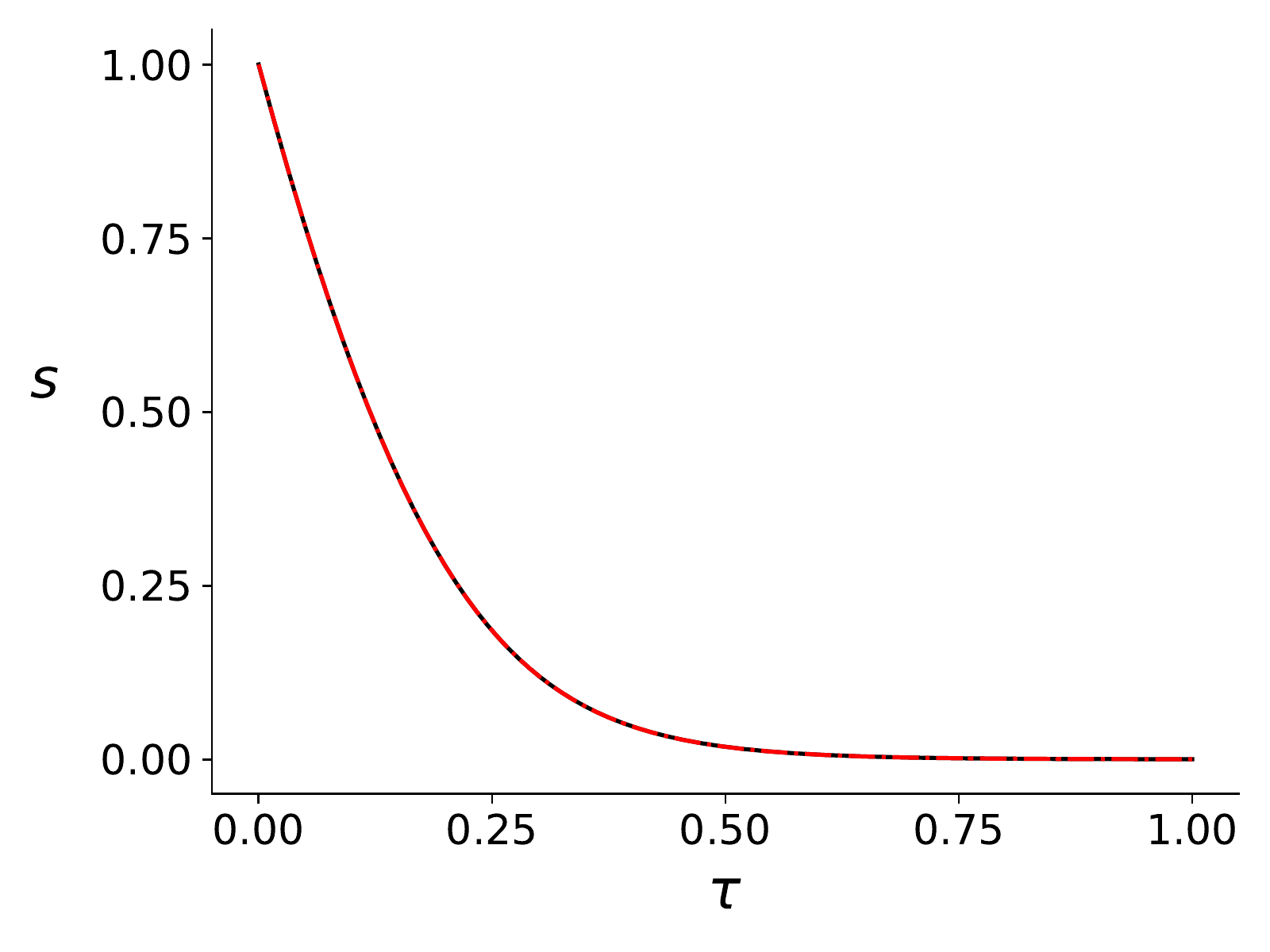}
\caption{\textbf{Uncompetitive inhibition reaction mechanism: Numerical simulations indicate that the accuracy of (\ref{UQSS}) improves as both $\varepsilon_{U} \to 0$ and $\mu_U\to 0$  along the parameter ray direction}. In both panels, the parameters (in arbitrary units) are: $s_0=1.0$, $k_1=1.0$, $k_2=1.0$, $k_{-1}=1.0$, $k_3=1.0$, $k_{-3}=1.0$ and $i_0=1.0$. The solid black curve is the numerical solution for $s$ to the mass action system~(\ref{MAUn}). The broken red curve is the numerical solution to (\ref{UQSS}). Time has been mapped to the $\tau$ scale: 
$\tau = t/T$, \;$\tau \in [0,1]$. {\sc{Top Left panel}}: Simulation performed with $e_0=1.0$. There is visible error with $\varepsilon_{U}=1.25\times 10^{-1}$ and $\mu_U= 1.0$. {\sc{Top Right panel}}: Simulation performed with $e_0=10^{-1}$. Although there is visible error with $\mu_U= 10^{-1}$ and $\varepsilon_{U}=1.25 \times 10^{-2}$, the approximation~(\ref{UQSS}) does appear to be improving along the parameter ray direction. {\sc{Bottom Left panel}}: Simulation performed with $e_0=10^{-2}$ and thus $\mu_U= 10^{-2}$ and $\varepsilon_{U}=1.25 \times 10^{-3}$. The QSS reduction~(\ref{UQSS}) is nearly indistinguishable from (\ref{MAUn}). {\sc{Bottom Right panel}}: Simulation performed with $e_0=10^{-3}$ with $\mu_U= 10^{-3}$ and $\varepsilon_{U}=1.25 \times 10^{-4}$. The QSS reduction~(\ref{UQSS}) is again practically indistinguishable from (\ref{MAUn}). Note that $\mu_U\ll 1$ is still a better indicator of accuracy than $\varepsilon_{U}\ll 1$.
 } \label{FIG6}
\end{figure}

\item Parallel to our analysis of the cooperative reaction, we next consider parameters with widely disparate magnitude. In this simulation, the accuracy of (\ref{UQSS}) improves only as  $\mu_U \to 0$, and this illustrates the relevance of $\mu_U$ as the dimensionless parameter that indicates the accuracy of (\ref{UQSS}); see {{\sc Figure}}~\ref{FIG7}. 
\begin{figure}[!htb]
  \centering
     \includegraphics[width=8.0cm]{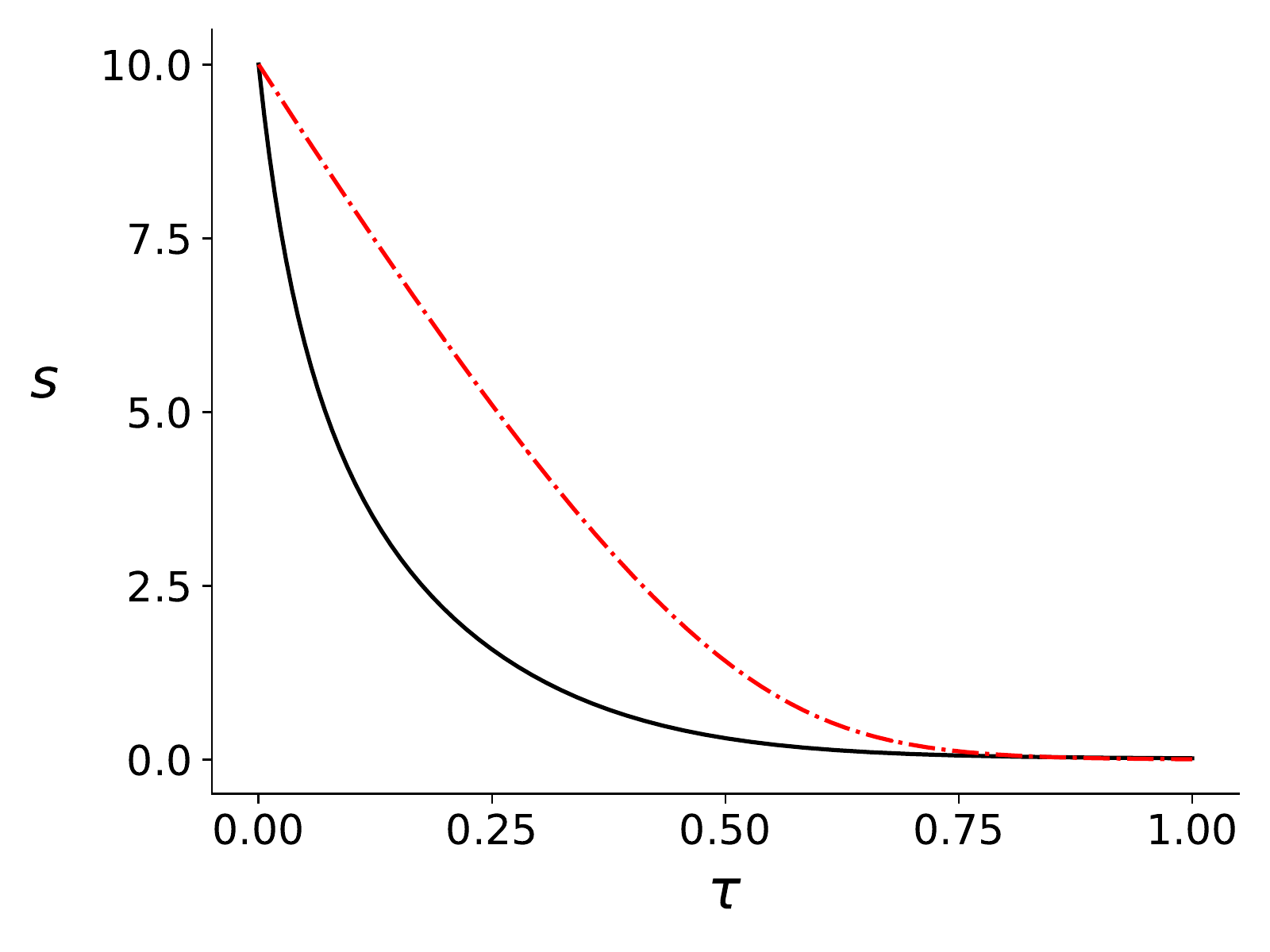}
     \includegraphics[width=8.0cm]{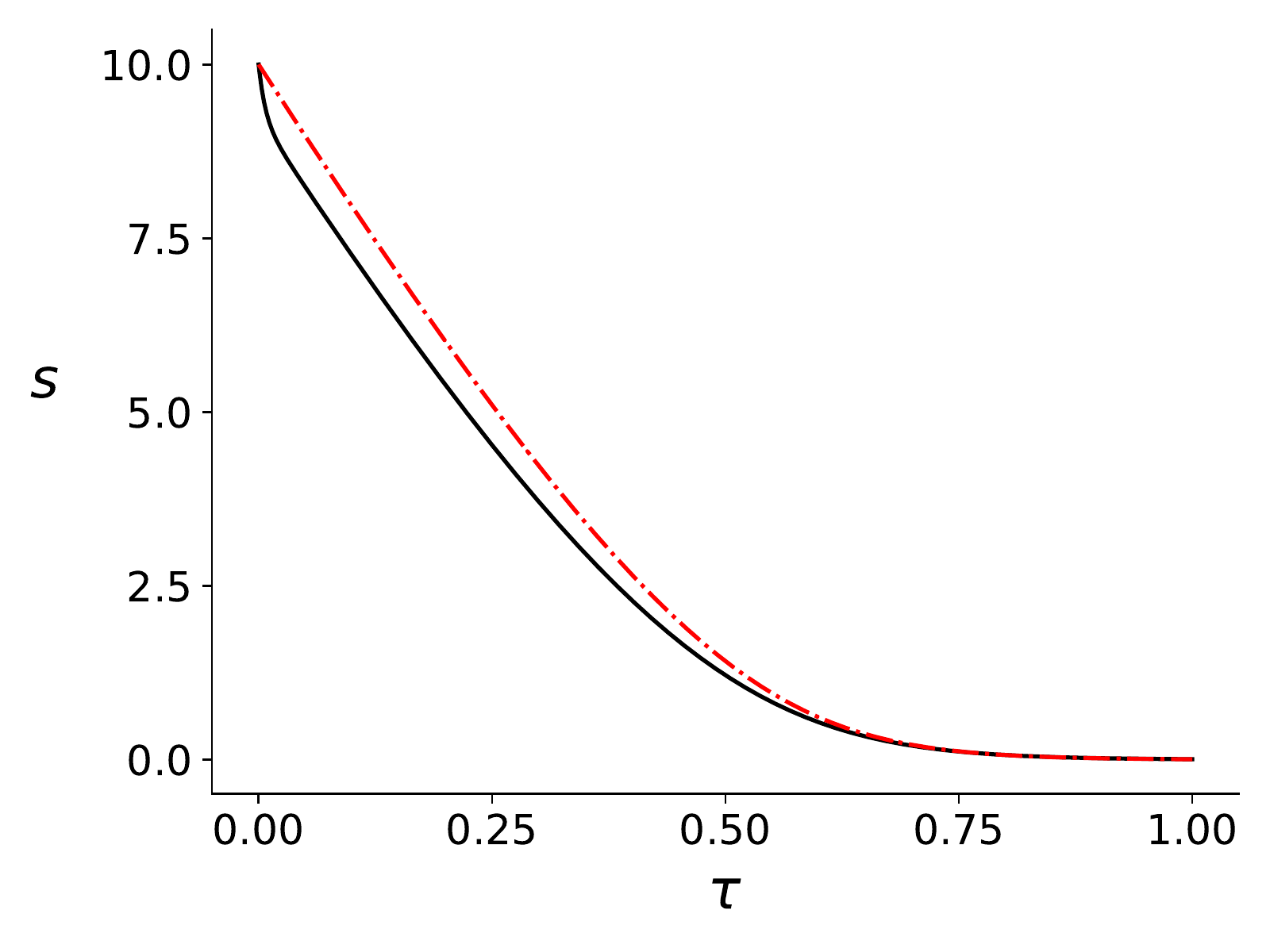}\\
      \includegraphics[width=8.0cm]{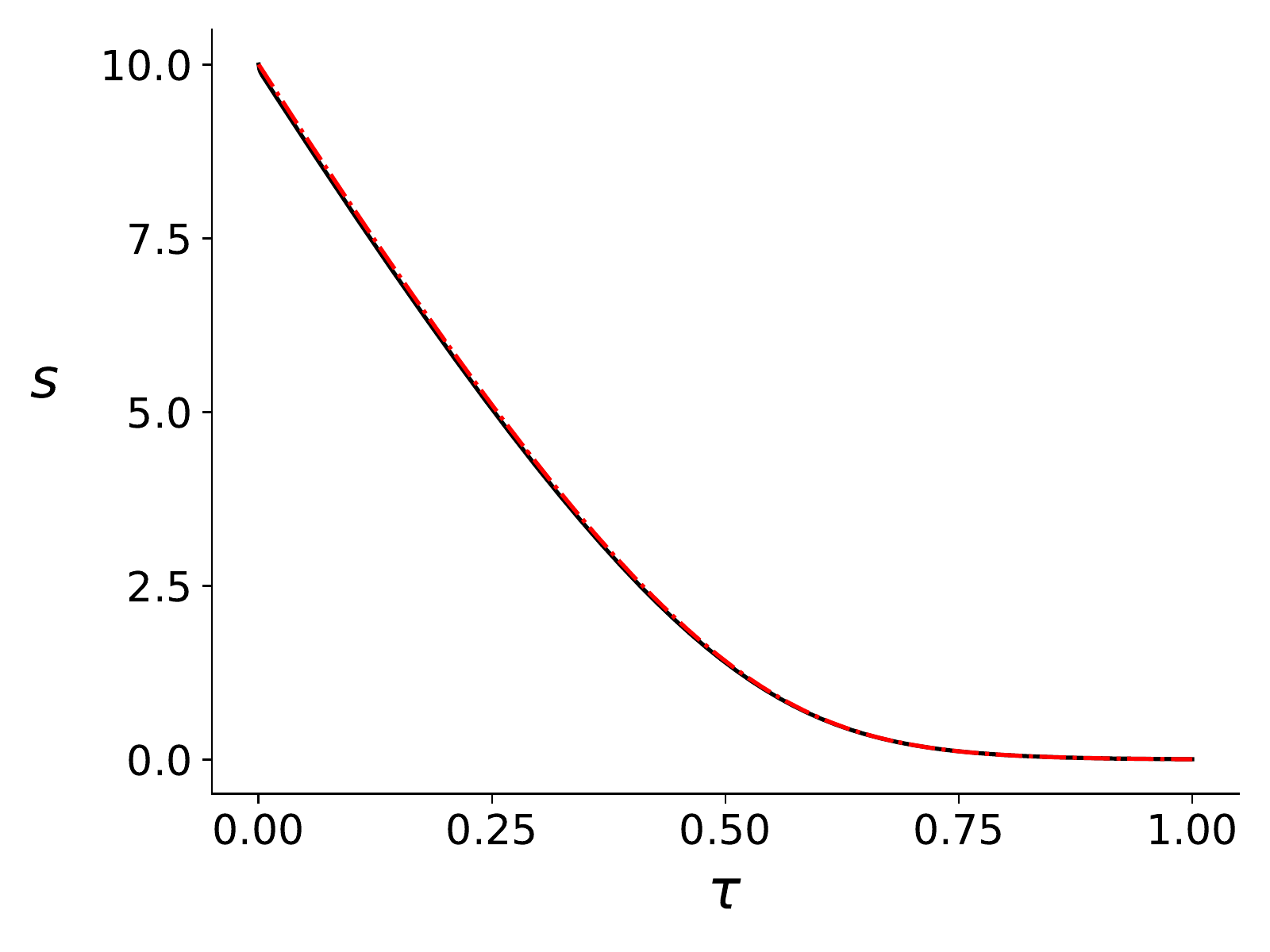}
     \includegraphics[width=8.0cm]{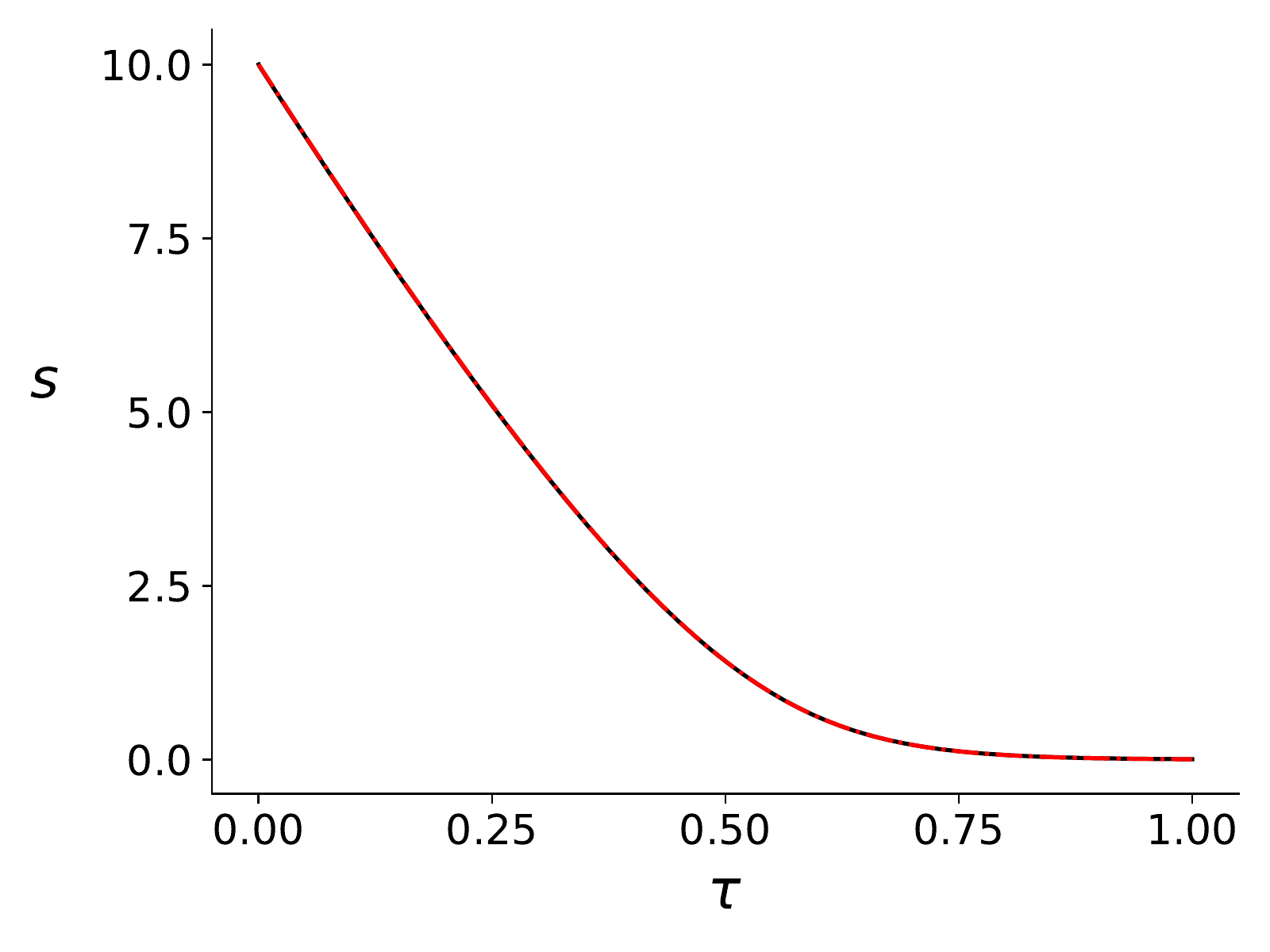}
\caption{\textbf{Uncompetitive inhibition reaction mechanism: The accuracy of (\ref{UQSS}) is reflected in the magnitude of the dimensionless parameter $\mu_U$.} The solid black curve is the numerical solution for $s$ to (\ref{MAUn}). The broken red curve is the numerical solution to (\ref{UQSS}). Time has been mapped to the $\tau$ scale: 
$\tau = t/T$, \;$\tau \in [0,1]$. The parameters (in arbitrary units) are: $s_0=10^1$, $k_{-3}=10^{-1}$, $k_3=10^1$, $i_0=10^1$, $k_1=1.0$, $k_2=k_{-1}=10^3$.  {{\sc Top Left panel}}: $e_0=1.0$ and $\varepsilon_{U}\approx 2.38 \times 10^{-4}, \;\;\mu_U\approx 5.25$. {{\sc Top Right panel}}: $e_0=10^{-1}$ and  $\varepsilon_{U}\approx 2.38 \times 10^{-5}, \;\;\mu_U\approx 5.25 \times 10^{-1}$. {{\sc Bottom Left panel}}: $e_0=10^{-2}$ and $\varepsilon_{U}\approx 2.38 \times 10^{-6}, \;\; \mu_U\approx 5.25 \times 10^{-2}$. {{\sc Bottom Right panel}}: $e_0=10^{-3}$ and $\varepsilon_{U}=2.38 \times 10^{-7}, \;\;\mu_U\approx 5.25 \times 10^{-3}$. Note that the solutions to (\ref{MAUn}) and (\ref{UQSS}) are virtually indistinguishable in the last panel.
 }\label{FIG7}
\end{figure}
\end{enumerate}

%%%%%%%%%%
\subsubsection{Near-invariance}
As in the case of the cooperative reaction mechanism, near-invariance scenarios also exist for uncompetitive inhibition. 
Setting $e_0=i_0=0$ (also) yields a TFPV for dimension one, viz.
\[
\widehat{\bar \pi}:=(0,s_0,k_1,k_{-1},k_2,k_3,k_{-3},0)^{\rm tr},
\]
with the same associated critical manifold $\widetilde Y$, defined by $c_1=c_2=0$.
We fix a further reference value $i_0^*$ and consider the ray direction $\rho^\dagger=(e_0^*,0,\ldots,0,i_0^*)^{\rm tr}$. Then, the 
perturbed system with $\pi=\widehat{\bar\pi}+\varepsilon\rho^\dagger$ has the form
\begin{equation}\label{upertvar}
    \begin{pmatrix}\dot{s}\\\dot{c}_1\\\dot{c}_2\end{pmatrix}=\begin{pmatrix}k_1s+k_{-1}&k_1s\\-k_1s-(k_{-1}+k_2)+k_3c_2 & -k_1s+k_{-3}\\-k_3c_2&k_{-3}\end{pmatrix}\begin{pmatrix}c_1\\c_2\end{pmatrix} + \varepsilon\begin{pmatrix}-k_1e_0^{*}s\\\;\;k_1e_0^{*}s-k_3i_0^{*}c_1\\k_3i_0^{*}c_1\end{pmatrix}
\end{equation}
Applying the reduction according to \eqref{tfredeq} yields
\begin{equation*}
 \begin{pmatrix}\dot{s}\\\dot{c}_1\\\dot{c}_2\end{pmatrix}=\varepsilon \begin{pmatrix}1 & \cfrac{k_1s+k_{-1}}{k_1s+k_{-1}+k_2} & \cfrac{(k_1s+k_{-1})k_{-3}+k_1k_2s}{k_{-3}(k_1s+k_{-1}+k_2)} \\ 0 & 0 &0 \\ 0&0&0\end{pmatrix}  \begin{pmatrix}-k_1e_0^{*}s\\\;\;k_1e_0^{*}s\\0\end{pmatrix}
\end{equation*}
and thus, with $\dot{c}_1=\dot{c}_2=0$,
\begin{equation}\label{UQSSvar}
    \dot{s}=-\cfrac{k_1e_0k_2s}{k_1s+k_2+k_{-1}},\quad s(0)=s_0,
\end{equation}
which is valid asymptotically as $\varepsilon\to 0$. Here, we recover the familiar Michaelis--Menten equation in the limit when the concentrations of both enzyme and inhibitor approach zero of order $\varepsilon$. 
(The same reduction is obtained for $k_3=\varepsilon k_3^*$ and $e_0=\varepsilon e_0^*$.) 

From a different perspective, when the term $k_3i_0$ vanishes, the subspace $W:=\{(s,c_1,c_2)\in \mathbb{R}^3: c_2=0\}$ is invariant, and a slight perturbation (not necessarily of order $\varepsilon$)  results in the near--invariance of $W$. Considering the expressions \eqref{Ueps} and \eqref{Umu} for $\varepsilon^*$ and $\mu^*$, respectively, one sees that $k_3i_0\to 0$ has no strong effect on these parameters, and that $\varepsilon_{MM}$ is a good upper estimate for $\varepsilon_U$.
One may rewrite~(\ref{UQSS}) as
\begin{equation}
    \dot{s} = -\cfrac{k_1e_0k_2s}{k_1s(1+ k_3i_0/k_{-3})+k_{-1}+k_2},
\end{equation}
thus, when $k_3i_0/k_{-3} \ll 1$, then the standard Michaelis--Menten reduction is approximately valid. In this case, the dynamics are effectively two-dimensional. Hence (for the given initial values) the magnitude of $\mu_U$ is irrelevant, and (\ref{UQSS}) will hold even if $1< k_1e_0/k_{-3}$ and $1< \mu_U$ \footnote{The implicit assumptions in the proof of Proposition~\ref{tspropdimonereal} are then not satisfied.} since $\varepsilon_{MM} \ll 1$ automatically ensures the validity of (\ref{UQSS}) when $W$ is nearly invariant (see, {{\sc Figure}}~\ref{FIG8}).
\begin{figure}[htb!]
  \centering
    \includegraphics[width=8.0cm]{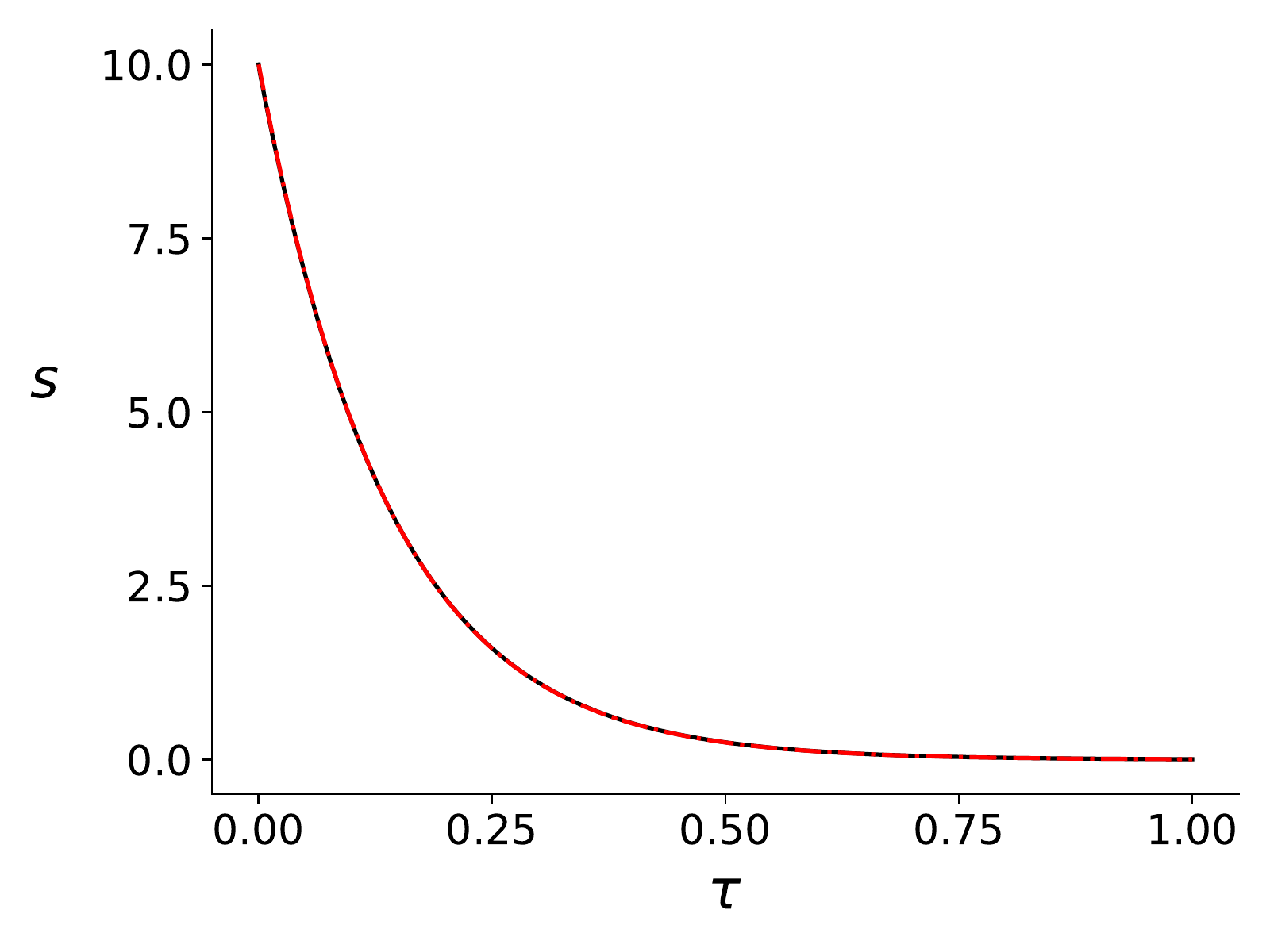}
    \includegraphics[width=8.0cm]{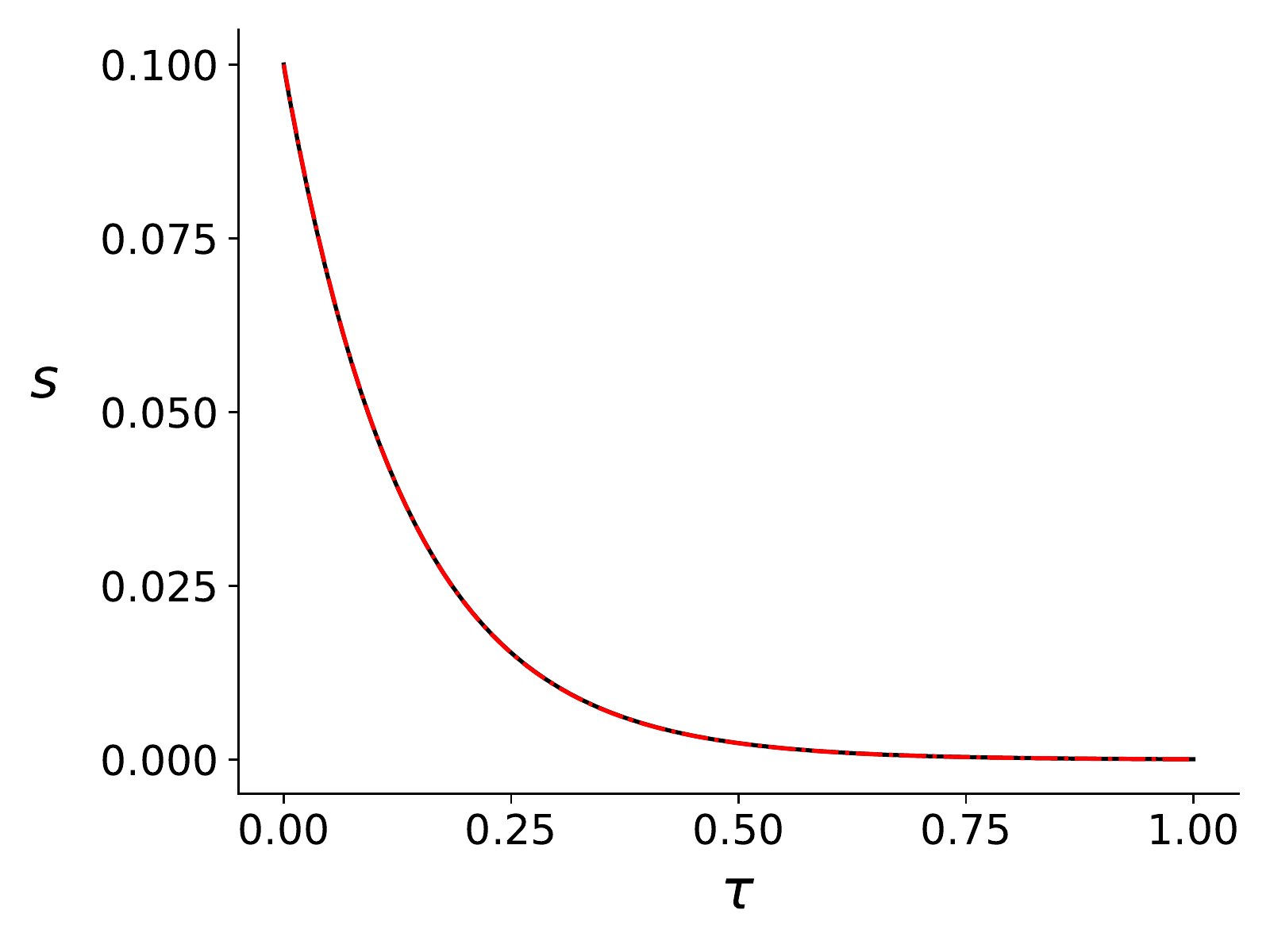}
\caption{\textbf{Uncompetitive inhibition reaction mechanism: Near-invariance may lead to scenarios in which the reduction ~\ref{UQSS}) is accurate even when $1 < \mu_U$.} The solid black curve is the numerical solution to (\ref{MAUn}). The broken red curve is the numerical solution to (\ref{UQSS}). Time has been mapped to the $\tau$ scale: 
$\tau = t/T$, \;$\tau \in [0,1]$. The parameters (in arbitrary units) are: $k_1=1.0$, $e_0=1.0$, $k_{-3}=10^{-1}$, $k_3=10^{-2}$, $i_0=10^{-3}$, $k_2=k_{-1}=10^2$ with $\varepsilon_U\approx 2.5 \times 10^{-3}$ and $\mu_U\approx 5.0.$  {{\sc Left panel}}: $s_0=10.0$. {{\sc Right panel}}: The long-time validity is verified with $s_0=10^{-1}$. Note that $k_1e_0/k_{-3}$ is large, thus $\mu_U$ is of order one. However, since $k_3i_0/k_{-3} = 10^{-2}$ and $c_1(0)=c_2(0)=0$, the dynamics prior to reduction are essentially two-dimensional. Consequently, (\ref{UQSS}) holds since $\varepsilon_{MM} \ll 1$.
 } \label{FIG8}
\end{figure}

%%%%%%%%%%%%%%%%%%%%%%%%%%%%%%%%%%%%%%%%%%%%%%%%
\subsection{Competitive inhibition reaction mechanism}
The irreversible competitive inhibition reaction mechanism
\begin{equation}\label{compet}
\begin{array}{rcccl}
E+S&\overset{k_1}{\underset{k_{-1}}{\rightleftharpoons}}&
C_1&\overset{k_2}{\rightharpoonup}&E+P,\\
E+I&\overset{k_{3}}{\underset{k_{-3}}{\rightleftharpoons}}&
C_2& & 
\end{array}
\end{equation}
corresponds (with mass action kinetics and stoichiometric conservation laws) to the ODE system
\begin{equation}\label{MAIn}
\begin{array}{rclclclclcl}
\dot s=&-& k_1(e_0-c_1-c_2)s&+&k_{-1}c_1 ,&     \\
\dot c_1=&& k_1(e_0-c_1-c_2)s&-&(k_{-1}+k_2)c_1,&    \\
\dot c_2=& & k_3 (e_0-c_1-c_2) (i_0-c_2)     &- &k_{-3} c_2 .
\end{array}
\end{equation}
The usual initial conditions are $s(0)=s_0,\,e(0)=e_0,\,i(0)=i_0$ and $c_1(0)=c_2(0)=p(0)=0$. We fix a reference value $e_0^*$, then from the conservation laws we obtain the compact positively invariant set
\begin{equation*}
K:=\{(s,c_1,c_2)\in \mathbb{R}_{\geq 0}^3: 0\leq s\leq s_0,\;\; 0\leq c_1\leq e_0^*,\;\; 0\leq c_2\leq \min\{e_0^*,\,i_0\}\}.
\end{equation*}
The parameter space $\Pi=\mathbb R_{\geq 0}^8$ has elements 
\[
(e_0,s_0,i_0,k_1,k_{-1},k_2,k_3,k_{-3})^{\rm tr},
\]
and it is known that $e_0=0$, with all other parameters positive, defines a TFPV,
\begin{equation}
\widehat\pi:= (0,s_0,i_0,k_1,k_{-1},k_2,k_3,k_{-3})^{\rm tr}
\end{equation}
with corresponding critical manifold 
\begin{equation*}
\widetilde Y:=\{(s,c_1,c_2)\in \mathbb{R}^3_{\geq 0}: c_1=c_2=0\}.
\end{equation*}
(see, below for nondegeneracy conditions on the remaining parameters.) 
We choose the parameter ray direction $\rho=(e_0^*,0,\ldots,0)^{\rm tr}$, with $e_0=\varepsilon e_0^*$. The singular perturbation reduction (see, Goeke and Walcher~\cite[Section 3.2]{gw2}) yields the equation
\begin{equation}\label{Iclass}
\dot s=-\dfrac{k_1k_{-3}k_2e_0s}{(k_1s+k_{-1}+k_2)k_{-3} + k_3i_0(k_{-1}+k_2)}.
\end{equation}
The reduced equation~(\ref{UQSS}) has been previously reported in the literature (see, e.g. Schnell and Mendoza~\cite{inhibition}. Note that the reduction~(\ref{Iclass}) again differs from the classical QSS reduction (see e.g.\ Keener and Sneyd~\cite[Section~1.4.3]{KeSn}). However, (\ref{Iclass}) and the classical reduction agree up to a term of order $\varepsilon^2$ and are therefore asymptotically equivalent.

%%%%%%%%%%%%%%%%%%%%%%%%%%%%%%%%%%%%%%%%%%%%%%
\subsubsection{Asymptotic small parameters}
The coefficients of the characteristic polynomial on the critical manifold are
\[
\begin{array}{rcl}
\sigma_1&=& k_1s+k_{-1}+k_2+k_3i_0+k_{-3};\\
\sigma_2&=& k_{-3}k_1s+(k_3i_0+k_{-3})(k_{-1}+k_2);\\
\widehat \sigma_3&=&k_2k_1e_0^*\cdot\left(k_3i_0+k_{-3}\right).
\end{array}
\]
\noindent We generally assume that all parameters are contained in a compact subset of the positive orthant, hence are bounded above by certain positive constants. Moreover $\sigma_1(\widehat \pi,s)$ and $ \sigma_2(\widehat \pi,s)$ satisfy the TFPV property 
\[
\begin{array}{rcl}
    k_3i_0+k_{-3}+k_{-1}+k_2&=&\min\sigma_1>0,\\
    (k_{-1}+k_2)(k_3i_0+k_{-3})&=&\min\sigma_2>0
\end{array}
\]
if and only if $k_{-1}+k_2$ and $k_3i_0+k_{-3}$ are bounded below by certain positive constants. More restrictively, we will assume that $i_0$ is bounded below by some positive constant. Finally $k_2$ and $k_{-3}$ should be bounded below by positive constants, lest the reduced equation \eqref{Iclass} is too close to trivial.

With $\sigma_1, \sigma_2$ and $\widehat {\sigma}_3$ evaluated at $\widehat \pi$, we obtain the distinguished small parameter
\begin{equation}\label{Ieps}
\begin{array}{rcl}
   \varepsilon ^* &=& \varepsilon\displaystyle \sup_{\widehat Y \cap K} \cfrac{\widehat \sigma_3(s,\widehat \pi,\rho,0)}{\sigma_1(s,\widehat \pi)\sigma_2(s,\widehat \pi)}\\
   &=& \cfrac{k_2k_1e_0}{(k_{-1}+k_2)^2}\cdot\cfrac{k_{-1}+k_2}{k_{-1}+k_2+k_3i_0+k_{-3}}\\
    &=& \varepsilon_{MM}\cdot\cfrac{k_{-1}+k_2}{k_{-1}+k_2+k_3i_0+k_{-3}}:=\varepsilon_I\\
   \end{array}
\end{equation}
To verify the equalities, note that $\widehat\sigma_3$ is constant while $\sigma_1,\,\sigma_2$ are increasing with $s$.

It is straightforward to check that $\sigma_1^2-4\sigma_2 \geq 0$, thus all eigenvalues are real and Proposition~\ref{tspropdimonereal} is applicable. 
Determining the parameter $\mu^*$ requires a distinction of cases. The derivative of 
\[
s\mapsto q(s):=\cfrac{\widehat {\sigma}_3(s,\widehat \pi,\rho,0)\sigma_1(s,\widehat \pi)}{\sigma_2(s,\widehat \pi)^2}
\] 
is a rational function in $s$ with numerator of degree one. Both coefficients are negative if and only if
\begin{equation}\label{eqApre}
        2k_{-3}(k_3i_0+k_{-3}) +k_{-3} (k_{-1}+k_2) \geq (k_{-1}+k_2)i_0k_3, 
\end{equation}
otherwise they have opposite signs. Note that \eqref{eqApre} is satisfied whenever $(k_3i_0)/(k_{-3}) \leq 1$. This inequality admits a direct interpretation in terms of the reaction mechanism. On the one hand, it places a lower bound on the allowable size of $k_{-3}$. More importantly, it holds whenever the inhibitor concentration is not too high, thus it is controllable by experimental design.

When \eqref{eqApre} holds then $s\mapsto q(s)$ is strictly decreasing for $s\geq 0$, and 
\begin{equation}
    \mu^*= \mu_I^{(1)}:=\varepsilon \cfrac{\widehat {\sigma}_3(0,\widehat \pi,\rho,0)\sigma_1(0,\widehat \pi)}{\sigma_2(0,\widehat \pi)^2}=\varepsilon_{MM}\cdot \cfrac{k_{-1}+k_2+k_3i_0+k_{-3}}{k_3i_0+k_{-3}}.
\end{equation}

Whenever \eqref{eqApre} does not hold then a straightforward calculation shows that the maximum of $s\mapsto q(s)$ for $0\leq s<\infty$ is given by
\begin{equation}
    \begin{array}{rcl}
     \mu^*=\mu_I^{(2)}    &  =& \dfrac{k_2k_1e_0\cdot(k_3i_0+k_{-3})}{4k_{-3}\cdot\bigg(k_3i_0(k_{-1}+k_2)-k_{-3}(k_3i_0+k_{-3})\bigg)}\\
         & =&\varepsilon_{MM}\cdot\dfrac{(k_{-1}+k_2)^2\cdot(k_3i_0+k_{-3})}{4k_{-3}\cdot\bigg(k_3i_0(k_{-1}+k_2)-k_{-3}(k_3i_0+k_{-3})\bigg)}.\\
    \end{array}
\end{equation}
This expression is somewhat unwieldy. But $\mu_I^{(2)}$ admits an obvious lower bound, obtained by discarding the negative term in the denominator:
\begin{equation*}
    \mu_I^{(2)}  \geq \cfrac{1}{4}\cdot\cfrac{k_1k_2e_0}{k_{-3}(k_{-1}+k_2)}\cdot\bigg(1+\cfrac{k_{-3}}{k_3i_0}\bigg).
\end{equation*}
Moreover, the negation of \eqref{eqApre} provides an estimate for the denominator which yields an upper bound
\begin{equation*}
\mu_I^{(2)}\leq \cfrac{1}{4}\cdot \cfrac{k_2k_1e_0(k_3i_0+k_{-3})}{k_{-3}^2(k_3i_0+k_{-3}+k_{-1}+k_2)}.
    \end{equation*}
The lower bound shows that is it necessary to require $k_1e_0 \ll \min \{k_{-3},k_3i_0\}$ whenever $k_{-1}$ and $k_2$ are of the same order.

Finally, whenever $s_0$ is not too large one may also consider the estimate 
\begin{equation}\label{Imu}
\begin{array}{rcl}
    \mu^* 
    &\leq& \varepsilon \cfrac{\widehat {\sigma}_3(s_0,\widehat \pi,\rho,0)\sigma_1(s_0,\widehat \pi)}{\sigma_2(0,\widehat \pi)^2}\\
    &=&{\cfrac{k_2k_1e_0}{(k_{-1}+k_2)^2}} \cdot\cfrac{k_1s_0+k_{-1}+k_2+k_3i_0+k_{-3}}{k_3i_0+k_{-3}}\\
    &=&\varepsilon_{MM} \cdot\cfrac{k_1s_0+k_{-1}+k_2+k_3i_0+k_{-3}}{k_3i_0+k_{-3}}=:\widetilde\mu_I,
    \end{array}
\end{equation}
which is a direct consequence of monotonicity properties of the $\sigma_j$. This inequality is exact whenever \eqref{eqApre} does not hold and $s_0$ is smaller than the argument of $\max q$.

%%%%%%%%%%%%%%%%%%%%%%%%%%%%%%%%%%%%%%%%%%%%%%%%%%%%%

%%%%%%%%%%
\subsubsection{Numerical simulations}
Generally, by Fenichel theory the accuracy of the reduction~(\ref{Iclass}) improves along the parameter ray as $\varepsilon_I\to 0$ and $\mu_I^{(i)}\to 0$, for $i=1,2$, respectively. Continuing the procedure employed in the previous case studies, we illustrate the efficacy of the qualifiers $\varepsilon_I\ll 1, \mu_I^{(i)} \ll 1$ (with appropriate index $i$) with several numerical simulations:

\begin{enumerate}
\item For our first example, we once again consider the case in which all parameters except $e_0$ are equal, which is a representative of parameters of the same magnitude. Numerical simulations confirm that the accuracy of (\ref{Iclass}) improves as $\varepsilon_I\to 0$ and $\mu_I^{(1)} \to 0$ (see, {{\sc Figure}}~\ref{FIG11}).
\begin{figure}[!htb]
  \centering
    \includegraphics[width=8.0cm]{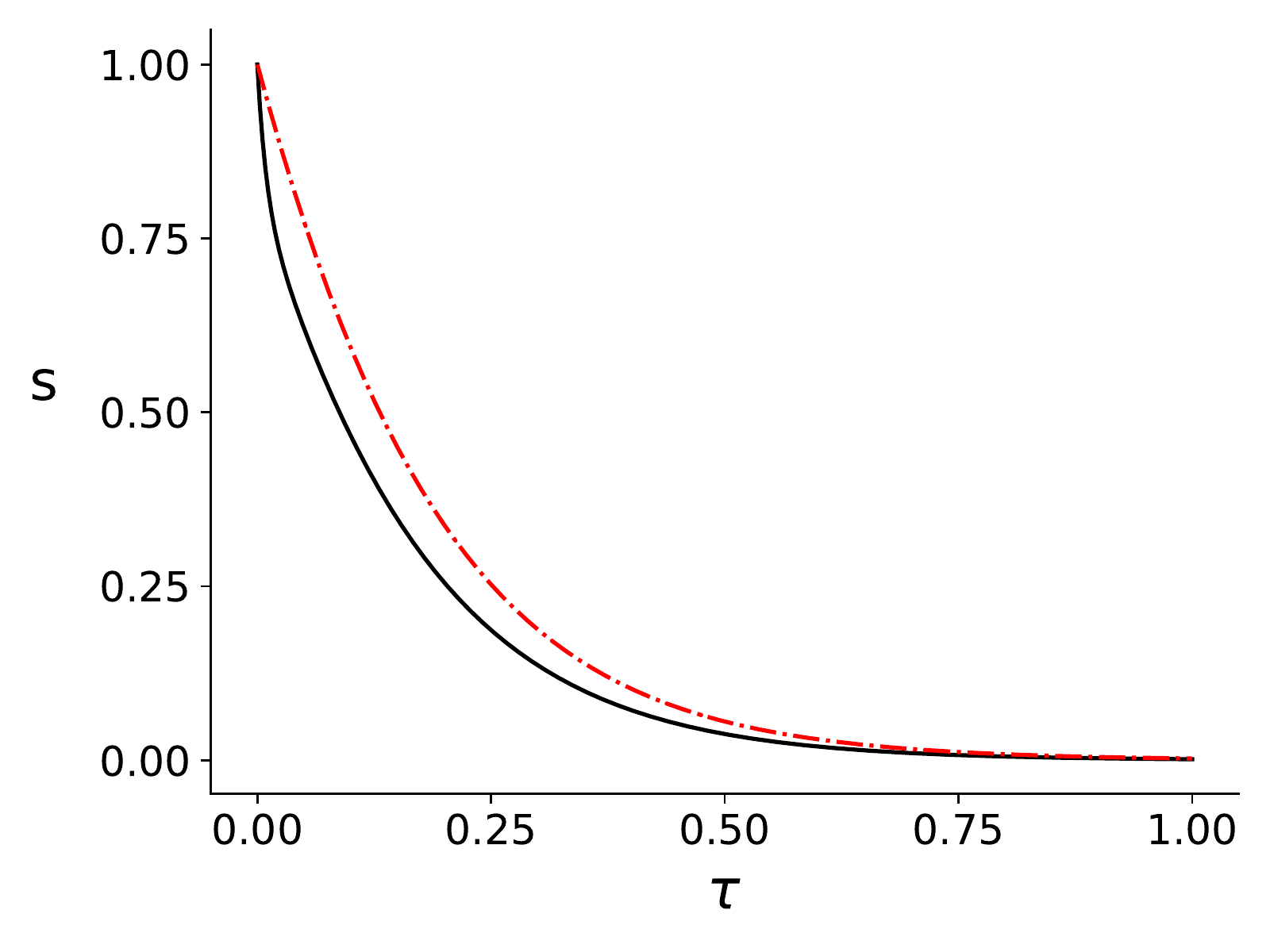}
    \includegraphics[width=8.0cm]{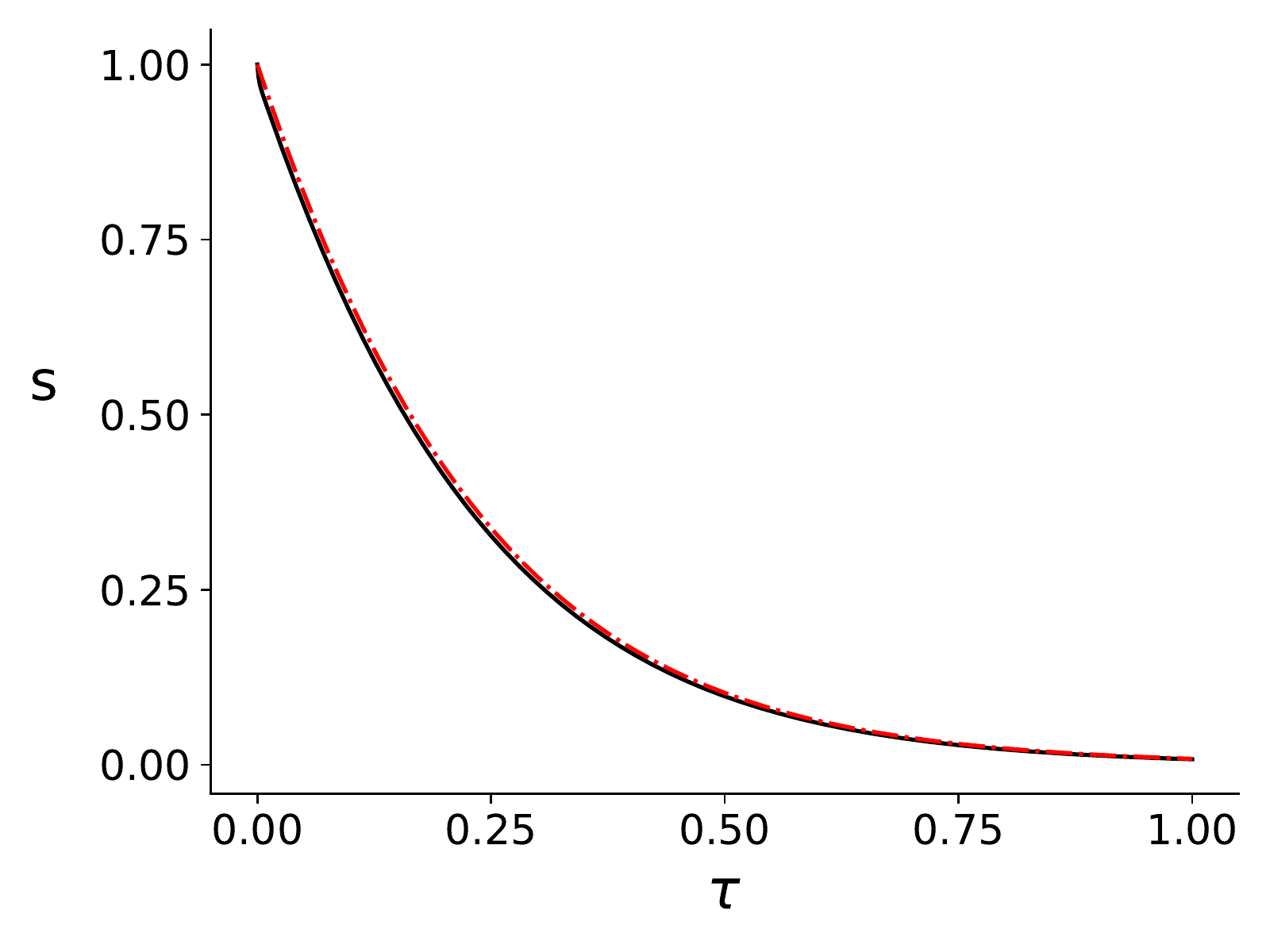}\\
    \includegraphics[width=8.0cm]{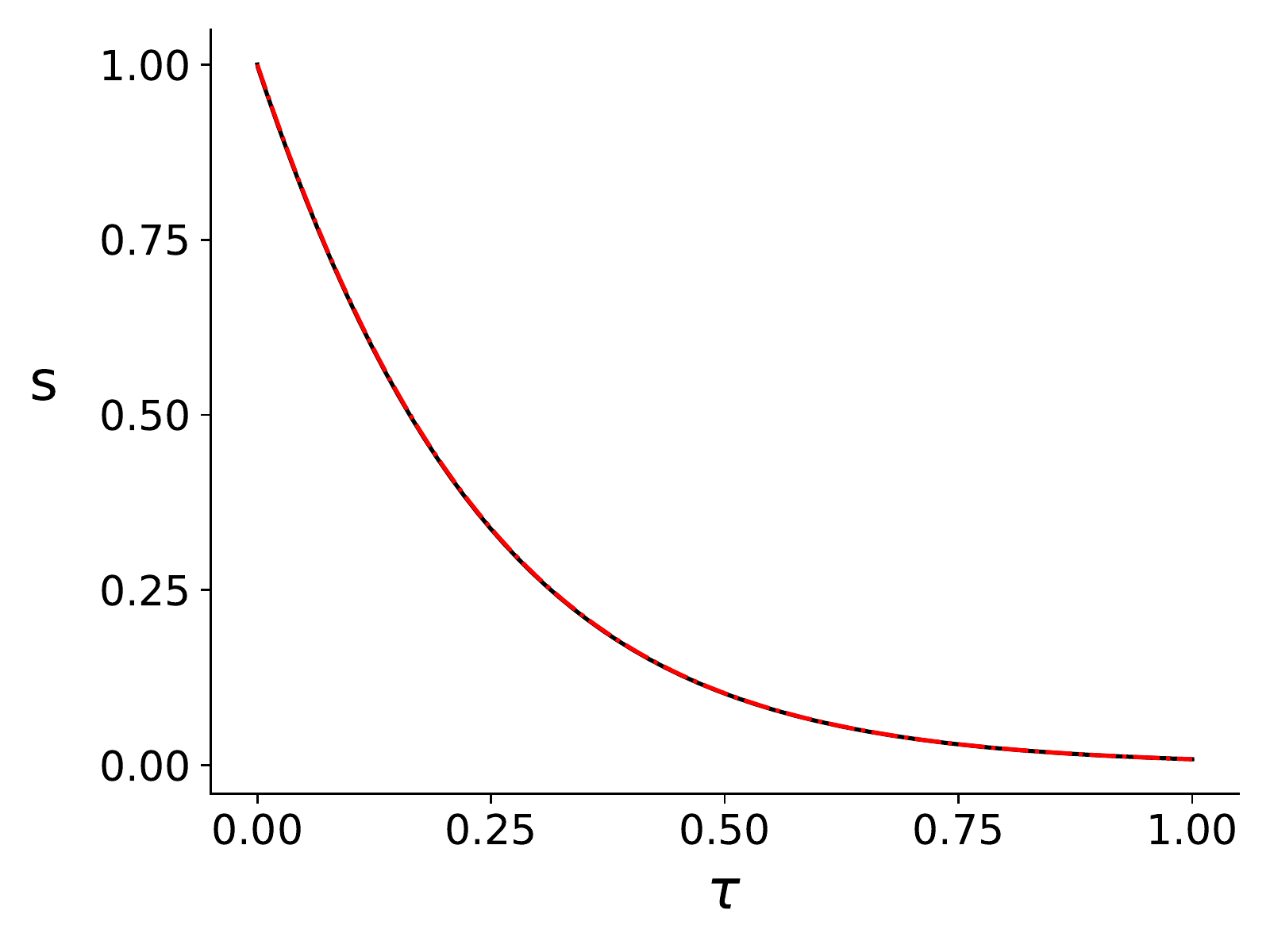}
    \includegraphics[width=8.0cm]{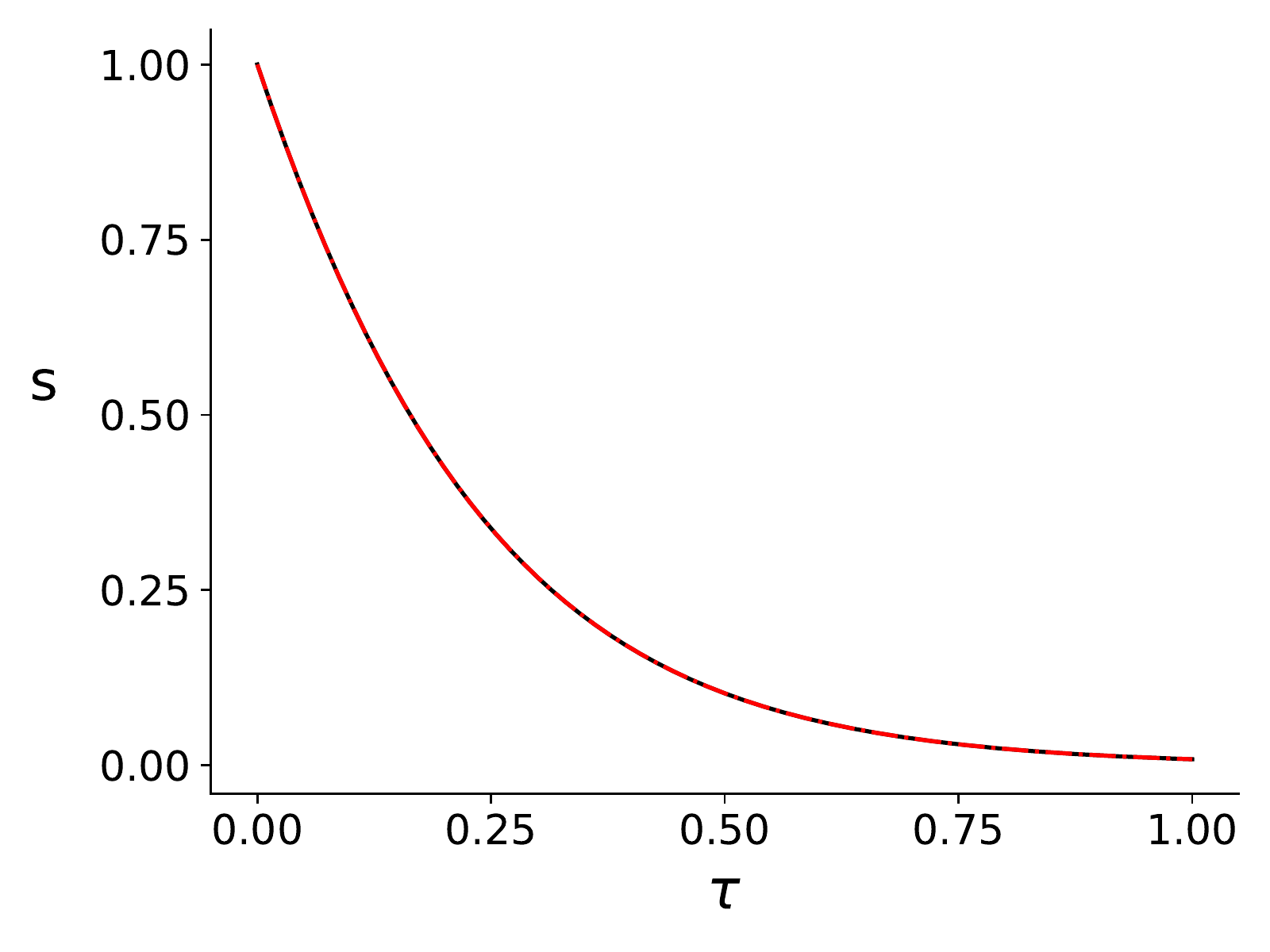}
\caption{\textbf{Competitive inhibition reaction mechanism: With parameters of unit magnitude, and \eqref{eqApre} valid, numerical simulations indicate that the accuracy of (\ref{Iclass}) improves along the parameter ray as both $\varepsilon_I \to 0$ and $\mu_I^{(1)}\to 0$.} In all panels, the parameters (in arbitrary units) are: $s_0=1.0$, $k_1=1.0$, $k_2=1.0$, $k_{-1}=1.0$, $k_3=1.0$, $k_{-3}=1.0$ and $i_0=1.0$. The solid black curve is the numerical solution to the mass action system~(\ref{MAIn}). The broken red curve is the numerical solution to (\ref{Iclass}). Time has been mapped to the $\tau$ scale: 
$\tau = t/T$, \;$\tau \in [0,1]$ {\sc{Top Left panel}}: $e_0=1.0$ with $\varepsilon_I=1.25 \times 10^{-1}$ and $\mu_I^{(1)}=5 \times 10^{-1}$. {\sc{Top Right panel}}: $e_0=10^{-1}$ with $\varepsilon_I=1.25 \times 10^{-2}$ and $\mu_I^{(1)}=5 \times 10^{-2}$. {\sc{Bottom Left panel}}: $e_0=10^{-2}$ with $\varepsilon_I=1.25 \times 10^{-3}$ and $\mu_I^{(1)}=5 \times 10^{-3}$. The reduction ~\ref{Iclass}) is nearly indistinguishable from (\ref{MAIn}). {\sc{Bottom Right panel}}: $e_0=10^{-4}$ with $\varepsilon_I=1.25 \times 10^{-4}$ and $\mu_I^{(1)}=5 \times 10^{-4}$. The QSS reduction~(\ref{Iclass}) is again practically indistinguishable from (\ref{MAIn}).
 } \label{FIG11}
\end{figure}

\item In our second example, we demonstrate the effectiveness of $\varepsilon_{I}$ and $\mu_I^{(i)}$ with parameters that have disparate magnitudes. We observe that $\mu_I^{(1)}$ is the definitive indicator of the accuracy of (\ref{Iclass}) when (\ref{eqApre}) holds, while $\mu_I^{(2)}$ is the indicator of the accuracy of (\ref{Iclass}) whenever (\ref{eqApre}) fails, reflecting the fact that one eigenvalue must have much smaller absolute value than the other two (see, {{\sc Figures}}~\ref{FIG12} and \ref{FIG12B}).
\begin{figure}[!htb]
  \centering
    \includegraphics[width=8.0cm]{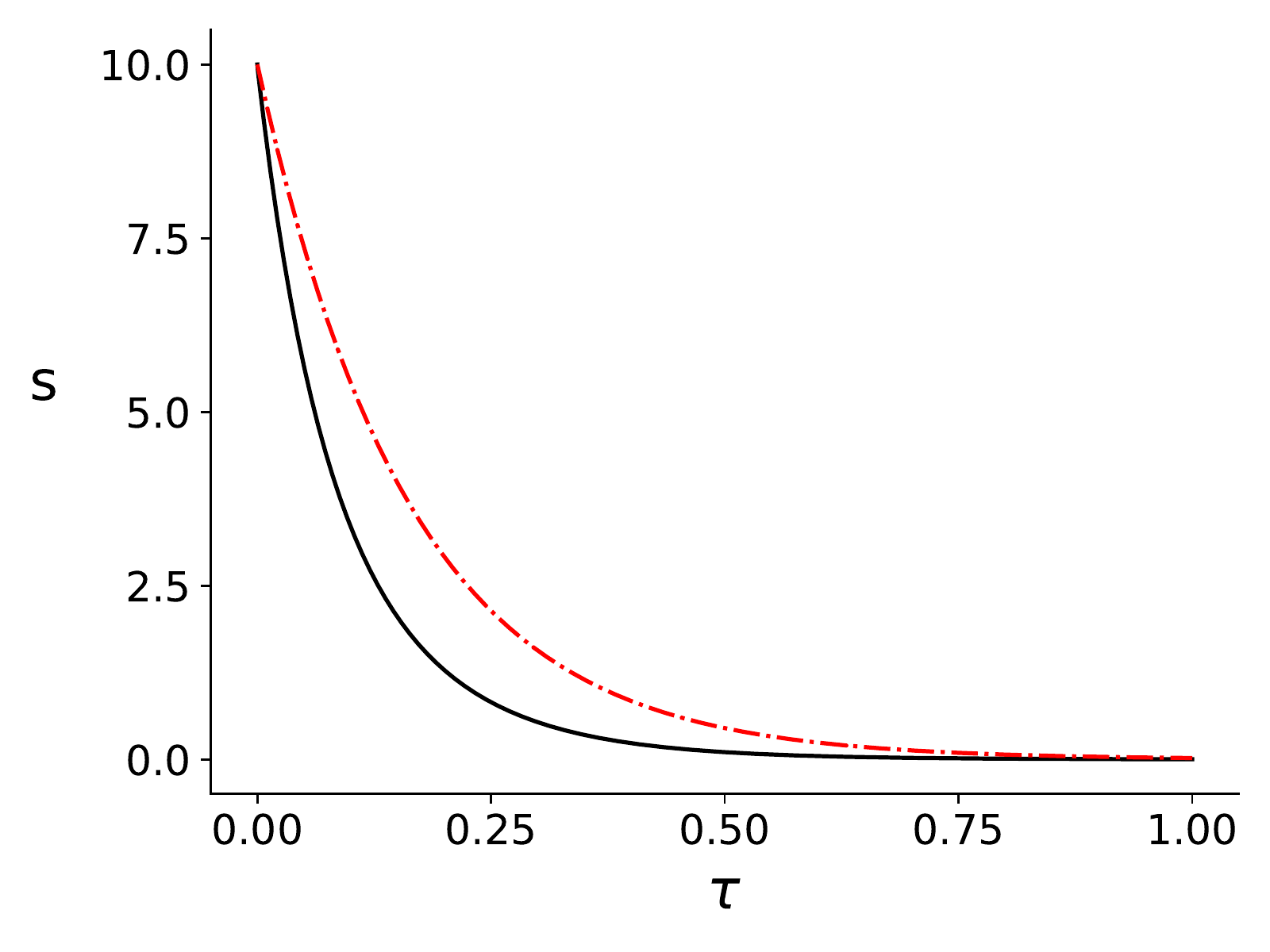}
    \includegraphics[width=8.0cm]{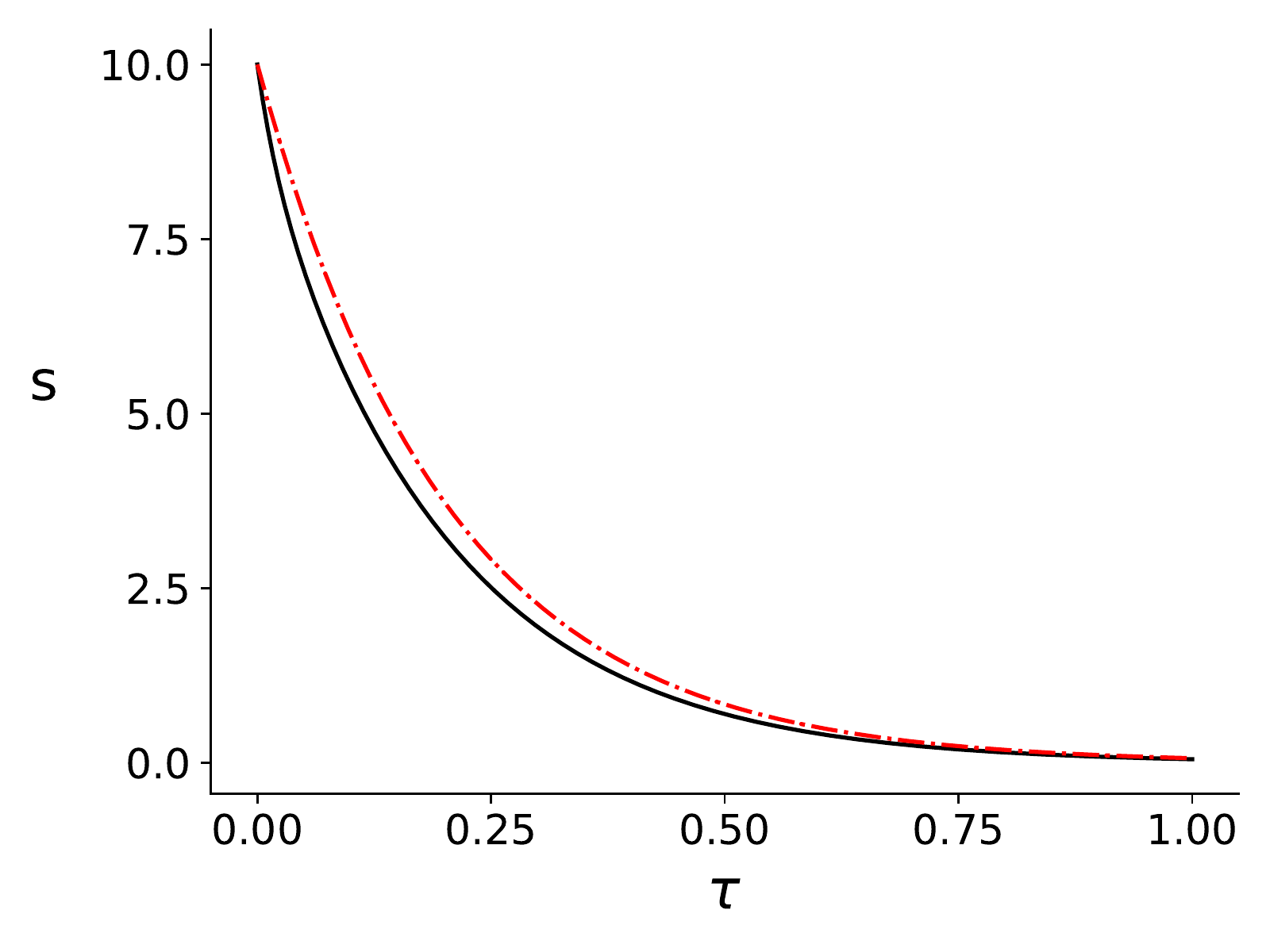}\\
    \includegraphics[width=8.0cm]{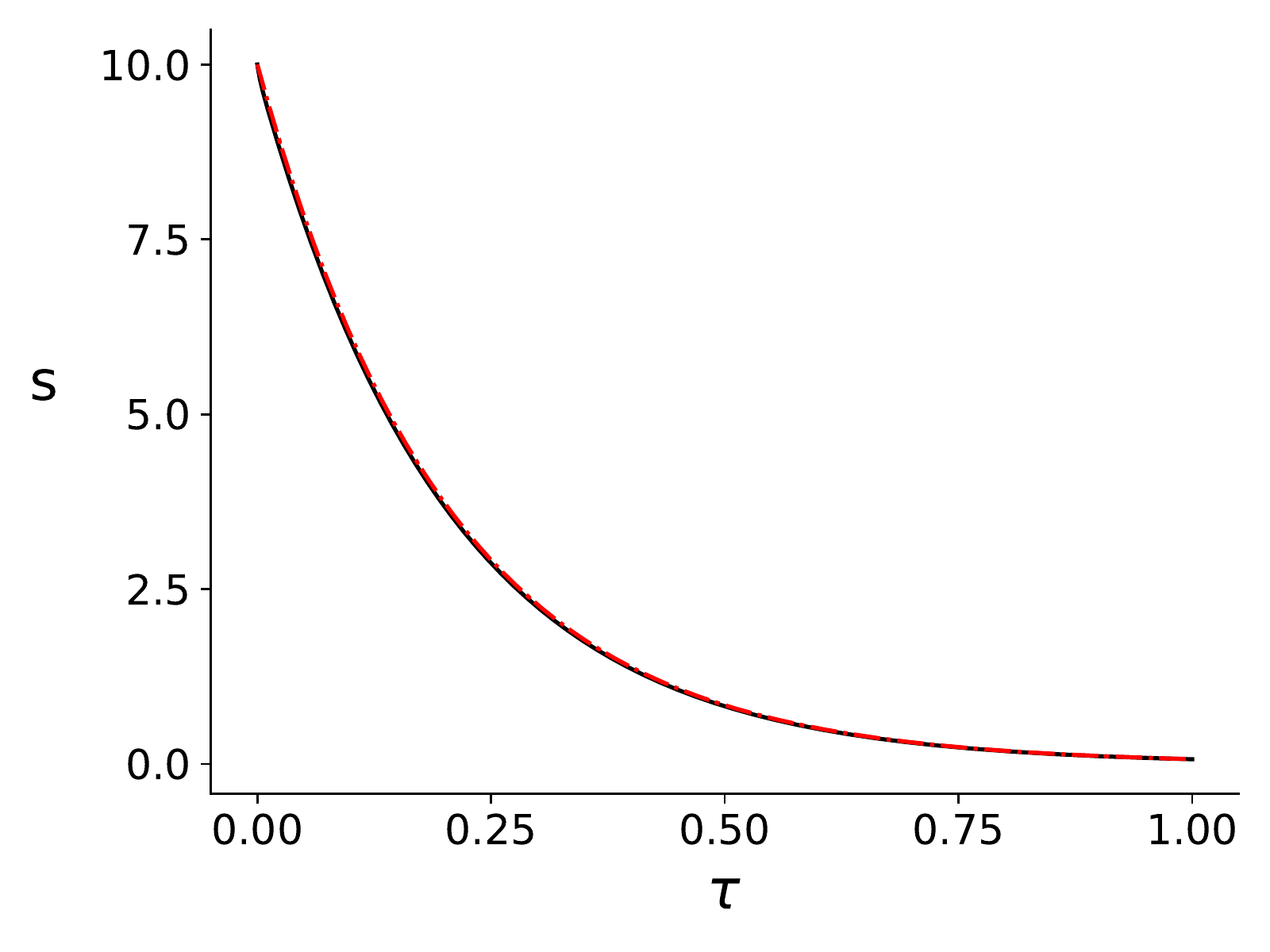}
    \includegraphics[width=8.0cm]{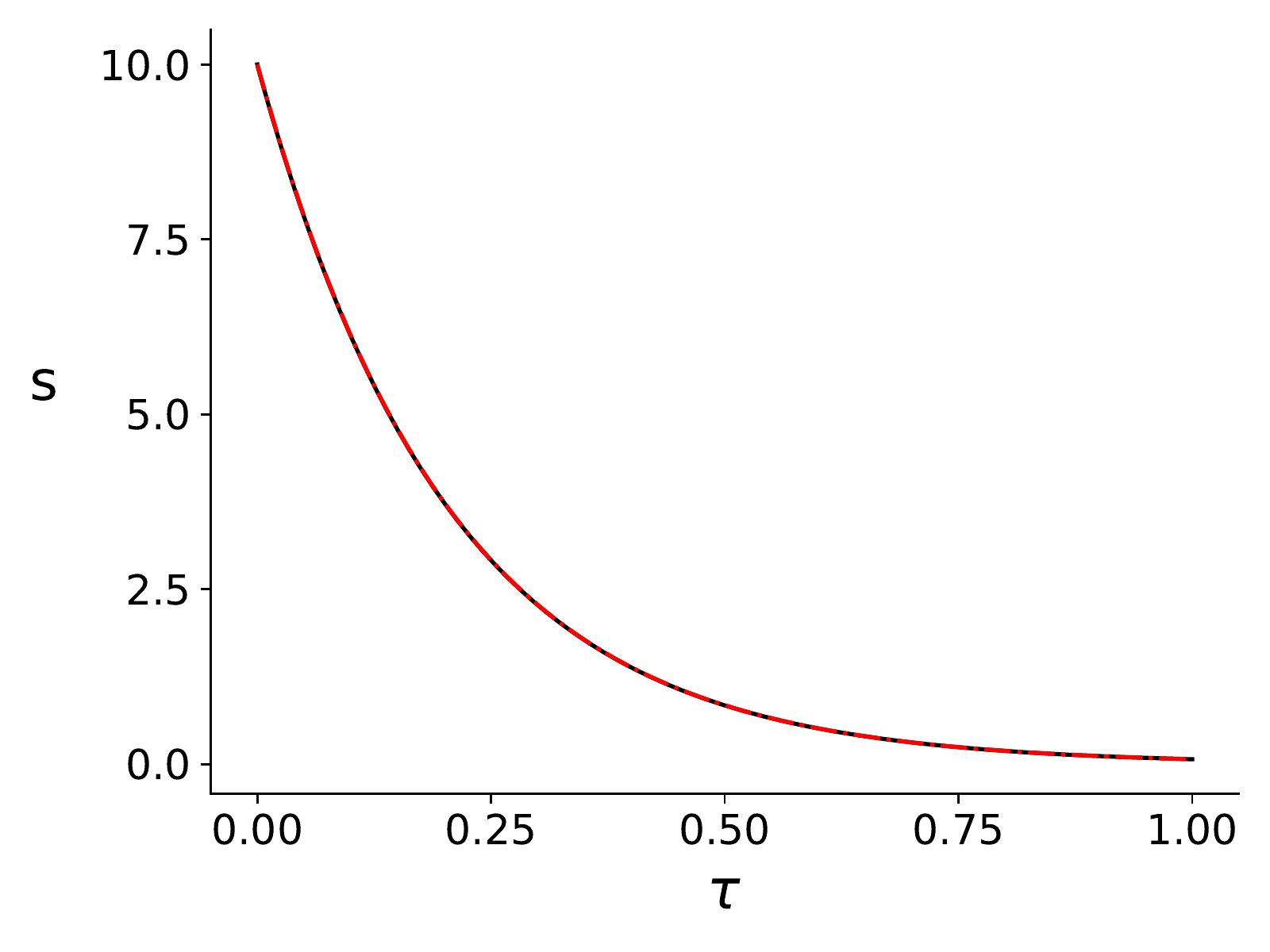}
\caption{\textbf{Competitive inhibition reaction mechanism: When parameter values are disparate in magnitude, numerical simulations indicate that the accuracy of (\ref{Iclass}) improves along the parameter ray as $\mu_I^{(1)} \to 0$ when (\ref{eqApre}) holds.} In all panels, the parameters (in arbitrary units) are: $s_0=10.0$, $k_1=1.0$, $k_2=k_{-1}=10^2$, $k_3=k_{-3}=10^{-1}$ and $i_0=1.0$. The solid black curve is the numerical solution for $s$ to the mass action system~(\ref{MAIn}). The broken red curve is the numerical solution to (\ref{Iclass}). Time has been mapped to the $\tau$ scale: 
$\tau = t/T$, \;$\tau \in [0,1]$.  {\sc{Top Left panel}}: $e_0=1.0$, $\varepsilon_I\approx 2.5\times 10^{-3}$, and $\mu_I^{(1)}\approx 2.5$. {\sc{Top Right panel}}: $e_0=10^{-1}$, $\varepsilon_I\approx 2.5\times 10^{-4}$, and $\mu_I^{(1)}\approx 2.5\times 10^{-1}$. {\sc{Bottom Left panel}}: $e_0=10^{-2}$, $\varepsilon_I\approx 2.5\times 10^{-5}$, and $\mu_I^{(1)}\approx 2.5 \times 10^{-2}$.  {\sc{Bottom Right panel}}: $e_0=10^{-3}$, $\varepsilon_I\approx 2.5\times 10^{-6}$, $\mu_I^{(1)} \approx 2.5\times 10^{-3}$ and the QSS reduction~(\ref{Iclass}) is nearly indistinguishable from (\ref{MAIn}). Collectively, these simulations indicate that $\mu_I^{(1)} \ll 1$ is the qualifier that ensures the validity of (\ref{Iclass}) when (\ref{eqApre}) holds.
 } \label{FIG12}
\end{figure}

\begin{figure}[!htb]
  \centering
    \includegraphics[width=8.0cm]{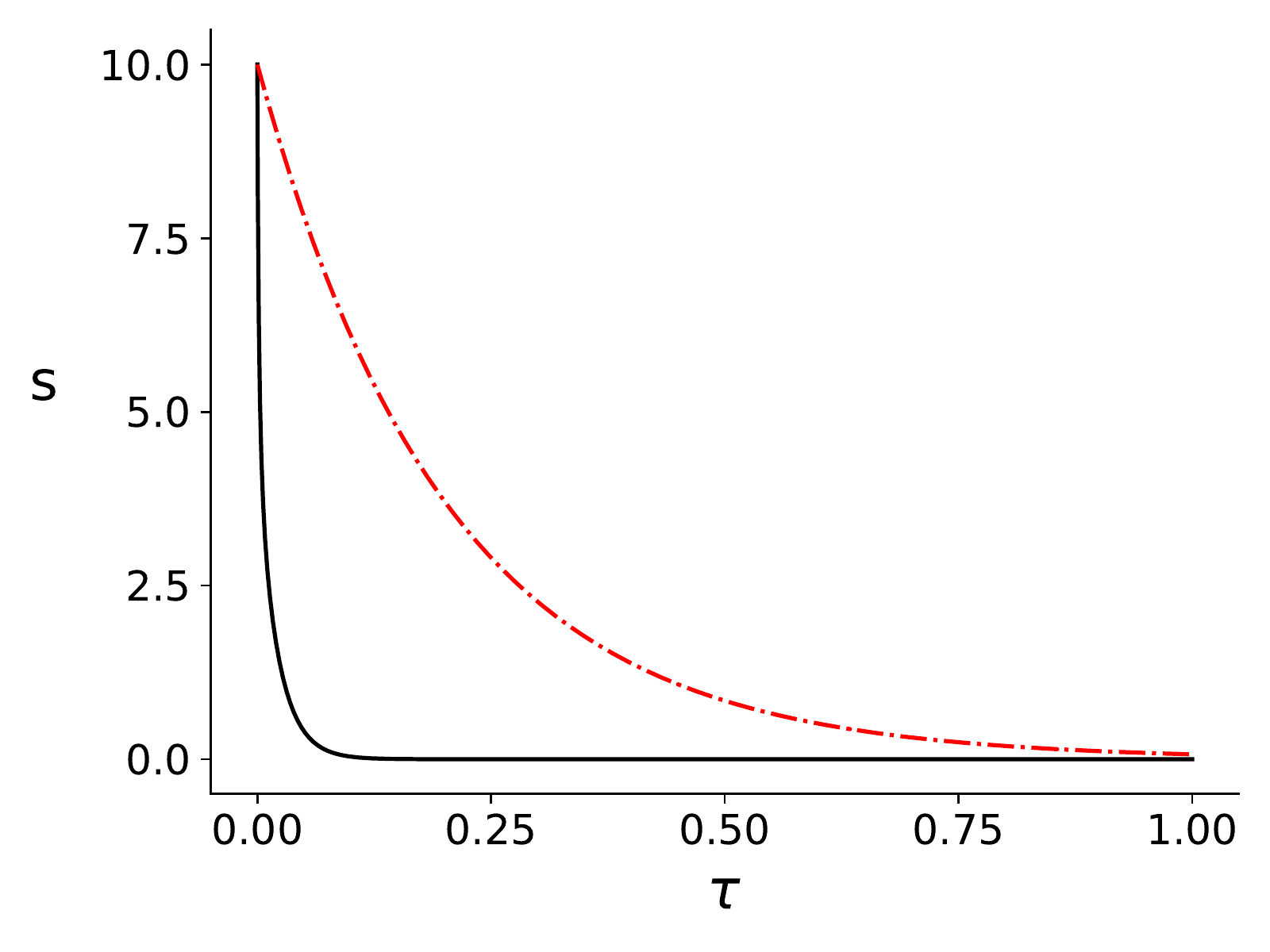}
    \includegraphics[width=8.0cm]{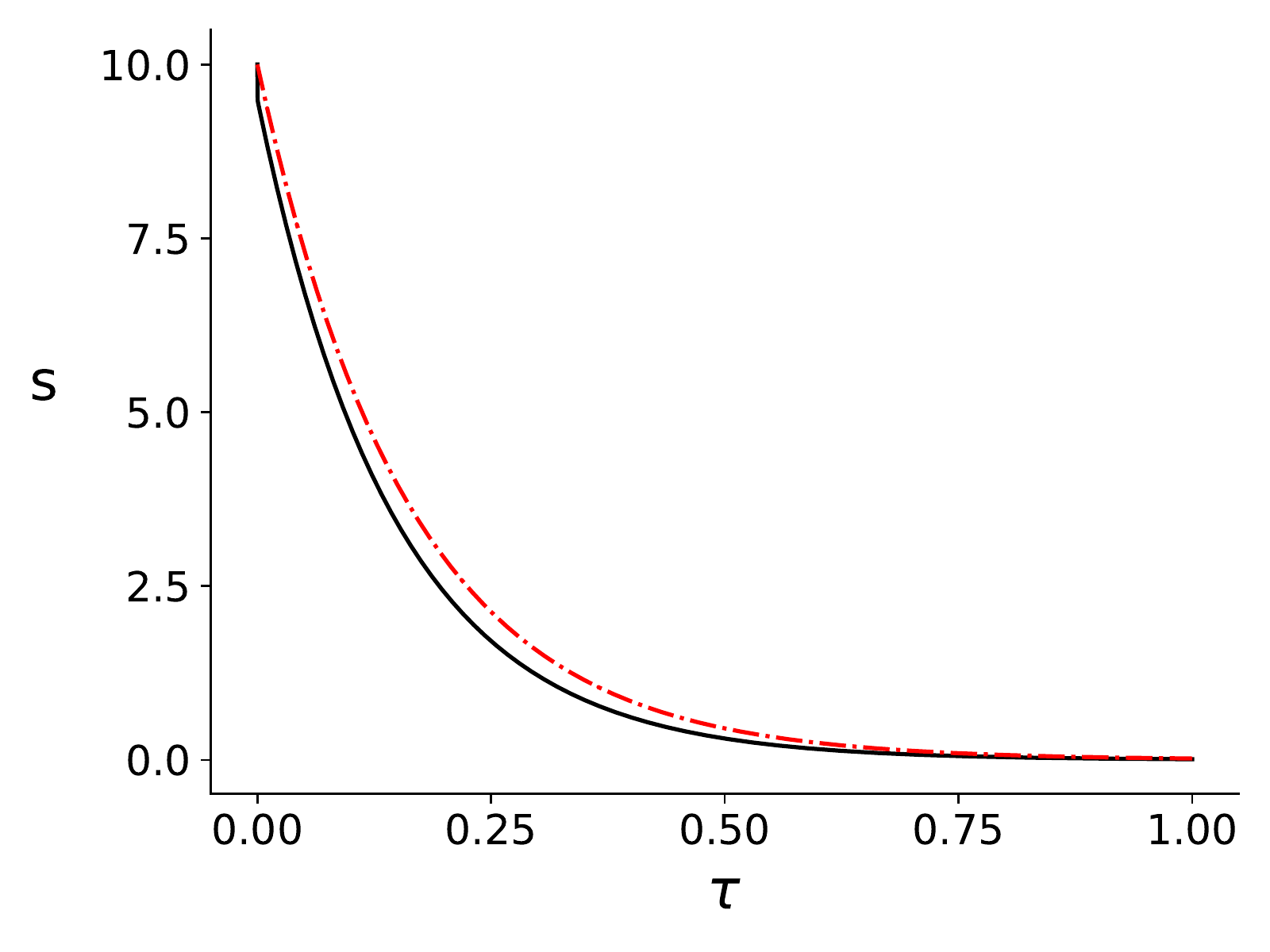}\\
    \includegraphics[width=8.0cm]{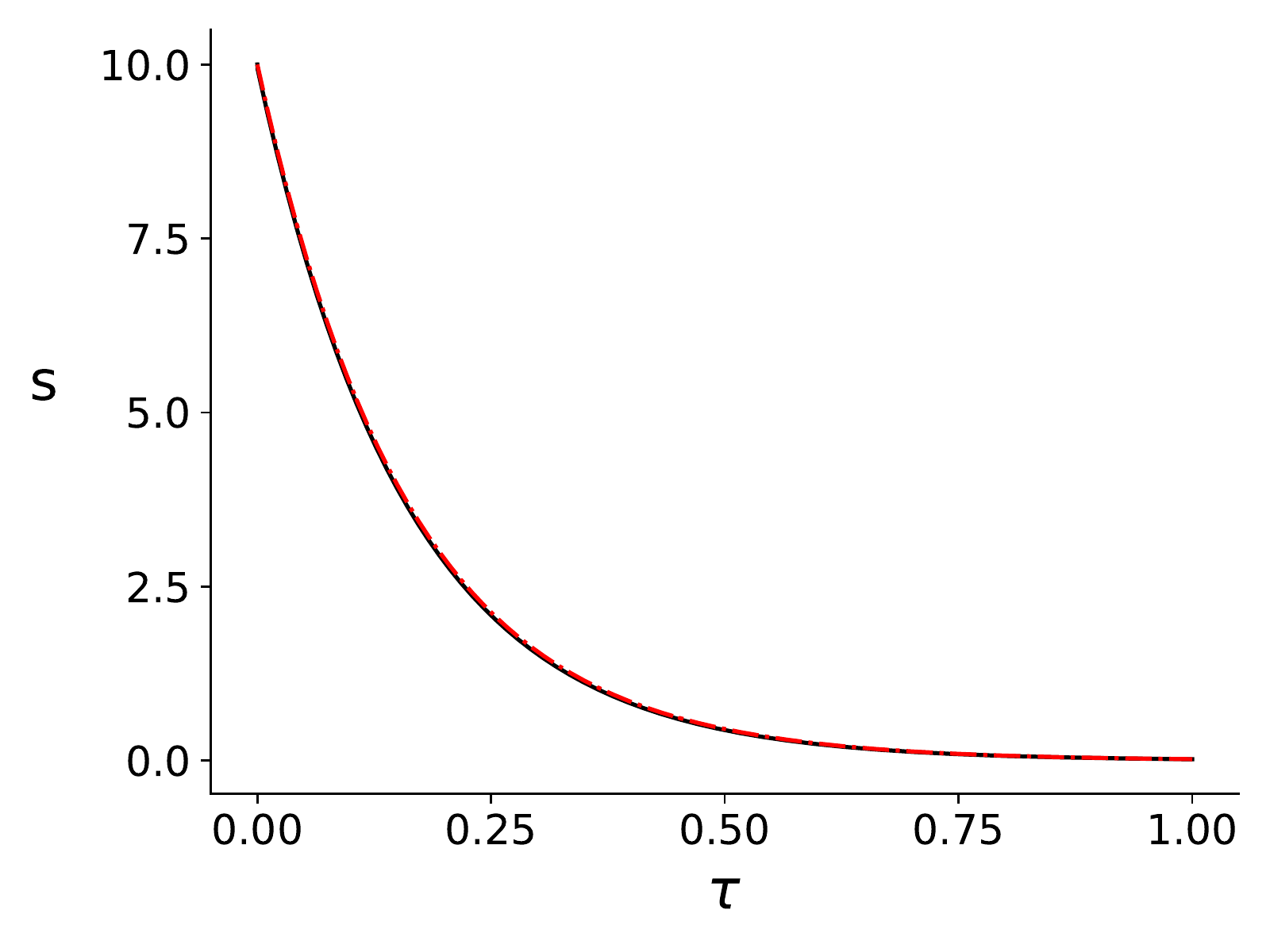}
    \includegraphics[width=8.0cm]{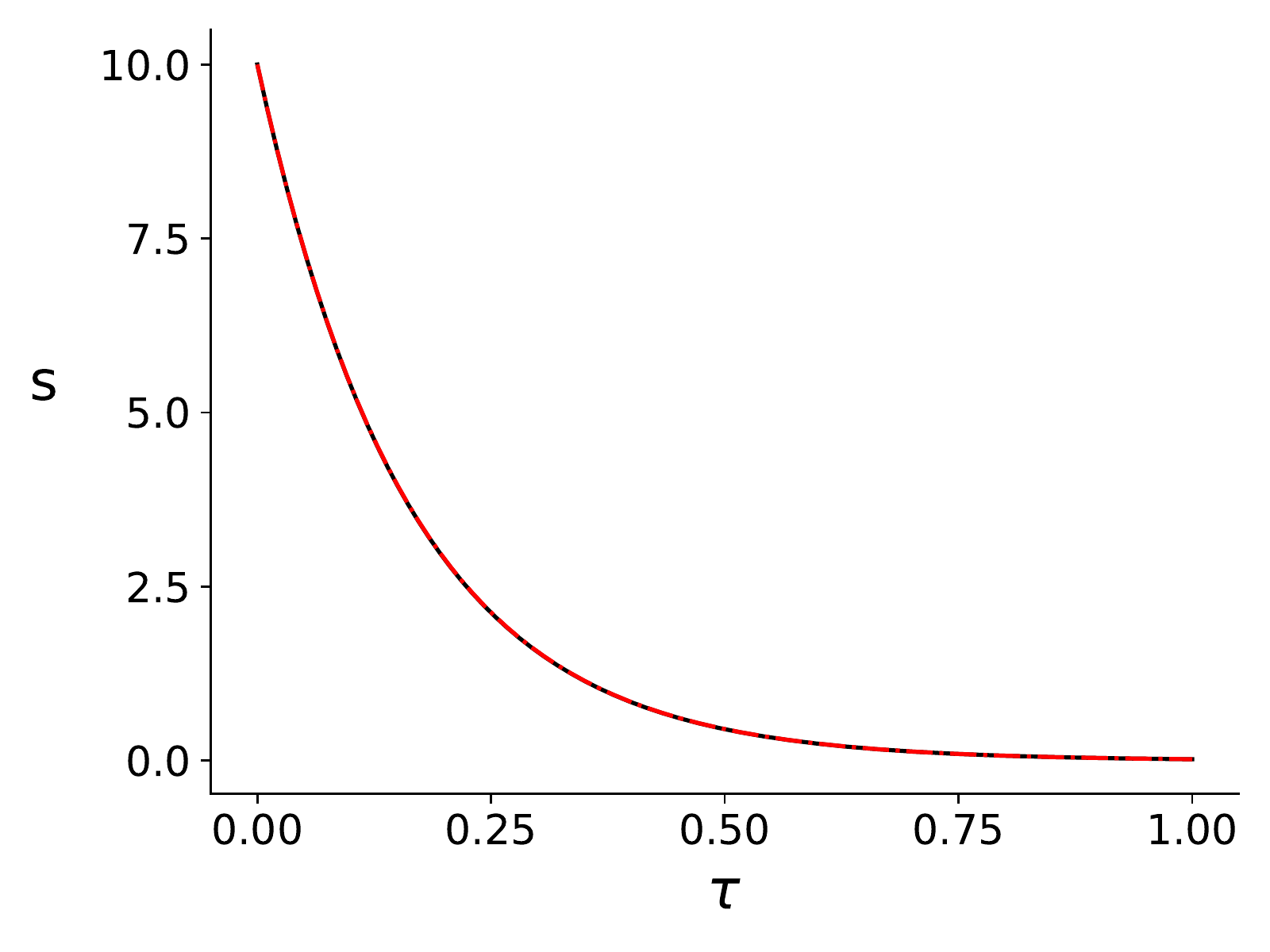}
 \caption{\textbf{Competitive inhibition reaction mechanism: When parameter values are disparate in magnitude, numerical simulations indicate that the accuracy of (\ref{Iclass}) improves along the search direction as $\mu_I^{(2)} \to 0$ when (\ref{eqApre}) fails.} In all panels, the parameters (in arbitrary units) are: $s_0=10.0$, $k_1=1.0$, $k_2=k_{-1}=10^2$, $k_{-3}=10^{-2}$ and $k_3=i_0=1.0$. The solid black curve is the numerical solution for $s$ to the mass action system~(\ref{MAIn}). The broken red curve is the numerical solution to (\ref{Iclass}). Time has been mapped to the $\tau$ scale: 
$\tau = t/T$, \;$\tau \in [0,1]$.  {\sc{Top Left panel}}: $e_0=1.0$, $\varepsilon_I\approx 2.5\times 10^{-3}$, and $\mu_I^{(2)}\approx 12.6$. {\sc{Top Right panel}}: $e_0=10^{-1}$, $\varepsilon_I\approx 2.5\times 10^{-4}$, and $\mu_I^{(2)}\approx 12.6 \times 10^{-1}$. {\sc{Bottom Left panel}}: $e_0=10^{-2}$, $\varepsilon_I\approx 2.5\times 10^{-5}$, and $\mu_I^{(2)}\approx 12.6 \times 10^{-2}$.  {\sc{Bottom Right panel}}: $e_0=10^{-3}$, $\varepsilon_I\approx 2.5\times 10^{-6}$, and $\mu_I^{(2)}\approx 12.6 \times 10^{-3}$. Observe the QSS reduction~(\ref{Iclass}) is nearly indistinguishable from (\ref{MAIn}), which indicates that $\mu_I^{(2)} \ll 1$ is the qualifier that ensures the validity of (\ref{Iclass}) when (\ref{eqApre}) fails.
 } \label{FIG12B}
\end{figure}
\end{enumerate}

%%%%%%%%%%
\subsubsection{Near--invariance}
As in the previous sections, we now discuss special instances of near-invariance. The inhibitory mechanism can be turned off by requiring $k_3i_0=0$, which implies that, for sufficiently small $k_3$ or $i_0$, the subspace $U:=\{(s,c_1,c_2)\in \mathbb{R}^3: c_2=0\}$ will be nearly invariant. One perspective is to define the parameter ray by $k_{3}= \varepsilon k_3^*$ and $e_0 = \varepsilon e_0^*$. Then, the perturbation form of the mass action equations is
\begin{equation*}
\begin{pmatrix}
\dot{s}\\\dot{c}_1\\\dot{c}_2 
\end{pmatrix}=\begin{pmatrix} k_1s+k_{-1} & \;\;k_1s \\ -k_1s -(k_{-1}+k_2) & -k_1s\\0 & -k_{-3}\end{pmatrix}\begin{pmatrix} c_1 \\ c_2\end{pmatrix} + \varepsilon \begin{pmatrix}-k_1e_0^*s\\ k_1e_0^*s\\ -k_3^*( c_1+c_2)(i_0-c_2)\end{pmatrix}+\mathcal{O}(\varepsilon^2).
\end{equation*}
The QSS reduction is obtained by projecting the leading order perturbation onto the critical manifold, thus
\begin{equation*}
 \begin{pmatrix}\dot{s}\\\dot{c}_1\\\dot{c}_2 
\end{pmatrix}= \varepsilon \begin{pmatrix}1 & \cfrac{k_1s+k_{-1}}{k_1s+k_{-1}+k_2} & \cfrac{k_1k_2s}{(k_1s+k_{-1}+k_2)k_{-3}} \\ 0 & 0 &0\\ 0&0&0\end{pmatrix}\begin{pmatrix}-k_1e_0^*s\\\;\;k_1e_0^*s\\0\end{pmatrix},
\end{equation*}
from which we recover
\begin{equation*}
    \dot{s} = -\cfrac{k_1k_2e_0s}{k_1s+k_{-1}+k_2},
\end{equation*}
 i.e., the sQSSA of the MM reaction mechanism. This is not surprising. With initial conditions $s(0)=s_0,c_1(0)=c_2(0)=0$, the dynamics are approximately two-dimensional. From a different perspective (taking independent limits), we can write (\ref{Iclass}) as
\begin{equation*}
 \dot{s} = -\cfrac{k_1k_2e_0s}{k_1s + (k_{-1}+k_2)\bigg(1+\cfrac{k_3i_0}{k_{-3}}\bigg)},  
\end{equation*}
from which it is clear by inspection that the sQSSA is recoverable from (\ref{Iclass}) whenever $k_3i_0/k_{-3} \ll 1$. Consequently, whenever $k_3i_0/k_{-3} \ll 1$  we need only consider the magnitude of $\varepsilon_{I}$ to ascertain the accuracy of (\ref{Iclass}) (see, {{\sc Figure}}~\ref{FIG12AA}).
\begin{figure}[!htb]
  \centering
    \includegraphics[width=8.0cm]{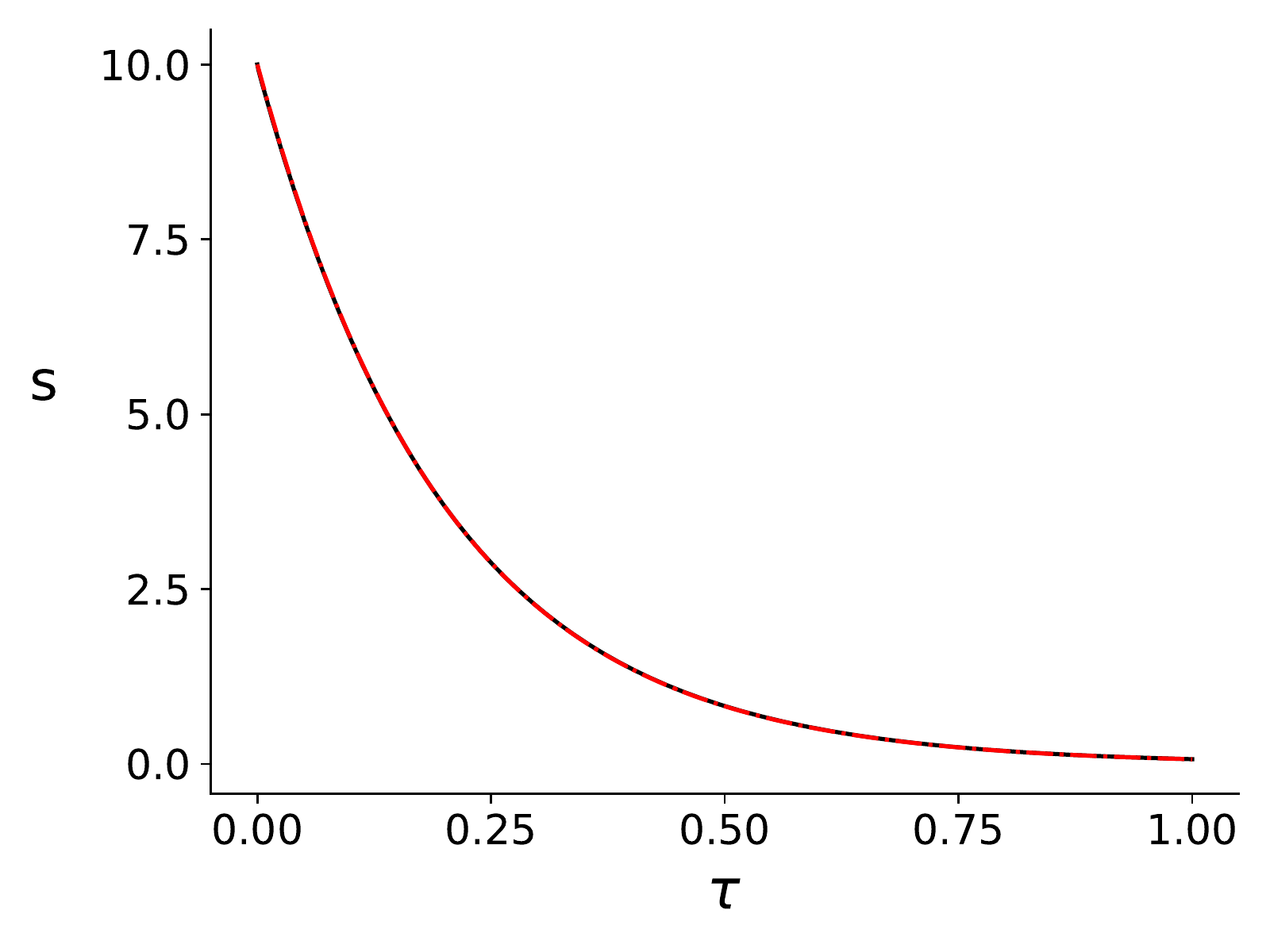}
    \includegraphics[width=8.0cm]{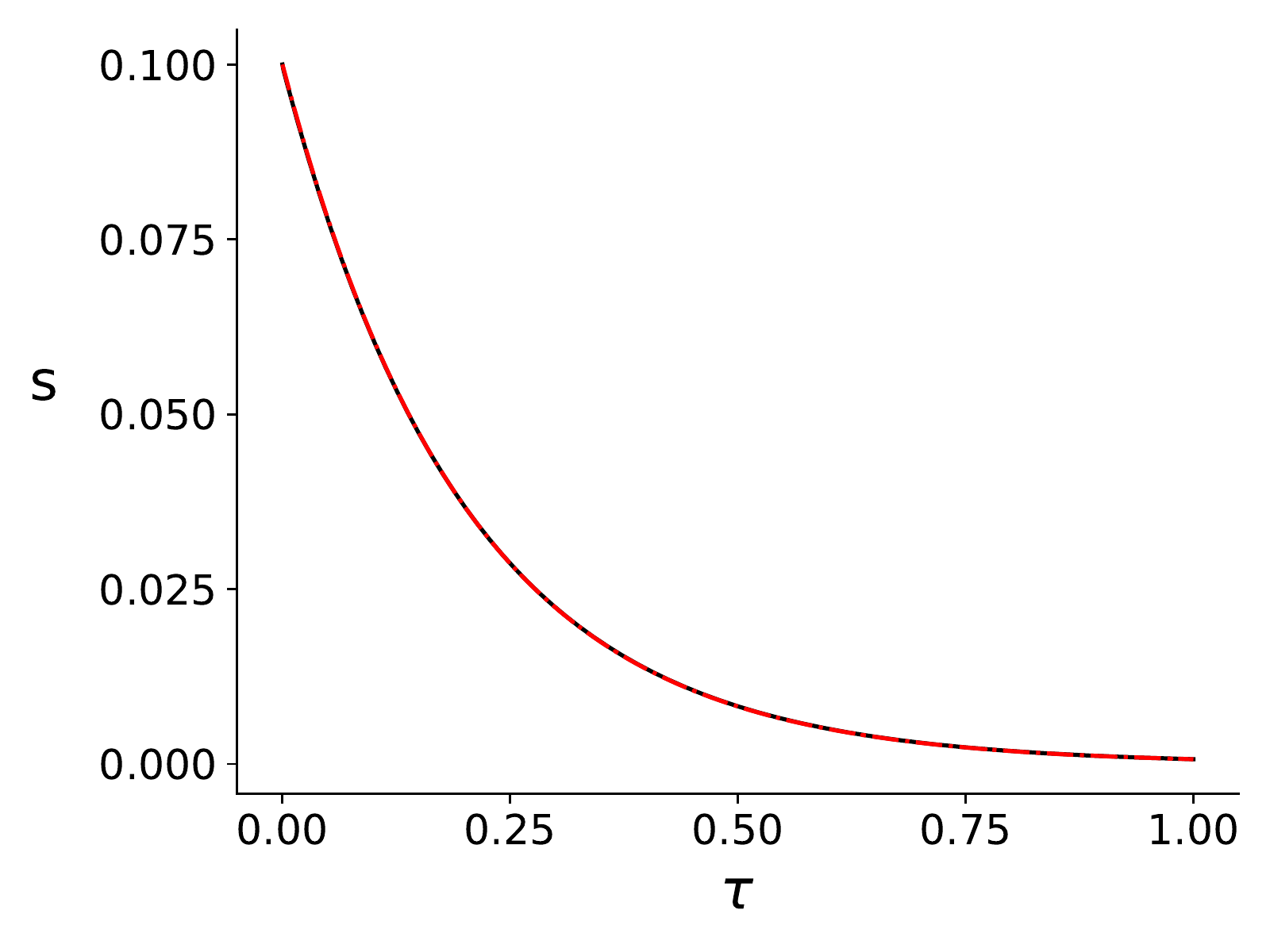}
\caption{\textbf{Competitive inhibition reaction mechanism: If $k_3i_0/k_{-3}\ll 1$, the subspace $U$ is nearly invariant, and (\ref{Iclass}) will be accurate provided $\varepsilon_{RS}\ll 1$.} In both panels, the parameters (in arbitrary units) are: $e_0=10.0$, $k_1=1.0$, $k_2=k_{-1}=10^{3}$, $k_3=10^{-7}$, $k_{-3}=10^{-1}$ and $i_0=10^0$. It is straightforward to verify that \eqref{eqApre} is satisfied, and that $\varepsilon_{MM} = 2.5 \times 10^{-3}$, and $\mu_I^{(1)} \approx 50$. The solid black curve is the numerical solution for $s$ to the mass action system~(\ref{MAIn}). The broken red curve is the numerical solution to (\ref{Iclass}). Time has been mapped to the $\tau$ scale: 
$\tau = t/T$, \;$\tau \in [0,1]$.  {\sc{Left panel}}: $s_0=10.0$ and the reduction~(\ref{Iclass}) is very accurate. {\sc{Right panel}}: Here $s_0=10^{-1}$ and the reduction~(\ref{Iclass}) is still accurate; this confirms the long-time validity of (\ref{Iclass}) is regulated by the magnitude of $\varepsilon_{MM}$ whenever $k_3i_0/k_{-3}\ll 1$.
 } \label{FIG12AA}
\end{figure}

%%%%%%%%%%%%%%%%%%%%%%%%%%%%%%%%%%%%%%%%%%%%%%%%%
\section{Case Studies: Reduction from dimension three to two}\label{seven} 
In this section, we further discuss the uncompetitive and competitive inhibition reaction mechanisms, but now we consider exemplary cases of reduction to dimension two. These scenarios are of less practical relevance than those in the previous section, but we present them for illustrative purposes. We will provide less detailed discussions, and will be content to show the feasibility of the method. The results from Section~\ref{sechigher}, in particular Proposition~\ref{propsmore} and Section~\ref{dim3s2}, will be employed. The determination of distinguished parameters now amounts to finding (or estimating) the maximum and minimum of rational functions in two variables on some compact set.

%%%%%%%%%%%%%%%%%%%%%%%%%%%%%%%%
\subsection{Uncompetitive inhibition reaction mechanism}
For the uncompetitive inhibition reaction mechanism, \eqref{uncompet} and \eqref{MAUn}, one sees that $k_1=k_{-3}=0$, with all other parameters contained in some compact subset of the open positive orthant, defines a TFPV $\widehat\pi$, with a two-dimensional critical manifold $\widetilde Y$ given by $c_1=0$. The TFPV conditions mean that both elementary reactions responsible for the formation of $C_1$ are slow.

We consider system \eqref{MAUn} with initial values $s(0)=s_0, \,c_1(0)=c_2(0)=0$ on the compact positively invariant set
\begin{equation*}
    K:=\{(s,c_1,c_2)\in \mathbb{R}^3_{\geq 0}: 0 \leq s\leq s_0,\;\; c_1+c_2 \leq e_0, \;\;c_2 \leq \min\{e_0,i_0\}\},
\end{equation*}
and take the ray direction
\[
\rho=(0,0,k_1^*,0,0,0,k_{-3}^*)^{\rm tr}
\]
in parameter space, with $k_i=\varepsilon k_i^*$, $k_i^*>0$, for $i\in\{1,\,-3\}$. Straightforward computations yield the reduced system 
\begin{equation}\label{Ired2}
\begin{pmatrix}\dot{s}\\ \dot{c}_2\end{pmatrix}=\dfrac1{k_{-1}+k_2+{k_{3}}(i_0-c_2)}\begin{pmatrix}-k_1(e_0-c_2)(k_2+k_3(i_0-c_2))s+k_{-3}k_{-1}c_2\\
k_1k_3(e_0-c_2)(i_0-c_2)s-k_{-3}(k_{-1}+k_2)c_2\end{pmatrix},
\end{equation}
with initial conditions $s(0)=s_0,\,c_2(0)=0.$ A straightforward phase plane analysis of system \eqref{Ired2} (respectively, of the orbitally equivalent system with the common denominator discarded) shows that every solution in the positive quadrant converges to the stationary point $0$.

%%%%%%%%%%
\subsubsection{Asymptotic small parameters}
We now determine dimensionless parameters that gauge the accuracy of (\ref{Ired2}). For the sake of brevity, we will restrict attention to the case $e_0>i_0$. 
The coefficients of the characteristic polynomial on $\widetilde Y$ are given by
\[
\begin{array}{rcl}
\widetilde\sigma_1&=&k_{-1}+k_2+k_3(i_0-c_2)+\varepsilon\,(\cdots)\\
\widetilde\sigma_2&=&k_1\left(k_3(i_0-c_2)(e_0-c_2+s)+k_2(e_0-c_2)\right)+k_{-3}(k_{-1}+k_2)+\varepsilon^2\,(\cdots)\\
\widetilde\sigma_3&=& k_1k_{-3} k_2(e_0-c_2).
\end{array}
\]
Thus,
\[
\begin{array}{rcl}
\sigma_1&=&k_{-1}+k_2+k_3(i_0-c_2)\\
\widehat\sigma_2&=&k_1^*\left(k_3(i_0-c_2)(e_0-c_2+s)+k_2(e_0-c_2)\right)+k_{-3}^*(k_{-1}+k_2)\\
\widehat\sigma_3&=& k_1^*k_{-3}^* k_2(e_0-c_2),
\end{array}
\]
and the first nondegeneracy condition from Lemma~\ref{bigsgoodprop} is satisfied since $e_0>i_0$.

According to Propositions~\ref{propsmore} and \ref{timescalebigs} and their proofs, for timescale comparisons we consider the rational function
\[
q(s, c_2)= \dfrac{\widehat\sigma_2(s,c_2.\widehat\pi,\rho,0)}{\sigma_1(s,c_2,\widehat\pi)^2},\quad 0\leq s\leq s_0, \quad0\leq c_2\leq i_0.
\]
Since $\widehat\sigma_2$ decreases with $c_2$ and increases with $s$, while $\sigma_1$ decreases with $c_2$, we obtain an upper estimate from
\[
\varepsilon^*\leq \varepsilon\dfrac{\max \widehat\sigma_2}{(\min\sigma_1)^2}= \dfrac{k_1(k_3i_0(e_0+s_0)+k_2e_0)+k_{-3}(k_{-1}+k_2)}{(k_{-1}+k_2)^2}=:\delta^*.
\]
Moreover from $\varepsilon^*\geq q(s_0,0)$, we find that 
\[
\dfrac{k_1(k_3i_0(e_0+s_0)+k_2e_0)+k_{-3}(k_{-1}+k_2)}{(k_{-1}+k_2+k_3i_0)^2}\leq \varepsilon^*.
\]
Likewise, we obtain lower timescale estimates from
\[
\varepsilon_*\geq \varepsilon\dfrac{\min \widehat\sigma_2}{(\max\sigma_1)^2}= 
 \dfrac{k_1k_2(e_0-i_0)+k_{-3}(k_{-1}+k_2)}{(k_{-1}+k_2+k_3i_0)^2}=:\delta_*.
\]
Thus, for $i_0$ not too large, the estimates by $\delta^*$ and $\delta_*$ are quite acceptable.

To estimate the disparity of the slow eigenvalues, according to Section~\ref{dim3s2}, we consider 
\[
\kappa^*=\max\dfrac{\sigma_1\widehat\sigma_3}{\widehat\sigma_2^2}\leq \dfrac{(k_{-1}+k_2+k_3i_0)k_1^*k_{-3}^*k_2e_0}{(k_1^*k_2e_0+k_{-3}^*(k_{-1}+k_2))^2}=:\nu^*
\]
as well as
\[
\kappa_*=\min\dfrac{\sigma_1\widehat\sigma_3}{\widehat\sigma_2^2}\geq \dfrac{(k_{-1}+k_2)k_1^*k_{-3}^*k_2(e_0-i_0)}{(k_1^*(k_3i_0(e_0+s_0)+k_2e_0)+k_{-3}^*(k_{-1}+k_2))^2}=:\nu_*.
\]
Whenever $i_0$ is not too large, these two parameters are close, and so are the slow eigenvalues.

%%%%%%%%%%
\subsubsection{Numerical Simulations}
From Fenichel theory, it is known that the accuracy of the reduction~(\ref{Ired2}) improves along the perturbation direction as $\varepsilon\to 0$. We include some numerical simulations to gauge the efficacy of the parameter $\delta^*$:

\begin{enumerate}
\item Following the outline established in Section 6, we first consider a case when all parameters are of unit order. Numerical simulations confirm that the accuracy of (\ref{Ired2}) improves as $\delta^*\to 0$ along the parameter ray direction (see, {{\sc Figure}}~\ref{FIG17A}). We include the values of $\delta_*$ to indicate the variation of timescale ratios.
\begin{figure}[!htb]
  \centering
    \includegraphics[width=8.0cm]{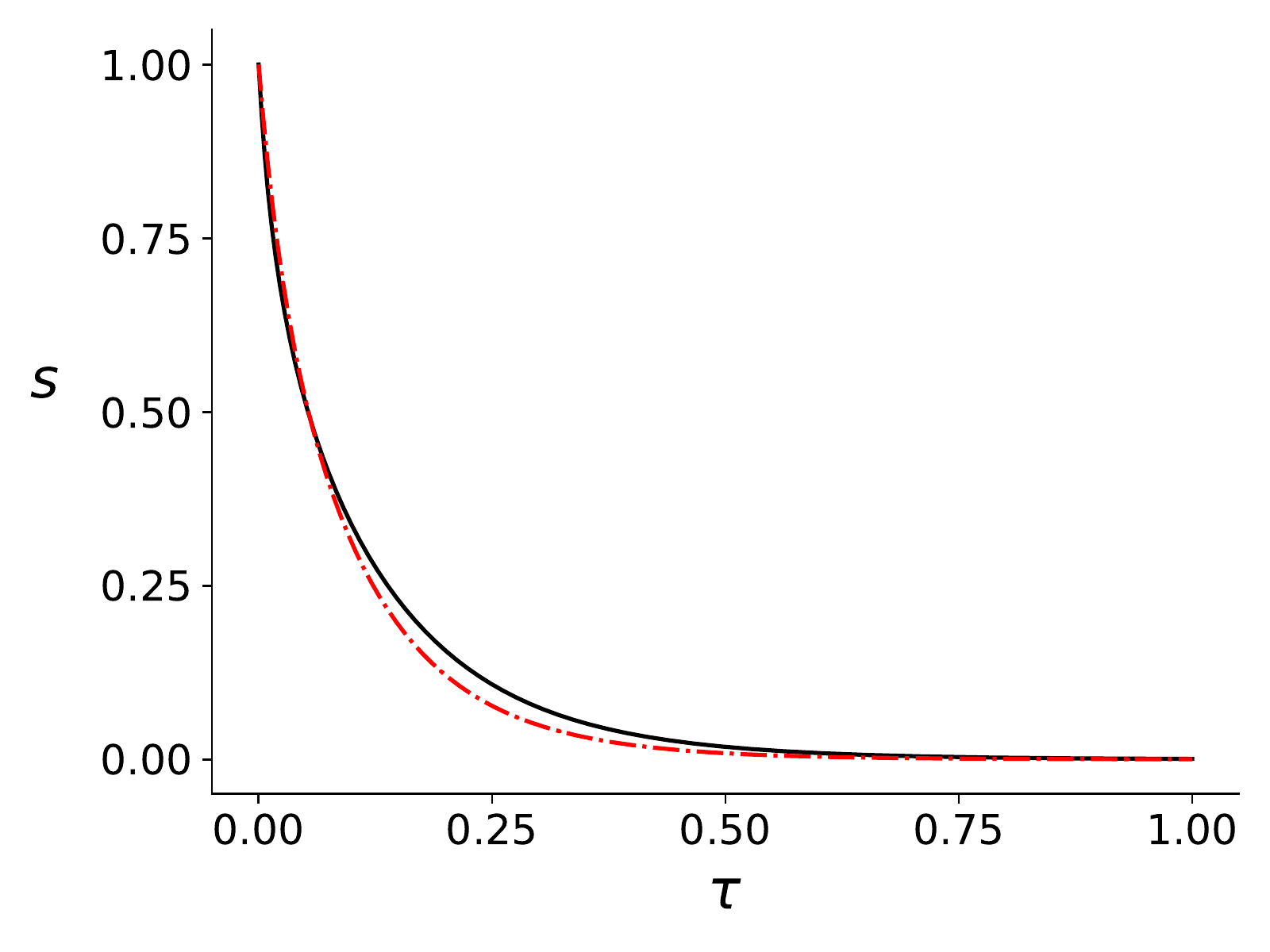}
    \includegraphics[width=8.0cm]{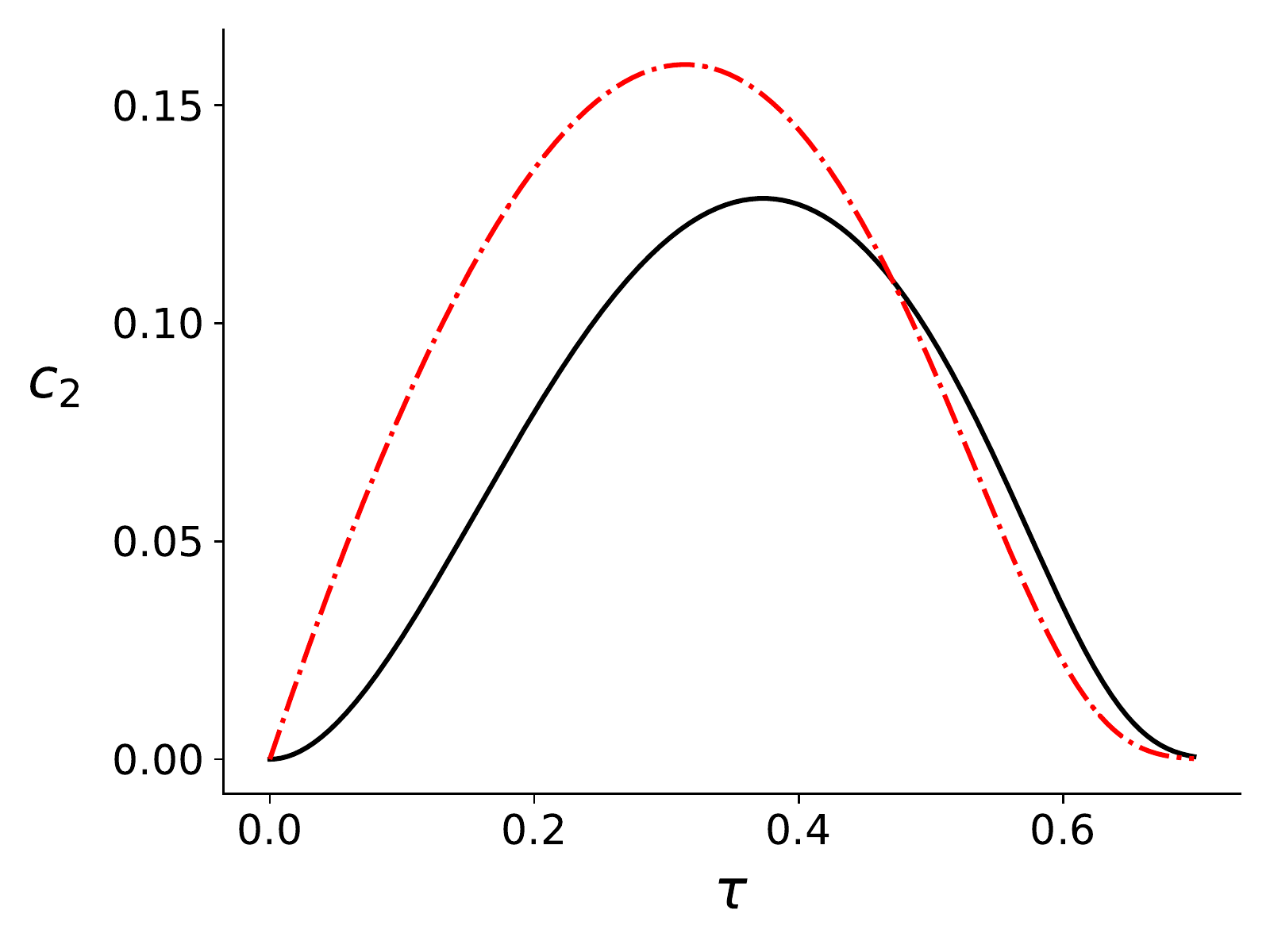}\\
    \includegraphics[width=8.0cm]{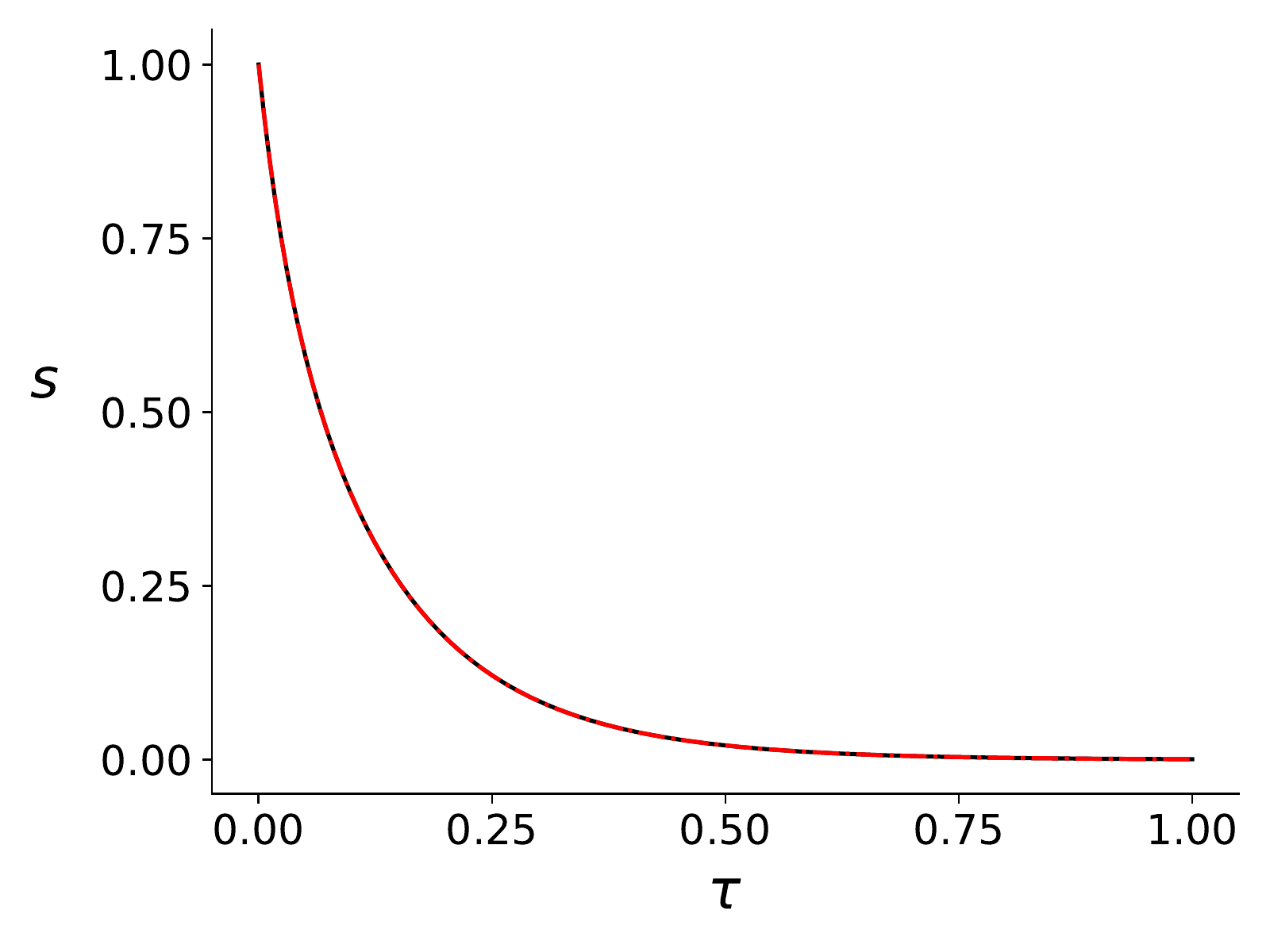}
    \includegraphics[width=8.0cm]{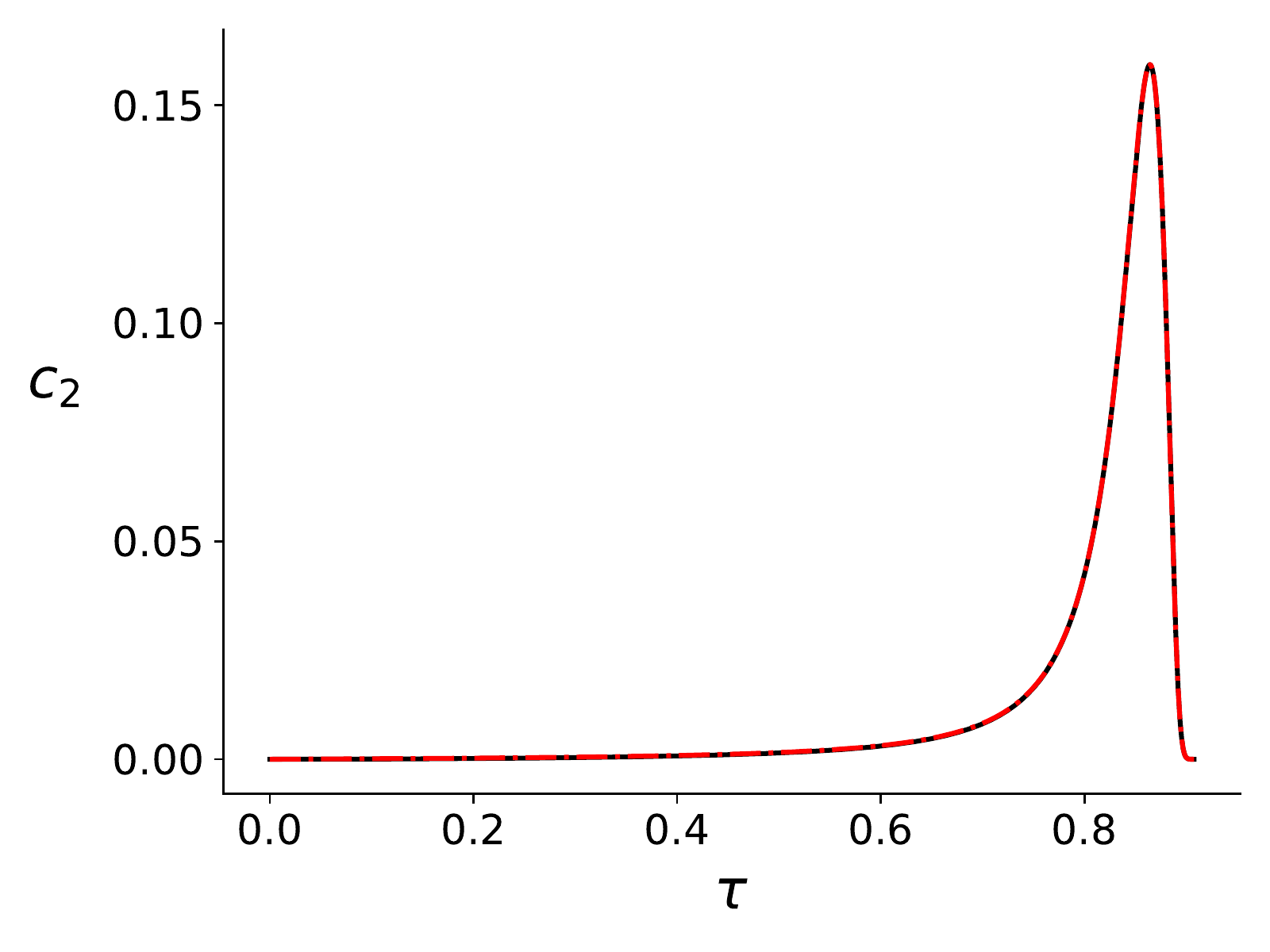}
\caption{\textbf{Uncompetitive inhibition reaction mechanism with reduction to dimension two.} In all panels, the parameters (in arbitrary units) are: $e_0=2.0$, $s_0=1.0$, $k_2=k_{-1}=1.0$, $k_3=1.0$, and $i_0=1.0$. The solid black curve is the numerical solution to the mass action system~(\ref{MAUn}). The dashed/dotted red curve is the numerical solution to (\ref{Ired2}). Time has been mapped to the $\tau$ scale: 
$\tau = t/T$, \;$\tau \in [0,1]$. {\sc{Top panels}}: $k_1=k_{-3}=10^{-1}$ with $\delta^* = 1.75 \times 10^{-1}$ and $\delta_*=3.33 \times 10^{-2}$. {\sc{Bottom panels}}: $k_1=k_{-3}=10^{-3}$ with $\delta^*= 1.75\times 10^{-3}$ and $\delta_*=3.33 \times 10^{-4}$. As expected, the accuracy of (\ref{Ired2}) improves as the perturbation decreases along the parameter ray.
 } \label{FIG17A}
\end{figure}

\item In a second set of simulations, we consider the case of parameter values that are disparate in magnitude. Numerical simulations confirm once again that the accuracy of (\ref{Ired2}) improves along the parameter ray direction as $\delta^*\to 0$ (see, {{\sc Figure}}~\ref{FIG17B}).
\begin{figure}[!htb]
  \centering
    \includegraphics[width=8.0cm]{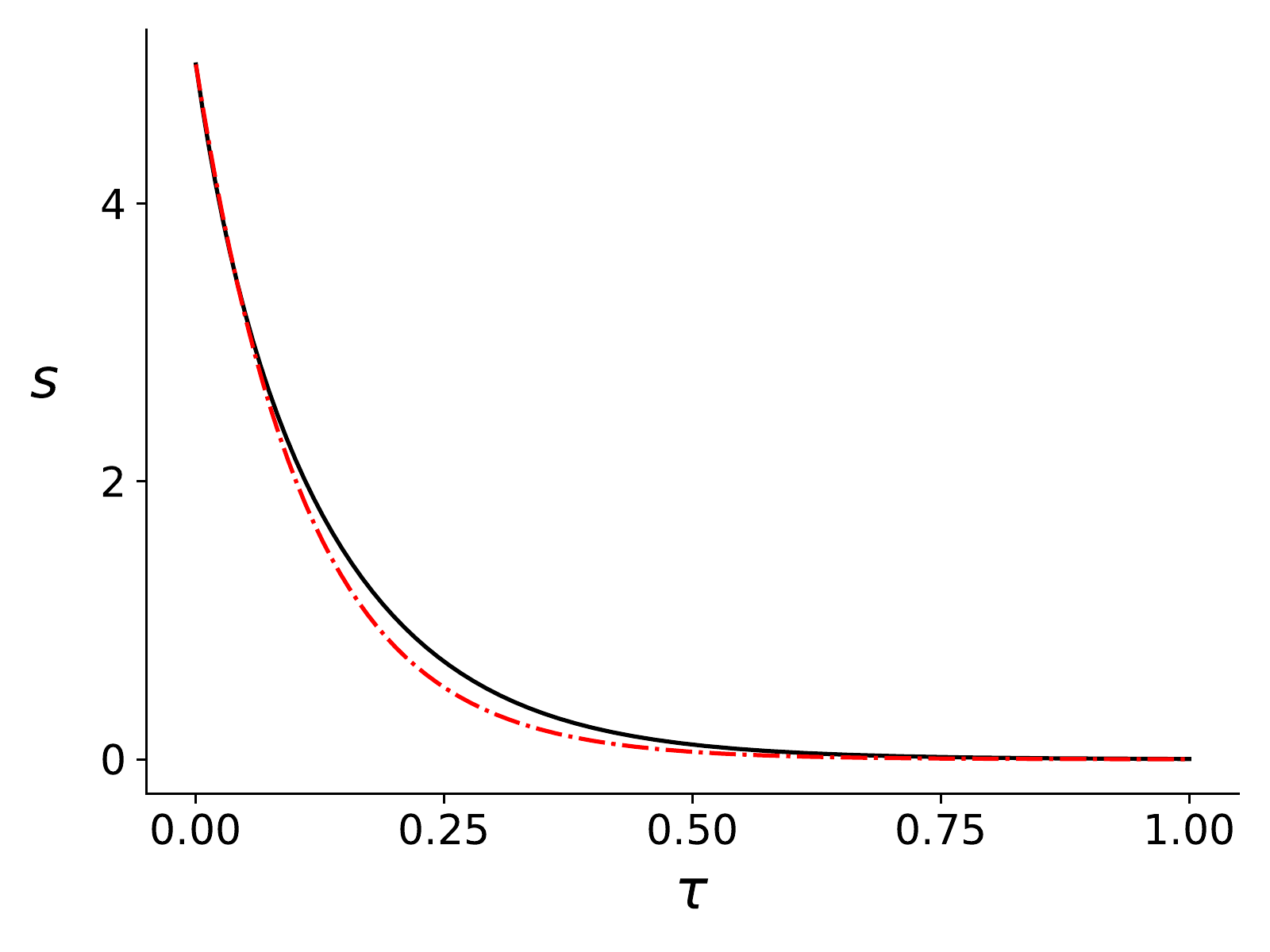}
    \includegraphics[width=8.0cm]{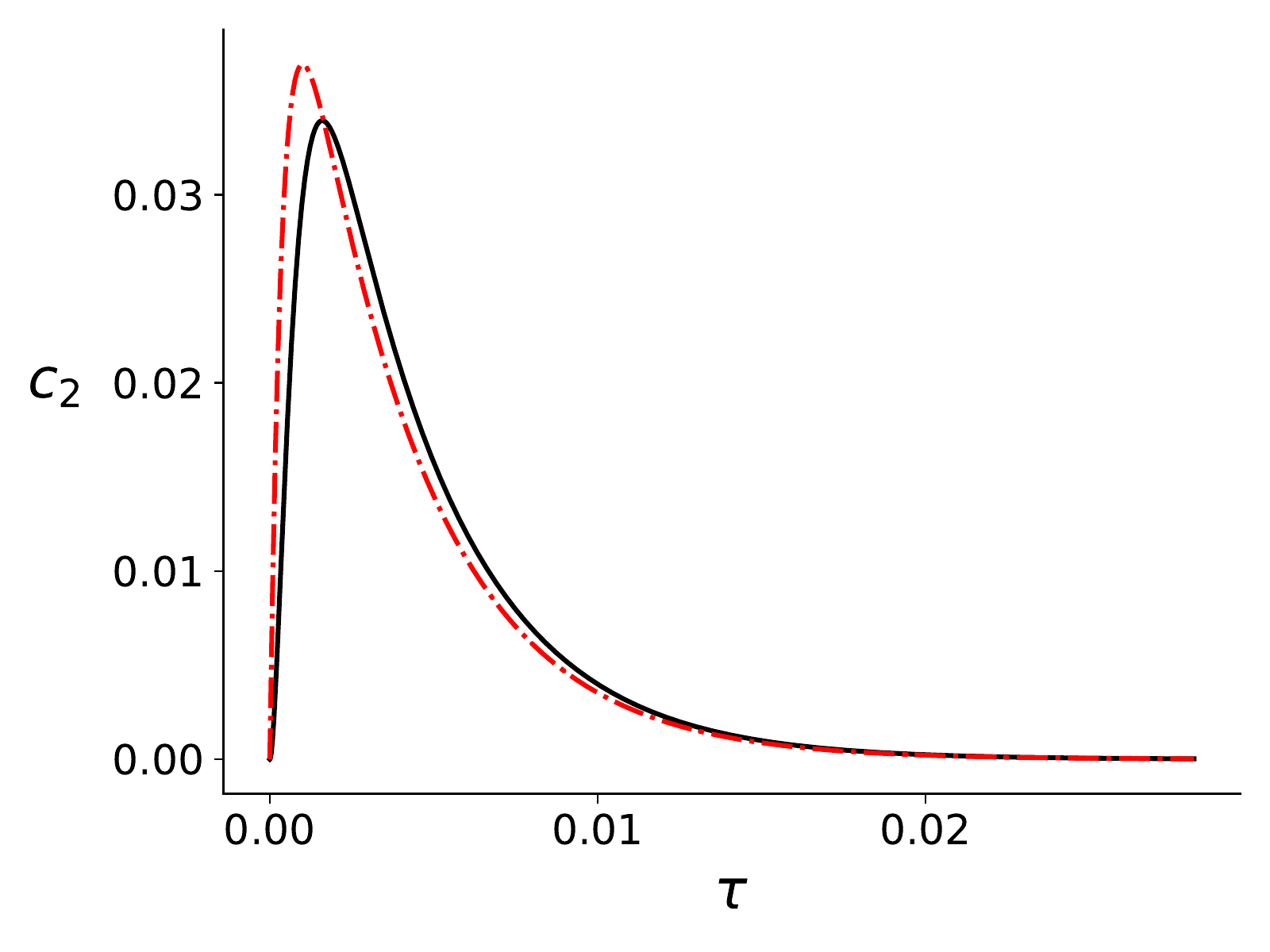}\\
    \includegraphics[width=8.0cm]{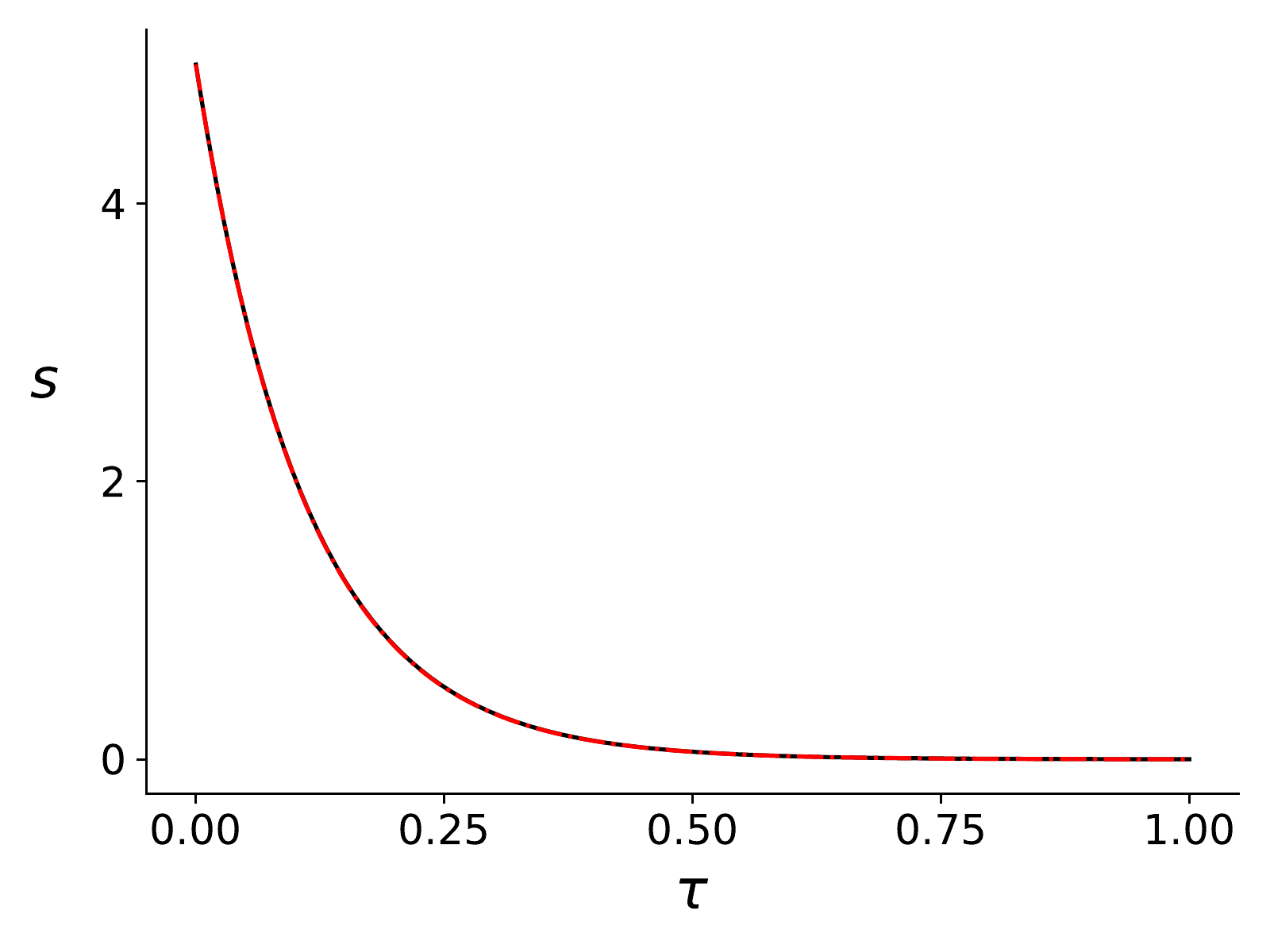}
    \includegraphics[width=8.0cm]{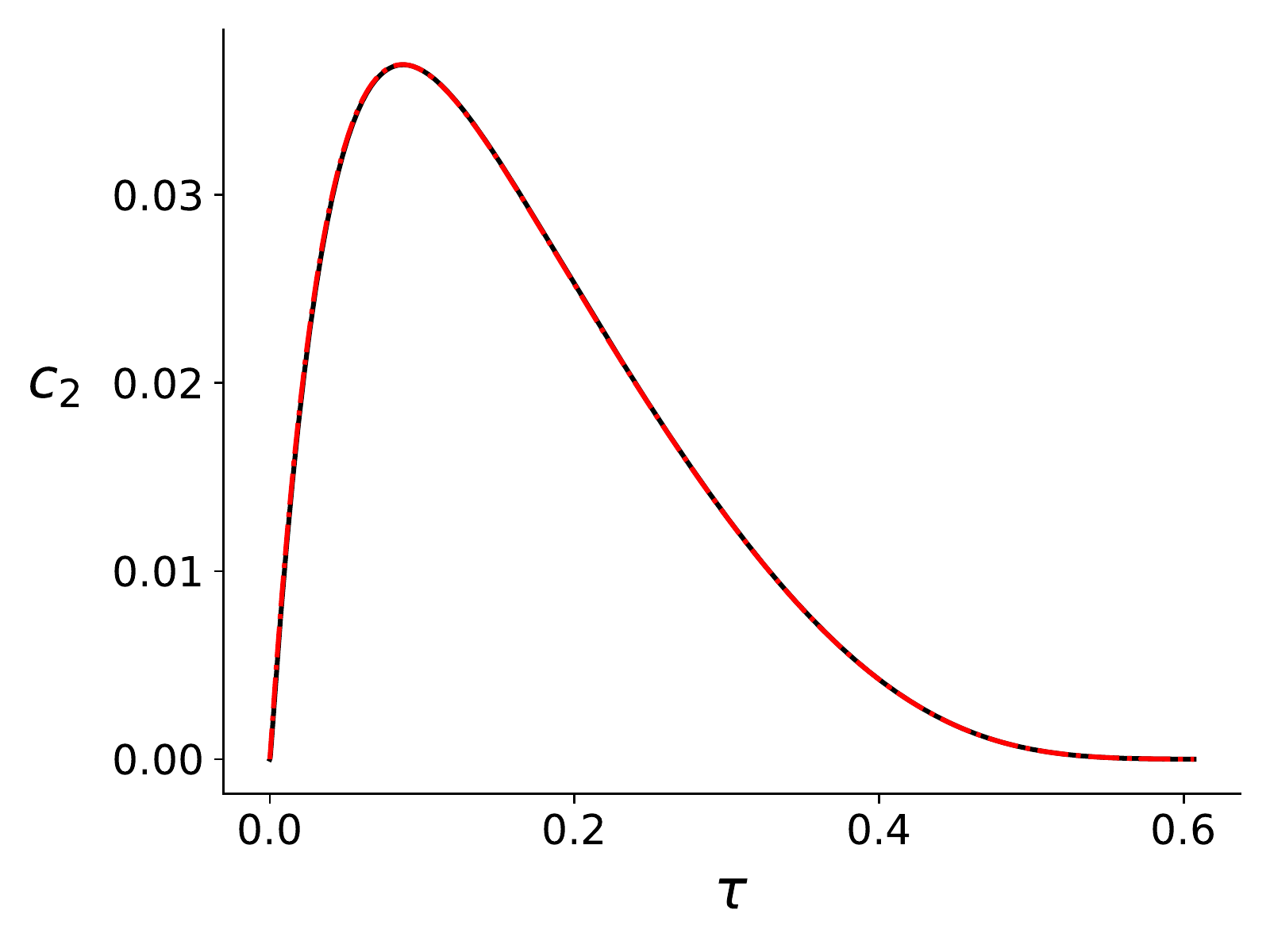}
  \caption{\textbf{Uncompetitive inhibition reaction mechanism with reduction to dimension two and disparate parameter values.} In all panels, the parameters (in arbitrary units) are: $e_0=10.0$, $s_0=5.0$, $k_2=10^{3}$, $k_{-1}=10^{2}$, $k_3=10.0$, and $i_0=1.0$. The solid black curve is the numerical solution to the mass action system~(\ref{MAUn}). The broken red curve is the numerical solution to (\ref{Ired2}). Time has been mapped to the $\tau$ scale: 
$\tau = t/T$, \;$\tau \in [0,1]$. {\sc{Top panels}}: $k_1=k_{-3}=10^{2}$ with $\delta^*\approx 9.3 \times 10^{-1}$ and $\delta_*\approx 8.2 \times 10^{-1}$. {\sc{Bottom panels}}: $k_1=k_{-3}=1.0$ with $\delta^*\approx 9.3 \times 10^{-3}$ and $\delta_*\approx 8.3 \times 10^{-3}$. Again, it is clear that the accuracy of (\ref{Ired2}) improves as the perturbation decreases along the parameter ray. Notably, the approximation in the second case is very good although $k_1=k_{-3}=1$. (As always, the expression ``$\ll1$'' should not be taken too literally.) As for measuring the discrepancy of the ``slow'' eigenvalues, one finds $\kappa_*\geq \nu_*\approx 7.8\cdot 10^{-2}$ and $\kappa^*\leq \nu^*\approx 9.0\cdot 10^{-2}$. This indicates a ratio of about $10^{-1}$.
 } \label{FIG17B}
\end{figure}
\end{enumerate}

%%%%%%%%%%%%%%%%%%%%%%%%%%%%%%%%%%%%%%%%%%%%%%%%%%%%%%%
\subsection{Competitive inhibition reaction mechanism}
For the competitive inhibition reaction mechanism, \eqref{compet} and \eqref{MAIn}, we consider the case that formation of complex $C_1$, and both formation and degradation of complex $C_2$, are slow. Setting $k_1=k_3=k_{-3}=0$, with all the other parameters contained in a compact subset of the positive orthant, defines a TFPV $\widehat\pi$ for dimension $s=2$, the critical manifold $\widetilde Y$ being given by $c_1=0$ (see, Kruff and Walcher~\cite{KrWa}). 
We consider the system on the compact positively invariant set $K$ defined by
\begin{equation*}
K:=\{(s,c_1,c_2)\in \mathbb{R}_{\geq 0}^3: 0\leq s\leq s_0,\;\; 0\leq c_1\leq e_0,\;\; 0\leq c_2\leq \min\{e_0,\,i_0\}\},
\end{equation*}
choosing the ray direction
\[
\rho=(0,0,k_1^*,0,0,k_3^*,k_{-3}^*)^{\rm tr}
\]
in parameter space, and $k_i=\varepsilon k_i^*$ with $k_i^*>0$ for $i\in\{1,\,3,\,-3\}$.  Standard computations yield the reduced system
\begin{equation}\label{Iclass2D}
\begin{array}{rcl}
    \dot s & =& -\dfrac{k_1k_2}{k_{-1}+k_2}(e_0-c_2)s \\
    \dot c_2 & =& k_3(e_0-c_2)(i_0-c_2)-k_{-3}c_2
\end{array}
\end{equation}
with initial conditions $s(0)=s_0$, $c_2(0)=0$. The qualitative behavior of this system is easily determined. All solutions in the positive quadrant converge to a stationary point $(0, c_2^*)$, with $0<c_2^*<\min\{e_0,i_0\}$.\footnote{Incidentally, it is possible to compute the solutions to (\ref{Iclass2D}) via quadratures. The second equation is separable, and upon substitution the first equation is non-autonomous linear.} 

%%%%%%
\subsubsection{Asymptotic small parameters}
For the sake of brevity, we will consider only the case $e_0>i_0$. The coefficients of the characteristic polynomial on $\widetilde Y$ are 
\[
\begin{array}{rcl}
\widetilde\sigma_1&=&k_{-1}+k_2+\varepsilon\,(\cdots)\\
\widetilde\sigma_2&=&k_1k_2(e_0-c_2)+k_3(e_0+i_0-2c_2)(k_{-1}+k_2)+k_{-3}(k_{-1}+k_2)+\varepsilon^2\,(\cdots)\\
\widetilde\sigma_3&=& k_1k_3\cdot\left(k_2e_0(e_0+i_0-2c_2)+k_2c_2(2c_2-e_0-i_0)\right) +k_1k_{-3}\cdot k_2(e_0-c_2),
\end{array}
\]
and we obtain
\[
\begin{array}{rcl}
\sigma_1&=&k_{-1}+k_2\\
\widehat\sigma_2&=&k_1^*k_2(e_0-c_2)+k_3^*(e_0+i_0-2c_2)(k_{-1}+k_2)+k_{-3}^*(k_{-1}+k_2)\\
\widehat\sigma_3&=&k_1^*k_2(e_0-c_2)\left( k_3^* (e_0+i_0-2c_2) +k_{-3}^*\right).
\end{array}
\]
The nondegeneracy conditions are satisfied (also at $c_2=i_0$), due to $e_0>i_0$. As for timescales,
we need to analyze the rational function
\[
q(s, c_2)= \dfrac{\widehat\sigma_2(s,c_2,\widehat\pi,\rho,0)}{\sigma_1(s,c_2,\widehat\pi)^2},\quad 0\leq s\leq s_0,\quad 0\leq c_1\leq e_0,\quad 0\leq c_2\leq i_0.
\]
Since $\sigma_1$ is constant and $\widehat\sigma_2$ is decreasing with $c_2$, 
attaining its maximum at $c_2=0$, and its minimum at $c_2=i_0$, we find the distinguished parameters 
\[
\varepsilon^*=\dfrac{k_1k_2e_0+(k_3(e_0+i_0)+k_{-3})(k_{-1}+k_2)}{(k_{-1}+k_2)^2}
\]
and
\[
\varepsilon_{*}=\dfrac{k_1k_2(e_0-i_0)+(k_3(e_0-i_0)+k_{-3})(k_{-1}+k_2)}{(k_{-1}+k_2)^2}.
\]

Furthermore, according to Section~\ref{dim3s2}, we consider
\begin{equation}\label{compinkapup}
\kappa^*=\max\dfrac{\sigma_1\widehat\sigma_3}{\widehat\sigma_2^2}\leq \dfrac{k_1^*k_2e_0(k_3^*(e_0+i_0)+k_{-3}^*)(k_{-1}+k_2)}{\left(k_1^*k_2(e_0-i_0)+(k_3^*(e_0-i_0)+k_{-3}^*)(k_{-1}+k_2)\right)^2}=:\nu^*
\end{equation}
and
\begin{equation}\label{compinkaplo}
\kappa_*=\min\dfrac{\sigma_1\widehat\sigma_3}{\widehat\sigma_2^2}\geq \dfrac{k_1^*k_2(e_0-i_0)(k_3^*(e_0-i_0)+k_{-3}^*)(k_{-1}+k_2)}{\left(k_1^*k_2e_0+(k_3^*(e_0+i_0)+k_{-3}^*)(k_{-1}+k_2)\right)^2}=:\nu_*
\end{equation}
to measure the disparity between the eigenvalues $\lambda_2$ and $\lambda_3$.

%%%%%%%%%%
\subsubsection{Numerical simulations}
We present numerical examples to gauge the accuracy of the reduction~(\ref{Iclass2D}) with decreasing $\varepsilon^*$:
\begin{enumerate}
\item For our first example, we consider the case with $\pi:=\varepsilon(k_1^*,1.0,1.0,1.0,1.0.,k_3^*,k_{-3}^*,1.0)^{\text{tr.}}$ (see, {{\sc Figure}}~\ref{FIG111}). We include the values of $\varepsilon^*,\,\varepsilon_*$ to indicate the variation of timescale ratios.
\begin{figure}[!htb]
  \centering
    \includegraphics[width=8.0cm]{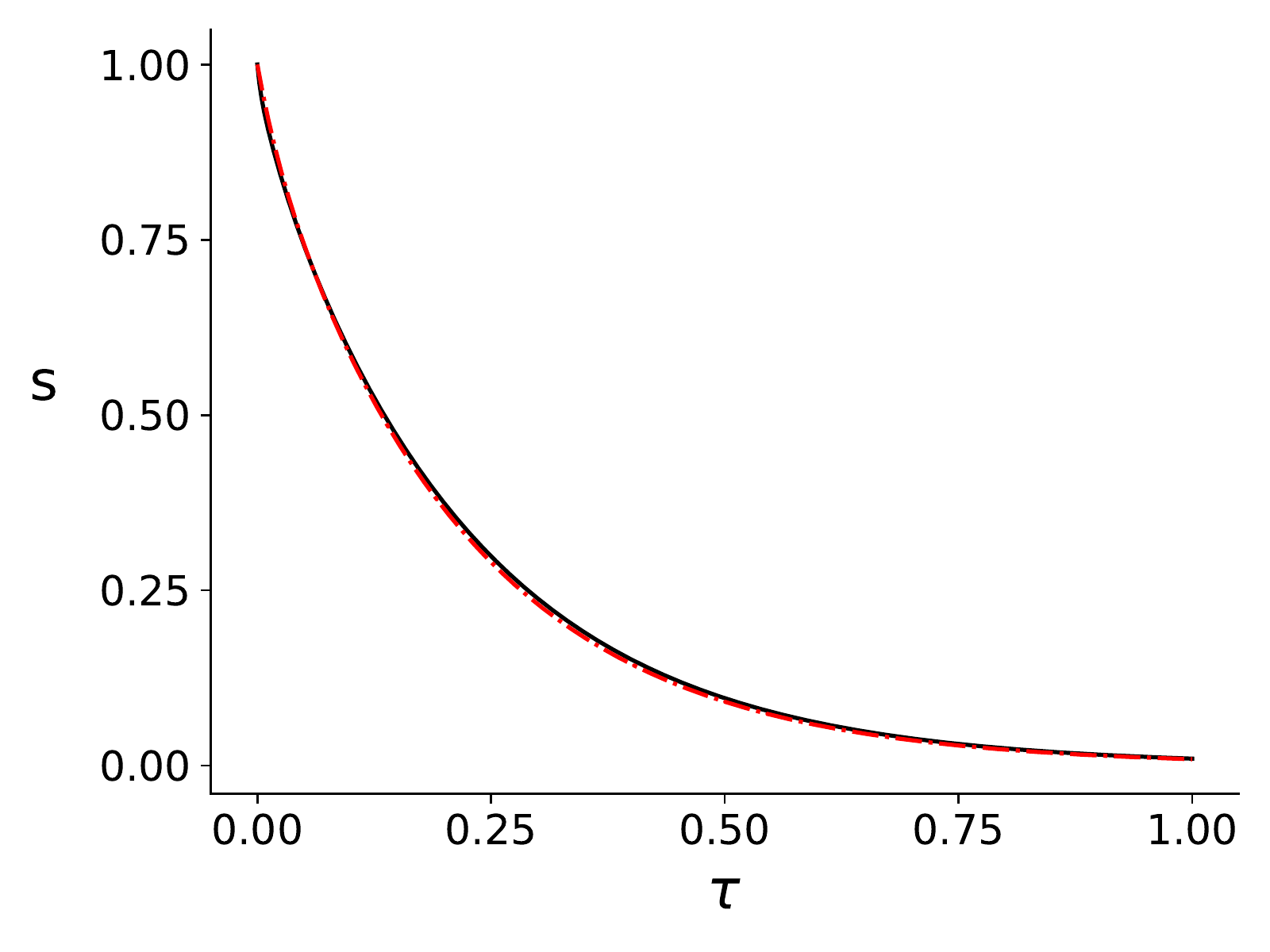}
    \includegraphics[width=8.0cm]{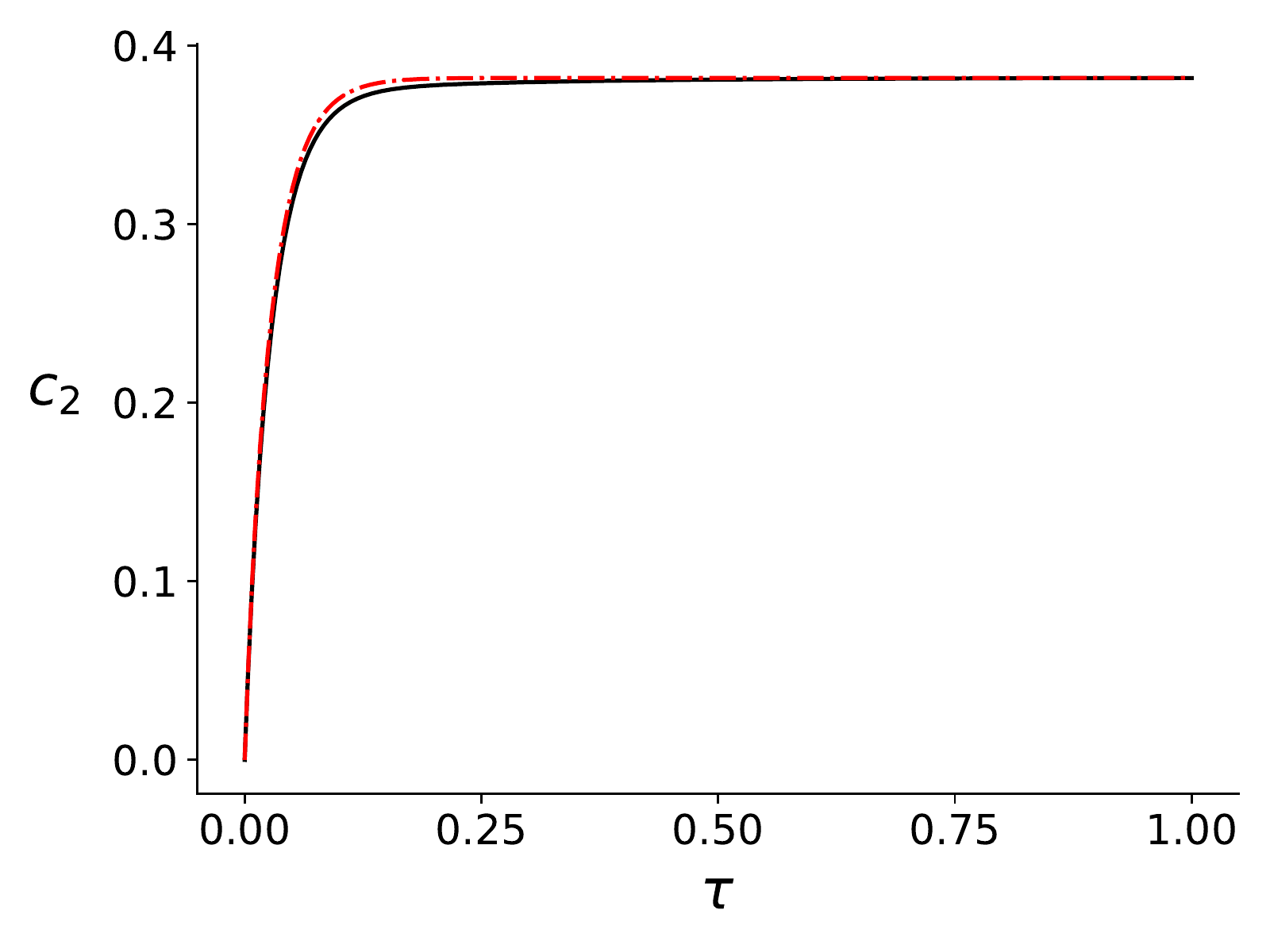}\\
    \includegraphics[width=8.0cm]{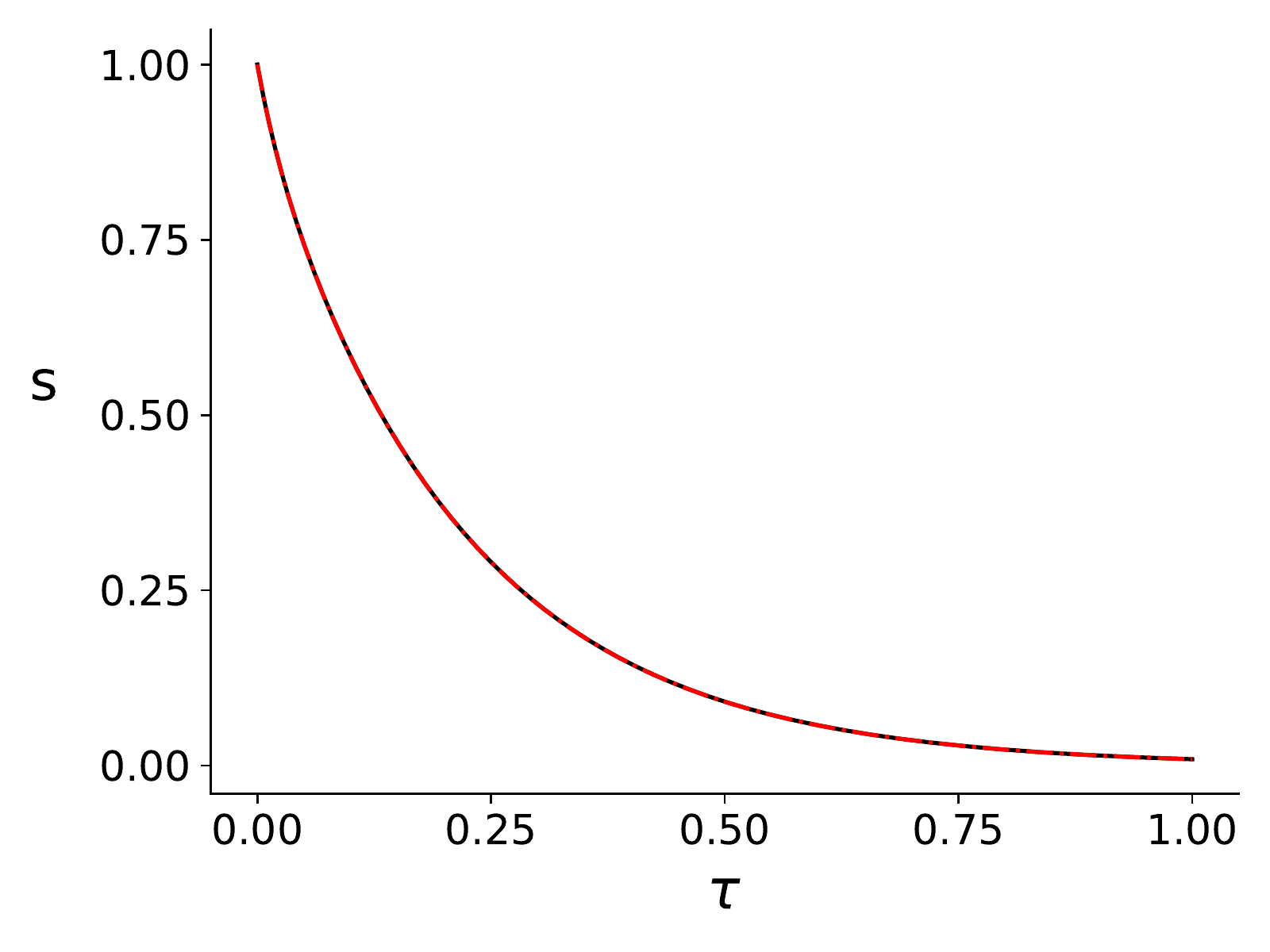}
    \includegraphics[width=8.0cm]{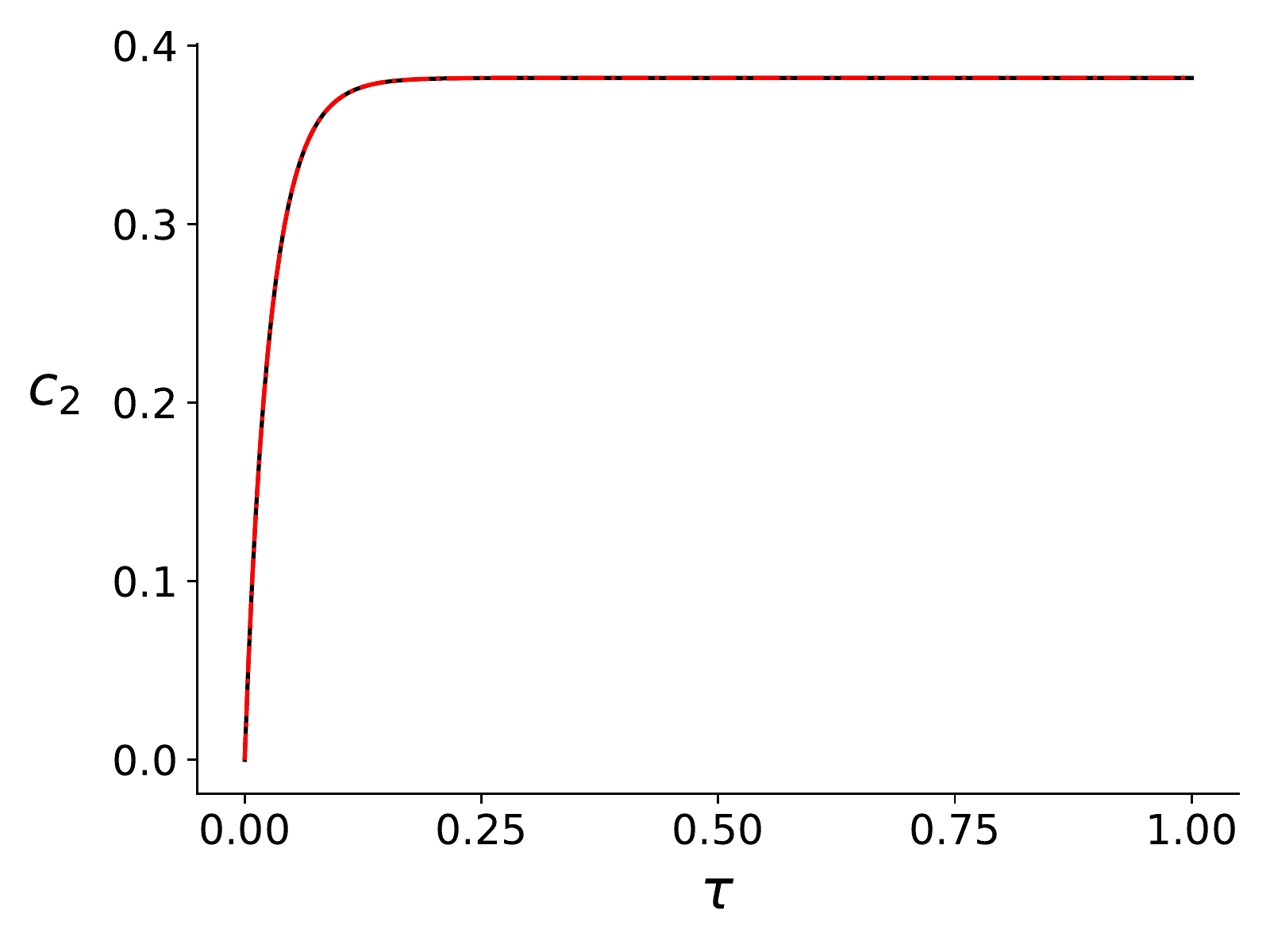}
\caption{\textbf{Competitive inhibition reaction mechanism with reduction to dimension two. The accuracy of (\ref{Iclass2D}) improves along the parameter ray.} In all panels, the parameters (in arbitrary units) are: $e_0=1.0$, $s_0=1.0$, $k_2=1.0$, $k_{-1}=1.0$, and $i_0=1.0$. The solid black curve is the numerical solution to the mass action system~(\ref{MAIn}). The broken red curve is the numerical solution to (\ref{Iclass2D}). Time has been mapped to the $\tau$ scale: 
$\tau = t/T$, \;$\tau \in [0,1]$. {\sc{Top panels}}: Simulation performed with $k_1=k_{-3}=k_3=10^{-1}$ and $\varepsilon^*=1.75\times10^{-1}$, $\varepsilon_*=5.0\times 10^{-2}$. {\sc{Bottom panels}}: Simulation performed with $k_1=k_{-3}=k_3=10^{-3}$ and $\varepsilon^*=1.75 \times 10^{-3}$, $\varepsilon_*=5.0\times 10^{-4}$. The singular perturbation reduction~(\ref{Iclass2D}) is practically indistinguishable from (\ref{MAIn}).
 } \label{FIG111}
\end{figure}

\item For our second example, we again consider a case when parameters are of differing magnitudes. Once more, numerical simulations confirm that the QSS reduction~(\ref{Iclass2D}) improves as $\varepsilon^*\to 0$ along the parameter ray (see, {{\sc Figure}}~\ref{FIG222}).
\begin{figure}[!htb]
  \centering
    \includegraphics[width=8.0cm]{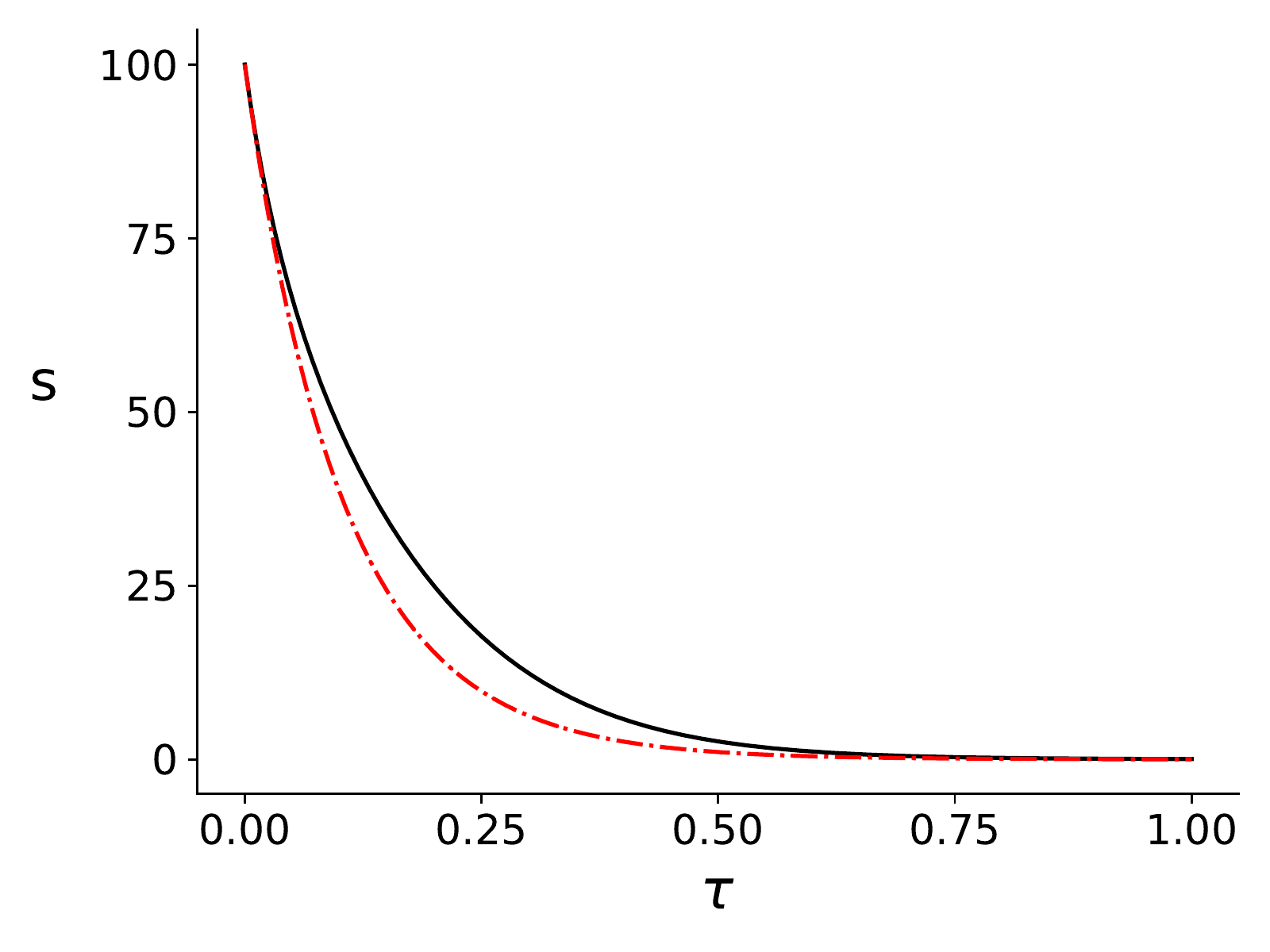}
    \includegraphics[width=8.0cm]{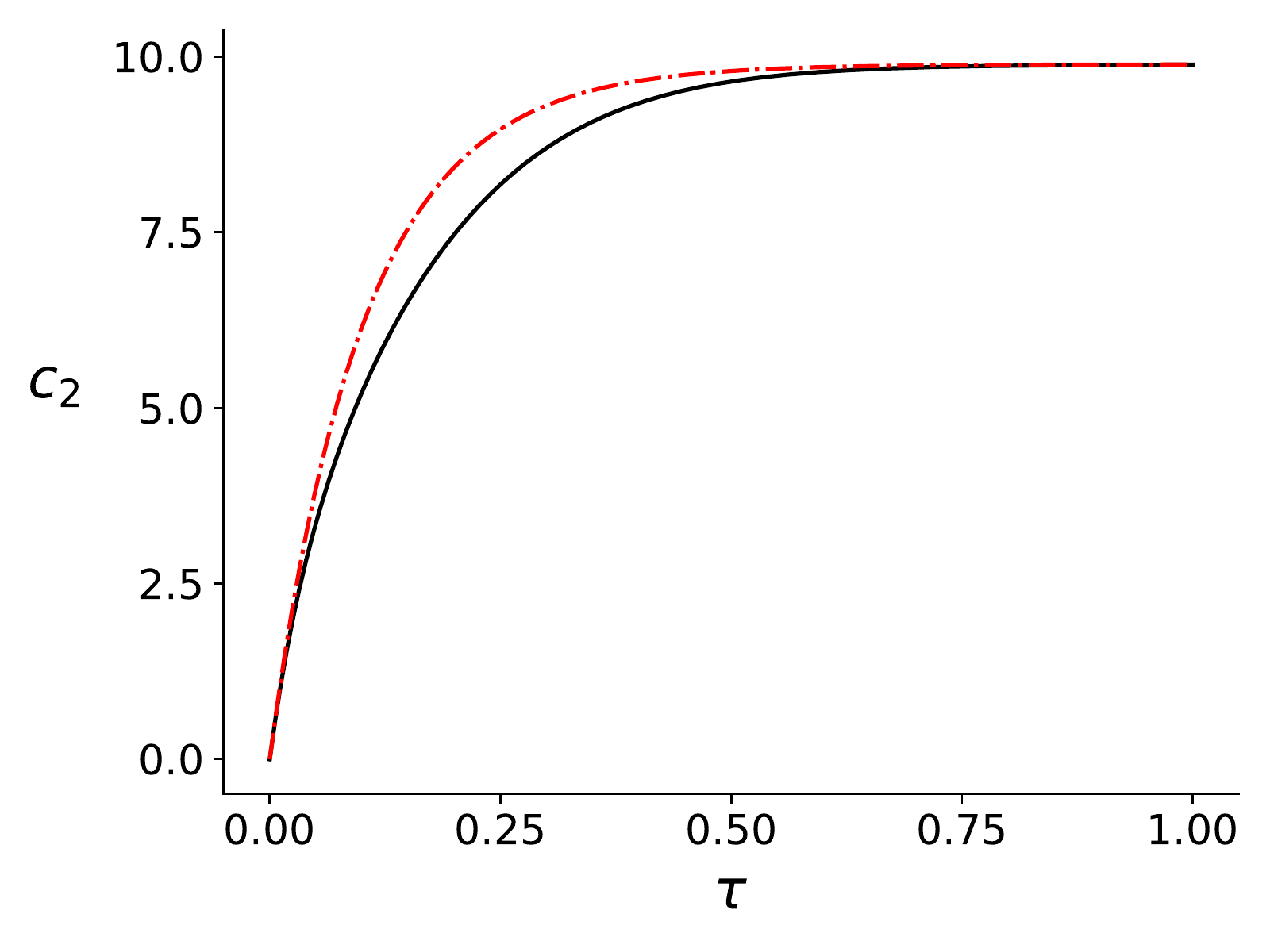}\\
    \includegraphics[width=8.0cm]{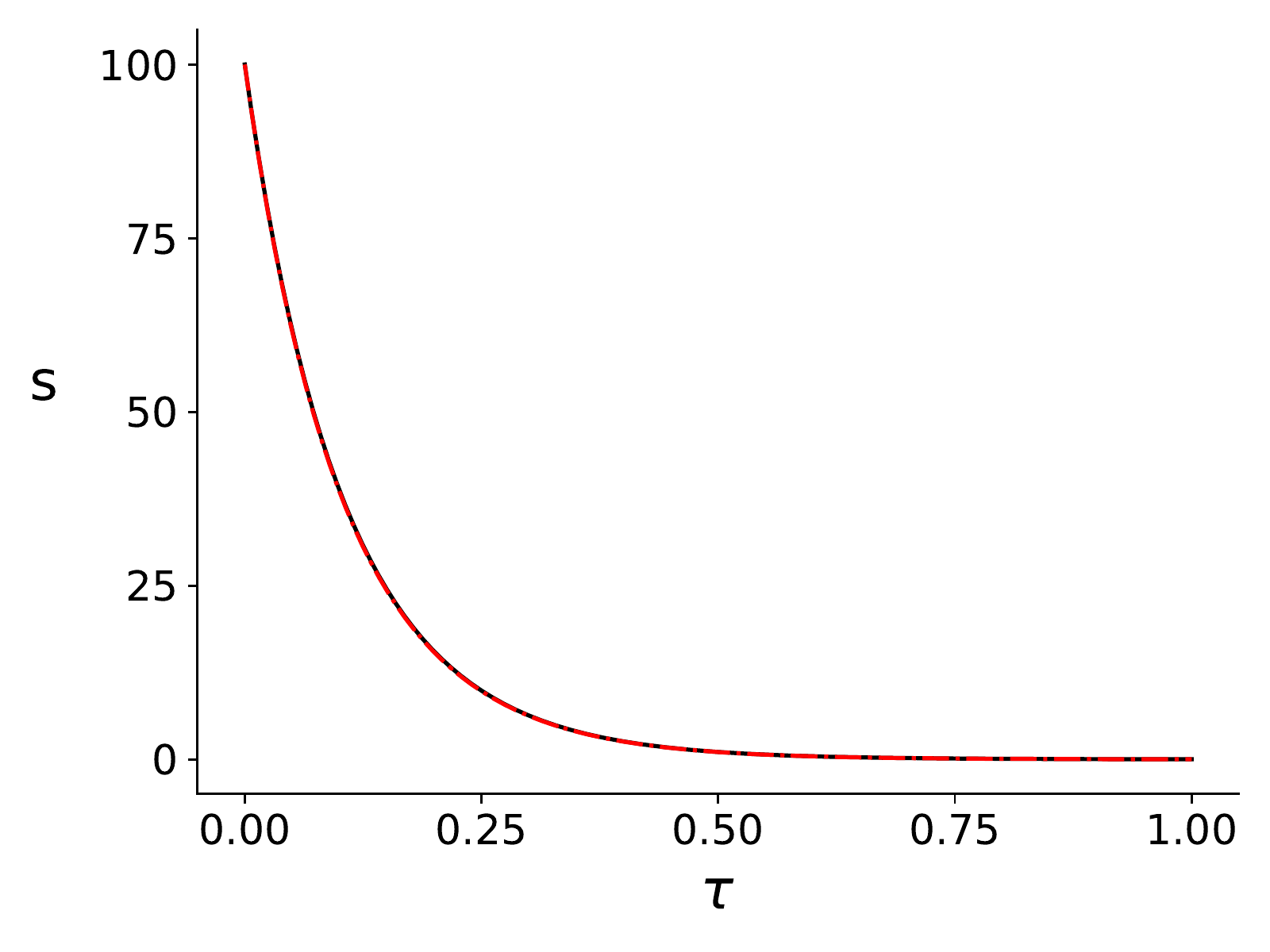}
    \includegraphics[width=8.0cm]{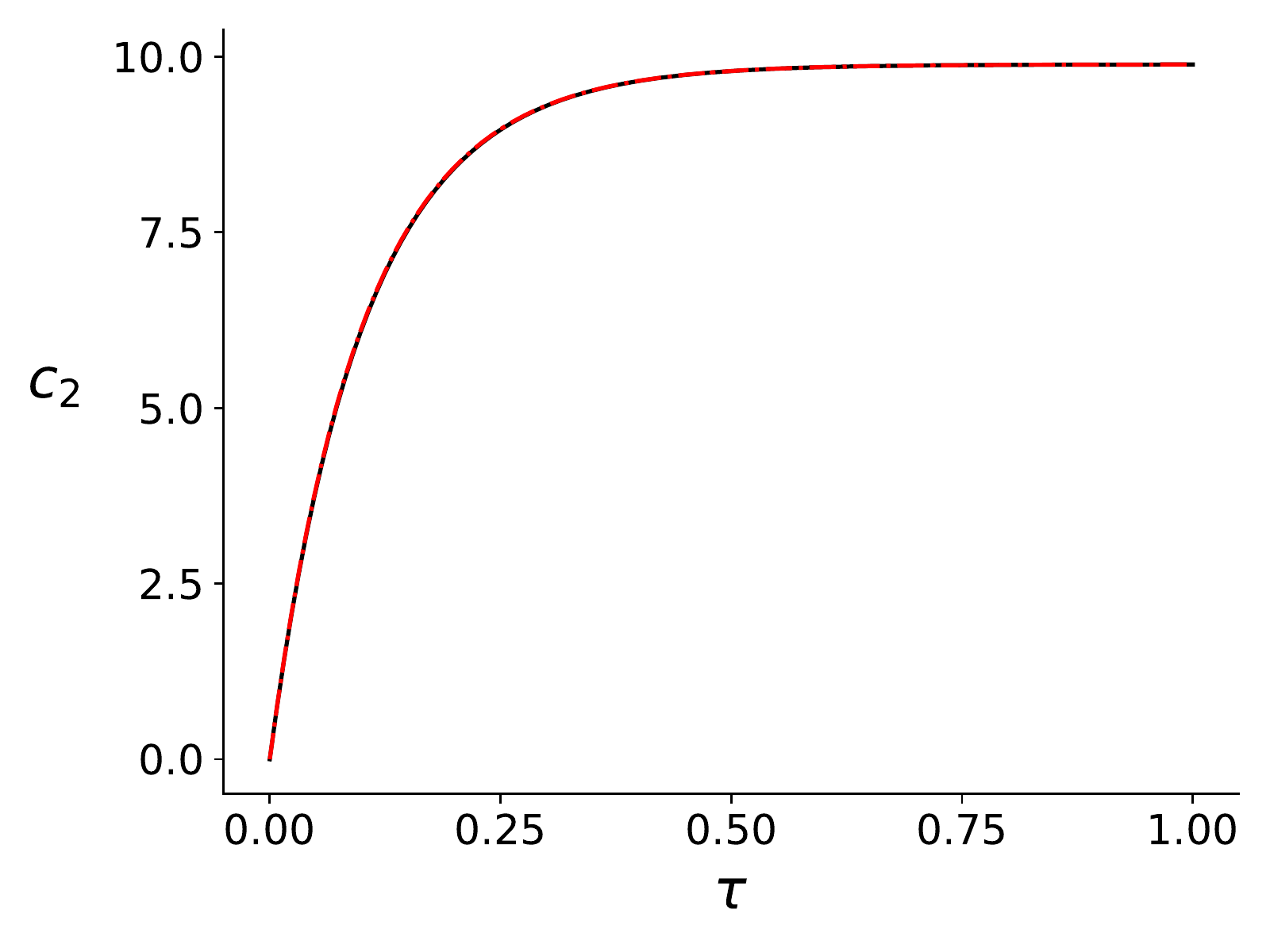}
\caption{\textbf{Competitive inhibition reaction mechanism with reduction to dimension two and disparate parameters.} In all panels, the ``non-small'' parameters (in arbitrary units) are: $e_0=10^2$, $s_0=10^2$, $k_2=10^2$, $k_{-1}=1.0$, and $i_0=10.0$. The solid black curve is the numerical solution to the mass action system~(\ref{MAIn}). The broken red curve is the numerical solution to (\ref{Iclass2D}). Time has been mapped to the $\tau$ scale: 
$\tau = t/T$, \;$\tau \in [0,1]$. {\sc{Top panels}}: Simulation performed with $k_1=k_{-3}=k_3=1.0$ and $\varepsilon^*\approx 2.08$, $\varepsilon_*\approx 1.78$. This scenario is outside the range of applicability for Proposition~\ref{timescalebigs}. {\sc{Bottom panels}}: Simulation performed with $k_1=k_{-3}=k_3=10^{-2}$ and $\varepsilon^*\approx 2.08 \times 10^{-2}$, $\varepsilon_*\approx 1.78 \times 10^{-2}$. The singular perturbation reduction~(\ref{Iclass2D}) here is very close to (\ref{MAIn}).
 } \label{FIG222}
\end{figure}
\end{enumerate}

%%%%%%
\subsubsection{The case of very small $k_1$: Three timescales}
Finally, we discuss a scenario mentioned in Section~\ref{dim3s2}. 
From equations~\eqref{compinkapup} and \eqref{compinkaplo}, below one sees that both $\kappa^*$ and $\kappa_*$ approach zero as $k_1^*\to 0$. This may indicate three timescales. Moreover, from equation \eqref{MAIn}, one sees that the plane defined by $c_1=0$ is invariant when $k_1=0$, thus nearly invariant when $k_1$ is small. A coordinate-independent approach to a three-timescale scenario was presented in Kruff and Walcher~\cite{KrWa}, based on work of Cardin and Teixeira~\cite{cartex}. We introduce two small parameters $\varepsilon_1,\,\varepsilon_2$ and
\[
k_3=\varepsilon_1 k_3^\dagger,\,k_{-3}=\varepsilon_1 k_{-3}^\dagger,\, k_1=\varepsilon_1\varepsilon_2 k_1^\dagger,
\]
and rewrite system \eqref{MAIn} with three timescales. As detailed in \cite{KrWa}, the system admits a sequence of two reductions, with nested invariant manifolds:

A reduction to slow dynamics on a two-dimensional invariant manifold close to $c_1=0$, with reduced system
    \begin{equation}\label{3tsredhalf}
        \begin{pmatrix}
         \dfrac{ds}{d\tau_1}\\ \dfrac{dc_2}{d\tau_1}
        \end{pmatrix}=\begin{pmatrix}
         0\\ k_3^\dagger(e_0-c_2)(i_0-c_2)-k_{-3}^{\dagger}c_2
        \end{pmatrix}
    \end{equation}
    with $\tau_1=\varepsilon_1 t$.

A subsequent reduction to ``very slow'' dynamics on a one-dimensional invariant manifold close to $c_1=0, c_2=\widetilde c_2$, with 
    \[
     \widetilde c_2 = \cfrac{k_3(e_0+i_0) +k_{-3} - \sqrt{(k_3(e_0+i_0) +k_{-3})^2-4e_0i_0k_3^2}}{2k_3}.
    \]
The fully reduced one-dimensional equation is then
    \begin{equation*}
    \dfrac{ds}{d\tau_2}=-\cfrac{k_2\cdot
    k_1^\dagger(e_0-\widetilde c_2)}{k_{-2}+k_{-1}}s
\end{equation*}
with $\tau_2=\varepsilon_1\varepsilon_2 t$, or, restated in fast time,
  \begin{equation}\label{3tsredfull}
    \dot s=-\cfrac{k_2\cdot
    k_1(e_0-\widetilde c_2)}{k_{-2}+k_{-1}}s.
\end{equation}
{\sc Figure}~\ref{FIG333} illustrates the ``slow--very slow'' dynamics for a numerical example.

\begin{figure}[!htb]
  \centering
    \includegraphics[width=8.0cm]{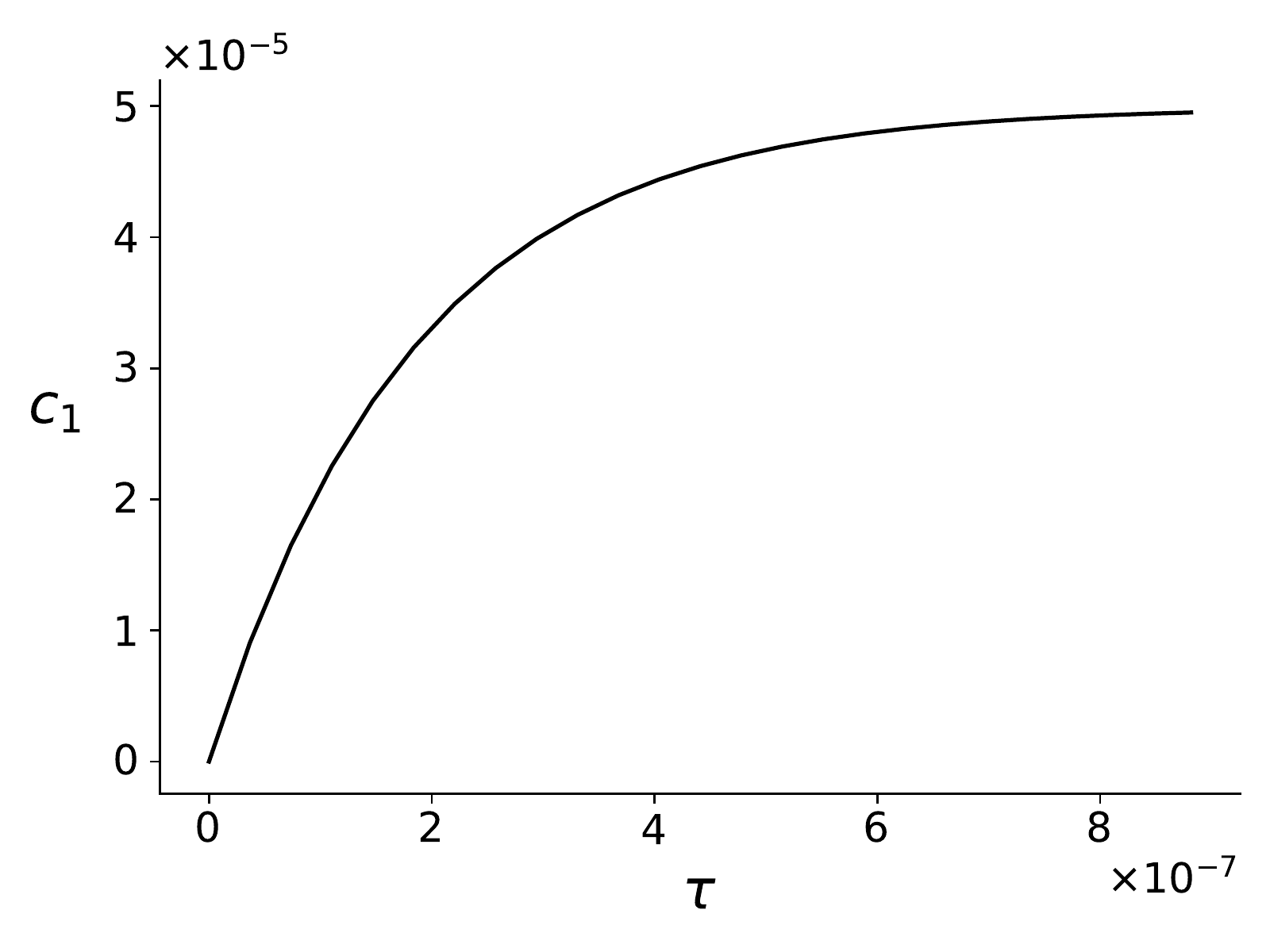}
    \includegraphics[width=8.0cm]{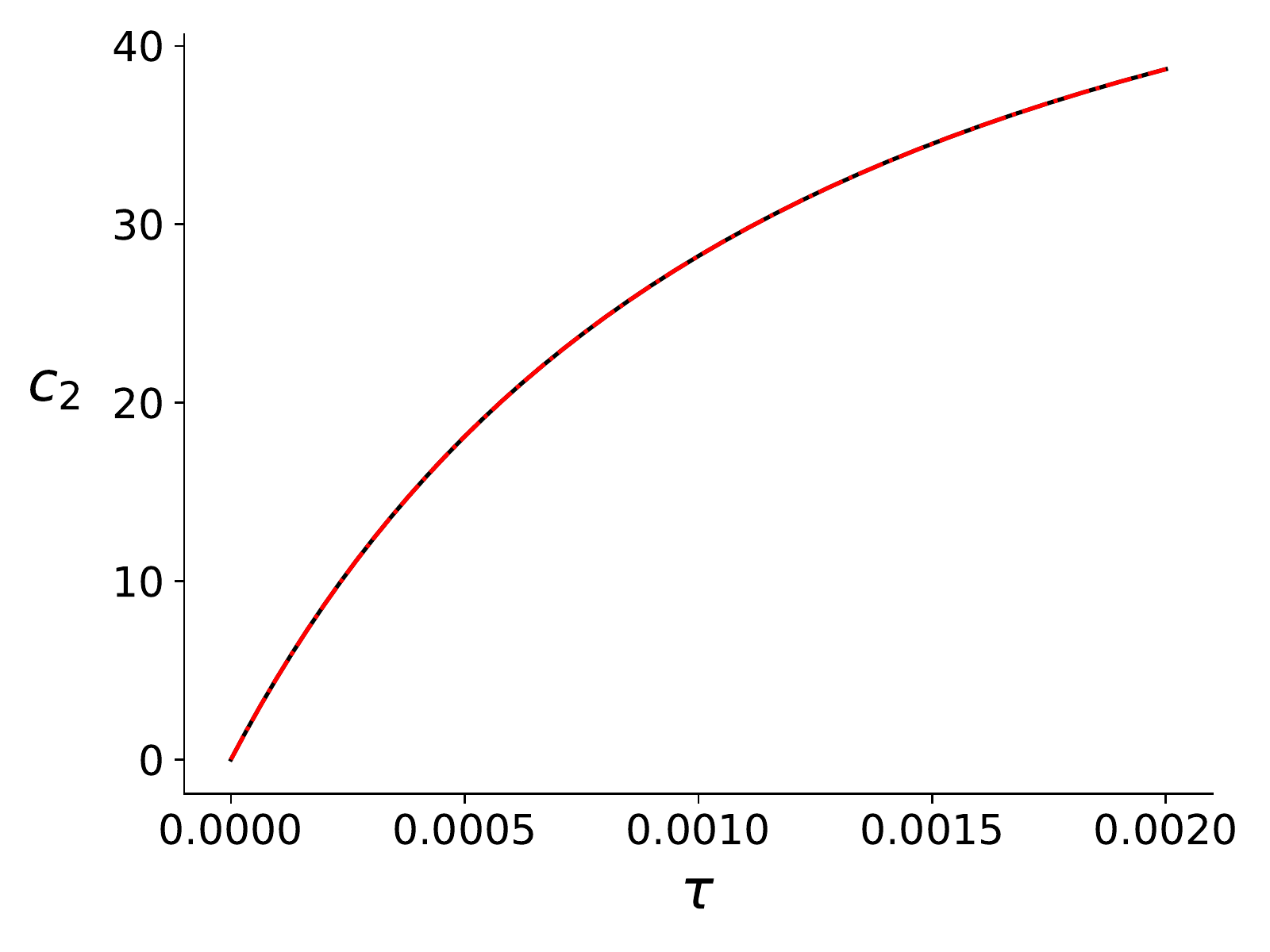}\\
    \includegraphics[width=8.0cm]{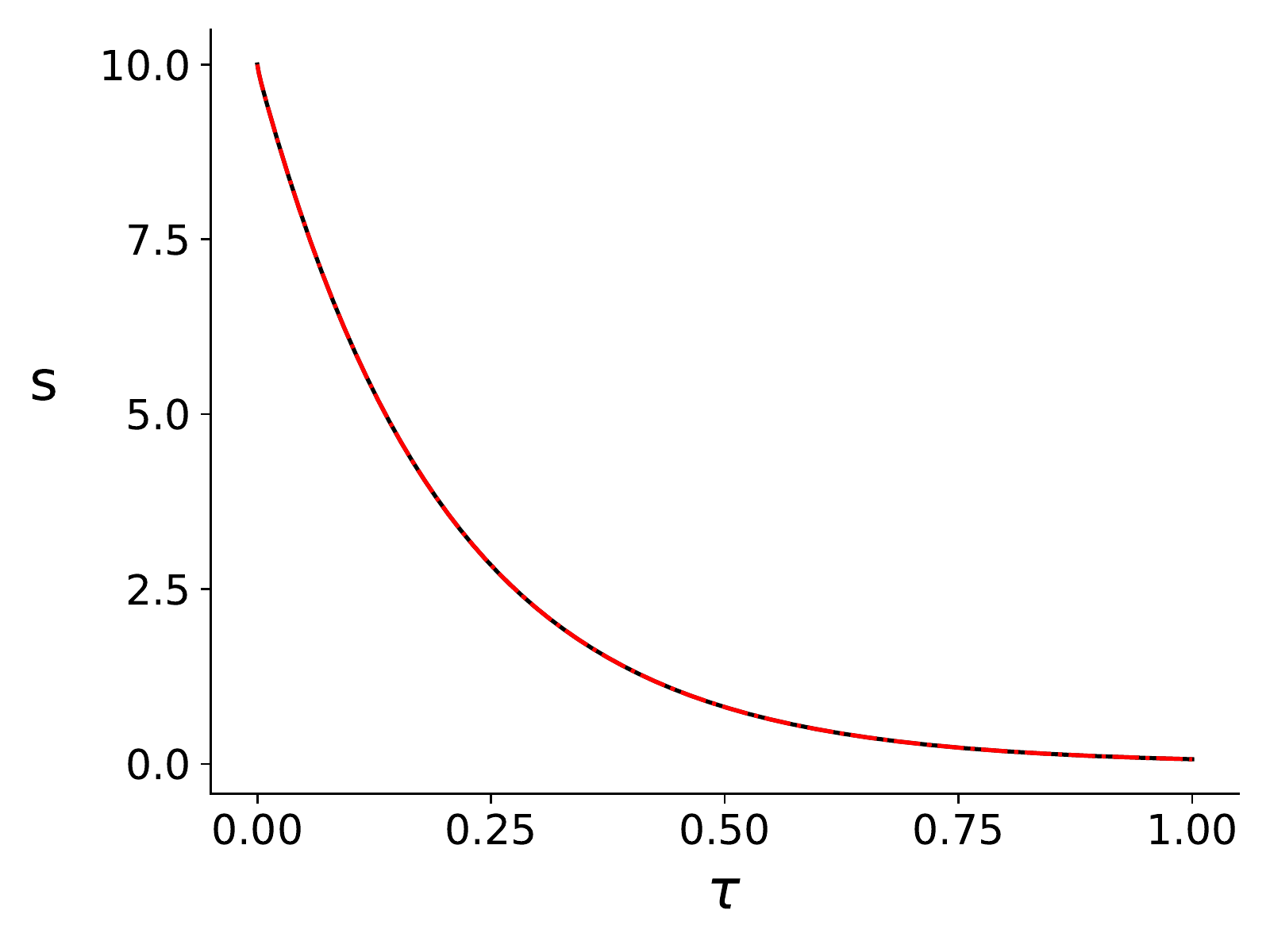}
    \includegraphics[width=8.0cm]{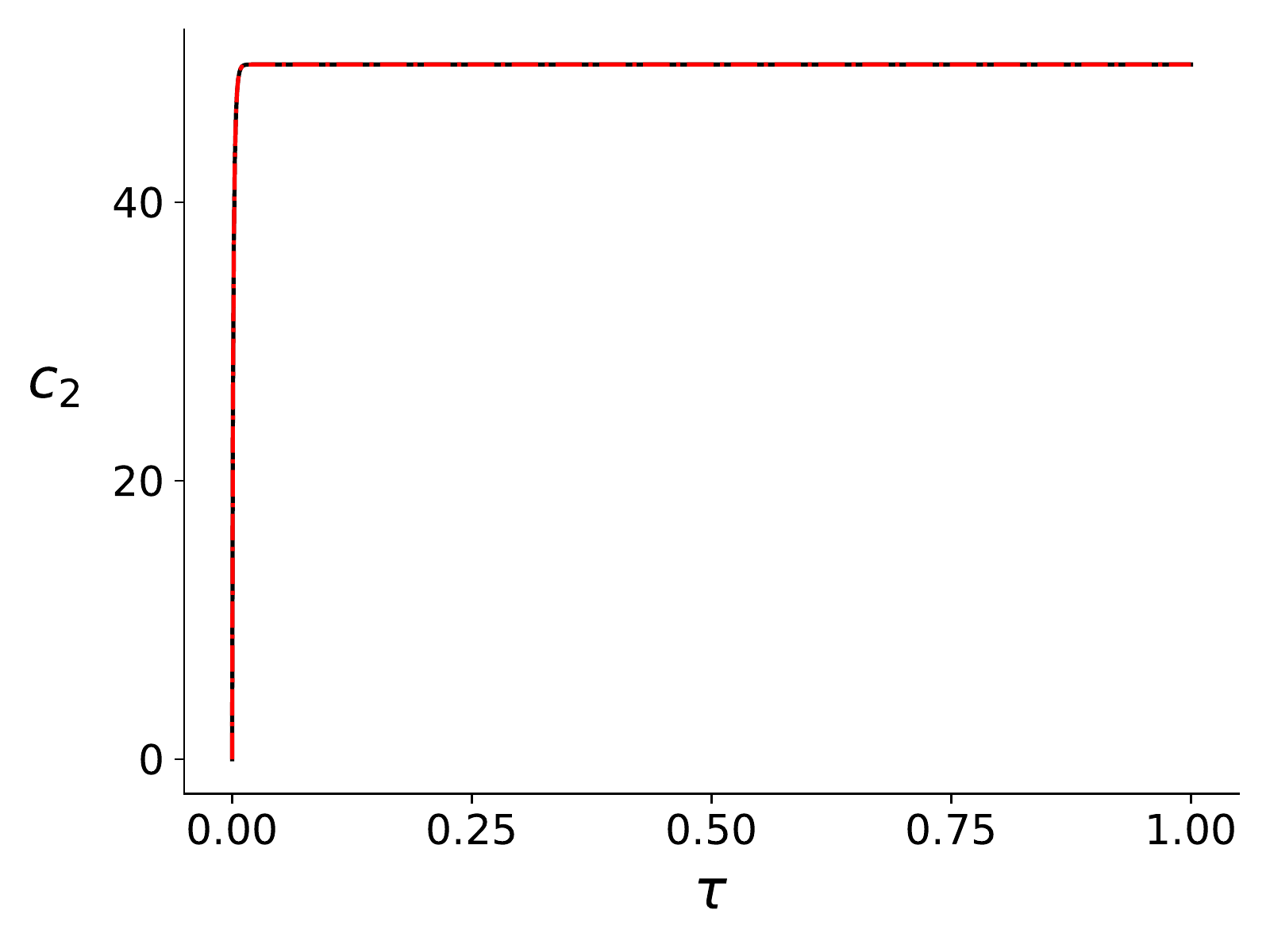}
\caption {\textbf{Competitive inhibition reaction mechanism with a three timescale scenario.} In both panels, the parameters (in arbitrary units) are: $s_0=10.0$, $e_0=10^2$, $k_1=1 \times 10^{-4}$, $k_2=2\times10^{3}$, $k_{-1}=1.0$ ,$k_3=10^{-1}$, $k_{-3}=10^{-3}$ and $i_0=50.0$. The solid black curve is the numerical solution to the mass action system~(\ref{MAIn}). The dashed/dotted red curves are the numerical solutions to (\ref{3tsredhalf}) and (\ref{3tsredfull}). Time has been mapped to the $\tau$ scale: 
$\tau = t/T$. For the chosen parameter values,  $\varepsilon^* \approx 7.5\times 10^{-3}$ and $\varepsilon_*\approx 2.5\times 10^{-3}$. Moreover, the two slow eigenvalues are disparate since $\nu^*\approx 6\times10^{-3}$, this is consistent with a three timescale scenario. {\sc{Top Left panel}}: The initial accumulation of $c_1$ occurs on the fast timescale; the concentrations of $c_2$ and $s$ are approximately constant on the fast timescale. {\sc{Top Right panel}}: The reduction~(\ref{3tsredhalf}) is accurate on the slow timescale as $c_2$ approaches its threshold value, $\widetilde{c}_2$. {\sc{Bottom Left panel}}: The reduction~(\ref{3tsredfull}) is accurate on the the very slow timescale, $\tau_2$, on which the depletion of $s$ is significant. {\sc{Bottom Right panel}}: On the very slow timescale, $c_2$ is effectively constant: $c_2 \approx \widetilde{c}_2$.
 } \label{FIG333}
\end{figure}

%%%%%%%%%%%%%%%%%%%%%%%%%%%%%%%%%%%%%%%%%%%%%%%%%%%%%%%%%%%%%%%%
\section{Discussion}
While the underlying theory and the qualitative analysis concerning the reduction of biochemical and chemical reaction networks is well understood and rests on solid ground, there is a sizable gap between available theory and applications to parameter identification problems in laboratory settings, where heuristics and ad hoc approaches are (perforce) still prevalent. Closing the gap requires further, more precise theoretical results.
The present paper contributes towards this goal, by introducing a general consistent method to obtain perturbation parameters, based on local linear timescales. Note that by its nature, our approach is focused on and limited to the local behavior.

We briefly recall the context and reviewing the results of the present paper:
\begin{enumerate}
\item We start from a singular perturbation reduction with a well-defined critical manifold. This is crucial to ensure appropriateness of linearizability. Considering the three steps (as outlined in the Introduction) that are necessary for a global quantitative estimate of the approximation error, our results amount to an essential part of Step 1. In absence of results concerning Steps 2 and 3, direct applications are limited. But, our results permit consistency checks, which show that certain common perturbation parameters are not feasible.

\item Using classical results from algebra to approximate eigenvalue ratios in the asymptotic limit, we obtained parameters that are computable, palatable, and admit a biochemical interpretation.

\item We first applied our methods to the Michaelis--Menten reaction mechanism. As it turns out, even for such a familiar system our approach provides new and elucidating perturbation parameters. Moreover, we included a partial discussion of Step 3 for the irreversible system with small product formation rate.

\item For two relevant extensions of the Michaelis--Menten reaction mechanism (like the uncompetitive inhibition and competitive inhibition), and a non-Michaelis--Menten reaction mechanism (like the cooperative system with two complexes), we derived perturbation parameters in the spirit of Segel and Slemrod~\cite{SSl}, but without resorting to nonlinear timescales. This stands in contrast to the practice of using $\varepsilon_{BH}$ or $\varepsilon_{SSl}$, or ad-hoc modifications of these. We augmented these results by an  extensive discussion of numerical examples to illustrate the efficacy of these parameters, but also to highlight the importance of the compactness requirements we impose throughout. We also discussed one case that leads to a system with three timescales. 

\item Finally, we discussed exemplary cases of reduction from dimension three to dimension two for both reaction inhibition scenarios, to verify the feasibility of our approach. Numerical simulations illustrate the quality and accuracy of the approximations.
\end{enumerate}

The remaining items (Step 2 and Step 3) as stated in the Introduction need to be handled  on a case-by-case basis. We will provide a complete analysis of the irreversible Michaelis--Menten reaction mechanism with low enzyme in forthcoming work. 

%%%%%%%%%%%%%%%%%%%%%%%%%%%%%%%%%%%%%%%%%%%%%%%%%
\section{Appendix} \label{sec:apppendix}
In this section, we collect some technical matters and proofs, as well as recalling some known results for which a concise presentation seems appropriate and useful.

%%%%%
\subsection{Lyapunov function arguments}\label{lyapsub}
Lyapunov functions can be used to estimate the approach to the slow manifold in a singularly perturbed system, as was mentioned in the Introduction. This estimate gives rise to a small parameter $\varepsilon_L$ which controls the distance of the solution to the slow manifold. We give an account of the relevant facts here.

We first state an auxiliary result that goes back to Lyapunov. 
\begin{lemma}\label{lyaplem}
Let $Q$ be a real $n\times n$-matrix, with eigenvalues $\mu_1,\ldots,\mu_n$, and let $\delta>0$. Then there exists a scalar product $\left<\cdot,\cdot\right>$ on $\mathbb R^n$ such that for all $x$ one has
\[
\left(\min_{1\leq i\leq n}{\rm Re}\,\mu_i -\delta\right)\,\left<x,\,x\right>\leq \left<x,\,Qx\right>\leq \left(\max_{1\leq i\leq n}{\rm Re}\,\mu_i +\delta\right)\,\left<x,\,x\right>,
\]
and 
\[
\left(\min_{1\leq i\leq n}|\mu_i|^2 -\delta\right)\,\left<x,\,x\right>\leq \left<Qx,\,Qx\right>\leq \left(\max_{1\leq i\leq n}|\mu_i|^2 +\delta\right)\,\left<x,\,x\right>.
\]
\end{lemma}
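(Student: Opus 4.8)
The plan is to prove this via a simultaneous reduction of $Q$ to a convenient normal form and then construct the scalar product from that form. The key observation is that the inequalities are invariant statements about $Q$ relative to a scalar product, so I would first pass to a basis in which $Q$ is nearly diagonal, construct the Euclidean scalar product in that basis, and then estimate the quadratic forms $\langle x, Qx\rangle$ and $\langle Qx, Qx\rangle$ directly. The standard tool here is that for any $\delta>0$, $Q$ is similar to a matrix $J_\delta = S^{-1}QS$ in a \emph{modified} (real) Jordan form in which the off-diagonal superdiagonal entries are scaled down to have modulus at most $\delta$ (the off-diagonal $1$'s in Jordan blocks replaced by a small parameter, achieved by conjugating with $\mathrm{diag}(1,\eta,\eta^2,\ldots)$ for small $\eta$). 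For complex conjugate eigenvalue pairs one uses the real $2\times 2$ block form $\begin{pmatrix} a & b \\ -b & a\end{pmatrix}$ with the coupling blocks similarly scaled.

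\textbf{The scalar product and the first inequality.} Having fixed such an $S$, I would define $\langle x,y\rangle := (S^{-1}x)^{\mathrm{tr}}(S^{-1}y)$, i.e.\ transport the standard inner product through $S^{-1}$. Writing $z=S^{-1}x$, we have $\langle x,x\rangle = z^{\mathrm{tr}}z$ and $\langle x, Qx\rangle = z^{\mathrm{tr}} J_\delta z$. The symmetric part of $J_\delta$ is block-diagonal with diagonal entries equal to the real parts $\mathrm{Re}\,\mu_i$, plus a perturbation whose operator norm is controlled by the small off-diagonal scaling. Thus $z^{\mathrm{tr}}J_\delta z$ lies between $(\min_i \mathrm{Re}\,\mu_i - \delta')\,z^{\mathrm{tr}}z$ and $(\max_i \mathrm{Re}\,\mu_i + \delta')\,z^{\mathrm{tr}}z$, where $\delta'$ can be made $\leq \delta$ by choosing the scaling parameter $\eta$ small enough. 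This is exactly the first pair of inequalities once rewritten in terms of $\langle\cdot,\cdot\rangle$.

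\textbf{The second inequality.} For the bound on $\langle Qx, Qx\rangle = z^{\mathrm{tr}} J_\delta^{\mathrm{tr}} J_\delta z$, I would analyze the positive semidefinite matrix $J_\delta^{\mathrm{tr}}J_\delta$. In the block-diagonal normal form, its diagonal blocks have entries approaching $|\mu_i|^2$ (since $|\mu_i|^2 = (\mathrm{Re}\,\mu_i)^2 + (\mathrm{Im}\,\mu_i)^2$ is exactly the squared modulus of the eigenvalue, matching the $2\times 2$ rotation-scaling block norm), again up to a perturbation controlled by $\eta$. Hence the eigenvalues of $J_\delta^{\mathrm{tr}}J_\delta$ are within $\delta$ of the set $\{|\mu_i|^2\}$, giving the Rayleigh-quotient bounds $\min_i |\mu_i|^2 - \delta \leq z^{\mathrm{tr}}J_\delta^{\mathrm{tr}}J_\delta z / z^{\mathrm{tr}}z \leq \max_i |\mu_i|^2 + \delta$.

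\textbf{Main obstacle.} The only genuinely delicate point is the uniform bookkeeping of the two $\delta$'s: the same scaling parameter $\eta$ must simultaneously control the perturbation in the first inequality (linear in the off-diagonal entries) and in the second (which involves products $J_\delta^{\mathrm{tr}}J_\delta$, so perturbations appear both linearly and quadratically in $\eta$). I would handle this by first fixing the target tolerance $\delta$, bounding all perturbation terms by a constant times $\eta$ (absorbing the quadratic terms into the linear bound for $\eta$ small), and then choosing $\eta$ small enough that every error term is $\leq \delta$. A secondary care point is the treatment of complex eigenvalues within a \emph{real} scalar product, which the real Jordan block form handles cleanly since each conjugate pair contributes a $2\times2$ block whose symmetric part is $(\mathrm{Re}\,\mu_i)\,I_2$ and whose Gram block is $|\mu_i|^2 I_2$ up to the small coupling.
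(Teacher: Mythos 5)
Your proposal is correct and follows essentially the same route as the paper, which only sketches the argument by citing Walter and Arnold: a real basis built from real and imaginary parts of a complex eigenbasis (equivalently, the real Jordan form), with the nilpotent part rescaled via $\mathrm{diag}(1,\eta,\eta^2,\ldots)$ to have norm less than $\delta$, and the scalar product transported through the change of basis. Your fleshed-out treatment of the symmetric part for the first inequality, of $J_\delta^{\mathrm{tr}}J_\delta$ for the second, and of the uniform choice of $\eta$ is exactly the standard proof the paper points to.
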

This can be proven as in Walter~\cite[Chapter VII, \S 30]{Walter} (see, also Arnold~\cite[Chapter~22]{Arn}). For matrices that are diagonalizable over $\mathbb C$ build a real basis from real and imaginary parts of a complex eigenbasis. For the non-diagonalizable case, by suitable choice of basis elements the nilpotent part can be chosen to have norm $<\delta$.

%%%%%
\subsubsection{Estimates}\label{lyapestsubsec}
This presentation follows Berglund and Gentz~\cite[Section~2.1 ff.]{BeGe}, but for illustrative purposes, we are satisfied with a local version. Consider a smooth system
\begin{equation}\label{genqsssys}
    \begin{array}{rcl}
       \dot x&=  &\varepsilon\widetilde f_1(x,y,\varepsilon)  \\
       \dot y&=  & f_2(x,y,\varepsilon)
    \end{array},\quad\text{briefly} \begin{pmatrix} \dot x\\ \dot y\end{pmatrix}= F(\begin{pmatrix} x\\ y\end{pmatrix})
\end{equation}
with $\begin{pmatrix}
x\\y
\end{pmatrix}$ in some open subset of $\mathbb{R}^n$, $x\in\mathbb{R}^m$, and a nonnegative parameter $\varepsilon$. Moreover let $\begin{pmatrix}
x_0\\y_0
\end{pmatrix}$
be such that $f_2(x_0,y_0,0)=0$, and $M$ a suitable compact neighborhood of this point. (More conditions on $M$ will be implicitly imposed below, by further assumptions.)
\begin{itemize}
 
    \item 
 Assume furthermore that 
    \[
    f_2(x,y,\varepsilon)=0 \Longleftrightarrow y=g(x,\varepsilon)
    \]
    for $\begin{pmatrix}
    x\\ y
    \end{pmatrix} \in M$ and $\varepsilon\leq\varepsilon_{\rm max}$, with some positive $\varepsilon_{\rm max}$ and a smooth function $g$. The zero set $Y_\varepsilon$ of $f_2(\cdot,\cdot,\varepsilon)$ in $M$ will be called the slow manifold, or QSS manifold,\footnote{This is an order $\varepsilon$ approximation of the slow manifold in a singular perturbation setting.} for $\varepsilon$.
    By Hadamard's lemma, after possibly shrinking $M$ there exists a smooth matrix valued function $A$ such that
    \[
    f_2(x,y,\varepsilon)=A(x,y,\varepsilon)\cdot (y-g(x,\varepsilon)).
    \]
    Thus, we may rewrite system \eqref{genqsssys} as
    \begin{equation}\label{genqsssys2}
        \begin{array}{rcl}
          \dot x&=   &\varepsilon\widetilde f_1(x,y,\varepsilon) \\
           \dot y&=&  A(x,y,\varepsilon)\cdot (y-g(x,\varepsilon)).
        \end{array}
    \end{equation}
    With
    \[
    D_yf_2(x,y,\varepsilon)=A(x,y,\varepsilon) + \left(D_yA(x,y,\varepsilon)\,\right)(y-g(x,\varepsilon)),
    \]
    one finds in particular 
     \[
    D_yf_2(x,y,\varepsilon)=A(x,y,\varepsilon) \text{ on } Y_\varepsilon.
    \]
    \item Now assume that all eigenvalues of $A(x_0,y_0,0)$ have negative real parts. By continuity and suitable choice of $M$ and $\varepsilon_{\rm max}$, all eigenvalues of $A(x,y,\varepsilon)$ have negative real part for $\begin{pmatrix}
    x\\y
    \end{pmatrix}\in M$ and $0\leq \varepsilon\leq \varepsilon_{\rm max}$. Due to Lemma~\ref{lyaplem}, there exists a scalar product $\left<\cdot,\cdot\right>$ on $\mathbb{R}^{n-m}$ and some $\gamma>0$ such that 
    \[
    \left<z,A(x_0,y_0,0)z\right>\leq -2\gamma \left<z,z\right>,\quad \text{ all } z\in \mathbb R^{n-m}.
    \]
    (Recall the correspondence between $2\gamma$ and eigenvalues.)
    Thus, we may assume that 
    \begin{equation}\label{Aestimeq}
        \left<z,A(x,y,\varepsilon)z\right>\leq -\gamma \left<z,z\right>, \quad \text{ all } z\in \mathbb R^{n-m},
    \end{equation}
    on $M$, with $0\leq \varepsilon\leq \varepsilon_{\rm max}$. Denote by $\Vert\cdot\Vert$ the norm associated with this scalar product. 
    \end{itemize}
    The following line of arguments is a slight variant of classical reasoning (which uses Gronwall's lemma, see e.g. Evans~\cite[Appendix~B for the latter]{Evans}).
    For solutions of \eqref{genqsssys} we find
    \begin{equation*}
        \begin{array}{rcl}
          \dfrac{d}{dt}  \left<y-g(x,\varepsilon), y-g(x,\varepsilon)\right>&=&2\left< y-g(x,\varepsilon),\dot y -D_xg(x,\varepsilon)f_1(x,y,\varepsilon)\right> \\
             & =& 2\left<y-g(x,\varepsilon),A(x,y,\varepsilon)\left(y-g(x,\varepsilon)\right)\right> \\
             & & \quad -2\left< y-g(x,\varepsilon), D_xg(x,\varepsilon)f_1(x,y,\varepsilon)\right>.
        \end{array}
    \end{equation*}
    The first term on the right hand side can be estimated by $-\gamma\cdot \left<y-g(x,\varepsilon), y-g(x,\varepsilon)\right>$. As for the second term,
    by Cauchy-Schwarz one has
    \begin{equation*}
        \begin{array}{rcl}
         2\left|\left< y-g(x,\varepsilon), D_xg(x,\varepsilon)f_1(x,y,\varepsilon)\right> \right| & \leq &2\Vert y-g(x,\varepsilon) \Vert_2\cdot \Vert D_xg(x,\varepsilon)f_1(x,y,\varepsilon)\Vert_2 \\
             & \leq &2\Vert y-g(x,\varepsilon) \Vert_2\cdot \left(\Vert D_xg(x,\varepsilon)\Vert\cdot \Vert f_1(x,y,\varepsilon)\Vert\right)
        \end{array}
    \end{equation*}
with suitable norms in the second and third factor.

Now, there exists a positive constant $ \kappa=\varepsilon\widetilde\kappa$ such that
\[
\Vert D_xg(x,\varepsilon)\Vert\cdot \Vert f_1(x,y,\varepsilon)\Vert\leq \kappa.
\]
So, for $V:=\Vert y-g(x,\varepsilon)\Vert^2$ one obtains the differential inequality
\[
\frac{dV}{dt}\leq -2\gamma V+2 \kappa \sqrt V.
\]
 Comparison with the solution of the corresponding Bernoulli equation yields
\begin{equation}
V\leq V(0)\exp(-\gamma t)+\left(\frac{\kappa}{\gamma}\right)^2\cdot(1-\exp(-\gamma t)),
\end{equation}
thus $\Vert y-g(x,\varepsilon)\Vert=\sqrt{V(t)}$ can be estimated e.g.\ by $\sqrt2 \frac{\kappa}{\gamma}$ as $t\to\infty$.\footnote{One may replace $\sqrt2$ by any smaller constant which is $>1$.} Therefore, after a transient phase the proximity of the solution to the slow manifold is controlled by 
\begin{equation}\label{lyapeps}
    \varepsilon_L:=\sqrt2 \frac{\kappa}{\gamma}=\sqrt2\varepsilon\frac{\widetilde\kappa}{\gamma}.
\end{equation}
More precisely, once $V(0)\exp(-\gamma t)\leq\left(\dfrac{\kappa}{\gamma}\right)^2$, the stated estimate holds. The inequality is satisfied whenever
\begin{equation*}
        t \geq \frac1\gamma\,\log\left(\dfrac{\gamma^2 V(0)}{\kappa^2} \right)\sim \log\dfrac{1}{\varepsilon_L},
\end{equation*}
and this indicates that the time span for the approach to the QSS manifold is of order $|\log\varepsilon_L|$ in the fast timescale, and of order $\varepsilon_L|\log\varepsilon_L|$ in the slow timescale $\varepsilon_L t.$ (A more detailed analysis will provide a lower estimate by a variant of \eqref{Aestimeq}, and confirm that the asymptotic estimate cannot be improved.) In particular time spans of order $1$ will not suffice for the transient.

In reaction network settings, $\varepsilon_L$ is a dimensional parameter (with dimension concentration); a suitable normalization needs to be chosen.

%%%%%
\subsubsection{A correspondence to eigenvalues}\label{lyapesteval}
We sketch the relation of the small parameter $\varepsilon_L$ to eigenvalues of the Jacobian. For the sake of simplicity, we only consider the linearization here, disregarding higher order terms. Given the system
\[
\begin{array}{rcccccc}
    \dot x &  =& -\varepsilon \widetilde U x&+&\varepsilon \widetilde V y & & \\
    \dot y & =& Wx&-&Zy&=& -Z\left(y-Z^{-1}W\right)x\\
\end{array}\quad , \text{ briefly} \begin{pmatrix}\dot x\\ \dot y\end{pmatrix} =F(x,y,\varepsilon),
\]
and keeping the notation from above, we have $A=-Z$, $g(x)=Z^{-1}Wx$, $D_xg=Z^{-1}W$. The slow manifold $Y_\varepsilon$ is given by $Wx-Zy=0$, up to higher order terms. Moreover
\[
\widetilde f_1=-\widetilde Ux+\widetilde V y=\left(-\widetilde U+\widetilde VZ^{-1}W\right)x \quad\text{ on }Y_\varepsilon.
\]
Now consider the eigenvalues of the matrix $DF=\begin{pmatrix}-\varepsilon \widetilde U&\varepsilon\widetilde V\\ W & -Z \end{pmatrix}$; see also Lemma~\ref{bigsgoodprop}. Thus, let $\alpha_0+\varepsilon\alpha_1+\cdots$ be an eigenvalue with eigenvector $\begin{pmatrix}
 x_0+\varepsilon x_1+\cdots\\ y_0+\varepsilon y_1+\cdots
\end{pmatrix}$; $\begin{pmatrix} x_0\\ y_0\end{pmatrix}\not= 0$. For $\alpha_0\not=0$, comparing lowest order terms in the eigenvalue condition yields
\[
x_0=0\text{ and } Wx_0-Zy_0=\alpha_0y_0,
\]
thus $-\alpha_0$ is an eigenvalue of $Z$. By Lemma~\ref{lyaplem}, we see that $2\gamma$ can be chosen near the nonzero eigenvalue of $DF(x,y,0)$ with smallest absolute real part.

For $\alpha_0=0$, thus the eigenvalue has order $\varepsilon$, comparing lowest orders in the eigenvalue condition yields
\[
-\widetilde U x_0+\widetilde V y_0=\alpha_1 x_0\text{ and } Wx_0-Zy_0=0,
\]
hence $\alpha_1$ is an eigenvalue for $-\widetilde U+\widetilde VZ^{-1}W=\widetilde f_1$. An upper estimate for $\varepsilon\Vert \widetilde f_1\Vert$ can be obtained from Lemma~\ref{lyaplem}: Choose the order $\varepsilon$ eigenvalue with greatest absolute value, multiplied by some factor accounting for a coordinate change. Thus, we see that $\kappa$ is composed of the factor $\Vert Z^{-1}W\Vert$ (which reflects the geometry of the slow manifold), the absolutely largest eigenvalue of order $\varepsilon$ and some multiplicative constants from coordinate transformations. 
In our local setting, all the multiplicative constants mentioned above are of order one.

To summarize, the small parameter $\varepsilon_L=\kappa/\gamma$ is determined by the ratio of the largest absolute eigenvalue of order $\varepsilon$ to the smallest absolute real part of eigenvalues of order one. From this perspective, for slow manifolds of dimension one in particular, the relevance of the parameters $\varepsilon^*$ and $\mu^*$ is obvious. Their advantage lies in their (relative) computational accessibility. Likewise, $\varepsilon^*$ is a both relevant and computationally accessible parameter for three-dimensional systems with two-dimensional slow manifolds.

\subsubsection{Remarks on Steps 2 and 3}\label{missingsubsec}
Lyapunov function arguments provide a small parameter $\varepsilon_L$ which characterizes closeness of a solution of \eqref{genqsssys} to the slow manifold. This takes care of Step 1 described in the Introduction, and clarifies the role of eigenvalues up to ($\varepsilon$-independent) factors due to coordinate changes.

For the ultimate goal of obtaining quantitative estimates for the discrepancy between the true solution and the singular perturbation approximation, one needs to go further. In Step 2, an appropriate critical time for the onset of the slow dynamics, as well as an  appropriate initial value for the reduced system, must be determined. As for Step 3, by a continuity and compactness argument, the right hand sides of the full and the reduced equation differ by $\varepsilon_L$ times some constant. With this, and an error estimate for the initial value for the reduced system, continuous dependence provides an estimate of the approximation error on compact time intervals. Further work may be required, since one is mostly interested in unbounded time intervals, so one cannot rely only on standard continuous dependence theorems.

In the present manuscript, we generally did not address the determination of $\varepsilon_L$ in examples and case studies. The only exception is irreversible Michaelis--Menten with slow product formation (see, Section~\ref{mmlosubsec}), which also contains partial results for Step 3. For the (more familiar and more relevant) irreversible Michaelis--Menten system with small enzyme concentration all three steps can dealt with completely (even if some complications arise), as will be shown in a forthcoming paper. For any system of dimension $>2$, even completing Step 1 seems quite demanding.

%%%%%%%%%%%%%%%%%%%%%%%%%%%%%%%%%%%%%%%%%%%%%%%%%%%%%%%%%%%%%%%%%%%%%%%%%%%%%%%%
\subsection{A proof of Lemma~\ref{tslemdimone}}
\begin{proof} Part (a) is a special case of Lemma~\ref{bigsgoodprop} below.
To prove part (b), abbreviate $\sigma^*_i(x):=\sigma_i(x,\widehat\pi)$ for $x\in\widetilde Y\cap K$, $1\leq i\leq n-1$. Then the nonzero roots of the characteristic polynomial $\chi$ are the roots of
\[
\zeta(x,\tau):=\tau^{n-1}+\sigma^*_1(x)\tau^{n-2}+\cdots+\sigma^*_{n-1}(x).
\]
By the blanket assumptions, the $\sigma^*_i$ are bounded above and below by positive constants, hence the absolute values of all zeros of the $\zeta(x,\cdot)$ are bounded above by some constant. Since $\widehat\pi$ is a TFPV, all zeros have negative real parts. Now assume that for every positive constant $\delta$, some $\zeta(x,\tau)$ has a zero with real part $\geq -\delta$. Then there exist sequences $(x_k)$ in $\widetilde Y\cap K$ and $(\mu_k)$ in $\mathbb C$ such that $\zeta(x_k,\mu_k)=0$ and ${\rm Re}\,\mu_k\to 0$. Due to boundedness of the sequence $(\mu_k)$ and compactness of $\widetilde Y\cap K$ we may assume that the $\mu_k$ converge to $\mu^*$, ${\rm Re}\,\mu^*=0$, and the $x_k$ converge to $x^*\in \widetilde Y\cap K$. By continuity $\zeta(x^*,\mu^*)=0$; a contradiction.
Part (c) follows by continuity and compactness arguments.
\end{proof}

%%%%%%%%%%%%%%%%%%%%%%%%%%%%%%%%%%%%%%%%%%%%%%%%%%%%%%%%%%%%%%%%%%%%%%%%%%%%%%%%%%
\subsection{Parameter dependence of eigenvalues}
 Recall from \eqref{sigtildef} the definition
\[
\widetilde\sigma_i(x,\varepsilon):=\sigma_i(x,\widehat\pi+\varepsilon\rho),\quad 1\leq i\leq n, \quad \widetilde\sigma_0:=1.
\]
 We first prove \eqref{bigsgoodcond}, concerning the orders of the $\widetilde \sigma_i$ whenever $s>1$.
\begin{lemma} \label{mucheps} Let $\widehat\pi$ be a TFPV for dimension $s$, with critical manifold $\widetilde Y$. Then for all $x\in\widetilde Y\cap K$ one has
\[
\widetilde\sigma_i(x,\varepsilon)=\varepsilon^{i-n+s}\widehat\sigma_i(x,\varepsilon)\text{ for all }x\in\widetilde Y\cap K, \quad n-s\leq i\leq n,
\]
with polynomial $\widehat\sigma_i$.
\end{lemma}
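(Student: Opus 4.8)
Since $h$ is polynomial, the entries of $D_1h(x,\widehat\pi+\varepsilon\rho)$ are polynomials in $\varepsilon$ (and $x$), and hence so is each coefficient $\widetilde\sigma_i(x,\varepsilon)$, it being (up to sign) the sum of the principal $i\times i$ minors of $D_1h$. The assertion is therefore equivalent to the divisibility statement that, as a polynomial in $\varepsilon$, $\widetilde\sigma_i(x,\cdot)$ vanishes to order at least $i-(n-s)$ at $\varepsilon=0$, for every $x\in\widetilde Y\cap K$ and $n-s\le i\le n$. The only structural input I will use is the rank condition from part (ii) of the TFPV definition: writing $J_0:=D_1h(x,\widehat\pi)$, one has ${\rm rank}\,J_0=n-s$ on $\widetilde Y$.

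The plan is to prove the divisibility minor-by-minor, via a purely linear-algebraic lemma. Let $M(\varepsilon)=M_0+\varepsilon M_1+\varepsilon^2M_2+\cdots$ be an $i\times i$ matrix whose entries are polynomial in $\varepsilon$, with ${\rm rank}\,M_0=r$. Then $\varepsilon^{\,i-r}$ divides $\det M(\varepsilon)$. To see this, expand $\det M(\varepsilon)=\det\bigl(c_1(\varepsilon),\dots,c_i(\varepsilon)\bigr)$ multilinearly in the columns $c_j(\varepsilon)=c_j^{(0)}+\varepsilon c_j^{(1)}+\cdots$, where $c_j^{(0)}$ is the $j$-th column of $M_0$. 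Each resulting term is a determinant in which $k$ columns are taken from $M_0$ (contributing $\varepsilon$-order $0$) and the remaining $i-k$ columns from the perturbation (each of order $\ge 1$), so the term has order at least $i-k$. Such a term can be nonzero only if its $k$ columns drawn from $M_0$ are linearly independent, which forces $k\le{\rm rank}\,M_0=r$; hence every nonvanishing term has order at least $i-r$, proving the claim.

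I would then apply this lemma to each principal submatrix $M(\varepsilon)=\bigl(D_1h(x,\widehat\pi+\varepsilon\rho)\bigr)[I,I]$ with $|I|=i$. Its constant term is $J_0[I,I]$, and since deleting rows and then columns cannot increase rank, ${\rm rank}\,J_0[I,I]\le{\rm rank}\,J_0=n-s$. The lemma therefore yields $\varepsilon^{\,i-(n-s)}\mid\det M(\varepsilon)$ for each such $I$. Summing over all index sets with $|I|=i$, the polynomial $\widetilde\sigma_i(x,\varepsilon)=(-1)^i\sum_{|I|=i}\det\bigl(D_1h(x,\widehat\pi+\varepsilon\rho)\bigr)[I,I]$ is divisible by $\varepsilon^{\,i-n+s}$, and setting $\widehat\sigma_i(x,\varepsilon):=\varepsilon^{-(i-n+s)}\widetilde\sigma_i(x,\varepsilon)$ produces the desired polynomial.

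The main point to get right is the \emph{sharp} order of vanishing $i-(n-s)$: the weaker statement $\widetilde\sigma_i(x,0)=0$ for $i>n-s$ is immediate from part (i) of the TFPV characterization (equivalently, from ${\rm rank}\,J_0=n-s$), but by itself yields only divisibility by $\varepsilon$. The rank-based column expansion is exactly what upgrades order $1$ to order $i-(n-s)$. I would stress that this argument needs only the rank condition, not the transversality (direct-sum) hypothesis; the latter—which guarantees that the $s$ eigenvalues collapsing at $\widehat\pi$ branch off as $O(\varepsilon)$ rather than with fractional exponents—is instead what will be needed for the finer eigenvalue bookkeeping in Lemma~\ref{bigsgoodprop}. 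A perturbation-theoretic alternative, tracking the small eigenvalues and invoking $\widetilde\sigma_i=(-1)^ie_i(\lambda_1,\dots,\lambda_n)$, would also work, but it is more delicate precisely because individual eigenvalues need not be analytic in $\varepsilon$, so I would favor the minor argument.
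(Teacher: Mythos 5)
Your proposal is correct, but it takes a genuinely different route from the paper's proof. The paper does not work in the original coordinates: it invokes a local analytic transformation $\Phi$ of $\widetilde h$ into Tikhonov standard form, uses that conjugate Jacobians share their characteristic polynomial, and then reads off the divisibility from the fact that the first $s$ rows of the standard-form Jacobian carry an explicit factor $\varepsilon$, so by Laplace expansion every contribution to $\widetilde\nu_i$ with $i>n-s$ acquires at least $i-(n-s)$ such factors. You replace this with a self-contained linear-algebra lemma: for an $i\times i$ family $M(\varepsilon)=M_0+\varepsilon(\cdots)$ with ${\rm rank}\,M_0=r$, the column-multilinear expansion forces every nonvanishing term to use at most $r$ columns of $M_0$, whence $\varepsilon^{\,i-r}\mid\det M(\varepsilon)$; applied to each principal submatrix, whose constant term $J_0[I,I]$ has rank at most ${\rm rank}\,J_0=n-s$, this gives the claim minor by minor. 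Your lemma and its application are sound, and your route buys something real: it uses only the rank part of TFPV condition (ii), avoids importing the (nontrivial) existence of the standard-form transformation, and is more general, applying to any polynomially perturbed matrix family. What the paper's route buys is economy within its own framework: the standard form is available there anyway, the row-counting is then immediate, and the conjugation viewpoint sets up the subsequent Newton--Puiseux analysis in Lemma~\ref{bigsgoodprop}. One cosmetic caveat: your closing definition $\widehat\sigma_i:=\varepsilon^{-(i-n+s)}\widetilde\sigma_i$ should be read coefficientwise --- the $\varepsilon$-coefficients of order $<i-n+s$ vanish identically on $\widetilde Y\cap K$ but not necessarily globally, so $\widehat\sigma_i$ is the polynomial obtained by discarding them and the stated identity holds precisely on $\widetilde Y\cap K$, exactly as the lemma asserts. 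Your side remark on the direct-sum condition is also accurate: given the rank condition it is equivalent to $\sigma_{n-s}(x,\widehat\pi)\neq 0$, i.e.\ to the nondegeneracy needed later for the eigenvalue bookkeeping, and it plays no role in the present divisibility statement.
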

\begin{proof} The arguments we will use are similar to those in the proof of Goeke et al.~\cite[Proposition~3]{gwz}. We set
\[
\widetilde h(x,\varepsilon):=h(x,\widehat\pi +\varepsilon\rho)\text{ for }x\in \widetilde Y\cap K.
\]
There exists a local transformation of $\widetilde h$ into Tikhonov standard form. Thus, there exists a local analytic diffeomorphism $\Phi$ and a vector field $\widetilde q$ such that 
\[
D\Phi(x)\widetilde h(x,\varepsilon)=\widetilde q(\Phi(x),\varepsilon)
\]
and consequently
\[
D\Phi(x)D\widetilde h(x,\varepsilon)=D\widetilde q(\Phi(x),\varepsilon)D\Phi(x),\quad x\in\widetilde Y\cap K.
\]
Therefore the Jacobian of $\widetilde h$ at $x$ and the Jacobian of $\widetilde q$ at $\Phi(x)$ are conjugate; in particular they have the same characteristic polynomial. Denoting by $\widetilde\nu_i(y)$ the coefficients of the characteristic polynomial of $D\widetilde q(y)$, this means
\[
\widetilde\sigma_i(x,\varepsilon)=\widetilde\nu_i(\Phi(x),\varepsilon)\text{ for all }x\in\widetilde Y\cap K.
\]
Since $\widetilde q$ is in Tikhonov standard form, we have
\[
\widetilde q(y,\varepsilon)=\begin{pmatrix}\varepsilon\widetilde q_1(y,\varepsilon)\\
\widetilde q_2(y,\varepsilon)\end{pmatrix},
\]
with $q_1$ having $s$ entries, and 
\[
D\widetilde q(y,\varepsilon)=\begin{pmatrix}\varepsilon D\widetilde q_1(y,\varepsilon)\\
D\widetilde q_2(y,\varepsilon)\end{pmatrix}.
\]
Thus, every entry of the first $s$ rows of the Jacobian is a multiple of $\varepsilon$, and with the Laplace expansion of the determinant this implies
\[
\widetilde\nu_i(x,\varepsilon)=\varepsilon^{i-n+s}\,\widehat\nu_i(x,\varepsilon), \quad n-s< i\leq n,
\]
and finally \eqref{bigsgoodcond}.
\end{proof}
Now we turn to determining the orders of the eigenvalues.

\begin{lemma}\label{bigsgoodprop} With objects and notation as in Lemma~\ref{mucheps}, let \eqref{bigsgoodcond} hold, and furthermore consider the nondegeneracy conditions:
\begin{enumerate}[(i)]
\item $\widehat\sigma_{n-s}(x,0)\not=0$ and $\widehat\sigma_{n}(x,0)\not=0$ on $\widetilde Y\cap K$.
\item The polynomials 
\begin{equation}\label{auxpoly}
\widehat\sigma_{n-s}(x,0)\tau^s+ \widehat\sigma_{n-s+1}(x,0)\tau^{s-1}+\cdots+\widehat\sigma_{n}(x,0)
\end{equation}
admit only simple zeros, for all $x\in \widetilde Y\cap K$. 
\end{enumerate}
\begin{enumerate}[(a)]
    \item 
Whenever (i) holds 
then the zeros $\lambda_i(x,\varepsilon)$ of the characteristic polynomial can be labeled such that 
\[
\lambda_1(x,0)\not=0,\ldots, \lambda_{n-s}(x,0)\not=0\quad\text{ on } \widetilde Y\cap K,
\]
and
\[
\lambda_i(x,\varepsilon)=\varepsilon\widehat\lambda_i(x,\varepsilon),\quad x\in \widetilde Y\cap K,\quad i>n-s,
\]
with continuous $\widehat\lambda_i$ such that $\widehat\lambda_i(x,0)\not=0$ on $\widetilde Y\cap K$, $n-s+1\leq i\leq n$.\footnote{The $\widehat\lambda_i$ can be represented as convergent power series in $(x,\varepsilon^{1/m})$ for some positive integer $m$.}
\item Whenever (ii) holds in addition to (i) then all $\widehat\lambda_i$, $n-s+1\leq i\leq n$, are analytic in $(x,\varepsilon)$.

\end{enumerate}

\end{lemma}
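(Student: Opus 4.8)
The plan is to regard the eigenvalues as the roots in $\tau$ of the characteristic polynomial $\chi(\tau,x,\varepsilon)=\tau^{n}+\widetilde\sigma_1(x,\varepsilon)\tau^{n-1}+\cdots+\widetilde\sigma_n(x,\varepsilon)$ and to use the order information in \eqref{bigsgoodcond} to split the $n-s$ fast roots from the $s$ slow roots. First I would evaluate at $\varepsilon=0$: since $\widetilde\sigma_i(x,0)=0$ for $i>n-s$ while $\widetilde\sigma_{n-s}(x,0)=\widehat\sigma_{n-s}(x,0)\ne0$ by (i), the polynomial factors as $\chi(\tau,x,0)=\tau^{s}\,p_0(\tau,x)$ with $p_0(0,x)=\widehat\sigma_{n-s}(x,0)\ne0$. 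Thus exactly $n-s$ roots stay bounded away from the origin as $\varepsilon\to0$; by continuity of the roots of a monic polynomial in its coefficients, these can be labeled $\lambda_1,\ldots,\lambda_{n-s}$ with $\lambda_i(x,0)\ne0$, while the remaining $s$ roots tend to $0$.

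To capture the exact order of the slow roots, the key device is the rescaling $\tau=\varepsilon\mu$. Setting $R(\mu,x,\varepsilon):=\varepsilon^{-s}\chi(\varepsilon\mu,x,\varepsilon)$ and using \eqref{bigsgoodcond} to cancel all negative powers of $\varepsilon$, one checks that $R$ is again a polynomial in $\mu$ whose coefficients are polynomials in $(x,\varepsilon)$, and that
\[
R(\mu,x,0)=\widehat\sigma_{n-s}(x,0)\,\mu^{s}+\widehat\sigma_{n-s+1}(x,0)\,\mu^{s-1}+\cdots+\widehat\sigma_{n}(x,0),
\]
which is precisely the auxiliary polynomial \eqref{auxpoly}. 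By (i) this limiting polynomial has degree exactly $s$ (leading coefficient $\widehat\sigma_{n-s}(x,0)\ne0$) and nonzero constant term $\widehat\sigma_n(x,0)$, so its $s$ roots are all nonzero. As $\varepsilon\to0$ the leading coefficient $\varepsilon^{\,n-s}$ of $R$ (viewed as a degree-$n$ polynomial in $\mu$) vanishes, so $s$ of its roots converge to these nonzero limits while $n-s$ escape to infinity; the bounded ones are exactly $\mu_i=\lambda_i/\varepsilon$ for the slow $\lambda_i$. This yields $\lambda_i=\varepsilon\widehat\lambda_i$ with $\widehat\lambda_i:=\mu_i$ continuous and $\widehat\lambda_i(x,0)\ne0$, proving (a); the convergent expansion in $\varepsilon^{1/m}$ asserted in the footnote follows from the Newton--Puiseux theorem applied to $R$, treating $x$ as a parameter.

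For part (b), the extra hypothesis (ii) says each root $\mu_j^{0}(x)$ of \eqref{auxpoly} is simple, i.e.\ $\partial_\mu R(\mu_j^{0}(x),x,0)\ne0$. Since $R$ is analytic (indeed polynomial) in $(\mu,x,\varepsilon)$, the analytic implicit function theorem produces near each $(\mu_j^{0}(x),x,0)$ a unique analytic solution $\mu_j(x,\varepsilon)$ of $R=0$ with $\mu_j(x,0)=\mu_j^{0}(x)$. Because simple roots remain distinct under small perturbations, these local branches do not collide, the labeling is consistent, and each $\widehat\lambda_i=\mu_i$ is analytic in $(x,\varepsilon)$, as claimed.

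The main obstacle lies in the bookkeeping of part (a): one must verify that the substitution $\tau=\varepsilon\mu$ followed by division by $\varepsilon^{s}$ leaves a genuine polynomial in $(\mu,x,\varepsilon)$ (this is exactly where \eqref{bigsgoodcond} is used in full), and that the slow/fast split in $\tau$ corresponds precisely to the bounded/unbounded split of the roots in $\mu$, i.e.\ that exactly $s$ roots of $R$ stay bounded. Once this is established, part (b) is a routine application of the implicit function theorem, with the simplicity hypothesis (ii) being exactly what upgrades the merely continuous (Puiseux) dependence of part (a) to genuine analyticity.
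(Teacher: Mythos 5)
Your proof is correct, but it takes a genuinely different route from the paper's. The paper works with the unscaled characteristic polynomial and runs a Newton--Puiseux/Newton-polygon analysis: it posits $\lambda=\alpha\varepsilon^{\gamma}+\cdots$ with $\alpha\neq0$, rules out every exponent $\gamma\neq1$ by comparing the $\varepsilon$-orders of the monomials $\widetilde\sigma_i\lambda^{n-i}$, identifies the limiting equation for $\alpha$ as the auxiliary polynomial \eqref{auxpoly}, and then invokes Hensel's lemma under (ii) to upgrade Puiseux series to series in integer powers of $\varepsilon$. You instead perform the blow-up $\tau=\varepsilon\mu$ once and for all: \eqref{bigsgoodcond} is precisely what makes $R(\mu,x,\varepsilon)=\varepsilon^{-s}\chi(\varepsilon\mu,x,\varepsilon)$ polynomial, $R(\cdot,x,0)$ is \eqref{auxpoly}, and your counting argument --- the $n-s$ fast roots escape to infinity in the $\mu$-chart, while a degree-$n$ family whose top $n-s$ coefficients vanish in the limit has exactly $s$ bounded roots, necessarily converging to the nonzero roots of \eqref{auxpoly} since $\widehat\sigma_n(x,0)\neq0$ --- replaces the paper's exclusion of balances $\gamma\neq1$. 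The analytic implicit function theorem at the simple roots then replaces Hensel's lemma in part (b). Your version is more elementary and self-contained: Newton--Puiseux is needed only for the footnote, and joint analyticity in $(x,\varepsilon)$ falls out of the IFT directly, whereas the paper's Newton-polygon computation makes the possible dominant balances explicit (useful when (i) fails and exponents $\gamma\neq1$ genuinely occur, as in the degenerate scenario of Section~\ref{mmdegsubsec}). One shared informality: both arguments construct root branches locally, and the global continuous/analytic labeling over all of $\widetilde Y\cap K$ (where monodromy could in principle obstruct a single-valued choice on a non-simply-connected critical manifold) is asserted rather than proved in the paper too, so you match its level of rigor on that point.
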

\begin{proof}The proof rests on the Newton-Puiseux theorem and on Hensel's lemma; we refer specifically to Abhyankar~\cite[Lectures~12 and 13]{abhyankar}. According to Newton-Puiseux the equation $\lambda^n+\sum \widetilde\sigma_i \lambda^{n-i}=0$ admits series solutions 
\[
\lambda=\alpha \varepsilon^\gamma+\cdots
\]
in rational exponents of $\varepsilon$, with a positive rational number $\gamma$ and $\alpha\not=0$. For such an expansion to hold with some $\gamma$ and $\alpha\not=0$, cancellation of lowest order terms in \eqref{charpol} is necessary. The lowest orders of the terms in the monomials are 
\[
(n-i)\gamma \text{ for } 0\leq i\leq s ,\quad\text{  and  } \quad(n-j)\gamma + j-n+s \text{ for } s+1\leq j\leq n,
\]
and for cancellation one must have equality between two of these orders. Clearly two orders in the first block cannot be equal. Assuming that an order from the first block equals an order in the second block, we get
\[
(n-i)\gamma=(n-j)\gamma + j-n+s\Rightarrow \gamma=\frac{j-(n-s)}{j-i}<1 \text{ unless } i=n-s.
\]
But in case $\gamma<1$ the lowest order equals $s\gamma$, with no cancellation; so only $\gamma=1$ remains. Finally, if two orders in the second block are equal then one directly sees $\gamma=1$.  This shows part (a).

Continuing the argument, $\gamma=1$ implies that precisely the monomials of degree $\leq n-s$ contribute to the lowest order, and the ansatz yields
\[
\widehat\sigma_{n-s}(x,0)\alpha^s+ \widehat\sigma_{n-s+1}(x,0)\alpha^{s-1}+\cdots+\widehat\sigma_{n}(x,0)=0,
\]
thus $s$ distinct choices for $\alpha $ by condition (ii), and $\alpha\not=0$. By Hensel's lemma, each choice for $\alpha$ yields a series $\lambda=\alpha\varepsilon+\cdots$, in positive integer powers of $\varepsilon$. This shows part (b).
\end{proof}
\begin{remark}
In case $s=1$ the second condition is automatic. Therefore Lemma~\ref{tslemdimone} (a) is also proven.
\end{remark}

%%%%%%%%%%%%%%%%%%%%%%%%%%%%%%%%%%%%%%%%%%%%%%%%

%%%%%%%%%%%%%%%%%%%%%%%%%%%%%%%%%%%%%%%%%%%%%%

\end{document}